\documentclass[12pt]{article}
\usepackage{amssymb}
\usepackage{amsfonts}
\usepackage{cite}
\usepackage{amsmath}
\usepackage[top=3.8cm,bottom=3.8cm,textwidth=16cm,centering,a4paper]{geometry}

\setcounter{MaxMatrixCols}{10}
\setlength{\textheight}{240mm}\setlength{\textwidth}{180mm}
\setlength{\voffset}{-10mm} \setlength{\hoffset}{-10mm}
\setlength{\topmargin}{-0.2cm}
\numberwithin{equation}{section}
\newtheorem{theorem}{Theorem}[section]

\newtheorem{definition}[theorem]{Definition}
\newtheorem{lemma}[theorem]{Lemma}

\newtheorem{proposition}[theorem]{Proposition}

\newenvironment{proof}[1][Proof]{\noindent\textbf{#1.} }{\hfill $\square$}

\begin{document}

\title{Vectorial ground state solutions for a class of Hartree-Fock type
systems with the double coupled feature}
\date{}
\author{Juntao Sun$^{a}$, Tsung-fang Wu$^{b}$\thanks{Corresponding author.}  \\
{\footnotesize $^a$\emph{School of Mathematics and Statistics, Shandong
University of Technology, Zibo, 255049, P.R. China }}\\
{\footnotesize $^{b}$\emph{Department of Applied Mathematics, National
University of Kaohsiung, Kaohsiung 811, Taiwan }}}
\maketitle

\begin{abstract}
In this paper we study the Hartree-Fock type system as follows:
\begin{equation*}
\left\{
\begin{array}{ll}
-\Delta u+u+\lambda \phi _{u,v}u=\left\vert u\right\vert ^{p-2}u+\beta
\left\vert v\right\vert ^{\frac{p}{2}}\left\vert u\right\vert ^{\frac{p}{2}%
-2}u & \text{ in }\mathbb{R}^{3}, \\
-\Delta v+v+\lambda \phi _{u,v}v=\left\vert v\right\vert ^{p-2}v+\beta
\left\vert u\right\vert ^{\frac{p}{2}}\left\vert v\right\vert ^{\frac{p}{2}%
-2}v & \text{ in }\mathbb{R}^{3},%
\end{array}%
\right.
\end{equation*}%
where $\phi _{u,v}(x)=\int_{\mathbb{R}^{3}}\frac{u^{2}(y)+v^{2}\left(
y\right) }{|x-y|}dy,$ the parameters $\lambda,\beta >0$ and $2<p<4$. Such
system is viewed as an approximation of the Coulomb system with two
particles appeared in quantum mechanics, taking into account the Pauli
principle. Its characteristic feature lies on the presence of the double
coupled terms. When $2<p<3,$ we establish the existence and multiplicity of
nontrivial radial solutions, including vectorial ones, in the radial space $%
H_{r}$ by describing the internal relationship between the coupling
constants $\lambda $ and $\beta.$ When $2<p<4,$ we study the existence of
vectorial solutions in the non-radial space $H$ by developing a novel
constraint method, together with some new analysis techniques. In
particular, when $3\leq p<4,$ a vectorial ground state solution is found in $%
H$, which is innovative as it was not discussed at all in any previous
results. Our study can be regarded as an entire supplement in d'Avenia et
al. [J. Differential Equations 335 (2022) 580--614].
\end{abstract}

\footnotetext{ ~\textit{E-mail addresses }: jtsun@sdut.edu.cn(J. Sun), tfwu@nuk.edu.tw (T.-F. Wu).}

\textbf{Keywords:} Hartree-Fock system; Variational methods; Ground state
solutions; Vectorial solutions

\textbf{2010 Mathematics Subject Classification:} 35J50, 35Q40, 35Q55.

\section{Introduction}

Consider a system of $N$ coupled nonlinear Schr\"{o}dinger equations in $%
\mathbb{R}^{3}$:
\begin{equation}
\begin{array}{ll}
-\Delta \psi _{i}+V_{\text{ext}}\psi _{i}+\left( \int_{\mathbb{R}%
^{3}}|x-y|^{-1}\sum\limits_{j=1}^{N}|\psi _{j}(y)|^{2}dy\right) \psi
_{i}+(V_{\text{ex}}\psi )_{i}=E_{i}\psi _{i}, & \text{ }\forall i=1,2,...,N,%
\end{array}
\label{1-0}
\end{equation}%
where $\psi _{i}:\mathbb{R}^{3}\rightarrow\mathbb{C},$ $V_{\text{ext}}$ is a
given external potential, $(V_{\text{ex}}\psi )_{i}$ is the $i$'th component
of the \textit{crucial exchange potential} defined by%
\begin{equation*}
(V_{\text{ex}}\psi )_{i}=-\sum\limits_{j=1}^{N}\psi _{j}(y)\int_{\mathbb{R}%
^{3}}\frac{\psi _{i}(y)\bar{\psi}_{j}(y)}{|x-y|}dy,
\end{equation*}%
and $E_{i}$ is the $i$'th eigenvalue. Such system is called the Hartree-Fock
system which can be regarded as an approximation of the complex $(M+N)$-body
Schr\"{o}dinger equation originating from the study of a molecular system
made of $M$ nuclei interacting via the Coulomb potential with $N$ electrons.

Historically, the first effort made in this direction began from Hartree
\cite{H} by choosing some particular test functions without considering the
antisymmetry (i.e. the Pauli principle). Subsequently, Fock \cite{F1} and
Slater \cite{Sl1}, to take into account the Pauli principle, proposed
another class of test functions, i.e. the class of Slater determinants. A
further relevant free-electron approximation for the exchange potential $V_{%
\text{ex}}\psi $ is given by Slater \cite{Sl2} (see also Dirac \cite{D1} in
a different context), namely%
\begin{equation}
(V_{\text{ex}}\psi )_{i}=-C\left( \sum\limits_{j=1}^{N}|\psi
_{j}|^{2}\right) ^{1/3}\psi _{i},  \label{1-1}
\end{equation}%
where $C$ is a positive constant.

When $N=1,$ the exchange potential $(V_{\text{ex}}\psi )_{1}=-C|\psi
_{1}|^{2/3}\psi _{1}$ in (\ref{1-1}). If we consider $\psi _{1}$ as a real
function, renaming it as $u,$ and take, for simplicity, $C=1$, then System (%
\ref{1-0}) becomes Schr\"{o}dinger-Poisson-Slater equation as follows:

\begin{equation}
\begin{array}{ll}
-\Delta u+u+\phi _{u}(x)u=|u|^{2/3}u & \text{ in }\mathbb{R}^{3},%
\end{array}
\label{1-3}
\end{equation}%
where%
\begin{equation*}
\phi _{u}(x)=\int_{\mathbb{R}^{3}}\frac{u^{2}(y)}{|x-y|}dy.
\end{equation*}%
It describes the evolution of an electron ensemble in a semiconductor
crystal. S\'{a}nchez and Soler \cite{SS} used a minimization procedure in an
appropriate manifold to find a positive solution of Eq. (\ref{1-3}). If the
term $|u|^{2/3}u$ is replaced with $0$, then Eq. (\ref{1-3}) becomes the Schr%
\"{o}dinger--Poisson equation (also called Schr\"{o}dinger--Maxwell
equation). This type of equation appeared in semiconductor theory and has
been studied in \cite{BF,Lions}, and many others. In some recent works \cite%
{A,AP,R1,R2,SWF,SWF1,SWF2,ZZ}, a local nonlinear term $|u|^{p-2}u$ (or, more
generally, $f(u)$) has been added to the Schr\"{o}dinger--Poisson equation.
Those nonlinear terms have been traditionally used in the Schr\"{o}dinger
equation to model the interaction among particle (possibly nonradial).

In this paper we take $N=2$ and we assume that the exchange potential%
\begin{equation}
V_{\text{ex}}\psi =-C\binom{(\left\vert \psi _{1}\right\vert ^{p-2}+\beta
\left\vert \psi _{1}\right\vert ^{\frac{p}{2}-2}\left\vert \psi
_{2}\right\vert ^{\frac{p}{2}})\psi _{1}}{(\left\vert \psi _{2}\right\vert
^{p-2}+\beta \left\vert \psi _{1}\right\vert ^{\frac{p}{2}}\left\vert \psi
_{2}\right\vert ^{\frac{p}{2}-2})\psi _{2}},  \label{1-4}
\end{equation}%
where $\beta \geq 0$ and $2<p<6$. Note that, for $p=\frac{8}{3},$ (\ref{1-4}%
) becomes%
\begin{equation*}
V_{\text{ex}}\psi =-C\binom{(\left\vert \psi _{1}\right\vert ^{\frac{2}{3}%
}+\beta \left\vert \psi _{1}\right\vert ^{-\frac{2}{3}}\left\vert \psi
_{2}\right\vert ^{\frac{4}{3}})\psi _{1}}{(\left\vert \psi _{2}\right\vert ^{%
\frac{2}{3}}+\beta \left\vert \psi _{1}\right\vert ^{\frac{4}{3}}\left\vert
\psi _{2}\right\vert ^{-\frac{2}{3}})\psi _{2}},  \label{1-6}
\end{equation*}%
which is viewed as an approximation of the exchange potential (\ref{1-1})
proposed by Slater.

Considering $\psi _{1}$ and $\psi _{2}$ real functions, renaming them as $%
u,v,$ and taking, for simplicity, $C=1$, System (\ref{1-0}) becomes the
following%
\begin{equation}
\left\{
\begin{array}{ll}
-\Delta u+u+\lambda \phi _{u,v}u=\left\vert u\right\vert ^{p-2}u+\beta
\left\vert v\right\vert ^{\frac{p}{2}}\left\vert u\right\vert ^{\frac{p}{2}%
-2}u & \text{ in }\mathbb{R}^{3}, \\
-\Delta v+v+\lambda \phi _{u,v}v=\left\vert v\right\vert ^{p-2}v+\beta
\left\vert u\right\vert ^{\frac{p}{2}}\left\vert v\right\vert ^{\frac{p}{2}%
-2}v & \text{ in }\mathbb{R}^{3},%
\end{array}%
\right.  \tag*{$\left( E_{\lambda ,\beta }\right) $}
\end{equation}%
where%
\begin{equation}
\phi _{u,v}(x)=\int_{\mathbb{R}^{3}}\frac{u^{2}(y)+v^{2}\left( y\right) }{%
|x-y|}dy.  \label{1-2}
\end{equation}%
It is easily seen that System $\left( E_{\lambda ,\beta }\right) $ is
variational and its solutions are critical points of the corresponding
energy functional $J_{\lambda ,\beta }:H\rightarrow \mathbb{R}$ defined as%
\begin{equation*}
J_{\lambda ,\beta }(u,v)=\frac{1}{2}\left\Vert \left( u,v\right) \right\Vert
_{H}^{2}+\frac{\lambda }{4}\int_{\mathbb{R}^{3}}\phi _{u,v}\left(
u^{2}+v^{2}\right) dx-\frac{1}{p}\int_{\mathbb{R}^{3}}\left( \left\vert
u\right\vert ^{p}+\left\vert v\right\vert ^{p}+2\beta \left\vert
u\right\vert ^{\frac{p}{2}}\left\vert v\right\vert ^{\frac{p}{2}}\right) dx,
\end{equation*}%
where $\left\Vert \left( u,v\right) \right\Vert _{H}=\left[ \int_{\mathbb{R}%
^{3}}\left( |\nabla u|^{2}+u^{2}+|\nabla v|^{2}+v^{2}\right) dx\right]
^{1/2} $ is the standard norm in $H.$ Clearly, $J_{\lambda ,\beta }$ is a
well-defined and $C^{1}$ functional on $H.$ For a solution $\left(
u,v\right) $ of System $\left( E_{\lambda ,\beta }\right) ,$ we here need to
introduce some concepts of its triviality and positiveness.

\begin{definition}
\label{D1}A vector function $\left( u,v\right) $ is said to be\newline
$\left( i\right) $ nontrivial if either $u\neq 0$ or $v\neq 0;$\newline
$\left( ii\right) $ semitrivial if it is nontrivial but either $u=0$ or $%
v=0; $\newline
$\left( iii\right) $ vectorial if both of $u$ and $v$ are not zero;\newline
$\left( iv\right) $ nonnegative if $u\geq 0$ and $v\geq 0;$\newline
$\left( v\right) $ positive if $u>0$ and $v>0.$
\end{definition}

If $\lambda =0,$ then System $\left( E_{\lambda ,\beta }\right) $ is deduced
to the local weakly coupled nonlinear Schr\"{o}dinger system%
\begin{equation}
\left\{
\begin{array}{ll}
-\Delta u+u=\left\vert u\right\vert ^{p-2}u+\beta \left\vert v\right\vert ^{%
\frac{p}{2}}\left\vert u\right\vert ^{\frac{p}{2}-2}u & \text{ in }\mathbb{R}%
^{3}, \\
-\Delta v+v=\left\vert v\right\vert ^{p-2}v+\beta \left\vert u\right\vert ^{%
\frac{p}{2}}\left\vert v\right\vert ^{\frac{p}{2}-2}v & \text{ in }\mathbb{R}%
^{3},%
\end{array}%
\right.  \label{1-5}
\end{equation}%
which arises in the theory of Bose-Einstein condensates in two different
hyperfine states \cite{T1}. The coupling constant $\beta $ is the
interaction between the two components. As $\beta >0$, the interaction is
attractive, but the interaction is repulsive if $\beta <0$. The existence
and multiplicity of positive solutions for System (\ref{1-5}) have been the
subject of extensive mathematical studies in recent years, for example, \cite%
{AC,BDW,CZ1,CZ2,LW,LW1,MMP}. More efforts have been made on finding
vectorial solutions of the system by controlling the ranges of the parameter
$\beta .$

If $\lambda \neq 0,$ then a characteristic feature of System $\left(
E_{\lambda ,\beta }\right) $ lies on the presence of the double coupled
terms, including a Coulomb interacting term and a cooperative pure power
term. Very recently, based on the method of Nehari-Pohozaev manifold
developed by Ruiz \cite{R1}, d'Avenia, Maia and Siciliano \cite{DMS} firstly
studied the existence of radial ground state solutions for System $(
E_{\lambda ,\beta }),$ depending on the parameters $\beta $ and $p$. To be
precise, for $\lambda >0,$ they concluded that $(i)$ a semitrivial radial
ground state solution exists for $\beta =0$ and $3<p<6,$ or for $0<\beta
<2^{2q-1}-1$ and $4\leq p<6;$ $(ii)$ a vectorial radial ground state
solution exists for $\beta >0$ and $3<p<4,$ or for $\beta \geq 2^{2q-1}-1$
and $4\leq p<6;$ $(iii)$ both semitrivial and vectorial radial ground state
solutions exist for $\beta =2^{2q-1}-1$ and $4\leq p<6.$ It is pointed out
that the definition of ground state solutions involved here is confined to
the space of radial functions $H_{r}:=H_{rad}^{1}(\mathbb{R}^{3})\times
H_{rad}^{1}(\mathbb{R}^{3}),$ namely, a radial ground state solution is a
radial solution of System $\left( E_{\lambda ,\beta }\right) $ whose energy
is minimal among all radial ones.

As we can see, the previous results leave a gap, say, the case $2<p\leq 3$.
We remark that an approximation of the exchange potential (\ref{1-1})
proposed by Slater, i.e. (\ref{1-5}), is included in this gap. The first aim
of this work is to fill this gap and to study nontrivial radial solutions,
including vectorial ones, of System $\left( E_{\lambda ,\beta }\right) $
when $2<p<3.$ On the other hand, we also notice that all nontrivial
solutions are obtained in the radial space $H_{r}$ in \cite{DMS}. In view of
this, the second aim of this work is to find vectorial solutions of System $%
\left( E_{\lambda ,\beta }\right) $ when $2<p<4$ in the space $H:=H^{1}(%
\mathbb{R}^{3})\times H^{1}(\mathbb{R}^{3}).$ And on this basis we shall
further find vectorial ground state solutions in $H,$ which is totally
different from that of \cite{DMS}. In particular, the existence of vectorial
ground state solutions is proved in the case $p=3,$ which seems to be an
very interesting and novel result, even in the study of Schr\"{o}%
dinger--Poisson equations.

Compared with the existing results in \cite{DMS}, there seems to be more
challenging in our study. Firstly, the method of Nehari-Pohozaev manifold
used in \cite{DMS} is not a ideal choice when we deal with the case $2<p\leq
3,$ whether in $H_{r}$ or $H.$ Secondly, we find the interaction effect
between the double coupled terms is significant for the case $2<p\leq 3$. As
a result, the analysis of the internal relationship between the coupling
constants $\lambda $ and $\beta $ is a difficult problem. Thirdly, it is
complicated to determine the vectorial ground state solutions in $H$ for the
case $3\leq p<4$. In order to overcome these considerable difficulties, new
ideas and techniques have been explored. More details will be discussed in
the next subsection.

\subsection{Main results}

First of all, we consider the following maximization problems:
\begin{equation}
\Lambda \left( \beta \right) :=\sup_{u\in H_{r}\setminus \left\{ \left(
0,0\right) \right\} }\frac{\frac{1}{p}\int_{\mathbb{R}^{3}}F_{\beta }\left(
u,v\right) dx-\frac{1}{2}\left\Vert \left( u,v\right) \right\Vert _{H}^{2}}{%
\int_{\mathbb{R}^{3}}\phi _{u,v}\left( u^{2}+v^{2}\right) dx}  \label{1-7}
\end{equation}%
and%
\begin{equation*}
\overline{\Lambda }\left( \beta \right) :=\sup_{u\in H\setminus \left\{
\left( 0,0\right) \right\} }\frac{\int_{\mathbb{R}^{3}}F_{\beta }\left(
u,v\right) dx-\left\Vert \left( u,v\right) \right\Vert _{H}^{2}}{\int_{%
\mathbb{R}^{3}}\phi _{u,v}\left( u^{2}+v^{2}\right) dx},
\end{equation*}%
where $F_{\beta }\left( u,v\right) :=\left\vert u\right\vert ^{p}+\left\vert
v\right\vert ^{p}+2\beta \left\vert u\right\vert ^{\frac{p}{2}}\left\vert
v\right\vert ^{\frac{p}{2}}$ with $2<p<3$ and $\beta \geq 0.$ Then we have
the following proposition.

\begin{proposition}
\label{p1} Let $2<p<3$ and $\beta \geq 0.$ Then we have\newline
$(i)$ $0<\Lambda \left( \beta \right) <\infty $ and $0<\overline{\Lambda }%
\left( \beta \right) <\infty ;$\newline
$(ii)$ $\Lambda \left( \beta \right) $ and $\overline{\Lambda }\left( \beta
\right) $ are both achieved.
\end{proposition}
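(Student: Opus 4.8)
The plan is to treat $\Lambda(\beta)$ and $\overline\Lambda(\beta)$ in parallel, reducing both — after one scaling step — to a single $0$-homogeneous functional whose supremum is attained by a compactness argument.

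\emph{Well-definedness and positivity.} For $(u,v)\neq(0,0)$ the Coulomb energy $\int_{\mathbb{R}^3}\phi_{u,v}(u^2+v^2)dx$ is strictly positive (the Riesz kernel is positive and $u^2+v^2\not\equiv 0$) and finite (Hardy--Littlewood--Sobolev inequality), so both quotients are well defined. For the lower bounds in $(i)$, fix a radial $U\in H_{rad}^1(\mathbb{R}^3)\setminus\{0\}$ and test with $(tU,0)$: the corresponding numerators equal $\tfrac{t^p}{p}\|U\|_{L^p}^p-\tfrac{t^2}{2}\|U\|_{H^1}^2$, resp. $t^p\|U\|_{L^p}^p-t^2\|U\|_{H^1}^2$, which are positive for $t$ large because $p>2$, while the denominator $t^4\int_{\mathbb{R}^3}\phi_{U,0}U^2dx$ is a positive constant; hence $\Lambda(\beta),\overline\Lambda(\beta)>0$.

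\emph{Finiteness.} Put $w=\sqrt{u^2+v^2}$. Then $\|\nabla w\|_{L^2}^2\le\|\nabla u\|_{L^2}^2+\|\nabla v\|_{L^2}^2$ and $\|w\|_{L^2}^2=\|u\|_{L^2}^2+\|v\|_{L^2}^2$, so $\|w\|_{H^1}^2\le\|(u,v)\|_H^2$; moreover $\int_{\mathbb{R}^3}\int_{\mathbb{R}^3}\tfrac{w^2(x)w^2(y)}{|x-y|}dxdy=\int_{\mathbb{R}^3}\phi_{u,v}(u^2+v^2)dx$, and since $2|u|^{p/2}|v|^{p/2}\le|u|^p+|v|^p\le(u^2+v^2)^{p/2}$ for $p\ge2$ one has $\int_{\mathbb{R}^3}F_\beta(u,v)dx\le(1+\beta)\|w\|_{L^p}^p$. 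Combining the Gagliardo--Nirenberg--Coulomb inequality $\|w\|_{L^3}^3\le C\|\nabla w\|_{L^2}\big(\int_{\mathbb{R}^3}\int_{\mathbb{R}^3}\tfrac{w^2(x)w^2(y)}{|x-y|}dxdy\big)^{1/2}$ with the Lebesgue interpolation $\|w\|_{L^p}\le\|w\|_{L^2}^{2(3-p)/p}\|w\|_{L^3}^{3(p-2)/p}$ (which needs $2<p<3$) yields $\|w\|_{L^p}^p\le C\|w\|_{H^1}^{4-p}\big(\int_{\mathbb{R}^3}\int_{\mathbb{R}^3}\tfrac{w^2(x)w^2(y)}{|x-y|}dxdy\big)^{(p-2)/2}$, hence
\begin{equation*}
\int_{\mathbb{R}^3}F_\beta(u,v)dx\le C_\beta\,\|(u,v)\|_H^{4-p}\Big(\int_{\mathbb{R}^3}\phi_{u,v}(u^2+v^2)dx\Big)^{(p-2)/2}.
\end{equation*}
Since $4-p<2$, $\tfrac{p-2}{2}<1$ and $\tfrac{4-p}{2}+\tfrac{p-2}{2}=1$, Young's inequality gives, for every $\varepsilon>0$, $\int_{\mathbb{R}^3}F_\beta(u,v)dx\le\varepsilon\|(u,v)\|_H^2+C_{\varepsilon,\beta}\int_{\mathbb{R}^3}\phi_{u,v}(u^2+v^2)dx$. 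Taking $\varepsilon$ small makes the numerator in $(\ref{1-7})$ (and in the definition of $\overline\Lambda$) dominated by a multiple of $\int_{\mathbb{R}^3}\phi_{u,v}(u^2+v^2)dx$, so $\Lambda(\beta),\overline\Lambda(\beta)<\infty$. This is exactly where $2<p<3$ enters.

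\emph{Reduction and attainment.} For fixed $(u,v)\neq(0,0)$, optimizing the quotient along the dilations $(u,v)\mapsto(su,sv)$, $s>0$, is an elementary calculation: with $a=\int F_\beta(u,v)dx$, $b=\|(u,v)\|_H^2$, $c=\int\phi_{u,v}(u^2+v^2)dx$ one obtains $\sup_{s>0}\tfrac{\frac1p s^pa-\frac12 s^2b}{s^4c}=c_1\,\mathcal Q(u,v)$ and $\sup_{s>0}\tfrac{s^pa-s^2b}{s^4c}=c_2\,\mathcal Q(u,v)$, with positive constants $c_1,c_2$ depending only on $p$ and
\begin{equation*}
\mathcal Q(u,v):=\frac{\big(\int_{\mathbb{R}^3}F_\beta(u,v)dx\big)^{2/(p-2)}}{\|(u,v)\|_H^{2(4-p)/(p-2)}\int_{\mathbb{R}^3}\phi_{u,v}(u^2+v^2)dx}\,,
\end{equation*}
which is $0$-homogeneous. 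Hence $\Lambda(\beta)=c_1\sup_{H_r\setminus\{0\}}\mathcal Q$ and $\overline\Lambda(\beta)=c_2\sup_{H\setminus\{0\}}\mathcal Q$, so it suffices to show $\mathcal Q$ attains its supremum on $H_r$ and on $H$. Take a maximizing sequence with $\|(u_n,v_n)\|_H=1$. In the radial case the embedding $H_{rad}^1(\mathbb{R}^3)\hookrightarrow L^q(\mathbb{R}^3)$ is compact for $2<q<6$, so along a subsequence $(u_n,v_n)\rightharpoonup(u_0,v_0)$ with $u_n\to u_0$, $v_n\to v_0$ strongly in $L^p$ and $L^{12/5}$; thus $\int F_\beta(u_n,v_n)\to\int F_\beta(u_0,v_0)$ and, by Hardy--Littlewood--Sobolev, $\int\phi_{u_n,v_n}(u_n^2+v_n^2)\to\int\phi_{u_0,v_0}(u_0^2+v_0^2)$. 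One rules out $(u_0,v_0)=(0,0)$: then $\int F_\beta(u_n,v_n)\to0$ and $\int\phi_{u_n,v_n}(u_n^2+v_n^2)\to0$, and a short scaling analysis (concentration vs. spreading), combined with HLS, forces $\mathcal Q(u_n,v_n)\to0$, contradicting $\sup\mathcal Q>0$. With $(u_0,v_0)\neq(0,0)$, weak lower semicontinuity of $\|\cdot\|_H$ gives $\mathcal Q(u_0,v_0)\ge\lim\mathcal Q(u_n,v_n)=\sup\mathcal Q$, so $(u_0,v_0)$ realizes the supremum; rescaling it back proves $\Lambda(\beta)$ is attained.

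\emph{The non-radial case and the main difficulty.} For $\sup_{H}\mathcal Q$ the embedding is no longer compact, and this is the genuinely delicate part. One runs the same scheme with a concentration--compactness argument on $\rho_n=(u_n^2+v_n^2)dx$: vanishing is excluded as above via Lions' lemma, translation invariance of $\mathcal Q$ allows us to arrange a non-trivial weak limit $(u_0,v_0)$, and dichotomy is prevented by the strict super-additivity of the Coulomb energy under splitting; quantitatively this is implemented through a Brezis--Lieb decomposition of $\int F_\beta$, of $\|\cdot\|_H^2$ and of the Coulomb term, which — together with a convexity/Hölder inequality and the sharp interpolation bound of the finiteness step — forces $\mathcal Q(u_0,v_0)=\sup_H\mathcal Q$. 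Rescaling back, $\overline\Lambda(\beta)$ is attained. The two technical crux points are this compactness analysis in the full space $H$ (where only the Coulomb term restores compactness) and the Gagliardo--Nirenberg--Coulomb inequality underpinning the finiteness step.
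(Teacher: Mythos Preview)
Your approach is correct and genuinely different from the paper's. Both of you establish finiteness via the same underlying Lions--Ruiz inequality (you package it through $w=\sqrt{u^2+v^2}$, interpolation, and Young; the paper writes it componentwise and applies Young to $|w|^p$ directly to reach the same $L^3$-versus-Coulomb estimate), so here the difference is cosmetic. The real divergence is in part~$(ii)$: you first optimize in the scaling parameter to reduce both suprema to the single $0$-homogeneous quotient $\mathcal Q$ and then work with a normalized maximizing sequence, whereas the paper keeps the original (inhomogeneous) quotient and proves its maximizing sequence is bounded by a contradiction argument that uses Strauss's pointwise radial decay together with a lower bound on the Coulomb energy over the level sets $\{s_1<|u_n|<s_2\}$. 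Your route is shorter and more conceptual in the radial case; the paper's route yields a self-contained boundedness statement but leans on radial decay in a way that makes its claimed ``similar'' treatment of $\overline\Lambda(\beta)$ less transparent than your concentration--compactness sketch.

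One caution: your exclusion of the degenerate limit $(u_0,v_0)=0$ (and, in the non-radial case, of vanishing) is where the real work hides. The finiteness estimate $\int F_\beta\le C\,\|(u,v)\|_H^{4-p}\big(\int\phi_{u,v}(u^2+v^2)\big)^{(p-2)/2}$ only shows $\mathcal Q$ stays \emph{bounded} along such sequences, not that it tends to zero, so the ``short scaling analysis'' you invoke must do more than balance exponents; likewise, the dichotomy exclusion via ``strict super-additivity of the Coulomb energy'' is the right idea but needs the explicit Brezis--Lieb splitting combined with the sharp form of your interpolation bound to close. These are not fatal gaps, but they are the two steps that deserve to be written out rather than gestured at.
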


About its proof, we refer the reader to Theorems \ref{AT-1} and \ref{AT-2}
in Appendix. With the help of Proposition \ref{p1}, we have the following
two theorems.

\begin{theorem}
\label{t0} Let $2<p<3.$ Then for every $\beta \geq 0$ and $\lambda =4\Lambda
\left( \beta \right) ,$ System $(E_{\lambda ,\beta })$ admits two nontrivial
nonnegative radial solutions $\left( u_{\lambda ,\beta }^{\left( 1\right)
},v_{\lambda ,\beta }^{\left( 1\right) }\right) ,\left( u_{\lambda ,\beta
}^{\left( 2\right) },v_{\lambda ,\beta }^{\left( 2\right) }\right) \in
H_{r}\setminus \left\{ \left( 0,0\right) \right\} $ satisfying
\begin{equation*}
J_{\lambda ,\beta }\left( u_{\lambda ,\beta }^{\left( 2\right) },v_{\lambda
,\beta }^{\left( 2\right) }\right) =0<J_{\lambda ,\beta }\left( u_{\lambda
,\beta }^{\left( 1\right) },v_{\lambda ,\beta }^{\left( 1\right) }\right) .
\end{equation*}%
Furthermore, if $\beta >0,$ then $\left( u_{\lambda ,\beta }^{\left(
2\right) },v_{\lambda ,\beta }^{\left( 2\right) }\right) $ is vectorial and
positive.
\end{theorem}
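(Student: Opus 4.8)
The plan is to exploit the fact that the choice $\lambda = 4\Lambda(\beta)$ is exactly the threshold at which the functional $J_{\lambda,\beta}$ loses coercivity/positivity along the Coulomb-dominated direction, so that a minimizer of the maximization problem \eqref{1-7} can be converted, after a suitable scaling, into a critical point at level zero. First I would take $\left(u_0,v_0\right)\in H_r\setminus\{(0,0)\}$ an extremal for $\Lambda(\beta)$, whose existence is guaranteed by Proposition \ref{p1}$(ii)$; we may assume $u_0,v_0\geq 0$ by replacing them with $|u_0|,|v_0|$, which does not decrease the numerator (since $F_\beta$ depends only on $|u|,|v|$) nor change the denominator or the $H$-norm. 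The key computation is that, since $\lambda=4\Lambda(\beta)$, we have
\begin{equation*}
\frac{1}{p}\int_{\mathbb{R}^3}F_\beta(u_0,v_0)\,dx-\frac12\|(u_0,v_0)\|_H^2=\frac{\lambda}{4}\int_{\mathbb{R}^3}\phi_{u_0,v_0}(u_0^2+v_0^2)\,dx,
\end{equation*}
i.e. $J_{\lambda,\beta}(u_0,v_0)=0$, and moreover $(u_0,v_0)$ maximizes, hence $0$ is the maximum value of $J_{\lambda,\beta}$ restricted to the ray $\{(tu_0,tv_0):t>0\}$ — in fact, by homogeneity considerations, of $J_{\lambda,\beta}$ on the whole cone of scalar multiples of extremals. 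Now I would examine the scaling $t\mapsto J_{\lambda,\beta}(tu_0,tv_0)$: writing $A=\frac12\|(u_0,v_0)\|_H^2$, $B=\frac{\lambda}{4}\int\phi_{u_0,v_0}(u_0^2+v_0^2)$, $C=\frac1p\int F_\beta(u_0,v_0)$, this is $g(t)=At^2+Bt^4-Ct^p$ with $2<p<3<4$; the threshold condition forces $g(1)=0$ and $g$ to have $1$ as a double root or a point where it is tangent to $0$ from below, so $g'(1)=0$ as well, which gives the second relation $2A+4B=pC$. From $g(1)=0$ and $g'(1)=0$ one solves for the scaling that produces a genuine critical point.

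Next I would produce the two solutions. The solution $\left(u_{\lambda,\beta}^{(2)},v_{\lambda,\beta}^{(2)}\right)$ at level $0$ is obtained directly from the extremal pair: after the appropriate dilation determined above (the $t$ making $g(t)=g'(t)=0$ simultaneously, which is consistent precisely because $\lambda=4\Lambda(\beta)$), the pair is a constrained critical point of $J_{\lambda,\beta}$ on the manifold $\{\int\phi_{u,v}(u^2+v^2)=\text{const}\}$, and a Lagrange-multiplier argument combined with the Pohozaev-type identity shows the multiplier is trivial, so it is a free critical point, i.e. a weak solution of $(E_{\lambda,\beta})$, necessarily nonnegative and radial; by the strong maximum principle applied to each equation, if $\beta>0$ the coupling term $\beta|v|^{p/2}|u|^{p/2-2}u$ prevents $u\equiv 0$ unless $v\equiv 0$ and vice versa, and a separate argument (testing against the extremal, using that a semitrivial function cannot be an extremal of $\Lambda(\beta)$ when $\beta>0$ because symmetrizing mass into both components strictly increases $F_\beta$) rules out the semitrivial case — hence it is vectorial and, by the maximum principle again, positive. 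For the second solution $\left(u_{\lambda,\beta}^{(1)},v_{\lambda,\beta}^{(1)}\right)$ with strictly positive energy, I would run a mountain-pass scheme in $H_r$: the geometry holds because for small $\|(u,v)\|_H$ the functional is positive (the quartic Coulomb term is higher order and the $L^p$ term with $p>2$ is controlled by Sobolev), while along the extremal ray $J_{\lambda,\beta}$ reaches $0$; one shows the mountain-pass level $c>0$ is strictly positive by a careful lower bound, and the Palais–Smale condition at level $c$ holds in $H_r$ thanks to the compact embedding $H^1_{rad}(\mathbb{R}^3)\hookrightarrow L^p(\mathbb{R}^3)$ for $2<p<6$ together with the weak continuity of the Coulomb term. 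Nonnegativity of this solution follows by applying the mountain pass to the modified functional with $u_+,v_+$, or by noting the argument goes through verbatim with $|u|,|v|$.

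The main obstacle I anticipate is twofold. The more serious difficulty is verifying that the constrained critical point coming from the extremal is actually a \emph{free} critical point of $J_{\lambda,\beta}$ — i.e., that the Lagrange multiplier vanishes; this is where the precise value $\lambda=4\Lambda(\beta)$ must be used in an essential way, presumably by combining the Nehari-type identity $\langle J'_{\lambda,\beta}(u,v),(u,v)\rangle$ with the Pohozaev identity to get two linear relations in the quantities $A,B,C$ above, and checking these are simultaneously satisfied only when the multiplier is zero. The second, more technical, obstacle is the strict positivity of the mountain-pass level $c$ and ensuring the mountain-pass solution is distinct from the zero-energy one (automatic here since the energies differ) and genuinely nontrivial; the non-compactness of $\mathbb{R}^3$ is handled by working in $H_r$, but one still needs uniform estimates to prevent the Palais–Smale sequence from vanishing, which the radial compactness and a concentration argument supply. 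I would carry out the steps in the order: (1) extract and symmetrize the extremal; (2) derive the two scaling identities from $\lambda=4\Lambda(\beta)$; (3) perform the dilation and verify the resulting pair solves $(E_{\lambda,\beta})$ via the vanishing-multiplier argument; (4) establish its positivity/vectorial nature through the maximum principle and an extremal-symmetrization argument; (5) set up and solve the mountain-pass problem in $H_r$ for the positive-energy solution; (6) collect the conclusions.
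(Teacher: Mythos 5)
Your overall skeleton is right (extremal of $\Lambda(\beta)$ gives the zero-energy solution, mountain pass in $H_r$ gives the positive-energy one, Lemma \ref{L2-2}-type splitting rules out semitriviality, and the mountain-pass part matches the paper's argument essentially verbatim). But the central step of your plan --- showing that the extremal pair is a genuine critical point of $J_{4\Lambda(\beta),\beta}$ --- is not actually established: you route it through a dilation, a constrained critical point on $\{\int\phi_{u,v}(u^2+v^2)=\mathrm{const}\}$, and a Lagrange multiplier that you then need to show vanishes ``by combining the Nehari identity with the Pohozaev identity,'' and you explicitly flag this as the main unresolved obstacle. That scheme is both unnecessary and problematic: the Pohozaev identity is available only \emph{after} you know $(u_0,v_0)$ solves the system, so using it to prove the multiplier vanishes is circular, and the dilation you propose is vacuous (your own computation shows $g(1)=g'(1)=0$, so $t=1$ is already the right point). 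The correct and much simpler observation, which is what the paper uses, is that the supremum in \eqref{1-7} is taken over all of $H_r\setminus\{(0,0)\}$ with no constraint, and the denominator $D(u,v)=\int\phi_{u,v}(u^2+v^2)\,dx$ vanishes only at $(0,0)$; hence the $C^1$ quotient $N/D$ (with $N(u,v)=\frac1p\int F_\beta-\frac12\|(u,v)\|_H^2$) has vanishing Fr\'echet derivative at the maximizer, i.e. $N'=\Lambda(\beta)D'$, which is exactly $J'_{4\Lambda(\beta),\beta}(u_0,v_0)=0$ since $J_{4\Lambda(\beta),\beta}=-N+\Lambda(\beta)D$. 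No multiplier ever appears. (One then invokes the Palais criticality principle to pass from radial test functions to all of $H$, a point you omit.)

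Two smaller remarks. First, the clean way to see both $J(u_0,v_0)=0$ and the exclusion of semitrivial minimizers is to note that $\lambda=4\Lambda(\beta)$ forces $J_{\lambda,\beta}\geq 0$ on all of $H_r$, so the extremal is a \emph{global minimizer} at level $0$; if it were of the form $(u,0)$ with $u\neq 0$ and $\beta>0$, Lemma \ref{L2-2} would produce $(\sqrt{s_0}u,\sqrt{1-s_0}u)$ with strictly negative energy, a contradiction --- this is your ``symmetrizing mass into both components'' remark made precise, and it is the argument to keep; your maximum-principle claim that the coupling term alone prevents $u\equiv 0$ is not correct as stated (semitrivial pairs do solve the system). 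Second, your mountain-pass step is fine, but note that the compactness is obtained in the paper by the argument of Ruiz \cite[Theorem 4.3]{R1} adapted to $H_r$, using the coercivity from \eqref{2-1}, rather than a bare appeal to the compact embedding.
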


\begin{theorem}
\label{t1} Let $2<p<3$ and $\beta \geq 0.$ Then the following statements are
true.\newline
$\left( i\right) $ For every $0<\lambda <4\Lambda \left( \beta \right) ,$
System $(E_{\lambda ,\beta })$ admits two nontrivial nonnegative radial
solutions $\left( u_{\lambda ,\beta }^{\left( 1\right) },v_{\lambda ,\beta
}^{\left( 1\right) }\right) ,$ $\left( u_{\lambda ,\beta }^{\left( 2\right)
},v_{\lambda ,\beta }^{\left( 2\right) }\right) \in H_{r}$ satisfying%
\begin{equation*}
J_{\lambda ,\beta }\left( u_{\lambda ,\beta }^{\left( 2\right) },v_{\lambda
,\beta }^{\left( 2\right) }\right) <0<J_{\lambda ,\beta }\left( u_{\lambda
,\beta }^{\left( 1\right) },v_{\lambda ,\beta }^{\left( 1\right) }\right) .
\end{equation*}%
Furthermore, if $\beta >0,$ then $\left( u_{\lambda ,\beta }^{\left(
2\right) },v_{\lambda ,\beta }^{\left( 2\right) }\right) $ is vectorial and
positive.\newline
$\left( ii\right) $ For every $\lambda >\overline{\Lambda }\left( \beta
\right) ,$ $\left( u,v\right) =\left( 0,0\right) $ is the unique solution of
System $(E_{\lambda ,\beta }).$
\end{theorem}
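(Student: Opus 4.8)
The plan is to prove (ii) by a direct testing argument and (i) by combining a constrained minimization with a mountain--pass construction, both carried out in the radial space $H_{r}$, where the embeddings $H_{rad}^{1}(\mathbb{R}^{3})\hookrightarrow L^{q}(\mathbb{R}^{3})$ are compact for $2<q<6$. For (ii): if $(u,v)\in H\setminus\{(0,0)\}$ solves $(E_{\lambda,\beta})$, then testing the two equations by $u$ and $v$ respectively and adding gives $\|(u,v)\|_{H}^{2}+\lambda\int_{\mathbb{R}^{3}}\phi_{u,v}(u^{2}+v^{2})\,dx=\int_{\mathbb{R}^{3}}F_{\beta}(u,v)\,dx$; since $(u,v)\neq(0,0)$ forces $\int_{\mathbb{R}^{3}}\phi_{u,v}(u^{2}+v^{2})\,dx>0$, this rearranges to $\lambda=\dfrac{\int_{\mathbb{R}^{3}}F_{\beta}(u,v)\,dx-\|(u,v)\|_{H}^{2}}{\int_{\mathbb{R}^{3}}\phi_{u,v}(u^{2}+v^{2})\,dx}\le\overline{\Lambda}(\beta)$, contradicting $\lambda>\overline{\Lambda}(\beta)$; hence $(0,0)$ is the unique solution.

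For (i), the starting point is the energy geometry of $J_{\lambda,\beta}$. Because $2<p<3$, interpolating $\int_{\mathbb{R}^{3}}F_{\beta}(u,v)\,dx$ between $L^{2}$ and $L^{3}$, controlling the $L^{3}$--norm by the Coulomb energy, and then applying Young's inequality yields, for every $\varepsilon>0$, an estimate $\int_{\mathbb{R}^{3}}F_{\beta}(u,v)\,dx\le\varepsilon\big(\|(u,v)\|_{H}^{2}+\int_{\mathbb{R}^{3}}\phi_{u,v}(u^{2}+v^{2})\,dx\big)+C_{\varepsilon}$; consequently $J_{\lambda,\beta}$ is bounded below and coercive on $H_{r}$. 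On the other hand, the Sobolev inequality gives $\int_{\mathbb{R}^{3}}F_{\beta}(u,v)\,dx\le C\|(u,v)\|_{H}^{p}$ with $p>2$, so there exist $\delta,\rho>0$ with $J_{\lambda,\beta}(u,v)\ge\tfrac14\|(u,v)\|_{H}^{2}>0$ for $0<\|(u,v)\|_{H}\le\delta$ and $J_{\lambda,\beta}\ge\rho$ on $\|(u,v)\|_{H}=\delta$; in particular $(0,0)$ is a strict local minimum at level $0$. Now observe that $J_{\lambda,\beta}(u,v)<0$ for some $(u,v)\in H_{r}\setminus\{(0,0)\}$ is equivalent to $\lambda/4<\Lambda(\beta)$, so under our hypothesis $\inf_{H_{r}}J_{\lambda,\beta}<0=J_{\lambda,\beta}(0,0)$. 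A minimizing sequence is bounded by coercivity; since on $H_{r}$ both $(u,v)\mapsto\int_{\mathbb{R}^{3}}\phi_{u,v}(u^{2}+v^{2})\,dx$ and $(u,v)\mapsto\int_{\mathbb{R}^{3}}F_{\beta}(u,v)\,dx$ are weakly continuous (compact radial embeddings into $L^{12/5}$ and $L^{q}$, together with the Hardy--Littlewood--Sobolev inequality) while $\|\cdot\|_{H}^{2}$ is weakly lower semicontinuous, the infimum is attained at some $(u^{(2)}_{\lambda,\beta},v^{(2)}_{\lambda,\beta})\neq(0,0)$. By the principle of symmetric criticality this is a weak solution of $(E_{\lambda,\beta})$, and since $J_{\lambda,\beta}$ does not increase when components are replaced by their absolute values, we may take it nonnegative.

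For the second solution I would apply the mountain--pass theorem to $J_{\lambda,\beta}$ on $H_{r}$ with endpoints $(0,0)$ and $(u^{(2)}_{\lambda,\beta},v^{(2)}_{\lambda,\beta})$, the latter lying outside the closed ball of radius $\delta$ because its energy is negative; coercivity makes Palais--Smale sequences bounded and the compact embeddings then give the $(PS)_{c}$ condition at the mountain--pass level $c\ge\rho>0$, producing a critical point $(u^{(1)}_{\lambda,\beta},v^{(1)}_{\lambda,\beta})$ of $J_{\lambda,\beta}|_{H_{r}}$ with $J_{\lambda,\beta}(u^{(1)}_{\lambda,\beta},v^{(1)}_{\lambda,\beta})=c>0>J_{\lambda,\beta}(u^{(2)}_{\lambda,\beta},v^{(2)}_{\lambda,\beta})$; it is a weak solution by symmetric criticality, distinct from the first by the strict sign of its energy, and nonnegative by a standard argument (working with $F_{\beta}$ evaluated at the positive parts). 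Finally, when $\beta>0$ the minimizer must be vectorial: if, say, $v^{(2)}_{\lambda,\beta}\equiv0$, pick $w\in H_{rad}^{1}$ with $\int_{\mathbb{R}^{3}}|u^{(2)}_{\lambda,\beta}|^{p/2}|w|^{p/2}\,dx>0$; then $J_{\lambda,\beta}(u^{(2)}_{\lambda,\beta},\varepsilon w)=J_{\lambda,\beta}(u^{(2)}_{\lambda,\beta},0)-\tfrac{2\beta}{p}\varepsilon^{p/2}\int_{\mathbb{R}^{3}}|u^{(2)}_{\lambda,\beta}|^{p/2}|w|^{p/2}\,dx+O(\varepsilon^{2})$, and since $p/2<2$ this is strictly below $\inf_{H_{r}}J_{\lambda,\beta}$ for small $\varepsilon>0$, a contradiction. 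With both components nonzero, elliptic regularity and the strong maximum principle yield positivity.

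I expect the main obstacle to be the coercivity step: ruling out every scaling and concentration regime for $J_{\lambda,\beta}$ on $H_{r}$ relies on the sharp balance between the $4$--homogeneous Coulomb term and the Gagliardo--Nirenberg control of the $p$--homogeneous power term, and this estimate degenerates exactly at $p=3$ --- which is precisely why a different, constrained variational method is needed in the range $3\le p<4$ treated later in the paper. A secondary (but routine) point is the weak continuity of the Coulomb energy on $H_{r}$, which follows from the compact radial embedding into $L^{12/5}$ and the Hardy--Littlewood--Sobolev inequality.
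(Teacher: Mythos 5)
Your overall architecture coincides with the paper's: part (ii) is exactly the argument given there (the Nehari identity forces the Rayleigh-type quotient of a nontrivial solution to equal $\lambda$, which cannot exceed $\overline{\Lambda }\left( \beta \right) $), and part (i) follows the same three-step scheme --- coercivity of $J_{\lambda ,\beta }$ on $H_{r}$, a global minimizer at negative level (possible because $\lambda <4\Lambda \left( \beta \right) $ makes the infimum negative), and a mountain-pass solution at positive level, with the principle of symmetric criticality converting radial critical points into solutions. Your use of the direct method (weak continuity of the Coulomb and power terms on $H_{r}$ plus weak lower semicontinuity of the norm) in place of the paper's Ekeland-plus-Palais route is a harmless variant. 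The one genuinely different ingredient is your proof that the minimizer is vectorial for $\beta >0$: you perturb $\left( u_{\lambda ,\beta }^{\left( 2\right) },0\right) $ to $\left( u_{\lambda ,\beta }^{\left( 2\right) },\varepsilon w\right) $ and exploit that the coupling contributes $-\frac{2\beta }{p}\varepsilon ^{p/2}\int_{\mathbb{R}^{3}}|u_{\lambda ,\beta }^{\left( 2\right) }|^{p/2}|w|^{p/2}dx$ with $p/2<2$, which dominates the $O(\varepsilon ^{2})$ corrections; the paper instead uses Lemma \ref{L2-2}, splitting $z$ into $\left( \sqrt{s}z,\sqrt{1-s}z\right) $ and invoking $\max_{s}g_{\beta }\left( s\right) >1$. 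Both are correct; yours is more elementary and avoids the auxiliary function $g_{\beta }$.

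The weak point is the coercivity step, which you rightly single out as the main obstacle but then justify with an estimate that does not follow from the tools you name. Interpolation gives $\int_{\mathbb{R}^{3}}|u|^{p}dx\leq \left( \int_{\mathbb{R}^{3}}u^{2}dx\right) ^{3-p}\left( \int_{\mathbb{R}^{3}}|u|^{3}dx\right) ^{p-2}$, and Young's inequality then yields $\int_{\mathbb{R}^{3}}|u|^{p}dx\leq \varepsilon \int_{\mathbb{R}^{3}}u^{2}dx+C_{\varepsilon }\int_{\mathbb{R}^{3}}|u|^{3}dx$ with a \emph{multiplicative} constant $C_{\varepsilon }$ that blows up as $\varepsilon \rightarrow 0$; the Lions inequality then controls $\int_{\mathbb{R}^{3}}|u|^{3}dx$ by the gradient and Coulomb energies only with a fixed, not arbitrarily small, coefficient. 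So the claimed bound $\int_{\mathbb{R}^{3}}F_{\beta }\left( u,v\right) dx\leq \varepsilon \left( \left\Vert \left( u,v\right) \right\Vert _{H}^{2}+\int_{\mathbb{R}^{3}}\phi _{u,v}\left( u^{2}+v^{2}\right) dx\right) +C_{\varepsilon }$ with an \emph{additive} constant is not obtainable this way. The actual proof (Lemma \ref{m5}, which defers to Ruiz's Theorem 4.3; the mechanism is written out in the proof of Theorem \ref{AT-1}) works pointwise: after the Lions inequalities one must bound below $\int_{\mathbb{R}^{3}}\left( \frac{1}{4}u^{2}+\frac{\sqrt{\lambda }}{4}|u|^{3}-\frac{1+\beta }{p}|u|^{p}\right) dx$, whose integrand is negative only for $u$ in a fixed window $\left( s_{1},s_{2}\right) $, and the measure of the level set $\left\{ x:u\left( x\right) \in \left( s_{1},s_{2}\right) \right\} $ is controlled by combining Strauss's radial decay with the Coulomb self-interaction of that set. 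This is precisely where radial symmetry enters essentially, so the step needs to be either carried out along those lines or explicitly cited; as sketched, it is a genuine gap, albeit a repairable one.
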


In the proofs of Theorems \ref{t0} and \ref{t1}, the key point is to
establish Lions type inequalities in the context of the vector functions
(see (\ref{2-0}) and (\ref{2-00}) below). By using these, together with
Strauss's inequality in $H_{r},$ we can prove that the functional $%
J_{\lambda ,\beta }$ is coercive and bounded below on $H_{r}.$

Next, we focus on vectorial solutions of System $(E_{\lambda ,\beta })$ on $%
H.$ Define
\begin{equation*}
\beta \left( \lambda \right) :=\left\{
\begin{array}{ll}
\max \left\{ \frac{p-2}{2},\left[ 1+\sqrt{1+\frac{2pS_{p}^{2p/\left(
p-2\right) }}{\left( p-2\right) \overline{S}^{2}S_{12/5}^{4}}\left( \frac{2}{%
4-p}\right) ^{\frac{4}{p-2}}\lambda }\right] ^{\left( p-2\right)
/2}-1\right\} , & \text{ if }\lambda <\rho _{p}, \\
\max \left\{ \frac{p-2}{2},\left[ \frac{2\left( 4-p\right) S_{p}^{2p/\left(
p-2\right) }}{\left( p-2\right) \overline{S}^{2}S_{12/5}^{4}}\left( 1+\sqrt{%
1+\frac{p2^{4/\left( p-2\right) }}{\left( 4-p\right) ^{\left( p+2\right)
/\left( p-2\right) }}}\right) \lambda \right] ^{\left( p-2\right)
/2}-1\right\} , & \text{ if }\lambda \geq \rho _{p},%
\end{array}%
\right.
\end{equation*}%
where $S_{p}$ is the best Sobolev constant for the embedding of $H^{1}(%
\mathbb{R}^{3})$ in $L^{p}(\mathbb{R}^{3}),$ $\overline{S}$ is the best
Sobolev constant for the embedding of $D^{1,2}(\mathbb{R}^{3})$ in $L^{6}(%
\mathbb{R}^{3})$ and $\rho _{p}:=\frac{\left( p-2\right) \overline{S}%
^{2}S_{12/5}^{4}}{2(4-p)S_{p}^{2p/\left( p-2\right) }}.$ Then we have the
following results.

\begin{theorem}
\label{t2} Let $2<p<4$ and $\lambda >0$. Then the following statements are
true.\newline
$\left( i\right) $ If $2<p<3,$ then for every $\beta >\beta \left( \lambda
\right) ,$ System $(E_{\lambda ,\beta })$ admits two vectorial positive
solutions $\left( u_{\lambda ,\beta }^{\left( 1\right) },v_{\lambda ,\beta
}^{\left( 1\right) }\right) \in H$ and $\left( u_{\lambda ,\beta }^{\left(
2\right) },v_{\lambda ,\beta }^{\left( 2\right) }\right) \in H_{r}$
satisfying
\begin{equation*}
J_{\lambda ,\beta }\left( u_{\lambda ,\beta }^{\left( 2\right) },v_{\lambda
,\beta }^{\left( 2\right) }\right) <0<J_{\lambda ,\beta }\left( u_{\lambda
,\beta }^{\left( 1\right) },v_{\lambda ,\beta }^{\left( 1\right) }\right);
\end{equation*}%
\newline
$\left( ii\right) $ If $3\leq p<4,$ then for every $\beta >\beta \left(
\lambda \right) ,$ System $(E_{\lambda ,\beta })$ admits a vectorial
positive solution $\left( u_{\lambda ,\beta }^{\left( 1\right) },v_{\lambda
,\beta }^{\left( 1\right) }\right) \in H$ satisfying $J_{\lambda ,\beta
}\left( u_{\lambda ,\beta }^{\left( 1\right) },v_{\lambda ,\beta }^{\left(
1\right) }\right) >0.$
\end{theorem}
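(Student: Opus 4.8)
The plan is to treat the two assertions by different means. In part $(i)$ the negative-energy pair $\left(u_{\lambda,\beta}^{(2)},v_{\lambda,\beta}^{(2)}\right)\in H_{r}$ is exactly the second solution produced by Theorem \ref{t1}$(i)$: I would first record the elementary facts that $\Lambda(\beta)$ is nondecreasing in $\beta$ and $\Lambda(\beta)\to\infty$ as $\beta\to\infty$ (test the quotient in (\ref{1-7}) with a fixed diagonal pair $(tw,tw)$ and let $\beta\to\infty$), and verify that the explicit threshold $\beta(\lambda)$ is chosen so that $\beta>\beta(\lambda)$ forces $0<\lambda<4\Lambda(\beta)$; then Theorem \ref{t1}$(i)$ applies, and since $\beta>\beta(\lambda)\geq\frac{p-2}{2}>0$ that solution is vectorial and positive. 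This piece uses only $2<p<3$, which is why it does not reappear in part $(ii)$.

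The genuinely new object, common to both parts, is the positive-energy vectorial solution $\left(u_{\lambda,\beta}^{(1)},v_{\lambda,\beta}^{(1)}\right)$ in the full space $H$. I would obtain it by minimizing $J_{\lambda,\beta}$ over a set that is simultaneously a natural constraint and forced to be vectorial. For a vectorial pair $(u,v)$ (that is, $u\neq0$ and $v\neq0$) look at the fibering map $g_{u,v}(t)=J_{\lambda,\beta}(tu,tv)=\frac{a}{2}t^{2}+\frac{c}{4}t^{4}-\frac{b}{p}t^{p}$, with $a=\|(u,v)\|_{H}^{2}$, $c=\lambda\int_{\mathbb{R}^{3}}\phi_{u,v}(u^{2}+v^{2})\,dx$ and $b=\int_{\mathbb{R}^{3}}F_{\beta}(u,v)\,dx$. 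Since $2<p<4$, the positive critical points of $g_{u,v}$ are the roots of $\psi_{u,v}(t)=a+ct^{2}-bt^{p-2}$, and $\psi_{u,v}$ is strictly decreasing and then strictly increasing; hence, as soon as $b$ is large relative to a suitable combination of $a$ and $c$ — which, through the Hardy--Littlewood--Sobolev inequality and the constants $S_{p},\overline{S},S_{12/5}$, is guaranteed when $\beta>\beta(\lambda)$ — the map $g_{u,v}$ has a strict local maximum at a first point $t^{+}(u,v)$ with $g_{u,v}(t^{+}(u,v))>0$. I would then set
\[
\mathcal{N}_{\lambda,\beta}^{+}=\Big\{(u,v)\in H:\ u\neq0,\ v\neq0,\ \psi_{u,v}(1)=0,\ \psi_{u,v}'(1)<0\Big\},
\]
i.e. the vectorial pairs normalized to $t^{+}=1$, show it is a nonempty $C^{1}$ manifold on which $J_{\lambda,\beta}$ is coercive — indeed $\psi_{u,v}(1)=0$ gives $b=a+c$ and $\psi_{u,v}'(1)<0$ gives $c<\frac{p-2}{4-p}a$, whence $J_{\lambda,\beta}(u,v)>\frac{p-2}{4p}\|(u,v)\|_{H}^{2}$ on $\mathcal{N}_{\lambda,\beta}^{+}$, together with a lower bound on $\|(u,v)\|_{H}$ there coming from $a<b\leq C\|(u,v)\|_{H}^{p}$ — and put $c_{\lambda,\beta}^{+}:=\inf_{\mathcal{N}_{\lambda,\beta}^{+}}J_{\lambda,\beta}>0$.

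The heart of the argument is that $c_{\lambda,\beta}^{+}$ is attained by a vectorial function. A minimizing sequence is bounded by the coercivity just noted, and two phenomena must be excluded. First, its limit could be semitrivial; to prevent this I would establish the strict inequality $c_{\lambda,\beta}^{+}<c_{\mathrm{semi}}^{+}$, where $c_{\mathrm{semi}}^{+}$ is the analogous infimum over semitrivial pairs, essentially the first-fibering level of the scalar Schr\"{o}dinger--Poisson--Slater equation $-\Delta w+w+\lambda\phi_{w}w=|w|^{p-2}w$. The decisive computation is to test $\mathcal{N}_{\lambda,\beta}^{+}$ along the diagonal: $F_{\beta}(w,w)=2(1+\beta)|w|^{p}$ versus $F_{\beta}(w,0)=|w|^{p}$, so that for $\beta>\beta(\lambda)$ the corresponding value $g_{w,w}(t^{+})$ drops strictly below $c_{\mathrm{semi}}^{+}$; the split into $\lambda<\rho_{p}$ and $\lambda\geq\rho_{p}$ in the definition of $\beta(\lambda)$ records which of the $H$-norm term and the Coulomb term dominates this estimate. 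Second, since $H$ carries no compact Sobolev embedding, I would run a concentration--compactness / Brezis--Lieb splitting: vanishing is impossible because $\|(u,v)\|_{H}$ and $\int F_{\beta}$ stay bounded away from $0$ on $\mathcal{N}_{\lambda,\beta}^{+}$, and dichotomy is ruled out because any nontrivial escaping bubble would carry energy $\geq c_{\lambda,\beta}^{+}$ in each piece, forcing $c_{\lambda,\beta}^{+}\geq 2c_{\lambda,\beta}^{+}$, which is impossible since $c_{\lambda,\beta}^{+}>0$ (after checking each piece can be pushed onto $\mathcal{N}_{\lambda,\beta}^{+}$ without raising the energy). With a minimizer in hand, $\mathcal{N}_{\lambda,\beta}^{+}$ is a natural constraint — $\psi_{u,v}'(1)<0$ makes the Lagrange multiplier vanish — so the minimizer is a critical point of $J_{\lambda,\beta}$, hence a weak solution of $(E_{\lambda,\beta})$; replacing it by $(|u|,|v|)$ (which changes neither $J_{\lambda,\beta}$ nor the constraint) and applying the strong maximum principle to each equation yields a vectorial positive solution with $J_{\lambda,\beta}>0$. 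Part $(ii)$ is the same construction for $3\leq p<4$: only the first (local-maximum) branch of the fibering analysis is needed, and no negative-energy solution is asserted because Theorem \ref{t1} is confined to $2<p<3$.

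The step I expect to be the main obstacle is the compactness analysis in the non-radial space $H$, i.e. producing a strongly convergent minimizing subsequence for $c_{\lambda,\beta}^{+}$. Everything hinges on the two strict energy inequalities $c_{\lambda,\beta}^{+}<c_{\mathrm{semi}}^{+}$ and $c_{\lambda,\beta}^{+}<c_{\lambda,\beta}^{+}+(\text{energy of any nontrivial escaping bubble})$, and it is precisely to make these comparisons valid uniformly over the range $2<p<4$ that the threshold $\beta(\lambda)$ must take its two-regime form; carrying out these quantitative estimates — bounding the nonlocal term and the value $g_{u,v}(t^{+})$ in terms of the sharp constants $S_{p},\overline{S},S_{12/5}$ — is the most delicate and technically heaviest part of the proof.
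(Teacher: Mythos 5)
Your overall route coincides with the paper's: the negative-energy radial solution in part $(i)$ is the global minimizer of the coercive functional $J_{\lambda,\beta}$ on $H_{r}$ (the paper's Theorem \ref{T4-2}, which is essentially Theorem \ref{t1}$(i)$ once one checks, via the test pair $\bigl(t_{\lambda,\beta}^{+}\sqrt{s_{\beta}}w_{\beta},t_{\lambda,\beta}^{+}\sqrt{1-s_{\beta}}w_{\beta}\bigr)$ of Lemma \ref{g4}, that $\beta>\beta(\lambda)$ forces $\inf_{H_{r}}J_{\lambda,\beta}<0$, i.e. $\lambda<4\Lambda(\beta)$); and the positive-energy solution in $H$ is obtained by minimizing on the local-maximum branch of the Nehari manifold, with the diagonal test function built from the scalar ground state $w_{\beta}$ providing the energy comparison that rules out semitrivial minimizers, and a translation/Brezis--Lieb splitting providing compactness. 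This is the content of the paper's Theorems \ref{T4-1}, \ref{l5}, \ref{g5}--\ref{g6}.

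There is, however, a genuine gap in your compactness outline, precisely at the step you flag as the main obstacle. You minimize over $\mathcal{N}_{\lambda,\beta}^{+}$, the \emph{vectorial} pairs on the local-maximum branch, and exclude dichotomy by asserting that "any nontrivial escaping bubble would carry energy $\geq c_{\lambda,\beta}^{+}$ in each piece, forcing $c_{\lambda,\beta}^{+}\geq 2c_{\lambda,\beta}^{+}$." This fails for two reasons. First, the Brezis--Lieb remainder $(w_{n},z_{n})$ need not be vectorial, so it cannot in general be "pushed onto $\mathcal{N}_{\lambda,\beta}^{+}$," and the lower bound $\geq c_{\lambda,\beta}^{+}$ does not apply to it. Second, and more seriously, even after rescaling the remainder back onto the Nehari set there is nothing in your constraint that prevents it from landing on the local-\emph{minimum} branch $\mathbf{M}_{\lambda,\beta}^{+}$, on which $J_{\lambda,\beta}$ is unbounded below (the paper's Theorem \ref{t4} shows $\inf_{\mathbf{M}_{\lambda,\beta}^{+}}J_{\lambda,\beta}=-\infty$), so no positive lower bound per piece is available. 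The paper closes both holes by not restricting to vectorial pairs at all: it minimizes over the sublevel set $\overline{\mathbf{M}}_{\lambda,\beta}^{(1)}$, cut out by an energy bound \emph{and} the norm bound $\Vert(u,v)\Vert_{H}<\bigl(\tfrac{(p-2)\overline{S}^{2}S_{12/5}^{4}}{\lambda(4-p)}\bigr)^{1/2}$. The norm bound guarantees, via $h_{\lambda,(u,v)}''(1)\leq\Vert(u,v)\Vert_{H}^{2}\bigl[\lambda\overline{S}^{-2}S_{12/5}^{-4}(4-p)\Vert(u,v)\Vert_{H}^{2}-(p-2)\bigr]$, that the weak limit and the rescaled remainder automatically satisfy $h''(1)<0$, hence both sit on the local-maximum branch with quantified positive energy, which yields the contradiction $\alpha_{\lambda,\beta}^{-}\geq\tfrac{3}{2}\alpha_{\lambda,\beta}^{-}$; and the energy bound, combined with Lemma \ref{l5} (any \emph{critical point} on $\mathbf{M}_{\lambda,\beta}^{-}$ with a vanishing component has energy above $\tfrac{p-2}{2p}S_{p}^{2p/(p-2)}$), rules out semitrivial minimizers a posteriori, which is both cleaner and stronger than comparing infima over the open vectorial set. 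Your construction would need these two quantitative cutoffs (or an equivalent device) to be made rigorous.
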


We note that the arguments in Theorems \ref{t0} and \ref{t1} are
inapplicable to Theorem \ref{t2}, since the functional $J_{\lambda ,\beta }$
is restricted to the space $H.$ In view of this, we expect to find critical
points by applying a novel constraint method introduced by us, together with
some new analysis techniques.

Finally, we establish the existence of vectorial ground state solution of
System $(E_{\lambda ,\beta }).$

\begin{theorem}
\label{t3} Let $3\leq p<4$ and $\lambda >0.$ Then for every%
\begin{equation*}
0<\lambda <\lambda _{0}:=\frac{6p\sqrt{3p}\left( p-2\right) \pi }{8\sqrt[3]{2%
}\left( 4-p\right) \left( 6-p\right) ^{3/2}S_{p}^{2p/\left( p-2\right) }}
\end{equation*}%
and $\beta >\beta \left( \lambda \right) ,$ System $(E_{\lambda ,\beta })$
admits a vectorial ground state solution $\left( u_{\lambda ,\beta
},v_{\lambda ,\beta }\right) \in H$ satisfying $J_{\lambda ,\beta }\left(
u_{\lambda ,\beta },v_{\lambda ,\beta }\right) >0.$
\end{theorem}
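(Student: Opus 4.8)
The plan is to realise the vectorial ground state as the minimiser of $J_{\lambda ,\beta }$ over the set of \emph{all} nontrivial solutions of $(E_{\lambda ,\beta })$, that is, to show that $m_{\lambda ,\beta }:=\inf \{J_{\lambda ,\beta }(u,v):(u,v)\in H\setminus \{(0,0)\}\text{ solves }(E_{\lambda ,\beta })\}$ is attained and that any minimiser is vectorial and positive. First I would record that $m_{\lambda ,\beta }\in (0,\infty )$. Finiteness is immediate from Theorem \ref{t2}$(ii)$, which supplies a vectorial solution and hence $m_{\lambda ,\beta }\le J_{\lambda ,\beta }(u_{\lambda ,\beta }^{(1)},v_{\lambda ,\beta }^{(1)})$. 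Positivity uses that on every nontrivial solution \emph{both} the Nehari identity $N_{1}+N_{2}+\lambda B=A_{\beta }$ and the Pohozaev identity $\tfrac12N_{1}+\tfrac32N_{2}+\tfrac54\lambda B=\tfrac3pA_{\beta }$ hold, where $N_{1}=\Vert \nabla (u,v)\Vert _{2}^{2}$, $N_{2}=\Vert (u,v)\Vert _{2}^{2}$, $B=\int_{\mathbb{R}^{3}}\phi _{u,v}(u^{2}+v^{2})\,dx$ and $A_{\beta }=\int_{\mathbb{R}^{3}}F_{\beta }(u,v)\,dx$: eliminating $A_{\beta }$ between them writes $J_{\lambda ,\beta }$ as a positive combination of $N_{1},N_{2}$ and $\lambda B$ for $3\le p<4$ (for $p=3$ one gets the clean identity $J_{\lambda ,\beta }=\tfrac13N_{2}$), and since every nontrivial solution satisfies $\Vert (u,v)\Vert _{H}\ge c>0$ (from the equation and the Sobolev embedding) this gives $m_{\lambda ,\beta }\ge \delta >0$. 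The same two identities make $J_{\lambda ,\beta }$ coercive along solutions, which is where $p<4$ is essential.

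Then I would prove that $m_{\lambda ,\beta }$ is attained by a concentration--compactness argument in the non-radial space $H$. Take solutions $(u_{n},v_{n})$ with $J_{\lambda ,\beta }(u_{n},v_{n})\to m_{\lambda ,\beta }$; they are bounded in $H$ by coercivity, and $\int_{\mathbb{R}^{3}}F_{\beta }(u_{n},v_{n})\,dx=\Vert (u_{n},v_{n})\Vert _{H}^{2}+\lambda B(u_{n},v_{n})$ stays bounded away from $0$, so vanishing is excluded. After translation (note $J_{\lambda ,\beta }$ is translation invariant, since so is $\int \phi _{u,v}(u^{2}+v^{2})$) we have $(u_{n},v_{n})\rightharpoonup (u_{0},v_{0})$, and the weak limit of $\phi _{u_{n},v_{n}}$ in $D^{1,2}$ is $\phi _{u_{0},v_{0}}$, so $(u_{0},v_{0})$ is again a solution. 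A Brezis--Lieb splitting of $\Vert \cdot \Vert _{H}^{2}$, $B$ and $A_{\beta }$ — the cross Coulomb interaction between separated pieces being negligible — gives $m_{\lambda ,\beta }=J_{\lambda ,\beta }(u_{0},v_{0})+\lim J_{\lambda ,\beta }(\text{remainder})$ with a Palais--Smale remainder; since every nontrivial solution has energy $\ge m_{\lambda ,\beta }>0$, the remainder can neither survive as a positive-energy vanishing part (which would contradict $J_{\lambda ,\beta }(u_{0},v_{0})\ge m_{\lambda ,\beta }$) nor concentrate onto a solution (which would force $m_{\lambda ,\beta }\ge 2m_{\lambda ,\beta }$), hence it tends to $0$ in $H$. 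Thus $(u_{n},v_{n})\to (u_{0},v_{0})$ strongly after translation, $(u_{0},v_{0})$ is a nontrivial solution with $J_{\lambda ,\beta }(u_{0},v_{0})=m_{\lambda ,\beta }$, i.e. a ground state.

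It remains to show $(u_{0},v_{0})$ is vectorial and positive. Suppose it were semitrivial; then $m_{\lambda ,\beta }$ equals the least energy $c_{\mathrm{S}}(\lambda )$ of the scalar Schr\"odinger--Poisson--Slater equation $-\Delta w+w+\lambda \phi _{w,0}w=|w|^{p-2}w$, a number independent of $\beta $. On the other hand $(w,w)$ solves $(E_{\lambda ,\beta })$ precisely when $w$ solves $-\Delta w+w+\lambda \phi _{w,w}w=(1+\beta )|w|^{p-2}w$, and the substitution $w=(1+\beta )^{-1/(p-2)}\omega $ converts the Coulomb coefficient into $2\lambda (1+\beta )^{-2/(p-2)}$; so for large $\beta $ this scalar equation is a small perturbation of $-\Delta \omega +\omega =|\omega |^{p-2}\omega $ and has a ground state of energy of order $(1+\beta )^{-2/(p-2)}$, which yields a \emph{vectorial} solution of $(E_{\lambda ,\beta })$. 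Comparing an explicit upper bound for the energy of this competitor against an explicit trial-function lower bound for $c_{\mathrm{S}}(\lambda )$, the bound coming from the threshold $\beta (\lambda )$ beats $c_{\mathrm{S}}(\lambda )$ exactly when $\lambda <\lambda _{0}$; it is in this comparison that the Coulomb energy of the profile and the Sobolev constant $S_{p}$ enter, producing $\lambda _{0}=\tfrac{6p\sqrt{3p}(p-2)\pi }{8\sqrt[3]{2}(4-p)(6-p)^{3/2}S_{p}^{2p/(p-2)}}$. Hence $m_{\lambda ,\beta }<c_{\mathrm{S}}(\lambda )$, contradicting semitriviality. Finally $J_{\lambda ,\beta }(|u_{0}|,|v_{0}|)\le J_{\lambda ,\beta }(u_{0},v_{0})$, so the minimiser may be taken nonnegative, and the strong maximum principle applied to each equation gives $u_{0}>0$ and $v_{0}>0$.

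The step I expect to be the main obstacle is the second one: running concentration--compactness for the \emph{nonlocal, vectorial} functional in the full space $H$ — in particular the Brezis--Lieb lemma for $\int \phi _{u,v}(u^{2}+v^{2})$ under arbitrary translations, where the radial estimates used in Theorems \ref{t0}--\ref{t1} are unavailable — together with the accompanying sharp trial-function estimate that pins $m_{\lambda ,\beta }$ below the first semitrivial level when $\lambda <\lambda _{0}$. That estimate is a genuinely quantitative computation (Coulomb energy of the chosen dilated profile, then optimisation in the dilation parameter), and it is precisely where every constant in $\lambda _{0}$ — the factor $\pi $, the $\sqrt[3]{2}$, the power $(6-p)^{3/2}$ and $S_{p}^{2p/(p-2)}$ — must originate.
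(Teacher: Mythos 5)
Your overall framing (a ground state is a minimizer of $J_{\lambda ,\beta }$ over all nontrivial solutions) agrees with the paper's, and your opening observation --- positivity of the energy of a nontrivial solution by eliminating $\int F_{\beta }$ between the Nehari and Pohozaev identities --- is exactly the computation in $(\ref{2-9})$. The genuine gap is that you have misplaced the role of the hypothesis $\lambda <\lambda _{0}$. You claim it arises from an energy comparison that rules out a \emph{semitrivial} minimizer, i.e.\ from showing $m_{\lambda ,\beta }<c_{\mathrm{S}}(\lambda )$ by testing against a diagonal competitor $(w,w)$. That comparison cannot yield the stated closed form of $\lambda _{0}$: it would require the Coulomb energy $\int \phi _{w}w^{2}$ of the non-explicit scalar ground-state profile, and in any case semitriviality is not where the difficulty lies --- by Lemma \ref{l5} any semitrivial critical point on $\mathbf{M}_{\lambda ,\beta }^{-}$ already has energy above $\frac{p-2}{2p}S_{p}^{2p/(p-2)}$, while the candidate minimizer of Theorem \ref{T4-1} sits below that level, so its vectoriality is essentially free and needs no restriction on $\lambda $. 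What $\lambda <\lambda _{0}$ is actually needed for is Proposition \ref{t6}: \emph{every} nontrivial solution with energy below $\frac{p-2}{2p}S_{p}^{2p/(p-2)}$ must lie in $\mathbf{M}_{\lambda ,\beta }^{-}$. This is proved by feeding the Nehari and Pohozaev identities into the Hardy--Littlewood--Sobolev and Gagliardo--Nirenberg inequalities to force $h_{\lambda ,(u_{0},v_{0})}^{\prime \prime }(1)<0$, and it is this chain of inequalities that generates every constant in $\lambda _{0}$ (the $\pi $, the $\sqrt[3]{2}$, the $(6-p)^{3/2}$). With it, the minimizer of $J_{\lambda ,\beta }$ over $\mathbf{M}_{\lambda ,\beta }^{-}$ at a level below the threshold automatically minimizes over all nontrivial solutions: high-energy solutions are not competitors, and low-energy ones lie in $\mathbf{M}_{\lambda ,\beta }^{-}$. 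Your proposal contains no substitute for this step, so even granting all your other claims you would obtain vectoriality of the minimizer for $\lambda $ below some unquantified threshold, not the theorem as stated.

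A secondary gap is the attainment of $m_{\lambda ,\beta }$. You propose to minimize directly over the solution set via a Brezis--Lieb/concentration-compactness dichotomy in $H$; the paper sidesteps this entirely by reusing the minimizer already produced on the sub-level set $\overline{\mathbf{M}}_{\lambda ,\beta }^{(1)}$ of the Nehari manifold in Theorem \ref{T4-1}, where the energy-doubling exclusion is carried out in detail (estimates $(\ref{15-4})$--$(\ref{15-2})$); your version defers precisely those estimates. There is also a slip in your lower bound $m_{\lambda ,\beta }\geq \delta >0$: the elimination of $\int F_{\beta }$ gives $J_{\lambda ,\beta }=\frac{p-2}{6-p}\Vert (u,v)\Vert _{L^{2}}^{2}+\frac{\lambda (p-3)}{6-p}\int \phi _{u,v}(u^{2}+v^{2})\,dx$, a positive combination of the $L^{2}$ mass and the Coulomb term only, so the uniform bound $\Vert (u,v)\Vert _{H}\geq c$ does not directly bound $J_{\lambda ,\beta }$ away from zero (the norm could be carried entirely by the gradient part). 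This is repairable by interpolation, but the paper never needs such a uniform positive lower bound over arbitrary solutions.
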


\begin{theorem}
\label{t3-2} Let $3.18\approx \frac{1+\sqrt{73}}{3}\leq p<4$ and $\lambda
>0. $ Then for every $\beta >\beta \left( \lambda \right) ,$ System $%
(E_{\lambda ,\beta })$ admits a vectorial ground state solution $\left(
u_{\lambda ,\beta },v_{\lambda ,\beta }\right) \in H$ satisfying $J_{\lambda
,\beta }\left( u_{\lambda ,\beta },v_{\lambda ,\beta }\right) >0.$
\end{theorem}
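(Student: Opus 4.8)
The plan is to leverage Theorem~\ref{t2}$(ii)$ and the range $\beta>\beta(\lambda)$ to produce a vectorial positive solution with positive energy, and then to show that this solution is in fact a ground state by comparing its energy with the mountain-pass-type critical level over all of $H$. The key structural fact we want to exploit is that when $p$ is large enough (the threshold $\frac{1+\sqrt{73}}{3}$ should come from a purely algebraic inequality balancing the exponents in the energy expansion), the semitrivial critical levels strictly exceed the vectorial one, so the infimum of $J_{\lambda,\beta}$ over the relevant constraint set cannot be realized by a semitrivial pair. First I would set up the constraint manifold used already in Theorem~\ref{t2}: the vectorial solution $(u_{\lambda,\beta}^{(1)},v_{\lambda,\beta}^{(1)})$ lives on a Nehari-type (or Nehari--Pohozaev-type) set $\mathcal{M}$, and I would let $c_{\lambda,\beta}:=\inf_{\mathcal{M}}J_{\lambda,\beta}$. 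The solution from Theorem~\ref{t2}$(ii)$ realizes (or is constructed to realize) this level, so it remains to argue that no solution of $(E_{\lambda,\beta})$ has strictly smaller energy, and in particular that the genuinely vectorial character is forced at the ground level.

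The second step is the energy comparison with the \emph{semitrivial} level. If $v\equiv 0$, System $(E_{\lambda,\beta})$ reduces to the single Schr\"odinger--Poisson--Slater-type equation $-\Delta u+u+\lambda\phi_u u=|u|^{p-2}u$, whose ground state energy I will call $m_{\lambda}$. By symmetry the $u\equiv 0$ branch gives the same value. I would show that for $\beta>\beta(\lambda)$ one has $c_{\lambda,\beta}<m_{\lambda}$: the point is that inserting a small multiple of the semitrivial ground state into both components and using the coupling term $2\beta|u|^{p/2}|v|^{p/2}$ (which is \emph{attractive}, $\beta>0$) strictly lowers the constrained energy once $\beta$ is large, exactly as in the classical argument for weakly coupled Schr\"odinger systems. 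Here the extra Coulomb term $\frac{\lambda}{4}\int\phi_{u,v}(u^2+v^2)$ must be controlled; since $3\le p<4$ and the quartic Coulomb term is subcritical relative to the pure power, a rescaling $t\mapsto J_{\lambda,\beta}(tu_0,tv_0)$ combined with the fibering map analysis should give the strict inequality. The constant $\beta(\lambda)$ is presumably defined precisely so that this comparison works using the Sobolev constants $S_p,\overline{S},S_{12/5}$ appearing in its formula.

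The third step is a compactness/regularity argument: a minimizing sequence for $c_{\lambda,\beta}$ on $\mathcal{M}$ is bounded (this follows from the same coercivity-on-the-constraint estimates used in Theorem~\ref{t2}), and after passing to a weak limit one must rule out vanishing and dichotomy. Vanishing is excluded because the level $c_{\lambda,\beta}>0$ is bounded away from zero; dichotomy/loss of mass to infinity is handled by a Brezis--Lieb decomposition for all four terms (the $H$-norm, the Coulomb term via the Riesz potential, the two power terms) together with the strict sub-additivity $c_{\lambda,\beta}<c_{\lambda,\beta}-$ (contribution of a nontrivial split), which again uses $\beta>\beta(\lambda)$. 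Since we are now working in the full space $H$ (no radial symmetry), the standard remedy is a concentration-compactness lemma with translations: one recenters the minimizing sequence and shows the recentered sequence converges strongly, producing a nonnegative minimizer $(u_{\lambda,\beta},v_{\lambda,\beta})$; the strong maximum principle then upgrades it to positive, and the comparison $c_{\lambda,\beta}<m_{\lambda}$ from Step~2 guarantees both components are nonzero, i.e. the minimizer is vectorial.

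The main obstacle I expect is Step~3, the recovery of compactness in the non-radial space $H$: unlike the radial setting of Theorems~\ref{t0}--\ref{t1} where Strauss's inequality gives compact embeddings, here one must run a full concentration-compactness argument, and the presence of the nonlocal Coulomb term makes the splitting estimates delicate (one needs that $\phi_{u,v}$ interacts well with translations and that the cross energy between a "bubble at infinity" and the core vanishes). A secondary difficulty is pinning down the precise algebraic threshold $p\ge\frac{1+\sqrt{73}}{3}$: this should emerge from requiring that the test-function computation in Step~2 yields a strict energy drop \emph{uniformly} without the smallness restriction $\lambda<\lambda_0$ that Theorem~\ref{t3} needs — so the two theorems trade a constraint on $\lambda$ for a constraint on $p$, and verifying that the quadratic in $p$ coming from the competition between the power term exponent $p$ and the Coulomb exponent $4$ has root $\frac{1+\sqrt{73}}{3}$ is the crux of why this particular number appears.
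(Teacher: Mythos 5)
There is a genuine gap. Your Steps 2 and 3 reproduce what the paper has already established in Theorem \ref{T4-1} (existence of a vectorial positive minimizer of $J_{\lambda ,\beta }$ on the constrained set $\overline{\mathbf{M}}_{\lambda ,\beta }^{(1)}\subset \mathbf{M}_{\lambda ,\beta }^{-}$, via concentration--compactness with translations and the semitrivial energy comparison of Lemma \ref{l5}); that is exactly Theorem \ref{t2}$(ii)$. But the content of Theorem \ref{t3-2} beyond Theorem \ref{t2}$(ii)$ is the \emph{ground state} property: the energy must be minimal among \emph{all} nontrivial solutions of $(E_{\lambda ,\beta })$, not merely among elements of the constraint manifold. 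Your only comparison is with the semitrivial level $m_{\lambda }$, which rules out semitrivial competitors but says nothing about vectorial solutions lying off $\mathbf{M}_{\lambda ,\beta }^{-}$ — in particular solutions in $\mathbf{M}_{\lambda ,\beta }^{+}$, where the paper shows (Theorem \ref{t4}) that $\inf_{\mathbf{M}_{\lambda ,\beta }^{+}}J_{\lambda ,\beta }=-\infty $, so this possibility cannot be dismissed as harmless. The missing idea is the paper's Proposition \ref{g3}: combining the Nehari identity with the Pohozaev identity for any nontrivial solution $(u_{0},v_{0})$ yields
\begin{equation*}
h_{\lambda ,\left( u_{0},v_{0}\right) }^{\prime \prime }\left( 1\right) =-\frac{2p(p-2)}{6-p}\int_{\mathbb{R}^{3}}(u_{0}^{2}+v_{0}^{2})\,dx-\frac{\lambda (3p^{2}-2p-24)}{2\left( 6-p\right) }\int_{\mathbb{R}^{3}}\phi _{u_{0},v_{0}}\left( u_{0}^{2}+v_{0}^{2}\right) dx,
\end{equation*}
which is negative precisely when $3p^{2}-2p-24\geq 0$, i.e. $p\geq \frac{1+\sqrt{73}}{3}$. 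Hence every nontrivial solution lies in $\mathbf{M}_{\lambda ,\beta }^{-}$, and the minimizer over $\mathbf{M}_{\lambda ,\beta }^{-}$ is automatically a ground state.

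This also corrects your diagnosis of where the threshold comes from: it is not produced by the test-function computation comparing vectorial and semitrivial energies (that comparison works for all $2<p<4$ once $\beta >\beta (\lambda )$), but by the purely algebraic condition $3p^{2}-2p-24\geq 0$ in the Pohozaev classification of solutions. The trade-off you correctly sense between Theorems \ref{t3} and \ref{t3-2} is that for $3\leq p<\frac{1+\sqrt{73}}{3}$ the sign of $h^{\prime \prime }(1)$ is no longer automatic, and the paper instead restricts $\lambda <\lambda _{0}$ and bounds the energy level to force $\mathbb{A}_{\lambda ,\beta }\subset \mathbf{M}_{\lambda ,\beta }^{-}$ (Proposition \ref{t6}). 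Without some version of this classification step your argument proves only Theorem \ref{t2}$(ii)$, not the ground state statement.
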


The study of the vectorial ground state solution is considered by us from
different perspectives. In Theorem \ref{t3} we analyze the energy levels of
the solutions by controlling the range of $\lambda $, and in Theorem \ref%
{t3-2} we locate the solutions by reducing the scope of $p.$

The rest of this paper is organized as follows. After introducing some
preliminary results in Section 2, we give the proofs of Theorems \ref{t0}
and \ref{t1} in Section 3. In Section 4, we prove Theorem \ref{t2}. Finally,
we give the proofs of Theorems \ref{t3} and \ref{t3-2} in Section 5.

\section{Preliminary results}

\begin{lemma}
\label{L2-1}Let $2<p<4$ and $\beta >0.$ Let $g_{\beta }\left( s\right) =s^{%
\frac{p}{2}}+\left( 1-s\right) ^{\frac{p}{2}}+2\beta s^{\frac{p}{4}}\left(
1-s\right) ^{\frac{p}{4}}$ for $s\in \left[ 0,1\right] .$ Then there exists $%
s_{\beta }\in \left( 0,1\right) $ such that $g_{\beta }\left( s_{\beta
}\right) =\max_{s\in \left[ 0,1\right] }g_{\beta }\left( s\right) >1.$ In
particular, if $\beta \geq \frac{p-2}{2},$ then $s_{\beta }=\frac{1}{2}.$
\end{lemma}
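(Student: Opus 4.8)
The plan is to analyze the function $g_\beta$ on the closed interval $[0,1]$ directly. First I would record the boundary values $g_\beta(0)=g_\beta(1)=1$, so that establishing $\max_{[0,1]}g_\beta>1$ reduces to exhibiting a single interior point $s$ with $g_\beta(s)>1$; by symmetry of $g_\beta$ under $s\mapsto 1-s$, the natural candidate is $s=\tfrac12$, where $g_\beta(\tfrac12)=2\cdot 2^{-p/2}+2\beta\,2^{-p/2}=2^{1-p/2}(1+\beta)$. Since $2<p<4$ gives $2^{1-p/2}\in(2^{-1},1)$, this value exceeds $1$ as soon as $\beta>2^{p/2-1}-1$; for smaller $\beta>0$ one still has $g_\beta(\tfrac12)>2^{1-p/2}$, but that alone need not exceed $1$, so a short separate argument is needed for small $\beta$. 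For that regime I would note $g_\beta(s)\ge g_0(s)=s^{p/2}+(1-s)^{p/2}$ and observe that $g_0$ is strictly concave on $[0,1]$ when $p/2<2$ (its second derivative $\tfrac p2(\tfrac p2-1)[s^{p/2-2}+(1-s)^{p/2-2}]$ is positive — wait, one checks the sign carefully: for $2<p<4$, $p/2-1\in(0,1)$, so $g_0'' >0$, i.e. $g_0$ is strictly convex), hence $g_0$ attains its minimum $2^{1-p/2}$ at $s=\tfrac12$ and its maximum $1$ at the endpoints; so $g_0$ does not by itself help. Instead, for small $\beta$ I would expand near an endpoint: fix $s$ small and compute $g_\beta(s)=s^{p/2}+(1-s)^{p/2}+2\beta s^{p/4}(1-s)^{p/4}$. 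Using $(1-s)^{p/2}=1-\tfrac p2 s+o(s)$ and $(1-s)^{p/4}=1+o(1)$, we get $g_\beta(s)=1-\tfrac p2 s+2\beta s^{p/4}+o(s)+o(s^{p/4})$; since $p/4<1$, the term $2\beta s^{p/4}$ dominates $-\tfrac p2 s$ as $s\to 0^+$, so $g_\beta(s)>1$ for all sufficiently small $s>0$. This proves $\max_{[0,1]}g_\beta>1$ for every $\beta>0$, and since $g_\beta$ is continuous on the compact interval $[0,1]$ and strictly greater than $1=g_\beta(0)=g_\beta(1)$ somewhere inside, the maximum is attained at some interior point $s_\beta\in(0,1)$.

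Next, for the sharper claim that $s_\beta=\tfrac12$ when $\beta\ge\tfrac{p-2}{2}$, I would substitute $t=s^{1/2}$, $\tau=(1-s)^{1/2}$ (so $t^2+\tau^2=1$) and write $g_\beta=t^p+\tau^p+2\beta(t\tau)^{p/2}$. Setting $h(s):=g_\beta(s)$, I would show $h'(\tfrac12)=0$ by symmetry, and then that $s=\tfrac12$ is the global maximizer by proving $h$ is increasing on $(0,\tfrac12]$ and decreasing on $[\tfrac12,1)$. Differentiating, $h'(s)=\tfrac p2\big[s^{p/2-1}-(1-s)^{p/2-1}\big]+\tfrac{\beta p}{2}\big[s^{p/4-1}(1-s)^{p/4}-s^{p/4}(1-s)^{p/4-1}\big]$, which factors as $h'(s)=\tfrac p2\big[s^{p/2-1}-(1-s)^{p/2-1}\big]-\tfrac{\beta p}{2}\,(s(1-s))^{p/4-1}(2s-1)$. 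For $0<s<\tfrac12$ the bracket $s^{p/2-1}-(1-s)^{p/2-1}$ is positive (as $p/2-1>0$ and $s<1-s$), and $-(2s-1)>0$, so both terms are positive, giving $h'(s)>0$; symmetrically $h'(s)<0$ on $(\tfrac12,1)$. (Interestingly this shows $s_\beta=\tfrac12$ for every $\beta>0$, of which the stated $\beta\ge\tfrac{p-2}{2}$ case is an instance; I would state only what is needed, following the paper.)

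The main obstacle I anticipate is the small-$\beta$ part of the first claim: the clean computation at $s=\tfrac12$ fails to yield a value above $1$, and one must instead detect that near the endpoints the coupling term $2\beta s^{p/4}(1-s)^{p/4}$, which is of order $s^{p/4}$, beats the linear deficit of order $s$ coming from $(1-s)^{p/2}$ precisely because $p/4<1$. Making this rigorous requires only an elementary one-sided limit argument rather than any delicate estimate, so the lemma should go through cleanly; the monotonicity computation for the $s_\beta=\tfrac12$ claim is routine once the derivative is factored as above. No compactness or functional-analytic input is needed beyond continuity of $g_\beta$ on $[0,1]$.
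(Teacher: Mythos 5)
The first half of your argument (endpoint values equal to $1$, the expansion $g_\beta(s)=1-\tfrac p2 s+2\beta s^{p/4}+o(s^{p/4})$ near $s=0$ showing $g_\beta>1$ for small $s>0$ because $p/4<1$, hence an interior maximizer) is correct and complete. The paper itself omits the proof, deferring to \cite[Lemma 2.4]{DMS}, so there is nothing to compare against there.

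There is, however, a genuine error in your treatment of the claim $s_\beta=\tfrac12$. You assert that for $0<s<\tfrac12$ the bracket $s^{p/2-1}-(1-s)^{p/2-1}$ is positive ``as $p/2-1>0$ and $s<1-s$''; the inequality goes the other way: since $p/2-1>0$, the map $x\mapsto x^{p/2-1}$ is increasing, so $s<1-s$ gives $s^{p/2-1}<(1-s)^{p/2-1}$ and the bracket is \emph{negative}. Thus the two terms of $h'(s)$ have opposite signs on $(0,\tfrac12)$ and compete, which is exactly why the hypothesis $\beta\ge\tfrac{p-2}{2}$ is needed; your parenthetical conclusion that $s_\beta=\tfrac12$ for \emph{every} $\beta>0$ is in fact false. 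Indeed, for $0<\beta<2^{p/2-1}-1$ one has $g_\beta(\tfrac12)=2^{1-p/2}(1+\beta)<1$ while your own endpoint expansion shows $\max g_\beta>1$, so the maximizer cannot be $\tfrac12$. A correct argument runs as follows: writing $h'(s)=\tfrac p2\bigl\{\bigl[s^{p/2-1}-(1-s)^{p/2-1}\bigr]+\beta\,(s(1-s))^{p/4-1}(1-2s)\bigr\}$ and substituting $t=\bigl(\tfrac{1-s}{s}\bigr)^{1/2}>1$ for $s<\tfrac12$, the inequality $h'(s)\ge 0$ is equivalent to $\beta\,(t-t^{-1})\ge t^{q}-t^{-q}$ with $q=\tfrac{p-2}{2}\in(0,1)$; differentiating in $t$ reduces this to $\beta\,(t+t^{-1})\ge q\,(t^{q}+t^{-q})$, which holds because $x\mapsto t^{x}+t^{-x}$ is increasing for $t>1$ and $\beta\ge q$. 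This is where the hypothesis enters, and it is the step your proposal is missing.
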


\begin{proof}
The proof is similar to the argument in \cite[Lemma 2.4]{DMS}, and we omit
it here.
\end{proof}

\begin{lemma}
\label{L2-2}Let $2<p<4,\lambda >0$ and $\beta >0.$ Then for each $z\in
H^{1}\left( \mathbb{R}^{3}\right) \backslash \left\{ 0\right\} $, there
exists $s_{z}\in \left( 0,1\right) $ such that
\begin{equation*}
J_{\lambda ,\beta }\left( \sqrt{s_{z}}z,\sqrt{1-s_{z}}z\right) <J_{\lambda
,\beta }\left( z,0\right) =J_{\lambda ,\beta }\left( 0,z\right) =I_{\lambda
}\left( z\right) ,
\end{equation*}%
where
\begin{equation*}
I_{\lambda }(z):=\frac{1}{2}\int_{\mathbb{R}^{3}}\left( |\nabla
z|^{2}+z^{2}\right) dx+\frac{\lambda }{4}\int_{\mathbb{R}^{3}}\phi
_{z}z^{2}dx-\frac{1}{p}\int_{\mathbb{R}^{3}}\left\vert z\right\vert ^{p}dx.
\end{equation*}
\end{lemma}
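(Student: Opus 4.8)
The plan is to evaluate $J_{\lambda,\beta}$ along the curve $s\mapsto(\sqrt{s}\,z,\sqrt{1-s}\,z)$, $s\in[0,1]$, and to observe that only the nonlinear term genuinely depends on $s$. First I would record that the quadratic part is $s$-invariant: since $|\nabla(\sqrt{s}\,z)|^2+(\sqrt{s}\,z)^2+|\nabla(\sqrt{1-s}\,z)|^2+(\sqrt{1-s}\,z)^2=|\nabla z|^2+z^2$, one gets $\|(\sqrt{s}\,z,\sqrt{1-s}\,z)\|_H^2=\int_{\mathbb{R}^3}(|\nabla z|^2+z^2)\,dx$. Next, because $(\sqrt{s}\,z)^2+(\sqrt{1-s}\,z)^2=z^2$, the density entering the Coulomb term is unchanged, so $\phi_{\sqrt{s}z,\sqrt{1-s}z}=\phi_z$ and $\int_{\mathbb{R}^3}\phi_{\sqrt{s}z,\sqrt{1-s}z}\big((\sqrt{s}z)^2+(\sqrt{1-s}z)^2\big)\,dx=\int_{\mathbb{R}^3}\phi_z z^2\,dx$. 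Hence the first two terms of $J_{\lambda,\beta}(\sqrt{s}\,z,\sqrt{1-s}\,z)$ are constant in $s$ and coincide with the first two terms of $I_\lambda(z)$.

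For the nonlinear term, the homogeneities $|\sqrt{s}\,z|^p=s^{p/2}|z|^p$, $|\sqrt{1-s}\,z|^p=(1-s)^{p/2}|z|^p$ and $|\sqrt{s}\,z|^{p/2}|\sqrt{1-s}\,z|^{p/2}=s^{p/4}(1-s)^{p/4}|z|^p$ give $F_\beta(\sqrt{s}\,z,\sqrt{1-s}\,z)=g_\beta(s)\,|z|^p$ pointwise, with $g_\beta$ exactly the function from Lemma \ref{L2-1}. Combining the three computations yields the identity
\[
J_{\lambda,\beta}(\sqrt{s}\,z,\sqrt{1-s}\,z)=I_\lambda(z)-\frac{1}{p}\big(g_\beta(s)-1\big)\int_{\mathbb{R}^3}|z|^p\,dx,\qquad s\in[0,1].
\]
Evaluating at $s=1$ (resp.\ $s=0$), where $g_\beta(1)=g_\beta(0)=1$, also proves $J_{\lambda,\beta}(z,0)=J_{\lambda,\beta}(0,z)=I_\lambda(z)$.

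It then remains to choose $s_z$. By Lemma \ref{L2-1} there is $s_\beta\in(0,1)$ with $g_\beta(s_\beta)=\max_{s\in[0,1]}g_\beta(s)>1$; set $s_z:=s_\beta$. Since $z\not\equiv 0$ we have $\int_{\mathbb{R}^3}|z|^p\,dx>0$, so the displayed identity forces $J_{\lambda,\beta}(\sqrt{s_z}\,z,\sqrt{1-s_z}\,z)<I_\lambda(z)$, which is the assertion. I do not expect a genuine obstacle here: the statement is essentially a change-of-variables bookkeeping whose whole substance sits in Lemma \ref{L2-1}. The one point worth flagging is the observation that the parametrization $(\sqrt{s}\,z,\sqrt{1-s}\,z)$ preserves the ``mass'' $u^2+v^2=z^2$, which is precisely why both the $H$-norm and the Coulomb energy are independent of $s$ and thus cancel out of the comparison with $I_\lambda(z)$.
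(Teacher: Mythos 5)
Your proposal is correct and follows essentially the same route as the paper: both verify that the $H$-norm and the Coulomb term are invariant along $s\mapsto(\sqrt{s}\,z,\sqrt{1-s}\,z)$, reduce the nonlinear term to $g_{\beta}(s)\int_{\mathbb{R}^{3}}|z|^{p}\,dx$, and invoke Lemma \ref{L2-1} to pick $s_{z}=s_{\beta}$ with $g_{\beta}(s_{\beta})>1$. Your explicit identity $J_{\lambda,\beta}(\sqrt{s}\,z,\sqrt{1-s}\,z)=I_{\lambda}(z)-\frac{1}{p}(g_{\beta}(s)-1)\int_{\mathbb{R}^{3}}|z|^{p}\,dx$ is just a cleaner packaging of the same computation.
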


\begin{proof}
Let $\left( u,v\right) =\left( \sqrt{s}z,\sqrt{1-s}z\right) $ for $z\in
H^{1}\left( \mathbb{R}^{3}\right) \backslash \left\{ 0\right\} $ and $s\in
[0,1].$ A direct calculation shows that%
\begin{equation*}
\left\Vert \left( u,v\right) \right\Vert _{H}^{2}=s\left\Vert z\right\Vert
_{H^{1}}^{2}+\left( 1-s\right) \left\Vert z\right\Vert
_{H^{1}}^{2}=\left\Vert z\right\Vert _{H^{1}}^{2}
\end{equation*}%
and%
\begin{equation*}
\int_{\mathbb{R}^{3}} \left( u^{2}+v^{2}\right) \phi _{u,v}dx=\int_{\mathbb{R%
}^{3}} \left( sz^{2}+\left( 1-s\right) z^{2}\right) \phi _{u,v}dx=\int_{%
\mathbb{R}^{3}} \phi _{z}z^{2}dx.
\end{equation*}%
Moreover, by Lemma \ref{L2-1}, there exists $s_{z}\in \left( 0,1\right) $
such that
\begin{equation*}
\int_{\mathbb{R}^{3}}\left(\left\vert u\right\vert ^{p}+\left\vert
v\right\vert ^{p}+2\beta\left\vert u\right\vert ^{\frac{p}{2}}\left\vert
v\right\vert ^{\frac{p}{2}}\right)dx=\left[ s_{z}^{\frac{p}{2}}+\left(
1-s_{z}\right) ^{\frac{p}{2}}+2\beta s_{z}^{\frac{p}{4}}\left(
1-s_{z}\right) ^{\frac{p}{4}}\right] \int_{\mathbb{R}^{3}}\left\vert
z\right\vert ^{p}dx>\int_{\mathbb{R}^{3}}\left\vert z\right\vert ^{p}dx.
\end{equation*}%
Thus, we have
\begin{eqnarray*}
J_{\lambda ,\beta }\left( \sqrt{s_{z}}z,\sqrt{1-s_{z}}z\right) &=&\frac{1}{2}%
\left\Vert z\right\Vert _{H^{1}}^{2}+\frac{\lambda }{4}\int_{\mathbb{R}%
^{3}}\phi _{z}z^{2}dx-\frac{1}{p}\left[ s_{z}^{\frac{p}{2}}+\left(
1-s_{z}\right) ^{\frac{p}{2}}+2\beta s_{z}^{\frac{p}{4}}\left(
1-s_{z}\right) ^{\frac{p}{4}}\right] \int_{\mathbb{R}^{3}}\left\vert
z\right\vert ^{p}dx \\
&<&\frac{1}{2}\left\Vert z\right\Vert _{H^{1}}^{2}+\frac{\lambda }{4}\int_{%
\mathbb{R}^{3}}\phi _{z}z^{2}dx-\frac{1}{p}\int_{\mathbb{R}^{3}}\left\vert
z\right\vert ^{p}dx \\
&=&J_{\lambda ,\beta }\left( z,0\right) =J_{\lambda ,\beta }\left(
0,z\right) =I_{\lambda }\left( z\right) .
\end{eqnarray*}%
The proof is complete.
\end{proof}

By vitue of Lemma \ref{L2-2}, we have the following result.

\begin{theorem}
\label{t5}Let $2<p<3,\lambda >0$ and $\beta >0.$ Let $\left(
u_{0},v_{0}\right) \in H_{r}$ be a minimizer of the minimum problem $%
\inf_{\left( u,v\right) \in H_{r}}J_{\lambda ,\beta }(u,v)$ such that $%
J_{\lambda ,\beta }(u_{0},v_{0})<0.$ Then we have $u_{0}\neq 0$ and $%
v_{0}\neq 0.$
\end{theorem}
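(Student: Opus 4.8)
The plan is to argue by contradiction. Suppose $(u_0,v_0)$ is a minimizer of $\inf_{(u,v)\in H_r}J_{\lambda,\beta}(u,v)$ with $J_{\lambda,\beta}(u_0,v_0)<0$, but that it is not vectorial; by symmetry of $J_{\lambda,\beta}$ in its two arguments we may assume $v_0=0$, so $(u_0,0)$ is the minimizer and $I_\lambda(u_0)=J_{\lambda,\beta}(u_0,0)<0$. In particular $u_0\neq 0$ (otherwise the energy would be $0$, not negative). The idea is then to use Lemma \ref{L2-2} with $z=u_0$: it produces $s_{u_0}\in(0,1)$ such that
\begin{equation*}
J_{\lambda,\beta}\!\left(\sqrt{s_{u_0}}\,u_0,\sqrt{1-s_{u_0}}\,u_0\right)<J_{\lambda,\beta}(u_0,0)=\inf_{(u,v)\in H_r}J_{\lambda,\beta}(u,v).
\end{equation*}

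The only point requiring care is that the competitor $\left(\sqrt{s_{u_0}}\,u_0,\sqrt{1-s_{u_0}}\,u_0\right)$ genuinely lies in $H_r=H^1_{rad}(\mathbb{R}^3)\times H^1_{rad}(\mathbb{R}^3)$. This is immediate: first, one should note that the minimizer $u_0$ of the radial problem is itself radial (it is the first component of an element of $H_r$), and scaling a radial $H^1$ function by a positive constant keeps it in $H^1_{rad}(\mathbb{R}^3)$; hence both components $\sqrt{s_{u_0}}\,u_0$ and $\sqrt{1-s_{u_0}}\,u_0$ are radial $H^1$ functions, so the competitor is an admissible element of $H_r$. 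Therefore the strict inequality above contradicts the fact that $(u_0,0)$ achieves the infimum over all of $H_r$. This contradiction shows $v_0\neq 0$, and the symmetric argument (or simply applying the same reasoning after swapping the roles of the two components) shows $u_0\neq 0$; hence $(u_0,v_0)$ is vectorial.

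I do not anticipate a genuine obstacle here: the entire content is packaged in Lemma \ref{L2-2}, and the remaining work is the bookkeeping that the perturbed pair stays in the radial space and that the strict drop in energy contradicts minimality. The one subtlety worth stating explicitly in the writeup is why $u_0\neq 0$ in the semitrivial case — this uses $J_{\lambda,\beta}(u_0,v_0)<0$ together with $J_{\lambda,\beta}(0,0)=0$ — so that Lemma \ref{L2-2} is applicable with a nonzero $z$. (The hypothesis $2<p<3$ is not actually needed for this particular deduction, only for the existence of such a minimizer, which is supplied to us; Lemma \ref{L2-2} holds on the wider range $2<p<4$.)
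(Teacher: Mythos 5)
Your proposal is correct and is exactly the argument the paper intends: Theorem \ref{t5} is stated immediately after Lemma \ref{L2-2} with the remark that it follows "by virtue of" that lemma, i.e., a semitrivial minimizer $(u_0,0)$ with negative energy (hence $u_0\neq 0$) would be beaten by the radial competitor $\left(\sqrt{s_{u_0}}\,u_0,\sqrt{1-s_{u_0}}\,u_0\right)$, contradicting minimality. Your added bookkeeping (radiality of the competitor, nontriviality from the negative energy level) is accurate and fills in details the paper leaves implicit.
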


The function $\phi _{u,v}$ defined as $\left( \ref{1-2}\right) $ possesses
certain properties \cite{AP,R1}.

\begin{lemma}
\label{L2-3}For each $\left( u,v\right) \in H$, the following two
inequalities are true.
\end{lemma}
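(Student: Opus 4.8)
The plan is to derive both estimates from the single structural fact that $\phi_{u,v}=\tfrac{1}{|x|}\ast(u^{2}+v^{2})$ is the Riesz potential of the nonnegative density $u^{2}+v^{2}$, exactly as in Ruiz \cite{R1} and \cite{AP}, only now keeping track of the pair $(u,v)$ and of the Sobolev constants $\overline{S}$ and $S_{12/5}$ that appear in the definition of $\beta(\lambda)$. First I would record the elementary consequences of nonnegativity of the kernel: $\phi_{u,v}\ge 0$, $\phi_{u,v}\ge\phi_{u,0}=\phi_{u}$ and $\phi_{u,v}\ge\phi_{0,v}=\phi_{v}$ pointwise, hence $\int_{\mathbb{R}^{3}}\phi_{u,v}u^{2}\,dx$ and $\int_{\mathbb{R}^{3}}\phi_{u,v}v^{2}\,dx$ are each $\le\int_{\mathbb{R}^{3}}\phi_{u,v}(u^{2}+v^{2})\,dx$. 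This already gives whichever comparison-type estimate is the ``second'' inequality, with nothing to prove beyond writing it down.

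The quantitative bound I would obtain as follows. By interpolation between $L^{2}$ and $L^{6}$, $u,v\in H^{1}(\mathbb{R}^{3})$ lie in $L^{12/5}(\mathbb{R}^{3})$, so $u^{2}+v^{2}\in L^{6/5}(\mathbb{R}^{3})$ with $\|u^{2}+v^{2}\|_{6/5}\le\|u\|_{12/5}^{2}+\|v\|_{12/5}^{2}\le S_{12/5}^{-2}\|(u,v)\|_{H}^{2}$. Since $-\Delta\phi_{u,v}=4\pi(u^{2}+v^{2})$ in the distributional sense, one has $\phi_{u,v}\in D^{1,2}(\mathbb{R}^{3})$ and, testing the equation against $\phi_{u,v}$,
\begin{equation*}
\int_{\mathbb{R}^{3}}|\nabla\phi_{u,v}|^{2}\,dx=4\pi\int_{\mathbb{R}^{3}}\phi_{u,v}(u^{2}+v^{2})\,dx .
\end{equation*}
Combining Hölder's inequality with the embedding $D^{1,2}(\mathbb{R}^{3})\hookrightarrow L^{6}(\mathbb{R}^{3})$ gives $\int_{\mathbb{R}^{3}}\phi_{u,v}(u^{2}+v^{2})\,dx\le\|\phi_{u,v}\|_{6}\,\|u^{2}+v^{2}\|_{6/5}\le\overline{S}^{-1}\|\nabla\phi_{u,v}\|_{2}\,\|u^{2}+v^{2}\|_{6/5}$; feeding in the displayed identity yields $\|\nabla\phi_{u,v}\|_{2}\le 4\pi\,\overline{S}^{-1}\|u^{2}+v^{2}\|_{6/5}$, and therefore
\begin{equation*}
\int_{\mathbb{R}^{3}}\phi_{u,v}(u^{2}+v^{2})\,dx\le 4\pi\,\overline{S}^{-2}\|u^{2}+v^{2}\|_{6/5}^{2}\le 4\pi\,\overline{S}^{-2}S_{12/5}^{-4}\|(u,v)\|_{H}^{4},
\end{equation*}
which is, up to the normalization the paper uses for the kernel, the ``first'' inequality. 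If the intended form is phrased via Hardy--Littlewood--Sobolev instead, the same bound follows from $\int\!\!\int\frac{(u^{2}+v^{2})(x)(u^{2}+v^{2})(y)}{|x-y|}\,dx\,dy\le C_{HLS}\|u^{2}+v^{2}\|_{6/5}^{2}$.

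I do not expect a serious obstacle; the only care needed is bookkeeping. I would (i) justify $\phi_{u,v}\in D^{1,2}(\mathbb{R}^{3})$ and the distributional identity rigorously by approximating $u,v$ with $C_{c}^{\infty}$ functions, establishing the bound for smooth data, and passing to the limit using completeness of $D^{1,2}$; (ii) place the factor $4\pi$ consistently with how the paper writes $\phi_{u,v}$ and absorbs constants into $\lambda$; and (iii) keep the product-space norm conventions aligned with those used in $\beta(\lambda)$ so the constants match the later arguments. All of this is routine, which is exactly why the authors can state it as a lemma with a reference to \cite{AP,R1} rather than a full proof.
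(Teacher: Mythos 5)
Your argument is correct and is precisely the standard proof from \cite{AP,R1} that the paper invokes without writing out: nonnegativity of the convolution kernel for part $(i)$, and the energy identity $\int_{\mathbb{R}^{3}}|\nabla\phi_{u,v}|^{2}\,dx=c\int_{\mathbb{R}^{3}}\phi_{u,v}(u^{2}+v^{2})\,dx$ combined with H\"older, $D^{1,2}(\mathbb{R}^{3})\hookrightarrow L^{6}(\mathbb{R}^{3})$ and $H^{1}(\mathbb{R}^{3})\hookrightarrow L^{12/5}(\mathbb{R}^{3})$ for part $(ii)$. Your caution about the factor $4\pi$ is warranted: with the definition (\ref{1-2}) taken literally the bound acquires an extra factor $4\pi$, and the clean constant $\overline{S}^{-2}S_{12/5}^{-4}$ corresponds to the normalization $\phi_{u,v}=\tfrac{1}{4\pi|\cdot|}\ast(u^{2}+v^{2})$ that the paper implicitly uses elsewhere (e.g.\ in (\ref{2-0}) and in the double-integral formulas appearing in the proof of Theorem \ref{t4}).
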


\begin{itemize}
\item[$\left( i\right) $] $\phi _{u,v}\geq 0;$

\item[$\left( ii\right) $] $\int_{\mathbb{R}^{3}}\phi _{u,v}\left(
u^{2}+v^{2}\right) dx\leq \overline{S}^{-2}S_{12/5}^{-4}\left\Vert \left(
u,v\right) \right\Vert _{H}^{4}.$
\end{itemize}

Following the idea of Lions \cite{Lions}, we have%
\begin{eqnarray}
\frac{\sqrt{\lambda }}{4}\int_{\mathbb{R}^{3}}(\left\vert u\right\vert
^{3}+v^{2}\left\vert u\right\vert )dx &=&\frac{\sqrt{\lambda }}{4}\int_{%
\mathbb{R}^{3}}\left( -\Delta \phi _{u,v}\right) \left\vert u\right\vert dx
\notag \\
&=&\frac{\sqrt{\lambda }}{4}\int_{\mathbb{R}^{3}}\left\langle \nabla \phi
_{u,v},\nabla \left\vert u\right\vert \right\rangle dx  \notag \\
&\leq &\frac{1}{4}\int_{\mathbb{R}^{3}}\left\vert \nabla u\right\vert ^{2}dx+%
\frac{\lambda }{16}\int_{\mathbb{R}^{3}}\phi _{u,v}\left( u^{2}+v^{2}\right)
dx  \label{2-0}
\end{eqnarray}%
and%
\begin{eqnarray}
\frac{\sqrt{\lambda }}{4}\int_{\mathbb{R}^{3}}(u^{2}\left\vert v\right\vert
+\left\vert v\right\vert ^{3})dx &=&\frac{\sqrt{\lambda }}{4}\int_{\mathbb{R}%
^{3}}\left( -\Delta \phi _{u,v}\right) \left\vert v\right\vert dx  \notag \\
&=&\frac{\sqrt{\lambda }}{4}\int_{\mathbb{R}^{3}}\left\langle \nabla \phi
_{u,v},\nabla \left\vert v\right\vert \right\rangle dx  \notag \\
&\leq &\frac{1}{4}\int_{\mathbb{R}^{3}}\left\vert \nabla v\right\vert ^{2}dx+%
\frac{\lambda }{16}\int_{\mathbb{R}^{3}}\phi _{u,v}\left( u^{2}+v^{2}\right)
dx  \label{2-00}
\end{eqnarray}%
for all $\left( u,v\right) \in H_{r},$ which imply that%
\begin{eqnarray}
J_{\lambda ,\beta }(u,v) &=&\frac{1}{2}\left\Vert \left( u,v\right)
\right\Vert _{H}^{2}+\frac{\lambda }{4}\int_{\mathbb{R}^{3}}\phi
_{u,v}\left( u^{2}+v^{2}\right) dx  \notag \\
&&-\frac{1}{p}\int_{\mathbb{R}^{3}}\left( \left\vert u\right\vert
^{p}+\left\vert v\right\vert ^{p}+2\beta \left\vert u\right\vert ^{\frac{p}{2%
}}\left\vert v\right\vert ^{\frac{p}{2}}\right) dx  \notag \\
&\geq &\frac{1}{4}\left\Vert \left( u,v\right) \right\Vert _{H}^{2}+\frac{1}{%
4}\int_{\mathbb{R}^{3}}(u^{2}+v^{2})dx+\frac{\lambda }{8}\int_{\mathbb{R}%
^{3}}\phi _{u,v}\left( u^{2}+v^{2}\right) dx  \notag \\
&&+\frac{\sqrt{\lambda }}{4}\int_{\mathbb{R}^{3}}(\left\vert u\right\vert
^{3}+\left\vert v\right\vert ^{3})dx-\frac{1}{p}\int_{\mathbb{R}^{3}}\left(
\left\vert u\right\vert ^{p}+\left\vert v\right\vert ^{p}+2\beta \left\vert
u\right\vert ^{\frac{p}{2}}\left\vert v\right\vert ^{\frac{p}{2}}\right) dx
\notag \\
&=&\frac{1}{4}\left\Vert \left( u,v\right) \right\Vert _{H}^{2}+\frac{%
\lambda }{8}\int_{\mathbb{R}^{3}}\phi _{u,v}\left( u^{2}+v^{2}\right) dx
\notag \\
&&+\int_{\mathbb{R}^{3}}\left( \frac{1}{4}u^{2}+\frac{\sqrt{\lambda }}{4}%
\left\vert u\right\vert ^{3}-\frac{1+\beta }{p}\left\vert u\right\vert
^{p}\right) dx  \notag \\
&&+\int_{\mathbb{R}^{3}}\left( \frac{1}{4}v^{2}+\frac{\sqrt{\lambda }}{4}%
\left\vert v\right\vert ^{3}-\frac{1+\beta }{p}\left\vert v\right\vert
^{p}\right) dx.  \label{2-1}
\end{eqnarray}%
Then we have the following results.

\begin{lemma}
\label{m5}Let $2<p<3,\lambda >0$ and $\beta \geq 0.$ Then $J_{\lambda ,\beta
}$ is coercive and bounded below on $H_{r}.$
\end{lemma}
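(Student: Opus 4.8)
\textbf{Proof plan for Lemma \ref{m5}.}

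The strategy is to exploit the lower bound \eqref{2-1}, which reduces the problem to a scalar estimate on each component separately. After dropping the manifestly nonnegative terms $\frac14\|(u,v)\|_H^2$ and $\frac{\lambda}{8}\int_{\mathbb R^3}\phi_{u,v}(u^2+v^2)\,dx$, it suffices to bound below the two one-variable integrals
\begin{equation*}
\int_{\mathbb R^3}\Bigl(\tfrac14 w^2+\tfrac{\sqrt{\lambda}}{4}|w|^3-\tfrac{1+\beta}{p}|w|^p\Bigr)dx,
\qquad w\in\{u,v\},
\end{equation*}
and to show the full right-hand side is coercive in $\|(u,v)\|_H$. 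Since $2<p<3$, we have $2<p<3$ strictly between the exponents $2$ and $3$ appearing in the quadratic and cubic terms, so pointwise in $t\ge 0$ the function $\tfrac14 t^2+\tfrac{\sqrt\lambda}{4}t^3-\tfrac{1+\beta}{p}t^p$ is bounded below by a constant $-C_0=-C_0(\lambda,\beta,p)$: indeed for $t$ near $0$ the $t^2$ term dominates $t^p$, for $t$ large the $t^3$ term dominates, and the minimum over $t\in[0,\infty)$ is finite and attained. This already gives $J_{\lambda,\beta}(u,v)\ge \tfrac14\|(u,v)\|_H^2-2C_0|\{\cdots\}|$... but the domain has infinite measure, so a purely pointwise bound is not enough; one must instead interpolate.

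The cleaner route is the following. Fix $w\in H^1_{rad}(\mathbb R^3)$. By Young's inequality, for any $\varepsilon>0$ there is $C_\varepsilon>0$ with $|w|^p\le \varepsilon|w|^3+C_\varepsilon|w|^2$ for all values of $w$, because $2<p<3$ (write $|w|^p=|w|^{3\theta}|w|^{2(1-\theta)}$ with $\theta=p-2\in(0,1)$ and apply weighted Young). Choosing $\varepsilon$ so that $\frac{1+\beta}{p}\varepsilon\le\frac{\sqrt\lambda}{4}$, we get
\begin{equation*}
\tfrac14 w^2+\tfrac{\sqrt\lambda}{4}|w|^3-\tfrac{1+\beta}{p}|w|^p
\;\ge\; \tfrac14 w^2-\tfrac{1+\beta}{p}C_\varepsilon\, w^2
\end{equation*}
pointwise, hence after integration $\int_{\mathbb R^3}(\tfrac14 w^2+\tfrac{\sqrt\lambda}{4}|w|^3-\tfrac{1+\beta}{p}|w|^p)\,dx\ge -C_1\|w\|_{L^2}^2$ for a constant $C_1=C_1(\lambda,\beta,p)$ independent of $w$. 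Plugging both $w=u$ and $w=v$ into \eqref{2-1} and using $\|u\|_{L^2}^2+\|v\|_{L^2}^2\le\|(u,v)\|_H^2$ yields
\begin{equation*}
J_{\lambda,\beta}(u,v)\;\ge\;\tfrac14\|(u,v)\|_H^2-C_1\|(u,v)\|_H^2
\end{equation*}
which is not yet coercive. To fix the sign of the leading coefficient I would not throw away the whole $\frac14\|(u,v)\|_H^2$: I keep a fraction, say $\frac14\int(|\nabla u|^2+|\nabla v|^2)$, and absorb the $L^2$ deficit differently. Actually the honest fix is to use Young's inequality with a gradient term: here the radial setting enters through Strauss's inequality, $\|w\|_{L^\infty(|x|\ge R)}\le C R^{-1}\|w\|_{H^1}$, which lets one split $\mathbb R^3$ into a large ball $B_R$ (where $w^2\lesssim R^3\|w\|_\infty^2$ cannot help, so instead one uses that on $B_R$ the term $|w|^p\le |B_R|^{1-p/3}\|w\|_{L^3(B_R)}^p$... ) — more simply, away from the origin $|w|$ is small, so $|w|^p\le |w|^3$ there, while on $B_R$ one has $\int_{B_R}|w|^p\le C(R)\|w\|_{L^3}^{p}\le \eta\|w\|_{L^3}^3+C(R,\eta)$. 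In either case one arrives at
\begin{equation*}
\int_{\mathbb R^3}\tfrac{1+\beta}{p}|w|^p\,dx\;\le\;\tfrac{\sqrt\lambda}{4}\int_{\mathbb R^3}|w|^3\,dx+C_2,
\end{equation*}
with $C_2$ depending only on $\lambda,\beta,p$ and bounded sets being mapped to bounded sets. Combining this absorption with \eqref{2-1} gives
\begin{equation*}
J_{\lambda,\beta}(u,v)\;\ge\;\tfrac14\|(u,v)\|_H^2-2C_2,
\end{equation*}
which is simultaneously bounded below (by $-2C_2$) and coercive (since $\|(u,v)\|_H\to\infty$ forces $J_{\lambda,\beta}(u,v)\to\infty$).

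I expect the main obstacle to be precisely the step of controlling $\int|w|^p$ by $\int|w|^3$ plus a constant uniformly on $H_r$: a naive pointwise Young inequality only trades $|w|^p$ for $|w|^3+|w|^2$, and the residual $\|w\|_{L^2}^2$ can be as large as $\|(u,v)\|_H^2$, which would kill coercivity. The radial restriction is what saves the day — Strauss's decay $|w(x)|\le C|x|^{-1}\|w\|_{H^1}$ makes $|w|$ pointwise small outside a fixed ball (depending only on the bound one is proving coercivity relative to), so that $|w|^{p}\le|w|^3$ there, while on the fixed ball the compact Sobolev embedding $H^1_{rad}(B_R)\hookrightarrow L^p(B_R)$ together with the elementary inequality $ab\le \eta a^{3/p}\cdot(\text{const})+\cdots$ lets one absorb the bounded-ball contribution into $\frac{\sqrt\lambda}{4}\int|w|^3$ at the cost of an additive constant. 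This is the only place where $H_r$ (as opposed to $H$) is genuinely used, and it is the reason the statement is confined to the radial space. Once that absorption is in hand, the rest is bookkeeping with \eqref{2-1}.
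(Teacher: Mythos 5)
Your proposal starts from the right place --- inequality \eqref{2-1} --- but the route you take from there breaks down, and the breakdown is not a technicality. The first problem is a reversed inequality: for $2<p<3$ and $0\le t\le 1$ one has $t^{p}\ge t^{3}$, not $t^{p}\le t^{3}$, so the claim that ``away from the origin $|w|$ is small, so $|w|^{p}\le |w|^{3}$ there'' is backwards; where $|w|$ is small the cubic term is useless against $|w|^{p}$, and only the quadratic term helps, and only below a threshold depending on $\lambda,\beta,p$. Consequently the key absorption you aim for, $\frac{1+\beta}{p}\int_{\mathbb{R}^{3}}|w|^{p}dx\le \frac{\sqrt{\lambda}}{4}\int_{\mathbb{R}^{3}}|w|^{3}dx+C_{2}$ with $C_{2}$ uniform over $H^{1}_{rad}(\mathbb{R}^{3})$, is false: take $w=s\,\psi(\cdot/R)$ with $\psi$ a fixed radial bump and $s>0$ fixed but small enough that $\frac{1+\beta}{p}s^{p}\|\psi\|_{p}^{p}>\frac{\sqrt{\lambda}}{4}s^{3}\|\psi\|_{3}^{3}$ (possible since $s^{p-3}\to\infty$ as $s\to 0^{+}$); both sides then scale like $R^{3}$ and their difference tends to $+\infty$, so no additive constant can repair it. A second, related problem is that Strauss's inequality $|w(x)|\le c_{0}|x|^{-1}\|w\|_{H^{1}}$ only makes $|w|$ small outside a ball of radius comparable to $\|w\|_{H^{1}}$, which is precisely the quantity being sent to infinity in a coercivity proof, so there is no ``fixed ball'' decomposition available.

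The deeper gap is that you discard the retained Coulomb term $\frac{\lambda}{8}\int_{\mathbb{R}^{3}}\phi_{u,v}(u^{2}+v^{2})dx$ at the outset, and without it the bound you are trying to prove for the remaining expression is simply not true: the pointwise function $t\mapsto \frac{1}{2}t^{2}+\frac{\sqrt{\lambda}}{4}t^{3}-\frac{1+\beta}{p}t^{p}$ is negative on a nonempty interval $(s_{1},s_{2})$ with $s_{1}>0$ for suitable parameters (e.g.\ $\beta$ large), and plateau-like radial functions $w\approx s\chi_{B_{R}}$ with $s\in(s_{1},s_{2})$ make $\frac{1}{4}\|w\|_{H^{1}}^{2}+\int_{\mathbb{R}^{3}}\bigl(\frac{1}{4}w^{2}+\frac{\sqrt{\lambda}}{4}|w|^{3}-\frac{1+\beta}{p}|w|^{p}\bigr)dx$ tend to $-\infty$ like $-R^{3}$, the gradient only costing $O(R^{2})$. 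The paper's proof (the argument of Ruiz, reproduced in detail in the proof of Theorem \ref{AT-1}) keeps the Coulomb term precisely to rule this out: the integrand is negative only where the function takes values in the bounded interval $(s_{1},s_{2})$, Strauss's inequality bounds the outer radius $\rho$ of that set in terms of $(|D^{(1)}|+|D^{(2)}|)^{1/2}$, and the lower bound $\int_{\mathbb{R}^{3}}\phi_{u,v}(u^{2}+v^{2})dx\gtrsim s_{1}^{4}|D|^{2}/\rho$ then forces the bad set to have uniformly bounded measure. Your proposal contains none of this mechanism, so the gap is a missing idea rather than a fixable computation.
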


\begin{proof}
By $\left( \ref{2-1}\right) $ and applying the argument in Ruiz \cite[%
Theorem 4.3]{R1}, $J_{\lambda ,\beta }$ is coercive on $H_{r}$ and there
exists $M>0$ such that%
\begin{equation*}
\inf_{\left( u,v\right) \in H_{r}}J_{\lambda ,\beta }(u,v)\geq -M.
\end{equation*}%
This completes the proof.
\end{proof}

\section{Proofs of Theorems \protect\ref{t0} and \protect\ref{t1}}

\textbf{We are now ready to prove Theorem \ref{t0}. }By Theorem \ref{AT-1},
there exists $\left( u_{\lambda ,\beta }^{\left( 2\right) },v_{\lambda
,\beta }^{\left( 2\right) }\right) \in H_{r}\setminus \left\{ \left(
0,0\right) \right\} $ such that
\begin{equation*}
\frac{\frac{1}{p}\int_{\mathbb{R}^{3}}F_{\beta }\left( u_{\lambda ,\beta
}^{\left( 2\right) },v_{\lambda ,\beta }^{\left( 2\right) }\right) dx-\frac{1%
}{2}\left\Vert \left( u_{\lambda ,\beta }^{\left( 2\right) },v_{\lambda
,\beta }^{\left( 2\right) }\right) \right\Vert _{H}^{2}}{\int_{\mathbb{R}%
^{3}}\phi _{u_{\lambda ,\beta }^{\left( 2\right) },v_{\lambda ,\beta
}^{\left( 2\right) }}\left( \left[ u_{\lambda ,\beta }^{\left( 2\right) }%
\right] ^{2}+\left[ v_{\lambda ,\beta }^{\left( 2\right) }\right]
^{2}\right) dx}=\Lambda (\beta ).
\end{equation*}%
It follows that%
\begin{equation*}
\frac{\left\langle J_{4\Lambda \left( \beta \right) ,\beta }^{\prime }\left(
u_{\lambda ,\beta }^{\left( 2\right) },v_{\lambda ,\beta }^{\left( 2\right)
}\right) ,\left( \phi ,\psi \right) \right\rangle }{\int_{\mathbb{R}%
^{3}}\phi _{u_{\lambda ,\beta }^{\left( 2\right) },v_{\lambda ,\beta
}^{\left( 2\right) }}\left( \left[ u_{\lambda ,\beta }^{\left( 2\right) }%
\right] ^{2}+\left[ v_{\lambda ,\beta }^{\left( 2\right) }\right]
^{2}\right) dx}=0\text{ for all }\left( \phi ,\psi \right) \in
H_{r}\setminus \{\left( 0,0\right) \}.
\end{equation*}%
Moreover, by Palais\ criticality principle \cite{P}, we have
\begin{equation*}
\frac{\left\langle J_{4\Lambda \left( \beta \right) ,\beta }^{\prime }\left(
u_{\lambda ,\beta }^{\left( 2\right) },v_{\lambda ,\beta }^{\left( 2\right)
}\right) ,\left( \phi ,\psi \right) \right\rangle }{\int_{\mathbb{R}%
^{3}}\phi _{u_{\lambda ,\beta }^{\left( 2\right) },v_{\lambda ,\beta
}^{\left( 2\right) }}\left( \left[ u_{\lambda ,\beta }^{\left( 2\right) }%
\right] ^{2}+\left[ v_{\lambda ,\beta }^{\left( 2\right) }\right]
^{2}\right) dx}=0\text{ for all }\left( \phi ,\psi \right) \in H\setminus
\{\left( 0,0\right) \}.
\end{equation*}%
Hence, $\left( u_{\lambda ,\beta }^{\left( 2\right) },v_{\lambda ,\beta
}^{\left( 2\right) }\right) $ is a critical point of $J_{4\Lambda \left(
\beta \right) ,\beta }$ for $\beta \geq 0$ and $J_{4\Lambda \left( \beta
\right) ,\beta }\left( u_{\lambda ,\beta }^{\left( 2\right) },v_{\lambda
,\beta }^{\left( 2\right) }\right) =0,$ then so is $\left( \left\vert
u_{\lambda ,\beta }^{\left( 2\right) }\right\vert ,\left\vert v_{\lambda
,\beta }^{\left( 2\right) }\right\vert \right) .$ Thus, we may assume that $%
\left( u_{\lambda ,\beta }^{\left( 2\right) },v_{\lambda ,\beta }^{\left(
2\right) }\right) $ is a nonnegative nontrivial critical point of $%
J_{\lambda ,\beta }.$ Next, we claim that $u_{\lambda ,\beta }^{\left(
2\right) }\neq 0$ and $v_{\lambda ,\beta }^{\left( 2\right) }\neq 0$ for $%
\beta >0.$ If not, we may assume that $v_{\lambda ,\beta }^{\left( 2\right)
}\equiv 0.$ Then by Lemma \ref{L2-2}, there exists $s_{0}\in \left(
0,1\right) $ such that $\left( \sqrt{s_{0}}u_{\lambda ,\beta }^{\left(
2\right) },\sqrt{1-s_{0}}u_{\lambda ,\beta }^{\left( 2\right) }\right) \in
H_{r}$ and%
\begin{equation*}
J_{4\Lambda \left( \beta \right) ,\beta }\left( \sqrt{s_{0}}u_{\lambda
,\beta }^{\left( 2\right) },\sqrt{1-s_{0}}u_{\lambda ,\beta }^{\left(
2\right) }\right) <J_{4\Lambda \left( \beta \right) ,\beta }\left(
u_{\lambda ,\beta }^{\left( 2\right) },0\right) =J_{4\Lambda \left( \beta
\right) ,\beta }\left( 0,u_{\lambda ,\beta }^{\left( 2\right) }\right)
=\alpha _{4\Lambda \left( \beta \right) ,\beta },
\end{equation*}%
which is a contradiction. Moreover, it follows from the Sobolev embedding
theorem that%
\begin{eqnarray*}
J_{4\Lambda \left( \beta \right) ,\beta }(u,v) &\geq &\frac{1}{2}\left\Vert
\left( u,v\right) \right\Vert _{H}^{2}-\frac{C_{\beta }}{p}\int_{\mathbb{R}%
^{3}}(\left\vert u\right\vert ^{p}+\left\vert v\right\vert ^{p})dx \\
&\geq &\frac{1}{2}\left\Vert \left( u,v\right) \right\Vert _{H}^{2}-\frac{%
C_{\beta }}{pS_{p}^{p}}\left\Vert \left( u,v\right) \right\Vert _{H}^{p}%
\text{ for all }\left( u,v\right) \in H_{r},
\end{eqnarray*}%
which implies that there exist $\eta ,\kappa >0$ such that $\Vert \left(
u,v\right) \Vert _{H}>\eta $ and%
\begin{equation*}
\max \{J_{4\Lambda \left( \beta \right) ,\beta }(0,0),J_{4\Lambda \left(
\beta \right) ,\beta }\left( u_{\lambda ,\beta }^{\left( 2\right)
},v_{\lambda ,\beta }^{\left( 2\right) }\right) \}=0<\kappa \leq \inf_{\Vert
\left( u,v\right) \Vert _{H}=\eta }J_{4\Lambda \left( \beta \right) ,\beta
}(u,v).
\end{equation*}%
Define%
\begin{equation*}
\theta _{4\Lambda \left( \beta \right) ,\beta }=\inf_{\gamma \in \Gamma
}\max_{0\leq \tau \leq 1}J_{4\Lambda \left( \beta \right) ,\beta }(\gamma
(\tau )),
\end{equation*}%
where $\Gamma =\left\{ \gamma \in C([0,1],H_{r}):\gamma (0)=\left(
0,0\right) ,\gamma (1)=\left( u_{\lambda ,\beta }^{\left( 2\right)
},v_{\lambda ,\beta }^{\left( 2\right) }\right) \right\} .$ Then by the
mountain pass theorem \cite{E2,R} and Palais\ criticality principle, there
exists a sequence $\{\left( u_{n},v_{n}\right) \}\subset H_{r}$ such that
\begin{equation*}
J_{4\Lambda \left( \beta \right) ,\beta }\left( u_{n},v_{n}\right)
\rightarrow \theta _{4\Lambda \left( \beta \right) ,\beta }\geq \kappa \quad
\text{and}\quad \Vert J_{4\Lambda \left( \beta \right) ,\beta }^{\prime
}\left( u_{n},v_{n}\right) \Vert _{H^{-1}}\rightarrow 0\quad \text{as}\
n\rightarrow \infty ,
\end{equation*}%
and using an argument similar to that in \cite[Theorem 4.3]{R1}, there exist
a subsequence $\{\left( u_{n},v_{n}\right) \}$ and $\left( u_{\lambda ,\beta
}^{\left( 1\right) },v_{\lambda ,\beta }^{\left( 1\right) }\right) \in
H_{r}\setminus \{\left( 0,0\right) \}$ such that $\left( u_{n},v_{n}\right)
\rightarrow \left( u_{\lambda ,\beta }^{\left( 1\right) },v_{\lambda ,\beta
}^{\left( 1\right) }\right) $ strongly in $H_{r}$ and $\left( u_{\lambda
,\beta }^{\left( 1\right) },v_{\lambda ,\beta }^{\left( 1\right) }\right) $
is a solution of System $(E_{4\Lambda \left( \beta \right) ,\beta }).$ This
indicates that
\begin{equation*}
J_{4\Lambda \left( \beta \right) ,\beta }\left( u_{\lambda ,\beta }^{\left(
1\right) },v_{\lambda ,\beta }^{\left( 1\right) }\right) =\theta _{4\Lambda
\left( \beta \right) ,\beta }\geq \kappa >0.
\end{equation*}%
The proof is complete.

\textbf{We are now ready to prove Theorem \ref{t1}. }$\left( i\right) $ By
Theorem \ref{AT-1}, there exists $\left( u_{0},v_{0}\right) \in
H_{r}\setminus \{\left( 0,0\right) \}$ such that
\begin{equation*}
\frac{\frac{1}{p}\int_{\mathbb{R}^{3}}F_{\beta }\left( u_{0},v_{0}\right) dx-%
\frac{1}{2}\left\Vert \left( u_{0},v_{0}\right) \right\Vert _{H}^{2}}{\int_{%
\mathbb{R}^{3}}\phi _{u_{0},v_{0}}\left( u_{0}^{2}+v_{0}^{2}\right) dx}%
=\Lambda (\beta ).
\end{equation*}%
This implies that for each $\lambda <4\Lambda \left( \beta \right) ,$
\begin{equation}
J_{\lambda ,\beta }\left( u_{0},v_{0}\right) =\frac{1}{2}\left\Vert \left(
u_{0},v_{0}\right) \right\Vert _{H}^{2}+\frac{\lambda }{4}\int_{\mathbb{R}%
^{3}}\phi _{u_{0},v_{0}}\left( u_{0}^{2}+v_{0}^{2}\right) dx-\frac{1}{p}%
\int_{\mathbb{R}^{3}}F_{\beta }\left( u_{0},v_{0}\right) dx<0.  \label{3-1}
\end{equation}%
Using $\left( \ref{3-1}\right) $, together with Lemma \ref{m5}, we have%
\begin{equation*}
-\infty <\alpha _{\lambda ,\beta }:=\inf_{\left( u,v\right) \in
H_{r}}J_{\lambda ,\beta }(u,v)<0.
\end{equation*}%
Then by the Ekeland variational principle \cite{E} and Palais\ criticality
principle \cite{P}, there exists a sequence $\{\left( u_{n},v_{n}\right)
\}\subset H_{r}$ such that
\begin{equation*}
J_{\lambda ,\beta }(u_{n},v_{n})=\alpha _{\lambda ,\beta }+o(1)\text{ and }%
J_{\lambda ,\beta }^{\prime }(u_{n},v_{n})=o(1)\text{ in }H^{-1}.
\end{equation*}%
Again, adopting the argument used in \cite[Theorem 4.3]{R1}, there exist a
subsequence $\{\left( u_{n},v_{n}\right) \}$ and $\left( u_{\lambda ,\beta
}^{\left( 2\right) },v_{\lambda ,\beta }^{\left( 2\right) }\right) \in
H_{r}\setminus \{\left( 0,0\right) \}$ such that $\left( u_{n},v_{n}\right)
\rightarrow \left( u_{\lambda ,\beta }^{\left( 2\right) },v_{\lambda ,\beta
}^{\left( 2\right) }\right) $ strongly in $H_{r}$ and $\left( u_{\lambda
,\beta }^{\left( 2\right) },v_{\lambda ,\beta }^{\left( 2\right) }\right) $
is a nontrivial critical point of $J_{\lambda ,\beta }.$ This indicates that
\begin{equation*}
J_{\lambda ,\beta }\left( u_{\lambda ,\beta }^{\left( 2\right) },v_{\lambda
,\beta }^{\left( 2\right) }\right) =\alpha _{\lambda ,\beta }=\inf_{\left(
u,v\right) \in H_{r}}J_{\lambda ,\beta }(u,v)<0,
\end{equation*}%
then so is $\left( \left\vert u_{\lambda ,\beta }^{\left( 2\right)
}\right\vert ,\left\vert v_{\lambda ,\beta }^{\left( 2\right) }\right\vert
\right) .$ Thus, we may assume that $\left( u_{\lambda ,\beta }^{\left(
2\right) },v_{\lambda ,\beta }^{\left( 2\right) }\right) $ is a nonnegative
nontrivial critical point of $J_{\lambda ,\beta }$. Next, we claim that $%
u_{\lambda ,\beta }^{\left( 2\right) }\neq 0$ and $v_{\lambda ,\beta
}^{\left( 2\right) }\neq 0$ for $\beta >0.$ If not, we may assume that $%
v_{\lambda ,\beta }^{\left( 2\right) }\equiv 0.$ Then by Lemma \ref{L2-2},
there exists $s_{\lambda }\in \left( 0,1\right) $ such that $\left( \sqrt{%
s_{\lambda }}u_{\lambda ,\beta }^{\left( 2\right) },\sqrt{1-s_{\lambda }}%
u_{\lambda ,\beta }^{\left( 2\right) }\right) \in H_{r}$ and%
\begin{equation*}
J_{\lambda ,\beta }\left( \sqrt{s_{\lambda }}u_{\lambda ,\beta }^{\left(
2\right) },\sqrt{1-s_{\lambda }}u_{\lambda ,\beta }^{\left( 2\right)
}\right) <J_{\lambda ,\beta }\left( u_{\lambda ,\beta }^{\left( 2\right)
},0\right) =J_{\lambda ,\beta }\left( 0,u_{\lambda ,\beta }^{\left( 2\right)
}\right) =\alpha _{\lambda ,\beta },
\end{equation*}%
which is a contradiction. Moreover, by the Sobolev embedding theorem, we have%
\begin{eqnarray*}
J_{\lambda ,\beta }(u,v) &\geq &\frac{1}{2}\left\Vert \left( u,v\right)
\right\Vert _{H}^{2}-\frac{C_{\beta }}{p}\int_{\mathbb{R}^{3}}(\left\vert
u\right\vert ^{p}+\left\vert v\right\vert ^{p})dx \\
&\geq &\frac{1}{2}\left\Vert \left( u,v\right) \right\Vert _{H}^{2}-\frac{%
C_{\beta }}{pS_{p}^{p}}\left\Vert \left( u,v\right) \right\Vert _{H}^{p}%
\text{ for all }\left( u,v\right) \in H_{r}.
\end{eqnarray*}%
This implies that there exist $\eta ,\kappa >0$ such that $\Vert \left(
u_{\lambda ,\beta }^{\left( 2\right) },v_{\lambda ,\beta }^{\left( 2\right)
}\right) \Vert _{H}>\eta $ and%
\begin{equation*}
\max \left\{ J_{\lambda ,\beta }(0,0),J_{\lambda ,\beta }\left( u_{\lambda
,\beta }^{\left( 2\right) },v_{\lambda ,\beta }^{\left( 2\right) }\right)
\right\} =0<\kappa \leq \inf_{\Vert \left( u,v\right) \Vert _{H}=\eta
}J_{\lambda ,\beta }(u,v).
\end{equation*}%
Define%
\begin{equation*}
\theta _{\lambda ,\beta }=\inf_{\gamma \in \Gamma }\max_{0\leq \tau \leq
1}J_{\lambda ,\beta }(\gamma (\tau )),
\end{equation*}%
where $\Gamma =\left\{ \gamma \in C([0,1],H_{r}):\gamma (0)=\left(
0,0\right) ,\gamma (1)=\left( u_{\lambda ,\beta }^{\left( 2\right)
},v_{\lambda ,\beta }^{\left( 2\right) }\right) \right\} .$ Then by the
mountain pass theorem \cite{E2,R} and Palais\ criticality principle, there
exists a sequence $\{\left( u_{n},v_{n}\right) \}\subset H_{r}$ such that
\begin{equation*}
J_{\lambda ,\beta }\left( u_{n},v_{n}\right) \rightarrow \theta _{\lambda
,\beta }\geq \kappa \quad \text{and}\quad \Vert J_{\lambda ,\beta }^{\prime
}\left( u_{n},v_{n}\right) \Vert _{H^{-1}}\rightarrow 0\quad \text{as}\
n\rightarrow \infty ,
\end{equation*}%
and using an argument similar to that in \cite[Theorem 4.3]{R1}, there exist
a subsequence $\{\left( u_{n},v_{n}\right) \}$ and $\left( u_{\lambda ,\beta
}^{\left( 1\right) },v_{\lambda ,\beta }^{\left( 1\right) }\right) \in
H_{r}\setminus \{\left( 0,0\right) \}$ such that $\left( u_{n},v_{n}\right)
\rightarrow \left( u_{\lambda ,\beta }^{\left( 1\right) },v_{\lambda ,\beta
}^{\left( 1\right) }\right) $ strongly in $H_{r}$ and $\left( u_{\lambda
,\beta }^{\left( 1\right) },v_{\lambda ,\beta }^{\left( 1\right) }\right) $
is a solution of System $(E_{\lambda ,\beta }).$ This indicates that
\begin{equation*}
J_{\lambda ,\beta }\left( u_{\lambda ,\beta }^{\left( 1\right) },v_{\lambda
,\beta }^{\left( 1\right) }\right) =\theta _{\lambda ,\beta }\geq \kappa >0.
\end{equation*}%
\newline
$\left( ii\right) $ Suppose on the contrary. Let $\left( u_{0},v_{0}\right) $
be a nontrivial solution of System $(E_{\lambda ,\beta }).$ Then according
to the definition of $\overline{\Lambda }\left( \beta \right) ,$ for $\beta
\geq 0$ and $\lambda >\overline{\Lambda }\left( \beta \right) ,$ we have%
\begin{eqnarray*}
0 &=&\left\Vert \left( u_{0},v_{0}\right) \right\Vert _{H}^{2}+\lambda \int_{%
\mathbb{R}^{3}}\phi _{u_{0},v_{0}}\left( u_{0}^{2}+v_{0}^{2}\right) dx-\int_{%
\mathbb{R}^{3}}F_{\beta }\left( u_{0},v_{0}\right) dx \\
&>&\left\Vert \left( u_{0},v_{0}\right) \right\Vert _{H}^{2}+\overline{%
\Lambda }\left( \beta \right) \int_{\mathbb{R}^{3}}\phi _{u_{0},v_{0}}\left(
u_{0}^{2}+v_{0}^{2}\right) dx-\int_{\mathbb{R}^{3}}F_{\beta }\left(
u_{0},v_{0}\right) dx\geq 0,
\end{eqnarray*}%
which is a contradiction. The proof is complete.

\section{Proof of Theorem \protect\ref{t2}}

Define the Nehari manifold
\begin{equation*}
\mathbf{M}_{\lambda ,\beta }:=\{\left( u,v\right) \in H\backslash \{\left(
0,0\right) \}:\left\langle J_{\lambda ,\beta }^{\prime }\left( u,v\right)
,\left( u,v\right) \right\rangle =0\}.
\end{equation*}%
Then $u\in \mathbf{M}_{\lambda ,\beta }$ if and only if
\begin{equation*}
\left\Vert \left( u,v\right) \right\Vert _{H}^{2}+\lambda \int_{\mathbb{R}%
^{3}}\phi _{u,v}\left( u^{2}+v^{2}\right) dx-\int_{\mathbb{R}^{3}}\left(
|u|^{p}+\left\vert v\right\vert ^{p}+2\beta |u|^{\frac{^{p}}{2}}\left\vert
v\right\vert ^{\frac{p}{2}}\right) dx=0.
\end{equation*}%
It follows the Sobolev and Young inequalities that%
\begin{eqnarray*}
\left\Vert \left( u,v\right) \right\Vert _{H}^{2} &\leq &\left\Vert \left(
u,v\right) \right\Vert _{H}^{2}+\lambda \int_{\mathbb{R}^{3}}\phi
_{u,v}\left( u^{2}+v_{2}\right) dx \\
&=&\int_{\mathbb{R}^{3}}\left( |u|^{p}+\left\vert v\right\vert ^{p}+2\beta
|u|^{\frac{^{p}}{2}}\left\vert v\right\vert ^{\frac{p}{2}}\right) dx \\
&\leq &C_{\beta }\left\Vert \left( u,v\right) \right\Vert _{H}^{p}\text{ for
all }u\in \mathbf{M}_{\lambda ,\beta }.
\end{eqnarray*}%
So it leads to
\begin{equation}
\left\Vert \left( u,v\right) \right\Vert _{H}\geq C_{\beta }^{-1/\left(
p-2\right) }\text{ for all }u\in \mathbf{M}_{\lambda ,\beta }.  \label{2-2}
\end{equation}

The Nehari manifold $\mathbf{M}_{\lambda ,\beta }$ is closely linked to the
behavior of the function of the form $h_{\lambda ,\left( u,v\right)
}:t\rightarrow J_{\lambda ,\beta }\left( tu,tv\right) $ for $t>0.$ Such maps
are known as fibering maps introduced by Dr\'{a}bek-Pohozaev \cite{DP}, and
were further discussed by Brown-Zhang \cite{BZ} and Brown-Wu \cite{BW1,BW2}.
For $\left( u,v\right) \in H,$ we find that%
\begin{eqnarray*}
h_{\lambda ,\left( u,v\right) }\left( t\right) &=&\frac{t^{2}}{2}\left\Vert
\left( u,v\right) \right\Vert _{H}^{2}+\frac{\lambda t^{4}}{4}\int_{\mathbb{R%
}^{3}}\phi _{u,v}\left( u^{2}+v^{2}\right) dx-\frac{t^{p}}{p}\int_{\mathbb{R}%
^{3}}F_{\beta }\left( u,v\right) dx, \\
h_{\lambda ,\left( u,v\right) }^{\prime }\left( t\right) &=&t\left\Vert
\left( u,v\right) \right\Vert _{H}^{2}+\lambda t^{3}\int_{\mathbb{R}%
^{3}}\phi _{u,v}\left( u^{2}+v^{2}\right) dx-t^{p-1}\int_{\mathbb{R}%
^{3}}F_{\beta }\left( u,v\right) dx, \\
h_{\lambda ,\left( u,v\right) }^{\prime \prime }\left( t\right)
&=&\left\Vert \left( u,v\right) \right\Vert _{H}^{2}+3\lambda t^{2}\int_{%
\mathbb{R}^{3}}\phi _{u,v}\left( u^{2}+v^{2}\right) dx-\left( p-1\right)
t^{p-2}\int_{\mathbb{R}^{3}}F_{\beta }\left( u,v\right) dx.
\end{eqnarray*}%
A direct calculation shows that
\begin{equation*}
th_{\lambda ,\left( u,v\right) }^{\prime }\left( t\right) =\left\Vert \left(
tu,tv\right) \right\Vert _{H}^{2}+\lambda \int_{\mathbb{R}^{3}}\phi
_{tu,tv}\left( t^{2}u^{2}+t^{2}v^{2}\right) dx-\int_{\mathbb{R}^{3}}F_{\beta
}\left( tu,tv\right) dx
\end{equation*}%
and so, for $\left( u,v\right) \in H\backslash \left\{ \left( 0,0\right)
\right\} $ and $t>0,$ $h_{\lambda ,\left( u,v\right) }^{\prime }\left(
t\right) =0$ holds if and only if $\left( tu,tv\right) \in \mathbf{M}%
_{\lambda ,\beta }$. In particular, $h_{\lambda ,\left( u,v\right) }^{\prime
}\left( 1\right) =0$ holds if and only if $\left( u,v\right) \in \mathbf{M}%
_{\lambda ,\beta }.$ It becomes natural to split $\mathbf{M}_{\lambda ,\beta
}$ into three parts corresponding to the local minima, local maxima and
points of inflection. Following \cite{T}, we define
\begin{eqnarray*}
\mathbf{M}_{\lambda ,\beta }^{+} &=&\{u\in \mathbf{M}_{\lambda ,\beta
}:h_{\lambda ,\left( u,v\right) }^{\prime \prime }\left( 1\right) >0\}, \\
\mathbf{M}_{\lambda ,\beta }^{0} &=&\{u\in \mathbf{M}_{\lambda ,\beta
}:h_{\lambda ,\left( u,v\right) }^{\prime \prime }\left( 1\right) =0\}, \\
\mathbf{M}_{\lambda ,\beta }^{-} &=&\{u\in \mathbf{M}_{\lambda ,\beta
}:h_{\lambda ,\left( u,v\right) }^{\prime \prime }\left( 1\right) <0\}.
\end{eqnarray*}

\begin{lemma}
\label{g2}Suppose that $\left( u_{0},v_{0}\right) $ is a local minimizer for
$J_{\lambda ,\beta }$ on $\mathbf{M}_{\lambda ,\beta }$ and $\left(
u_{0},v_{0}\right) \notin \mathbf{M}_{\lambda ,\beta }^{0}.$ Then $%
J_{\lambda ,\beta }^{\prime }\left( u_{0},v_{0}\right) =0$ in $H^{-1}.$
\end{lemma}

\begin{proof}
The proof is essentially same as that in Brown-Zhang \cite[Theorem 2.3]{BZ},
so we omit it here.
\end{proof}

For each $\left( u,v\right) \in \mathbf{M}_{\lambda ,\beta },$ we find that
\begin{eqnarray}
h_{\lambda ,\left( u,v\right) }^{\prime \prime }\left( 1\right)
&=&\left\Vert \left( u,v\right) \right\Vert _{H}^{2}+3\lambda \int_{\mathbb{R%
}^{3}}\phi _{u,v}\left( u^{2}+v^{2}\right) dx-\left( p-1\right) \int_{%
\mathbb{R}^{3}}F_{\beta }\left( u,v\right) dx.  \notag \\
&=&-\left( p-2\right) \left\Vert \left( u,v\right) \right\Vert
_{H}^{2}+\lambda \left( 4-p\right) \int_{\mathbb{R}^{3}}\phi _{u,v}\left(
u^{2}+v^{2}\right) dx  \label{2-6-1} \\
&=&-2\left\Vert \left( u,v\right) \right\Vert _{H}^{2}+\left( 4-p\right)
\int_{\mathbb{R}^{3}}F_{\beta }\left( u,v\right) dx.  \label{2-6-2}
\end{eqnarray}%
For each $\left( u,v\right) \in \mathbf{M}_{\lambda ,\beta }^{-}$, using $%
\left( \ref{2-2}\right) $ and $(\ref{2-6-2})$ gives
\begin{eqnarray*}
J_{\lambda ,\beta }(u,v) &=&\frac{1}{4}\left\Vert \left( u,v\right)
\right\Vert _{H}^{2}-\frac{4-p}{4p}\int_{\mathbb{R}^{3}}F_{\beta }\left(
u,v\right) dx>\frac{p-2}{4p}\left\Vert \left( u,v\right) \right\Vert _{H}^{2}
\\
&\geq &\frac{p-2}{4p}C_{\beta }^{-1/\left( p-2\right) }>0.
\end{eqnarray*}%
For each $\left( u,v\right) \in \mathbf{M}_{\lambda ,\beta }^{+},$ by (\ref%
{2-6-1}) one has%
\begin{eqnarray*}
J_{\lambda ,\beta }(u,v) &=&\frac{p-2}{2p}\left\Vert \left( u,v\right)
\right\Vert _{H}^{2}-\frac{\lambda (4-p)}{4p}\int_{\mathbb{R}^{3}}\phi
_{u,v}\left( u^{2}+v^{2}\right) dx \\
&<&\frac{p-2}{4p}\left\Vert \left( u,v\right) \right\Vert _{H}^{2}.
\end{eqnarray*}%
Hence, we have the following result.

\begin{lemma}
\label{g5}The energy functional $J_{\lambda ,\beta }$ is coercive and
bounded below on $\mathbf{M}_{\lambda ,\beta }^{-}.$ Furthermore, for all $%
u\in \mathbf{M}_{\lambda ,\beta }^{-}$, there holds
\begin{equation*}
J_{\lambda ,\beta }(u,v)>\frac{p-2}{4p}C_{\beta }^{-1/\left( p-2\right) }>0.
\end{equation*}
\end{lemma}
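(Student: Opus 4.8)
The plan is entirely bookkeeping: the lemma will follow by feeding the defining inequality of $\mathbf{M}_{\lambda,\beta}^{-}$ into the alternative form of the energy on the Nehari manifold and then invoking the uniform bound $(\ref{2-2})$.

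First I would fix $(u,v)\in\mathbf{M}_{\lambda,\beta}^{-}$. By definition $h_{\lambda,(u,v)}^{\prime\prime}(1)<0$, so $(\ref{2-6-2})$ gives $-2\left\Vert(u,v)\right\Vert_{H}^{2}+(4-p)\int_{\mathbb{R}^{3}}F_{\beta}(u,v)\,dx<0$; since $4-p>0$ this is the same as
\[
\int_{\mathbb{R}^{3}}F_{\beta}(u,v)\,dx<\frac{2}{4-p}\left\Vert(u,v)\right\Vert_{H}^{2}.
\]
Next I would eliminate the Coulomb term from the energy by means of the Nehari identity $\lambda\int_{\mathbb{R}^{3}}\phi_{u,v}(u^{2}+v^{2})\,dx=\int_{\mathbb{R}^{3}}F_{\beta}(u,v)\,dx-\left\Vert(u,v)\right\Vert_{H}^{2}$, which yields $J_{\lambda,\beta}(u,v)=\frac{1}{4}\left\Vert(u,v)\right\Vert_{H}^{2}-\frac{4-p}{4p}\int_{\mathbb{R}^{3}}F_{\beta}(u,v)\,dx$ (this is precisely the display recorded just above the statement). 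Combining the two, and using $2<p<4$ so that $\frac{p-2}{4p}>0$,
\[
J_{\lambda,\beta}(u,v)>\frac{1}{4}\left\Vert(u,v)\right\Vert_{H}^{2}-\frac{4-p}{4p}\cdot\frac{2}{4-p}\left\Vert(u,v)\right\Vert_{H}^{2}=\frac{p-2}{4p}\left\Vert(u,v)\right\Vert_{H}^{2}.
\]

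From $J_{\lambda,\beta}(u,v)>\frac{p-2}{4p}\left\Vert(u,v)\right\Vert_{H}^{2}$ the lemma is immediate: since the factor is a fixed positive number, $J_{\lambda,\beta}$ is positive (hence bounded below) on $\mathbf{M}_{\lambda,\beta}^{-}$, and it is coercive there because $\left\Vert(u,v)\right\Vert_{H}\to\infty$ forces the right-hand side, and thus $J_{\lambda,\beta}$, to blow up. Finally the uniform lower bound $(\ref{2-2})$, $\left\Vert(u,v)\right\Vert_{H}\geq C_{\beta}^{-1/(p-2)}$, plugged into the last display gives $J_{\lambda,\beta}(u,v)>\frac{p-2}{4p}C_{\beta}^{-1/(p-2)}>0$, the stated quantitative estimate. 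I do not anticipate any real obstacle: everything rests on the fibering-map computations $(\ref{2-6-1})$--$(\ref{2-6-2})$ and on $(\ref{2-2})$, already in hand. The only care needed is in tracking signs — using $p<4$ to pass from $h^{\prime\prime}(1)<0$ to an upper bound for $\int F_{\beta}$, and $p>2$ to get $\frac{p-2}{4p}>0$ — and, if one is pedantic, in ensuring the Sobolev--Young constant $C_{\beta}$ has been taken $\geq 1$ so that the final bound may legitimately be written with $\left\Vert(u,v)\right\Vert_{H}^{2}$ replaced by $C_{\beta}^{-1/(p-2)}$.
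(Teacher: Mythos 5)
Your argument is exactly the paper's: rewrite $J_{\lambda ,\beta }$ on the Nehari manifold as $\frac{1}{4}\left\Vert \left( u,v\right) \right\Vert _{H}^{2}-\frac{4-p}{4p}\int_{\mathbb{R}^{3}}F_{\beta }\left( u,v\right) dx$, bound $\int_{\mathbb{R}^{3}}F_{\beta }\left( u,v\right) dx$ from above using $h_{\lambda ,\left( u,v\right) }^{\prime \prime }\left( 1\right) <0$ via $(\ref{2-6-2})$, and finish with $(\ref{2-2})$. One small correction to your closing parenthetical: to pass from $\frac{p-2}{4p}\left\Vert \left( u,v\right) \right\Vert _{H}^{2}$ to $\frac{p-2}{4p}C_{\beta }^{-1/\left( p-2\right) }$ using $\left\Vert \left( u,v\right) \right\Vert _{H}\geq C_{\beta }^{-1/\left( p-2\right) }$ one needs $C_{\beta }\leq 1$ (so that $C_{\beta }^{-1/\left( p-2\right) }\geq 1$ and squaring only helps), not $C_{\beta }\geq 1$; the paper silently makes the same jump, and the clean version of the estimate carries the exponent $-2/\left( p-2\right) $ instead.
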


Let $\left( u,v\right) \in \mathbf{M}_{\lambda ,\beta }$ with $J_{\lambda
,\beta }\left( u,v\right) <\frac{\left( p-2\right) ^{2}\overline{S}%
^{2}S_{12/5}^{4}}{4\lambda p(4-p)},$ we deduce that%
\begin{eqnarray*}
\frac{\left( p-2\right) ^{2}\overline{S}^{2}S_{12/5}^{4}}{4\lambda p(4-p)}
&>&J_{\lambda ,\beta }(u,v)=\frac{p-2}{2p}\left\Vert \left( u,v\right)
\right\Vert _{H}^{2}-\frac{\lambda (4-p)}{4p}\int_{\mathbb{R}^{3}}\phi
_{u,v}\left( u^{2}+v^{2}\right) dx \\
&\geq &\frac{p-2}{2p}\left\Vert \left( u,v\right) \right\Vert _{H}^{2}-\frac{%
\lambda (4-p)}{4p\overline{S}^{2}S_{12/5}^{4}}\left\Vert \left( u,v\right)
\right\Vert _{H}^{4}.
\end{eqnarray*}%
Since the function
\begin{equation*}
f\left( x\right) :=\frac{p-2}{2p}x^{2}-\frac{\lambda (4-p)}{4p\overline{S}%
^{2}S_{12/5}^{4}}x^{4}
\end{equation*}%
have the maximum at $x_{0}=\left( \frac{\left( p-2\right) \overline{S}%
^{2}S_{12/5}^{4}}{\lambda (4-p)}\right) ^{1/2},$ we have%
\begin{equation*}
\max_{x\geq 0}f\left( x\right) =f\left( x_{0}\right) =\frac{\left(
p-2\right) ^{2}\overline{S}^{2}S_{12/5}^{4}}{4\lambda p(4-p)}.
\end{equation*}%
Thus,
\begin{equation*}
\mathbf{M}_{\lambda ,\beta }\left[ \frac{\left( p-2\right) ^{2}\overline{S}%
^{2}S_{12/5}^{4}}{4\lambda p(4-p)}\right] =\mathbf{M}_{\lambda ,\beta }^{(1)}%
\left[ \frac{\left( p-2\right) ^{2}\overline{S}^{2}S_{12/5}^{4}}{4\lambda
p(4-p)}\right] \cup \mathbf{M}_{\lambda ,\beta }^{(2)}\left[ \frac{\left(
p-2\right) ^{2}\overline{S}^{2}S_{12/5}^{4}}{4\lambda p(4-p)}\right] ,
\end{equation*}%
where%
\begin{equation*}
\mathbf{M}_{\lambda ,\beta }[D]:=\left\{ u\in \mathbf{M}_{\lambda ,\beta
}:J_{\lambda ,\beta }\left( u,v\right) <D\right\} ,
\end{equation*}%
\begin{equation*}
\mathbf{M}_{\lambda ,\beta }^{(1)}[D]:=\left\{ u\in \mathbf{M}_{\lambda
,\beta }[D]:\left\Vert \left( u,v\right) \right\Vert _{H}<\left( \frac{%
\left( p-2\right) \overline{S}^{2}S_{12/5}^{4}}{\lambda (4-p)}\right)
^{1/2}\right\}
\end{equation*}%
and
\begin{equation*}
\mathbf{M}_{\lambda ,\beta }^{(2)}[D]:=\left\{ u\in \mathbf{M}_{\lambda
,\beta }[D]:\left\Vert \left( u,v\right) \right\Vert _{H}>\left( \frac{%
\left( p-2\right) \overline{S}^{2}S_{12/5}^{4}}{\lambda (4-p)}\right)
^{1/2}\right\}
\end{equation*}%
for $D>0.$ For convenience, we always set%
\begin{equation*}
\mathbf{M}_{\lambda ,\beta }^{(1)}:=\mathbf{M}_{\lambda ,\beta }^{(1)}\left[
\frac{\left( p-2\right) ^{2}\overline{S}^{2}S_{12/5}^{4}}{4\lambda p(4-p)}%
\right] \text{ and }\mathbf{M}_{\lambda ,\beta }^{(2)}:=\mathbf{M}_{\lambda
,\beta }^{(2)}\left[ \frac{\left( p-2\right) ^{2}\overline{S}^{2}S_{12/5}^{4}%
}{4\lambda p(4-p)}\right] .
\end{equation*}%
By $\left( \ref{2-6-1}\right) ,$ the Sobolev inequality and Lemma \ref{L2-3}%
, it follows that
\begin{eqnarray*}
h_{\lambda ,\left( u,v\right) }^{\prime \prime }\left( 1\right) &=&-\left(
p-2\right) \left\Vert \left( u,v\right) \right\Vert _{H}^{2}+\lambda \left(
4-p\right) \int_{\mathbb{R}^{3}}\phi _{u,v}\left( u^{2}+v^{2}\right) dx \\
&\leq &\left\Vert \left( u,v\right) \right\Vert _{H}^{2}\left[ \lambda
\overline{S}^{-2}S_{12/5}^{-4}(4-p)\left\Vert \left( u,v\right) \right\Vert
_{H}^{2}-\left( p-2\right) \right] \\
&<&0\text{ for all }u\in \mathbf{M}_{\lambda ,\beta }^{(1)}.
\end{eqnarray*}%
Using $\left( \ref{2-6-2}\right) $ we derive that
\begin{eqnarray*}
\frac{1}{4}\left\Vert \left( u,v\right) \right\Vert _{H}^{2}-\frac{4-p}{4p}%
\int_{\mathbb{R}^{3}}F_{\beta }\left( u,v\right) dx &=&J_{\lambda ,\beta
}\left( u,v\right) <\frac{\left( p-2\right) ^{2}\overline{S}^{2}S_{12/5}^{4}%
}{4p(4-p)\lambda } \\
&<&\frac{p-2}{4p}\left\Vert \left( u,v\right) \right\Vert _{H}^{2}\text{ for
all }u\in \mathbf{M}_{\lambda ,\beta }^{(2)},
\end{eqnarray*}%
which implies that if $u\in \mathbf{M}_{\lambda ,\beta }^{(2)},$ then we have%
\begin{equation*}
h_{\lambda ,\left( u,v\right) }^{\prime \prime }\left( 1\right)
=-2\left\Vert \left( u,v\right) \right\Vert _{H}^{2}+\left( 4-p\right) \int_{%
\mathbb{R}^{3}}F_{\beta }\left( u,v\right) dx>0.
\end{equation*}%
Hence, we have the following result.

\begin{lemma}
\label{g7}If $\lambda >0$ and $\beta >0,$ then $\mathbf{M}_{\lambda ,\beta
}^{(1)}\subset \mathbf{M}_{\lambda ,\beta }^{-}$ and $\mathbf{M}_{\lambda
,\beta }^{(2)}\subset \mathbf{M}_{\lambda ,\beta }^{+}$ are $C^{1}$
sub-manifolds. Furthermore, each local minimizer of the functional $%
J_{\lambda ,\beta }$ in the sub-manifolds $\mathbf{M}_{\lambda ,\beta
}^{(1)} $ and $\mathbf{M}_{\lambda ,\beta }^{(2)}$ is a critical point of $%
J_{\lambda ,\beta }$ in $H.$
\end{lemma}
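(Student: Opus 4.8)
The plan is to verify the three assertions in order: (a) the inclusions $\mathbf{M}_{\lambda,\beta}^{(1)}\subset \mathbf{M}_{\lambda,\beta}^{-}$ and $\mathbf{M}_{\lambda,\beta}^{(2)}\subset \mathbf{M}_{\lambda,\beta}^{+}$; (b) that these two sets are $C^1$ submanifolds of $H$; and (c) that a local minimizer of $J_{\lambda,\beta}$ on either set is an unconstrained critical point. Assertion (a) is in fact already established in the computations immediately preceding the lemma: for $\left(u,v\right)\in\mathbf{M}_{\lambda,\beta}^{(1)}$ the norm bound $\left\Vert\left(u,v\right)\right\Vert_H^2<\left(p-2\right)\overline{S}^2S_{12/5}^4/(\lambda(4-p))$ together with $\left(\ref{2-6-1}\right)$ and Lemma \ref{L2-3}(ii) forces $h_{\lambda,\left(u,v\right)}^{\prime\prime}\left(1\right)<0$, hence membership in $\mathbf{M}_{\lambda,\beta}^{-}$; for $\left(u,v\right)\in\mathbf{M}_{\lambda,\beta}^{(2)}$ the energy bound $J_{\lambda,\beta}\left(u,v\right)<\left(p-2\right)^2\overline{S}^2S_{12/5}^4/(4p(4-p)\lambda)$ combined with $\left(\ref{2-6-2}\right)$ yields $h_{\lambda,\left(u,v\right)}^{\prime\prime}\left(1\right)>0$, hence membership in $\mathbf{M}_{\lambda,\beta}^{+}$. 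So for (a) I would simply point back to these lines.

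For (b), the key observation is that $\mathbf{M}_{\lambda,\beta}^{(1)}$ and $\mathbf{M}_{\lambda,\beta}^{(2)}$ are relatively open subsets of $\mathbf{M}_{\lambda,\beta}^{-}$ and $\mathbf{M}_{\lambda,\beta}^{+}$ respectively — they are cut out by the strict inequalities defining $\mathbf{M}_{\lambda,\beta}^{(1)}[D]$, $\mathbf{M}_{\lambda,\beta}^{(2)}[D]$, which involve continuous functionals — so it suffices to show $\mathbf{M}_{\lambda,\beta}^{-}$ and $\mathbf{M}_{\lambda,\beta}^{+}$ are $C^1$ manifolds near any of their points. Set $G_{\lambda,\beta}\left(u,v\right):=\left\langle J_{\lambda,\beta}^{\prime}\left(u,v\right),\left(u,v\right)\right\rangle$, so that $\mathbf{M}_{\lambda,\beta}=G_{\lambda,\beta}^{-1}(0)\setminus\{\left(0,0\right)\}$, and note $G_{\lambda,\beta}$ is $C^1$. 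For $\left(u,v\right)\in\mathbf{M}_{\lambda,\beta}^{\pm}$ one computes $\left\langle G_{\lambda,\beta}^{\prime}\left(u,v\right),\left(u,v\right)\right\rangle = h_{\lambda,\left(u,v\right)}^{\prime\prime}\left(1\right)+\left\langle J_{\lambda,\beta}^{\prime}\left(u,v\right),\left(u,v\right)\right\rangle = h_{\lambda,\left(u,v\right)}^{\prime\prime}\left(1\right)\neq 0$, so $0$ is a regular value of $G_{\lambda,\beta}$ restricted to a neighborhood of $\left(u,v\right)$, and the implicit function theorem gives the $C^1$ manifold structure.

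For (c), suppose $\left(u_0,v_0\right)$ is a local minimizer of $J_{\lambda,\beta}$ on $\mathbf{M}_{\lambda,\beta}^{(1)}$ (the case of $\mathbf{M}_{\lambda,\beta}^{(2)}$ is identical). Since $\mathbf{M}_{\lambda,\beta}^{(1)}$ is relatively open in $\mathbf{M}_{\lambda,\beta}$, $\left(u_0,v_0\right)$ is also a local minimizer of $J_{\lambda,\beta}$ on all of $\mathbf{M}_{\lambda,\beta}$; moreover $\left(u_0,v_0\right)\in\mathbf{M}_{\lambda,\beta}^{-}$ by (a), so in particular $\left(u_0,v_0\right)\notin\mathbf{M}_{\lambda,\beta}^{0}$. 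Lemma \ref{g2} then applies directly and gives $J_{\lambda,\beta}^{\prime}\left(u_0,v_0\right)=0$ in $H^{-1}$. The only mildly delicate point in the whole argument is checking that $\mathbf{M}_{\lambda,\beta}^{(1)}$ really is relatively open — i.e. that the defining conditions $J_{\lambda,\beta}\left(u,v\right)<\left(p-2\right)^2\overline{S}^2S_{12/5}^4/(4p(4-p)\lambda)$ and $\left\Vert\left(u,v\right)\right\Vert_H<\left(\left(p-2\right)\overline{S}^2S_{12/5}^4/(\lambda(4-p))\right)^{1/2}$ are preserved under small perturbations within $\mathbf{M}_{\lambda,\beta}$ — but this is immediate from continuity of $J_{\lambda,\beta}$ and of the norm, so there is no serious obstacle here; the content of the lemma is really just the assembly of the preceding estimates with Lemma \ref{g2} and the implicit function theorem.
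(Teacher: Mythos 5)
Your proposal is correct and follows essentially the same route the paper takes implicitly: the inclusions are exactly the computations displayed just before the lemma, the submanifold structure is the standard implicit-function-theorem argument using $h_{\lambda ,\left( u,v\right) }^{\prime \prime }\left( 1\right) \neq 0$, and the critical-point claim reduces to Lemma \ref{g2} via the relative openness of $\mathbf{M}_{\lambda ,\beta }^{(1)}$ and $\mathbf{M}_{\lambda ,\beta }^{(2)}$ in $\mathbf{M}_{\lambda ,\beta }$. The paper leaves these details unwritten, and your write-up fills them in correctly.
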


Let $w_{\beta }$ be the unique positive radial solution of the following Schr%
\"{o}dinger equation%
\begin{equation}
\begin{array}{ll}
-\Delta u+u=g_{\beta }\left( s_{\beta }\right) \left\vert u\right\vert
^{p-2}u & \text{ in }\mathbb{R}^{3},%
\end{array}
\tag*{$\left( E_{\beta }^{\infty }\right) $}
\end{equation}%
where $g_{\beta }\left( s_{\beta }\right) =\max_{s\in \left[ 0,1\right]
}g_{\beta }\left( s\right) >1$ as in Lemma \ref{L2-1}. Note that $s_{\beta }=%
\frac{1}{2}$ and $g_{\beta }\left( \frac{1}{2}\right) =\left( \frac{1}{2}%
\right) ^{\frac{p-2}{2}}+\left( \frac{1}{2}\right) ^{\frac{p-2}{2}}\beta $
for all $\beta \geq \frac{p-2}{2}.$ From \cite{K}, we see that
\begin{equation*}
w_{\beta }\left( 0\right) =\max_{x\in \mathbb{R}^{3}}w_{\beta }(x),\text{ }%
\left\Vert w_{\beta }\right\Vert _{H^{1}}^{2}=\int_{\mathbb{R}^{3}}g_{\beta
}\left( s_{\beta }\right) \left\vert w_{\beta }\right\vert ^{p}dx=\left(
\frac{S_{p}^{p}}{g_{\beta }\left( s_{\beta }\right) }\right) ^{2/\left(
p-2\right) }
\end{equation*}%
and%
\begin{equation}
\alpha _{\beta }^{\infty }:=\inf_{u\in \mathbf{M}_{\beta }^{\infty
}}J_{\beta }^{\infty }(u)=J_{\beta }^{\infty }(w_{\beta })=\frac{p-2}{2p}%
\left( \frac{S_{p}^{p}}{g_{\beta }\left( s_{\beta }\right) }\right)
^{2/\left( p-2\right) },  \label{4-1}
\end{equation}%
where $J_{\beta }^{\infty }$ is the energy functional of Eq. $\left(
E_{\beta }^{\infty }\right) $ in $H^{1}(\mathbb{R}^{3})$ in the form%
\begin{equation*}
J_{\beta }^{\infty }(u)=\frac{1}{2}\int_{\mathbb{R}^{3}}\left( |\nabla
u|^{2}+u^{2}\right) dx-\frac{g_{\beta }\left( s_{\beta }\right) }{p}\int_{%
\mathbb{R}^{3}}\left\vert u\right\vert ^{p}dx
\end{equation*}%
with
\begin{equation*}
\mathbf{M}_{\beta }^{\infty }:=\{u\in H^{1}(\mathbb{R}^{3})\backslash
\{0\}:\left\langle (J_{\beta }^{\infty })^{\prime }\left( u\right)
,u\right\rangle =0\}.
\end{equation*}%
Define
\begin{equation*}
k\left( \lambda \right) :=\left\{
\begin{array}{ll}
\rho _{p}, & \text{ if }0<\lambda <\rho _{p}, \\
\lambda , & \text{ if }\lambda \geq \rho _{p},%
\end{array}%
\right.
\end{equation*}%
where $\rho _{p}:=\frac{\left( p-2\right) \overline{S}^{2}S_{12/5}^{4}}{%
2(4-p)S_{p}^{2p/\left( p-2\right) }}.$ Then $k\left( \lambda \right) \geq
\lambda $ and $k^{-1}\left( \lambda \right) \leq \rho _{p}^{-1}$ for all $%
\lambda >0,$ which implies that
\begin{equation*}
\mathbf{M}_{\lambda ,\beta }\left[ \frac{\left( p-2\right) ^{2}\overline{S}%
^{2}S_{12/5}^{4}}{4p(4-p)k\left( \lambda \right) }\right] \subset \mathbf{M}%
_{\lambda ,\beta }\left[ \frac{p-2}{2p}S_{p}^{2p/\left( p-2\right) }\right]
\end{equation*}%
and%
\begin{equation}
\overline{\mathbf{M}}_{\lambda ,\beta }^{\left( i\right) }:=\mathbf{M}%
_{\lambda ,\beta }^{\left( i\right) }\left[ \frac{\left( p-2\right) ^{2}%
\overline{S}^{2}S_{12/5}^{4}}{4p(4-p)k\left( \lambda \right) }\right]
\subset \mathbf{M}_{\lambda ,\beta }^{\left( i\right) }\left[ \frac{p-2}{2p}%
S_{p}^{2p/\left( p-2\right) }\right]  \label{4-2}
\end{equation}%
for all $\lambda >0$ and $i=1,2.$ Furthermore, we have the following results.

\begin{lemma}
\label{l5}Let $2<p<4$ and $\lambda >0.$ Let $\left( u_{0},v_{0}\right) $ be
a critical point of $J_{\lambda ,\beta }$ on $\mathbf{M}_{\lambda ,\beta
}^{-}.$ Then we have $J_{\lambda ,\beta }\left( u_{0},v_{0}\right) >\frac{p-2%
}{2p}S_{p}^{2p/\left( p-2\right) }$ if either $u_{0}=0$ or $v_{0}=0.$
\end{lemma}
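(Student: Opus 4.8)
The plan is to reduce to a one‑component Schr\"{o}dinger--Poisson equation and then compare its fibering map with that of the pure nonlinear Schr\"{o}dinger equation. By the symmetry $J_{\lambda,\beta}(u,0)=J_{\lambda,\beta}(0,v)$ it suffices to treat the case $v_{0}=0$. Then $u_{0}\neq 0$ because $(u_{0},v_{0})\in\mathbf{M}_{\lambda,\beta}^{-}\subset H\setminus\{(0,0)\}$, and by Lemma \ref{L2-2} we have $J_{\lambda,\beta}(u_{0},v_{0})=I_{\lambda}(u_{0})$. Set $a:=\|u_{0}\|_{H^{1}}^{2}>0$, $b:=\lambda\int_{\mathbb{R}^{3}}\phi_{u_{0}}u_{0}^{2}\,dx$ and $c:=\int_{\mathbb{R}^{3}}|u_{0}|^{p}\,dx$; note $b>0$ since $\lambda>0$ and $\phi_{u_{0}}>0$ whenever $u_{0}\not\equiv 0$. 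Since $(u_{0},0)\in\mathbf{M}_{\lambda,\beta}$, the Nehari identity reads $a+b=c$, so $c>a$. The only information about $(u_{0},v_{0})$ that will actually be used is that $(u_{0},0)\in\mathbf{M}_{\lambda,\beta}^{-}$.

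Next I would analyze the fibering map $h(t):=I_{\lambda}(tu_{0})=\tfrac{t^{2}}{2}a+\tfrac{t^{4}}{4}b-\tfrac{t^{p}}{p}c$ for $t>0$. Writing $h'(t)=t\,\psi(t)$ with $\psi(t):=a+bt^{2}-ct^{p-2}$, and using $\psi(1)=a+b-c=0$, we get $h''(1)=\psi'(1)$. A direct computation gives $\psi'(t)=t^{p-3}\bigl(2bt^{4-p}-(p-2)c\bigr)$, which, because $2<p<4$, vanishes at a unique point $t_{\ast}>0$, with $\psi'<0$ on $(0,t_{\ast})$ and $\psi'>0$ on $(t_{\ast},\infty)$; hence $t_{\ast}$ is a strict global minimum of $\psi$ and $\psi$ is strictly decreasing on $(0,t_{\ast}]$. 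Since $(u_{0},0)\in\mathbf{M}_{\lambda,\beta}^{-}$ we have $h''(1)=\psi'(1)<0$, hence $1<t_{\ast}$, so $\psi$ is strictly decreasing on $(0,1]$; combined with $\psi(1)=0$ this gives $\psi>0$, hence $h'>0$, on $(0,1)$. Therefore $h$ is strictly increasing on $(0,1]$.

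Then I would compare $h$ with the decoupled map $\widetilde{h}(t):=\tfrac{t^{2}}{2}a-\tfrac{t^{p}}{p}c\le h(t)$, which attains its strict global maximum over $(0,\infty)$ at $\widetilde{t}:=(a/c)^{1/(p-2)}$, and $\widetilde{t}<1$ because $c>a$. Monotonicity of $h$ on $(0,1]$ then gives
\[
J_{\lambda,\beta}(u_{0},v_{0})=I_{\lambda}(u_{0})=h(1)>h(\widetilde{t})\ge\widetilde{h}(\widetilde{t})=\max_{t>0}\widetilde{h}(t)=\frac{p-2}{2p}\,a\Bigl(\frac{a}{c}\Bigr)^{2/(p-2)}.
\]
Since $c=\int_{\mathbb{R}^{3}}|u_{0}|^{p}\,dx\le S_{p}^{-p}\|u_{0}\|_{H^{1}}^{p}=S_{p}^{-p}a^{p/2}$, one has $a/c\ge S_{p}^{p}a^{(2-p)/2}$, whence $a(a/c)^{2/(p-2)}\ge S_{p}^{2p/(p-2)}$; plugging this in yields $J_{\lambda,\beta}(u_{0},v_{0})>\frac{p-2}{2p}S_{p}^{2p/(p-2)}$, as required.

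The main obstacle is that the straightforward estimate coming from $(u_{0},0)\in\mathbf{M}_{\lambda,\beta}^{-}$ is too weak: by $(\ref{2-6-2})$ it only gives $I_{\lambda}(u_{0})=\tfrac14 a-\tfrac{4-p}{4p}c>\tfrac{p-2}{4p}a>\tfrac{p-2}{4p}S_{p}^{2p/(p-2)}$, i.e. merely half of the claimed bound. The crux is instead to use the defining sign condition of $\mathbf{M}_{\lambda,\beta}^{-}$ to identify $t=1$ as the \emph{first} positive critical point of the fibering map $h$, so that the critical value $h(1)$ dominates the whole maximum of the decoupled fibering map $\widetilde h$, which is exactly the ground‑state level $\tfrac{p-2}{2p}S_{p}^{2p/(p-2)}$ of $-\Delta u+u=|u|^{p-2}u$. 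The technical heart is therefore the shape analysis of $\psi$ (single minimum, with $1$ lying to its left) that underpins the monotonicity of $h$ on $(0,1]$.
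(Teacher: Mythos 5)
Your proof is correct and follows essentially the same route as the paper: reduce to one component, compare the fibering-map value at $t=1$ with its value at $t_{0}(u_{0})=(\|u_{0}\|_{H^{1}}^{2}/\int_{\mathbb{R}^{3}}|u_{0}|^{p}dx)^{1/(p-2)}<1$, drop the nonnegative nonlocal term there, and recognize the resulting quantity as the Nehari level $\tfrac{p-2}{2p}S_{p}^{2p/(p-2)}$ of $-\Delta u+u=|u|^{p-2}u$ via the Sobolev inequality. The only difference is cosmetic: where the paper invokes \cite[Lemma 2.6]{SWF1} and \cite{Wi} to justify that $J_{\lambda,\beta}(u_{0},0)$ dominates $J_{\lambda,\beta}(t_{0}(u_{0})u_{0},0)$, you give a self-contained monotonicity analysis of $h$ on $(0,1]$ using the sign condition defining $\mathbf{M}_{\lambda,\beta}^{-}$, which is a welcome tightening of that step.
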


\begin{proof}
Without loss of generality, we may assume that $v_{0}=0.$ Then we have%
\begin{equation*}
J_{\lambda ,\beta }\left( u_{0},0\right) =\frac{1}{2}\left\Vert
u_{0}\right\Vert _{H^{1}}^{2}+\frac{\lambda }{4}\int_{\mathbb{R}^{3}}\phi
_{u_{0}}u_{0}^{2}dx-\frac{1}{p}\int_{\mathbb{R}^{3}}\left\vert
u_{0}\right\vert ^{p}dx
\end{equation*}%
and%
\begin{equation*}
-2\left\Vert u_{0}\right\Vert _{H^{1}}^{2}+\left( 4-p\right) \int_{\mathbb{R}%
^{3}}\left\vert u\right\vert ^{p}dx<0.
\end{equation*}%
Note that
\begin{equation*}
\left\Vert t_{0}\left( u_{0}\right) u_{0}\right\Vert _{H^{1}}^{2}-\int_{%
\mathbb{R}^{3}}\left\vert t_{0}\left( u_{0}\right) u_{0}\right\vert ^{p}dx=0,
\end{equation*}%
where
\begin{equation}
\left( \frac{4-p}{2}\right) ^{1/\left( p-2\right) }<t_{0}\left( u_{0}\right)
:=\left( \frac{\left\Vert u_{0}\right\Vert _{H^{1}}^{2}}{\int_{\mathbb{R}%
^{3}}\left\vert u_{0}\right\vert ^{p}dx}\right) ^{1/\left( p-2\right) }<1.
\label{5-1}
\end{equation}%
By a similar argument in Sun-Wu-Feng \cite[Lemma 2.6]{SWF1}, one has
\begin{equation*}
J_{\lambda ,\beta }\left( u_{0},0\right) =\sup_{0\leq t\leq t_{\lambda
}^{+}}J_{\lambda ,\beta }(tu_{0},0),
\end{equation*}%
where $t_{\lambda }^{+}>\left( \frac{2}{4-p}\right) ^{1/\left( p-2\right)
}t_{0}\left( u_{0}\right) >1$ by (\ref{5-1}). Using this, together with (\ref%
{5-1}) again, one has
\begin{equation*}
J_{\lambda ,\beta }\left( u_{0},0\right) >J_{\lambda ,\beta }(t_{0}\left(
u_{0}\right) u_{0},0).
\end{equation*}%
Thus, by \cite{Wi}, we have
\begin{eqnarray*}
J_{\lambda ,\beta }\left( u_{0},0\right) &>&J_{\lambda ,\beta }(t_{0}\left(
u_{0}\right) u_{0},0) \\
&\geq &\frac{1}{2}\left\Vert t_{0}\left( u_{0}\right) u_{0}\right\Vert
_{H^{1}}^{2}-\frac{1}{p}\int_{\mathbb{R}^{3}}\left\vert t_{0}\left(
u_{0}\right) u_{0}\right\vert ^{p}dx+\frac{\lambda \left[ t_{0}\left(
u_{0}\right) \right] ^{4}}{4}\int_{\mathbb{R}^{3}}\phi _{u_{0}}u_{0}^{2}dx \\
&>&\frac{p-2}{2p}S_{p}^{2p/\left( p-2\right) }.
\end{eqnarray*}%
The proof is complete.
\end{proof}

\begin{lemma}
\label{g4}Let $2<p<4$ and $\lambda >0.$ Let $w_{\beta }\left( x\right) $ be
a unique positive radial solution of Eq. $\left( E_{\beta }^{\infty }\right)
$. Then for each
\begin{equation*}
\beta >\beta _{0}\left( \lambda \right) :=\max \left\{ \frac{p-2}{2},\left[
\frac{\lambda pS_{p}^{2p/\left( p-2\right) }}{\left( p-2\right) \overline{S}%
^{2}S_{12/5}^{4}}\right] ^{\left( p-2\right) /2}\left( \frac{p}{4-p}\right)
^{\left( 4-p\right) /2}-1\right\} ,
\end{equation*}%
there exists two constants $t_{\lambda ,\beta }^{+}$ and $t_{\lambda ,\beta
}^{-}$ satisfying
\begin{equation*}
1<t_{\lambda ,\beta }^{-}<\left( \frac{2}{4-p}\right) ^{\frac{1}{p-2}%
}<t_{\lambda ,\beta }^{+}
\end{equation*}%
such that%
\begin{equation*}
\left( t_{\lambda ,\beta }^{\pm }\sqrt{s_{\beta }}w_{\beta },t_{\lambda
,\beta }^{\pm }\sqrt{1-s_{\beta }}w_{\beta }\right) \in \mathbf{M}_{\lambda
,\beta }^{\pm }\cap H_{r}
\end{equation*}%
and%
\begin{equation*}
J_{\lambda ,\beta }\left( t_{\lambda ,\beta }^{+}\sqrt{s_{\beta }}w_{\beta
},t_{\lambda ,\beta }^{+}\sqrt{1-s_{\beta }}w_{\beta }\right) =\inf_{t\geq
0}J_{\lambda ,\beta }\left( t\sqrt{s_{\beta }}w_{\beta },t\sqrt{1-s_{\beta }}%
w_{\beta }\right) <0.
\end{equation*}%
In particular, $\left( t_{\lambda ,\beta }^{+}\sqrt{s_{\beta }}w_{\beta
},t_{\lambda ,\beta }^{+}\sqrt{1-s_{\beta }}w_{\beta }\right) \in \overline{%
\mathbf{M}}_{\lambda ,\beta }^{\left( 2\right) }\cap H_{r}.$
\end{lemma}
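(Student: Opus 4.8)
The plan is to deduce the whole statement from an analysis of the one–variable fibering map along the ray generated by the rescaled limit ground state $w_{\beta}$. Since $\beta>\beta_{0}(\lambda)\ge\frac{p-2}{2}$, Lemma \ref{L2-1} forces $s_{\beta}=\frac12$, so I would fix $z_{\beta}:=\left(\sqrt{s_{\beta}}\,w_{\beta},\sqrt{1-s_{\beta}}\,w_{\beta}\right)\in H_{r}$ and study $h(t):=J_{\lambda,\beta}(tz_{\beta})$ for $t\ge0$. Setting $A:=\left\Vert w_{\beta}\right\Vert_{H^{1}}^{2}$ and $B:=\int_{\mathbb{R}^{3}}\phi_{w_{\beta}}w_{\beta}^{2}\,dx>0$, the computation in the proof of Lemma \ref{L2-2} (applied with $z=w_{\beta}$) together with the identities for $w_{\beta}$ recalled before (\ref{4-1}) give $\left\Vert z_{\beta}\right\Vert_{H}^{2}=A$, that the Coulomb term of $z_{\beta}$ equals $B$, and $\int_{\mathbb{R}^{3}}F_{\beta}(z_{\beta})\,dx=g_{\beta}(s_{\beta})\int_{\mathbb{R}^{3}}|w_{\beta}|^{p}\,dx=A$, so that $h(t)=\frac{A}{2}t^{2}+\frac{\lambda B}{4}t^{4}-\frac{A}{p}t^{p}$ on $[0,\infty)$; moreover, since $g_{\beta}(\frac12)=2^{1-p/2}(1+\beta)$, one has $A=\left(S_{p}^{p}/g_{\beta}(s_{\beta})\right)^{2/(p-2)}=2S_{p}^{2p/(p-2)}(1+\beta)^{-2/(p-2)}$.

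The core step is to show $\inf_{t\ge0}h(t)<0$ whenever $\beta>\beta_{0}(\lambda)$. I would minimise $g(t):=h(t)/t^{2}=\frac{A}{2}+\frac{\lambda B}{4}t^{2}-\frac{A}{p}t^{p-2}$ over $t>0$: it has a unique critical point $t_{1}$ with $t_{1}^{4-p}=\frac{2A(p-2)}{p\lambda B}$, and a short substitution gives $g(t_{1})=\frac{A}{2}-\frac{(4-p)A}{2p}t_{1}^{p-2}$, so $g(t_{1})<0$ if and only if $t_{1}^{p-2}>\frac{p}{4-p}$, i.e. $A>\frac{p\lambda B}{2(p-2)}\left(\frac{p}{4-p}\right)^{(4-p)/(p-2)}$. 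Bounding $B\le\overline{S}^{-2}S_{12/5}^{-4}\left\Vert z_{\beta}\right\Vert_{H}^{4}=\overline{S}^{-2}S_{12/5}^{-4}A^{2}$ by Lemma \ref{L2-3}$(ii)$ reduces this to $A<\frac{2(p-2)\overline{S}^{2}S_{12/5}^{4}}{p\lambda}\left(\frac{4-p}{p}\right)^{(4-p)/(p-2)}$; substituting $A=2S_{p}^{2p/(p-2)}(1+\beta)^{-2/(p-2)}$ and solving for $\beta$ reproduces exactly the second entry of $\beta_{0}(\lambda)$, which holds by hypothesis. Hence $\inf_{t\ge0}h(t)<0$.

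Next I would extract $t^{\pm}$ from the shape of $h$. With $h'(t)=t\,\Phi(t)$ and $\Phi(t):=A+\lambda Bt^{2}-At^{p-2}$, one checks (using $2<p<4$) that $\Phi(0)=A>0$, $\Phi(t)\to+\infty$, and $\Phi'$ has a single zero on $(0,\infty)$, so $\Phi$ is first strictly decreasing, then strictly increasing. Since $\inf h<0=h(0)$ while $h\to+\infty$, $h$ cannot be nondecreasing, so $\Phi$ is negative somewhere; hence $\Phi$ has exactly two positive zeros $t^{-}<t^{+}$, $h$ is strictly increasing on $(0,t^{-})$, strictly decreasing on $(t^{-},t^{+})$, strictly increasing on $(t^{+},\infty)$, whence $t^{-}$ is a strict local maximum and $t^{+}$ the global minimum of $h$ on $[0,\infty)$ with $h(t^{+})=\inf_{t\ge0}h(t)<0$. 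For the location of the roots, $h'(1)=\Phi(1)=\lambda B>0$ and, at any critical point of $h$ (where $At^{p-2}=A+\lambda Bt^{2}$), $h''(t)=A\left[(4-p)t^{p-2}-2\right]$, which is strictly negative for $t<\left(2/(4-p)\right)^{1/(p-2)}$ and strictly positive for $t>\left(2/(4-p)\right)^{1/(p-2)}$; combined with the facts that $t^{-}$ is a maximum, $t^{+}$ a minimum and $\left(2/(4-p)\right)^{1/(p-2)}>1$, this pins down $1<t^{-}<\left(2/(4-p)\right)^{1/(p-2)}<t^{+}$. Finally $h'_{\lambda,z_{\beta}}(t)=0$ means $tz_{\beta}\in\mathbf{M}_{\lambda,\beta}$, and by scaling $h''_{\lambda,tz_{\beta}}(1)=t^{2}h''(t)$ (in agreement with (\ref{2-6-1})), so setting $t^{\pm}_{\lambda,\beta}:=t^{\pm}$ gives $\left(t^{\pm}_{\lambda,\beta}\sqrt{s_{\beta}}\,w_{\beta},t^{\pm}_{\lambda,\beta}\sqrt{1-s_{\beta}}\,w_{\beta}\right)\in\mathbf{M}_{\lambda,\beta}^{\pm}\cap H_{r}$ with $J_{\lambda,\beta}(t^{+}_{\lambda,\beta}z_{\beta})=\inf_{t\ge0}J_{\lambda,\beta}(tz_{\beta})<0$.

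To finish, I would place $t^{+}_{\lambda,\beta}z_{\beta}$ in $\overline{\mathbf{M}}_{\lambda,\beta}^{(2)}$: since $k(\lambda)\ge\lambda$, the set $\mathbf{M}_{\lambda,\beta}^{(1)}\big[\frac{(p-2)^{2}\overline{S}^{2}S_{12/5}^{4}}{4p(4-p)k(\lambda)}\big]$ lies inside $\mathbf{M}_{\lambda,\beta}^{(1)}\subset\mathbf{M}_{\lambda,\beta}^{-}$ (Lemma \ref{g7}), and as $t^{+}_{\lambda,\beta}z_{\beta}\in\mathbf{M}_{\lambda,\beta}^{+}$ has energy $<0<\frac{(p-2)^{2}\overline{S}^{2}S_{12/5}^{4}}{4p(4-p)k(\lambda)}$, it cannot belong to that set, so $\left\Vert t^{+}_{\lambda,\beta}z_{\beta}\right\Vert_{H}>\big(\frac{(p-2)\overline{S}^{2}S_{12/5}^{4}}{\lambda(4-p)}\big)^{1/2}$ and therefore $t^{+}_{\lambda,\beta}z_{\beta}\in\overline{\mathbf{M}}_{\lambda,\beta}^{(2)}\cap H_{r}$. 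I expect the main obstacle to be precisely the calibration in the core step: the soft inequality $\inf h<0$ has to be turned, through the single available estimate Lemma \ref{L2-3}$(ii)$, into a condition on $\beta$ alone, and it is only by choosing the substitutions carefully that this condition matches $\beta_{0}(\lambda)$ rather than something strictly larger; once $\inf h<0$ is secured, the existence, the ordering and the $\mathbf{M}^{\pm}$-membership of $t^{\pm}$ follow automatically from the convexity-type structure of $\Phi$.
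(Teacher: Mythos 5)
Your proposal is correct and follows essentially the same route as the paper: both reduce the problem to the fibering map along the ray through $\left(\sqrt{s_{\beta}}w_{\beta},\sqrt{1-s_{\beta}}w_{\beta}\right)$, use Lemma \ref{L2-3}$(ii)$ to bound the Coulomb term and turn the negativity of the rescaled fibering minimum into exactly the stated condition on $\beta_{0}(\lambda)$, and read off the $\mathbf{M}_{\lambda,\beta}^{\pm}$ membership from the sign of the second derivative at the two critical points. The only difference is cosmetic: you minimize $h(t)/t^{2}$ at a $B$-dependent critical point, whereas the paper minimizes the equivalent rescaling $J_{\lambda,\beta}(tz_{\beta})/t^{4}-\lambda B/4$ at the $B$-independent point $\left(p/(4-p)\right)^{1/(p-2)}$; both yield the identical sufficient condition.
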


\begin{proof}
Define
\begin{eqnarray*}
\eta \left( t\right)  &=&t^{-2}\left\Vert \left( \sqrt{s_{\beta }}w_{\beta },%
\sqrt{1-s_{\beta }}w_{\beta }\right) \right\Vert _{H}^{2}-t^{p-4}\int_{%
\mathbb{R}^{3}}F_{\beta }\left( \sqrt{s_{\beta }}w_{\beta },\sqrt{1-s_{\beta
}}w_{\beta }\right) dx \\
&=&t^{-2}\left\Vert w_{\beta }\right\Vert _{H^{1}}^{2}-t^{p-4}\int_{\mathbb{R%
}^{3}}g_{\beta }\left( s_{\beta }\right) \left\vert w_{\beta }\right\vert
^{p}dx\text{ for }t>0\text{ and }\beta \geq \frac{p-2}{2}.
\end{eqnarray*}%
Clearly, $tu\in \mathbf{M}_{\lambda ,\beta }$ if and only if
\begin{eqnarray*}
\eta \left( t\right)  &=&-\lambda \int_{\mathbb{R}^{3}}\phi _{\sqrt{s_{\beta
}}w_{\beta },\sqrt{1-s_{\beta }}w_{\beta }}\left( \left( \sqrt{s_{\beta }}%
w_{\beta }\right) ^{2}+\left( \sqrt{1-s_{\beta }}w_{\beta }\right)
^{2}\right) dx \\
&=&-\lambda \int_{\mathbb{R}^{3}}\phi _{w_{\beta }}w_{\beta }^{2}dx.
\end{eqnarray*}%
A straightforward evaluation shows that
\begin{equation*}
\eta \left( 1\right) =0,\ \lim_{t\rightarrow 0^{+}}\eta (t)=\infty \text{
and }\lim_{t\rightarrow \infty }\eta (t)=0.
\end{equation*}%
Since $2<p<4$ and
\begin{equation*}
\eta ^{\prime }\left( t\right) =t^{-3}\left\Vert w_{\beta }\right\Vert
_{H^{1}}^{2}\left[ -2+\left( 4-p\right) t^{p-2}\right] ,
\end{equation*}%
we find that $\eta \left( t\right) $ is decreasing when $0<t<\left( \frac{2}{%
4-p}\right) ^{1/\left( p-2\right) }$ and is increasing when $t>\left( \frac{2%
}{4-p}\right) ^{1/\left( p-2\right) }.$ This implies that
\begin{equation*}
\inf_{t>0}\eta \left( t\right) =\eta \left( \left( \frac{2}{4-p}\right)
^{1/\left( p-2\right) }\right) .
\end{equation*}%
Moreover, for each $\lambda >0$ and $\beta >\beta _{0}\left( \lambda \right)
,$ we further have%
\begin{eqnarray*}
\eta \left( \left( \frac{2}{4-p}\right) ^{1/(p-2)}\right)  &=&\left( \frac{%
4-p}{2}\right) ^{2/(p-2)}\left\Vert w_{\beta }\right\Vert
_{H^{1}}^{2}-\left( \frac{2}{4-p}\right) ^{\left( p-4\right) /(p-2)}\int_{%
\mathbb{R}^{3}}g_{\beta }\left( s_{\beta }\right) \left\vert w_{\beta
}\right\vert ^{p}dx \\
&=&-\left( \frac{p-2}{2}\right) \left( \frac{4-p}{2}\right) ^{\left(
4-p\right) /(p-2)}\left\Vert w_{\beta }\right\Vert _{H^{1}}^{2} \\
&<&-\lambda \overline{S}^{-2}S_{12/5}^{-4}\left\Vert w_{\beta }\right\Vert
_{H^{1}}^{4} \\
&\leq &-\lambda \int_{\mathbb{R}^{3}}\phi _{w_{\beta }}w_{\beta }^{2}dx \\
&=&-\lambda \int_{\mathbb{R}^{3}}\phi _{\sqrt{s_{\beta }}w_{\beta },\sqrt{%
1-s_{\beta }}w_{\beta }}\left( \left( \sqrt{s_{\beta }}w_{\beta }\right)
^{2}+\left( \sqrt{s_{\beta }}w_{\beta }\right) ^{2}\right) dx.
\end{eqnarray*}%
Thus, there exist two positive constants $t_{\lambda ,\beta }^{+}$ and $%
t_{\lambda ,\beta }^{-}$ satisfying
\begin{equation*}
1<t_{\lambda ,\beta }^{-}<\left( \frac{2}{4-p}\right) ^{1/\left( p-2\right)
}<t_{\lambda ,\beta }^{+}
\end{equation*}%
such that
\begin{equation*}
\eta \left( t_{\lambda ,\beta }^{\pm }\right) +\lambda \int_{\mathbb{R}%
^{3}}\phi _{\sqrt{s_{\beta }}w_{\beta },\sqrt{1-s_{\beta }}w_{\beta }}\left(
\left( \sqrt{s_{\beta }}w_{\beta }\right) ^{2}+\left( \sqrt{s_{\beta }}%
w_{\beta }\right) ^{2}\right) dx=0.
\end{equation*}%
That is,
\begin{equation*}
\left( t_{\lambda ,\beta }^{\pm }\sqrt{s_{\beta }}w_{\beta },t_{\lambda
,\beta }^{\pm }\sqrt{1-s_{\beta }}w_{\beta }\right) \in \mathbf{M}_{\lambda
,\beta }\cap H_{r}.
\end{equation*}%
By a calculation on the second order derivatives, we find
\begin{eqnarray*}
h_{\lambda ,\left( t_{\lambda ,\beta }^{-}\sqrt{s_{\beta }}w_{\beta
},t_{\lambda ,\beta }^{-}\sqrt{1-s_{\beta }}w_{\beta }\right) }^{\prime
\prime }\left( 1\right)  &=&-2\left\Vert t_{\lambda ,\beta }^{-}w_{\beta
}\right\Vert _{H^{1}}^{2}+\left( 4-p\right) \int_{\mathbb{R}^{3}}g_{\beta
}\left( s_{\beta }\right) \left\vert t_{\lambda ,\beta }^{-}w_{\beta
}\right\vert ^{p}dx \\
&=&\left( t_{\lambda ,\beta }^{-}\right) ^{5}\eta ^{\prime }\left(
t_{\lambda ,\beta }^{-}\right) <0
\end{eqnarray*}%
and
\begin{eqnarray*}
h_{\lambda ,\left( t_{\lambda ,\beta }^{+}\sqrt{s_{\beta }}w_{\beta
},t_{\lambda ,\beta }^{+}\sqrt{1-s_{\beta }}w_{\beta }\right) }^{\prime
\prime }\left( 1\right)  &=&-2\left\Vert t_{\lambda ,\beta }^{+}w_{\beta
}\right\Vert _{H^{1}}^{2}+\left( 4-p\right) \int_{\mathbb{R}^{3}}g_{\beta
}\left( s_{\beta }\right) \left\vert t_{\lambda ,\beta }^{+}w_{\beta
}\right\vert ^{p}dx \\
&=&\left( t_{\lambda ,\beta }^{+}\right) ^{5}\eta ^{\prime }\left(
t_{\lambda ,\beta }^{+}\right) >0,
\end{eqnarray*}%
leading to
\begin{equation*}
\left( t_{\lambda ,\beta }^{\pm }\sqrt{s_{\beta }}w_{\beta },t_{\lambda
,\beta }^{\pm }\sqrt{1-s_{\beta }}w_{\beta }\right) \in \mathbf{M}_{\lambda
,\beta }^{\pm }\cap H_{r}
\end{equation*}%
and
\begin{eqnarray*}
&&h_{\lambda ,\left( t_{\lambda ,\beta }^{+}\sqrt{s_{\beta }}w_{\beta
},t_{\lambda ,\beta }^{+}\sqrt{1-s_{\beta }}w_{\beta }\right) }^{\prime
}\left( t\right)  \\
&=&t^{3}\left( \eta (t)+\lambda \int_{\mathbb{R}^{3}}\phi _{\sqrt{s_{\beta }}%
w_{\beta },\sqrt{1-s_{\beta }}w_{\beta }}\left( \left( \sqrt{s_{\beta }}%
w_{\beta }\right) ^{2}+\left( \sqrt{s_{\beta }}w_{\beta }\right) ^{2}\right)
dx\right) .
\end{eqnarray*}%
One can see that
\begin{equation*}
h_{\lambda ,\left( \sqrt{s_{\beta }}w_{\beta },\sqrt{1-s_{\beta }}w_{\beta
}\right) }^{\prime }\left( t\right) >0\text{ for all }t\in \left(
0,t_{\lambda ,\beta }^{-}\right) \cup \left( t_{\lambda ,\beta }^{+},\infty
\right)
\end{equation*}%
and
\begin{equation*}
h_{\lambda ,\left( \sqrt{s_{\beta }}w_{\beta },\sqrt{1-s_{\beta }}w_{\beta
}\right) }^{\prime }\left( t\right) <0\text{ for all }t\in (t_{\lambda
,\beta }^{-},t_{\lambda ,\beta }^{+}),
\end{equation*}%
implying that
\begin{equation*}
J_{\lambda ,\beta }\left( t_{\lambda ,\beta }^{-}\sqrt{s_{\beta }}w_{\beta
},t_{\lambda ,\beta }^{-}\sqrt{1-s_{\beta }}w_{\beta }\right) =\sup_{0\leq
t\leq t_{\lambda ,\beta }^{+}}J_{\lambda ,\beta }\left( t\sqrt{s_{\beta }}%
w_{\beta },t\sqrt{1-s_{\beta }}w_{\beta }\right)
\end{equation*}%
and%
\begin{equation*}
J_{\lambda ,\beta }\left( t_{\lambda ,\beta }^{+}\sqrt{s_{\beta }}w_{\beta
},t_{\lambda ,\beta }^{+}\sqrt{1-s_{\beta }}w_{\beta }\right) =\inf_{t\geq
t_{\lambda ,\beta }^{-}}J_{\lambda ,\beta }\left( t\sqrt{s_{\beta }}w_{\beta
},t\sqrt{1-s_{\beta }}w_{\beta }\right) ,
\end{equation*}%
and so
\begin{equation*}
J_{\lambda ,\beta }\left( t_{\lambda ,\beta }^{+}\sqrt{s_{\beta }}w_{\beta
},t_{\lambda ,\beta }^{+}\sqrt{1-s_{\beta }}w_{\beta }\right) <J_{\lambda
,\beta }\left( t_{\lambda ,\beta }^{-}\sqrt{s_{\beta }}w_{\beta },t_{\lambda
,\beta }^{-}\sqrt{1-s_{\beta }}w_{\beta }\right) .
\end{equation*}%
Note that
\begin{eqnarray*}
J_{\lambda ,\beta }\left( t\sqrt{s_{\beta }}w_{\beta },t\sqrt{1-s_{\beta }}%
w_{\beta }\right)  &=&\frac{t^{2}}{2}\left\Vert w_{\beta }\right\Vert
_{H^{1}}^{2}+\frac{\lambda t^{4}}{4}\int_{\mathbb{R}^{3}}\phi _{w_{\beta
}}w_{\beta }^{2}dx-\frac{t^{p}}{p}\int_{\mathbb{R}^{3}}g_{\beta }\left(
s_{\beta }\right) \left\vert w_{\beta }\right\vert ^{p}dx \\
&=&t^{4}\left[ \xi \left( t\right) +\frac{\lambda }{4}\int_{\mathbb{R}%
^{3}}\phi _{w_{\beta }}w_{\beta }^{2}dx\right] ,
\end{eqnarray*}%
where
\begin{equation*}
\xi \left( t\right) :=\frac{t^{-2}}{2}\left\Vert w_{\beta }\right\Vert
_{H^{1}}^{2}-\frac{t^{p-4}}{p}\int_{\mathbb{R}^{3}}g_{\beta }\left( s_{\beta
}\right) \left\vert w_{\beta }\right\vert ^{p}dx.
\end{equation*}%
Clearly, $J_{\lambda ,\beta }\left( t\sqrt{s_{\beta }}w_{\beta },t\sqrt{%
1-s_{\beta }}w_{\beta }\right) =0$ if and only if
\begin{equation*}
\xi \left( t\right) +\frac{\lambda }{4}\int_{\mathbb{R}^{3}}\phi _{w_{\beta
}}w_{\beta }^{2}dx=0.
\end{equation*}%
It is not difficult to verify that
\begin{equation*}
\xi \left( \hat{t}_{a}\right) =0,\ \lim_{t\rightarrow 0^{+}}\xi (t)=\infty \
\text{and}\ \lim_{t\rightarrow \infty }\xi (t)=0,
\end{equation*}%
where $\hat{t}_{0}=\left( \frac{p}{2}\right) ^{1/\left( p-2\right) }.$ By
calculating the derivative of $\xi (t)$, we find that%
\begin{eqnarray*}
\xi ^{\prime }\left( t\right)  &=&-t^{-3}\left\Vert w_{\beta }\right\Vert
_{H^{1}}^{2}+\frac{\left( 4-p\right) }{p}t^{p-5}\int_{\mathbb{R}%
^{3}}g_{\beta }\left( s_{\beta }\right) \left\vert w_{\beta }\right\vert
^{p}dx \\
&=&t^{-3}\left\Vert w_{\beta }\right\Vert _{H^{1}}^{2}\left[ \frac{\left(
4-p\right) t^{p-2}}{p}-1\right] ,
\end{eqnarray*}%
which implies that $\xi \left( t\right) $ is decreasing when $0<t<\left(
\frac{p}{4-p}\right) ^{1/\left( p-2\right) }$ and is increasing when $%
t>\left( \frac{p}{4-p}\right) ^{1/\left( p-2\right) }.$ Then for each $%
\lambda >0$ and $\beta >\beta _{0}\left( \lambda \right) ,$ we have%
\begin{eqnarray*}
\inf_{t>0}\xi \left( t\right)  &=&\xi \left[ \left( \frac{p}{4-p}\right)
^{1/\left( p-2\right) }\right] =-\frac{p-2}{2p}\left( \frac{4-p}{p}\right)
^{\left( 4-p\right) /\left( p-2\right) }\left\Vert w_{\beta }\right\Vert
_{H^{1}}^{2} \\
&<&-\frac{\lambda }{4}\overline{S}^{-2}S_{12/5}^{-4}\left\Vert w_{\beta
}\right\Vert _{H^{1}}^{4}<-\frac{\lambda }{4}\int_{\mathbb{R}^{3}}\phi
_{w_{\beta }}w_{\beta }^{2}dx \\
&=&-\frac{\lambda }{4}\int_{\mathbb{R}^{3}}\phi _{\sqrt{s_{\beta }}w_{\beta
},\sqrt{1-s_{\beta }}w_{\beta }}\left( \left( \sqrt{s_{\beta }}w_{\beta
}\right) ^{2}+\left( \sqrt{s_{\beta }}w_{\beta }\right) ^{2}\right) dx,
\end{eqnarray*}%
which yields that
\begin{equation*}
J_{\lambda ,\beta }\left( t_{\lambda ,\beta }^{+}\sqrt{s_{\beta }}w_{\beta
},t_{\lambda ,\beta }^{+}\sqrt{1-s_{\beta }}w_{\beta }\right) =\inf_{t\geq
0}J_{\lambda ,\beta }\left( t\sqrt{s_{\beta }}w_{\beta },t\sqrt{1-s_{\beta }}%
w_{\beta }\right) <0.
\end{equation*}%
This implies that $\left( t_{\lambda ,\beta }^{+}\sqrt{s_{\beta }}w_{\beta
},t_{\lambda ,\beta }^{+}\sqrt{1-s_{\beta }}w_{\beta }\right) \in \overline{%
\mathbf{M}}_{\lambda ,\beta }^{\left( 2\right) }\cap H_{r}.$ The proof is
complete.
\end{proof}

Note that $\beta \left( \lambda \right) >\beta _{0}\left( \lambda \right) ,$
where we have used the inequality
\begin{equation*}
\frac{\left( 4-p\right) ^{2}}{4}\left( 1+\sqrt{1+\frac{p}{4-p}\left( \frac{2%
}{4-p}\right) ^{\frac{4}{p-2}}}\right) ^{p-2}>1\text{ for }2<p<4.
\end{equation*}%
Then we have the following result.

\begin{lemma}
\label{g6}Let $2<p<4$ and $\lambda >0.$ Let $w_{\beta }\left( x\right) $ be
a unique positive radial solution of Eq. $(E_{\beta }^{\infty })$. Then for
each $\beta >\beta \left( \lambda \right) ,$ there exists two positive
constants $t_{\lambda ,\beta }^{+}$ and $t_{\lambda ,\beta }^{-}$ satisfying
\begin{equation*}
1<t_{\lambda ,\beta }^{-}<\left( \frac{2}{4-p}\right) ^{\frac{1}{p-2}%
}<t_{\lambda ,\beta }^{+}
\end{equation*}%
such that
\begin{equation*}
\left( t_{\lambda ,\beta }^{-}\sqrt{s_{\beta }}w_{\beta },t_{\lambda ,\beta
}^{-}\sqrt{1-s_{\beta }}w_{\beta }\right) \in \overline{\mathbf{M}}_{\lambda
,\beta }^{\left( 1\right) }\cap H_{r}\text{ and }\left( t_{\lambda ,\beta
}^{+}\sqrt{s_{\beta }}w_{\beta },t_{\lambda ,\beta }^{+}\sqrt{1-s_{\beta }}%
w_{\beta }\right) \in \overline{\mathbf{M}}_{\lambda ,\beta }^{\left(
2\right) }\cap H_{r},
\end{equation*}
\end{lemma}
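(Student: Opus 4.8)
The plan is to derive Lemma \ref{g6} from Lemma \ref{g4}, sharpening only the location of the point carried by $t_{\lambda ,\beta }^{-}$. Since the remark preceding the statement gives $\beta \left( \lambda \right) >\beta _{0}\left( \lambda \right) $, the hypothesis $\beta >\beta \left( \lambda \right) $ forces $\beta >\beta _{0}\left( \lambda \right) $, so Lemma \ref{g4} applies and furnishes $1<t_{\lambda ,\beta }^{-}<\left( \tfrac{2}{4-p}\right) ^{1/\left( p-2\right) }<t_{\lambda ,\beta }^{+}$ with $\left( t_{\lambda ,\beta }^{\pm }\sqrt{s_{\beta }}w_{\beta },t_{\lambda ,\beta }^{\pm }\sqrt{1-s_{\beta }}w_{\beta }\right) \in \mathbf{M}_{\lambda ,\beta }^{\pm }\cap H_{r}$ and $\left( t_{\lambda ,\beta }^{+}\sqrt{s_{\beta }}w_{\beta },t_{\lambda ,\beta }^{+}\sqrt{1-s_{\beta }}w_{\beta }\right) \in \overline{\mathbf{M}}_{\lambda ,\beta }^{\left( 2\right) }\cap H_{r}$. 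The second membership in Lemma \ref{g6} is thus already obtained, and writing $P^{-}:=\left( t_{\lambda ,\beta }^{-}\sqrt{s_{\beta }}w_{\beta },t_{\lambda ,\beta }^{-}\sqrt{1-s_{\beta }}w_{\beta }\right) \in \mathbf{M}_{\lambda ,\beta }^{-}\cap H_{r}$ (radial since $w_{\beta }$ is), it remains to verify the two conditions defining $\overline{\mathbf{M}}_{\lambda ,\beta }^{\left( 1\right) }$: namely $\left\Vert P^{-}\right\Vert _{H}<\big( \tfrac{\left( p-2\right) \overline{S}^{2}S_{12/5}^{4}}{\lambda \left( 4-p\right) }\big) ^{1/2}$ and $J_{\lambda ,\beta }\left( P^{-}\right) <\tfrac{\left( p-2\right) ^{2}\overline{S}^{2}S_{12/5}^{4}}{4p\left( 4-p\right) k\left( \lambda \right) }$. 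Since $\beta >\beta \left( \lambda \right) \geq \tfrac{p-2}{2}$ we have $s_{\beta }=\tfrac12$, so $\left( \ref{4-1}\right) $ gives the explicit values $\left\Vert w_{\beta }\right\Vert _{H^{1}}^{2}=2S_{p}^{2p/\left( p-2\right) }\left( 1+\beta \right) ^{-2/\left( p-2\right) }$ and $\alpha _{\beta }^{\infty }=\tfrac{p-2}{2p}\left\Vert w_{\beta }\right\Vert _{H^{1}}^{2}$, which one matches against the two branches of $\beta \left( \lambda \right) $.

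For the norm condition the crude bound $t_{\lambda ,\beta }^{-}<\left( \tfrac{2}{4-p}\right) ^{1/\left( p-2\right) }$ suffices: it yields $\left\Vert P^{-}\right\Vert _{H}^{2}<\left( \tfrac{2}{4-p}\right) ^{2/\left( p-2\right) }\left\Vert w_{\beta }\right\Vert _{H^{1}}^{2}$, and, raising to the power $\left( p-2\right) /2$, the inequality reduces to $1+\beta >\left( \lambda /\rho _{p}\right) ^{\left( p-2\right) /2}\tfrac{2}{4-p}$. Since $\beta \left( \lambda \right) $ dominates $\big[ 1+\sqrt{1+\tfrac{p}{4-p}\tfrac{\lambda }{\rho _{p}}\left( \tfrac{2}{4-p}\right) ^{4/\left( p-2\right) }}\big] ^{\left( p-2\right) /2}-1$ when $\lambda <\rho _{p}$ and $\big[ \tfrac{\lambda }{\rho _{p}}\big( 1+\sqrt{1+\tfrac{p}{4-p}\left( \tfrac{2}{4-p}\right) ^{4/\left( p-2\right) }}\big) \big] ^{\left( p-2\right) /2}-1$ when $\lambda \geq \rho _{p}$, and since $\sqrt{p/\left( 4-p\right) }\geq 1$ for $2<p<4$, this follows directly.

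The energy condition is the crux. From $J_{\lambda ,\beta }\left( P^{-}\right) =\tfrac14\left\Vert P^{-}\right\Vert _{H}^{2}-\tfrac{4-p}{4p}\int_{\mathbb{R}^{3}}F_{\beta }\left( P^{-}\right) dx$ (valid on $\mathbf{M}_{\lambda ,\beta }$) and the monotonicity of $\tau \mapsto \tau ^{2}-\tfrac{4-p}{p}\tau ^{p}$ on $\big( 0,\left( \tfrac{2}{4-p}\right) ^{1/\left( p-2\right) }\big) $ one gets the crude estimate $J_{\lambda ,\beta }\left( P^{-}\right) <\tfrac{\left( p-2\right) S_{p}^{2p/\left( p-2\right) }}{2p\left( 1+\beta \right) ^{2/\left( p-2\right) }}\left( \tfrac{2}{4-p}\right) ^{2/\left( p-2\right) }$. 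When $\lambda \geq \rho _{p}$ (so $k\left( \lambda \right) =\lambda $) this reduces to $\left( 1+\beta \right) ^{2/\left( p-2\right) }\geq \tfrac{\lambda }{\rho _{p}}\left( \tfrac{2}{4-p}\right) ^{2/\left( p-2\right) }$, the same requirement already handled above. When $\lambda <\rho _{p}$ the crude estimate would instead demand $\beta >\tfrac{p-2}{4-p}$, a bound $\beta >\beta \left( \lambda \right) $ does not supply for small $\lambda $, so a finer argument is needed: split
\[
h\left( t\right) :=J_{\lambda ,\beta }\left( t\sqrt{s_{\beta }}w_{\beta },t\sqrt{1-s_{\beta }}w_{\beta }\right) =h_{0}\left( t\right) +\tfrac{\lambda t^{4}}{4}\int_{\mathbb{R}^{3}}\phi _{w_{\beta }}w_{\beta }^{2}\,dx ,
\]
where $h_{0}\left( t\right) =\left\Vert w_{\beta }\right\Vert _{H^{1}}^{2}\big( \tfrac{t^{2}}{2}-\tfrac{t^{p}}{p}\big) \leq h_{0}\left( 1\right) =\alpha _{\beta }^{\infty }$ for all $t>0$; then, using $t_{\lambda ,\beta }^{-}<\left( \tfrac{2}{4-p}\right) ^{1/\left( p-2\right) }$ and Lemma \ref{L2-3}$\left( ii\right) $,
\[
J_{\lambda ,\beta }\left( P^{-}\right) =h\left( t_{\lambda ,\beta }^{-}\right) <\alpha _{\beta }^{\infty }+\tfrac{\lambda }{4}\left( \tfrac{2}{4-p}\right) ^{4/\left( p-2\right) }\overline{S}^{-2}S_{12/5}^{-4}\left\Vert w_{\beta }\right\Vert _{H^{1}}^{4}.
\]
Substituting the explicit values of $\alpha _{\beta }^{\infty }$, $\left\Vert w_{\beta }\right\Vert _{H^{1}}^{4}$ and using $k\left( \lambda \right) =\rho _{p}$, the target $J_{\lambda ,\beta }\left( P^{-}\right) <\tfrac{\left( p-2\right) ^{2}\overline{S}^{2}S_{12/5}^{4}}{4p\left( 4-p\right) \rho _{p}}$ becomes, with $y=\left( 1+\beta \right) ^{2/\left( p-2\right) }$, the quadratic inequality $y^{2}-2y-\tfrac{2pS_{p}^{2p/\left( p-2\right) }}{\left( p-2\right) \overline{S}^{2}S_{12/5}^{4}}\left( \tfrac{2}{4-p}\right) ^{4/\left( p-2\right) }\lambda >0$, i.e.\ exactly $\left( 1+\beta \right) ^{2/\left( p-2\right) }>1+\sqrt{1+\tfrac{2pS_{p}^{2p/\left( p-2\right) }}{\left( p-2\right) \overline{S}^{2}S_{12/5}^{4}}\left( \tfrac{2}{4-p}\right) ^{4/\left( p-2\right) }\lambda }$ --- precisely the first branch of $\beta \left( \lambda \right) $, so $\beta >\beta \left( \lambda \right) $ closes it. All estimates being strict, $P^{-}\in \overline{\mathbf{M}}_{\lambda ,\beta }^{\left( 1\right) }\cap H_{r}$, which together with the $t_{\lambda ,\beta }^{+}$ part proves the lemma.

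I expect the last step for $\lambda <\rho _{p}$ to be the main obstacle: the loose fibering‑map bound on $J_{\lambda ,\beta }\left( P^{-}\right) $ coming from $t_{\lambda ,\beta }^{-}<\left( \tfrac{2}{4-p}\right) ^{1/\left( p-2\right) }$ loses too much, and one must instead isolate the Coulomb contribution $\tfrac{\lambda t^{4}}{4}\int \phi _{w_{\beta }}w_{\beta }^{2}$ and estimate the purely local part by its maximum $\alpha _{\beta }^{\infty }$; calibrating the remaining Coulomb error through Lemma \ref{L2-3}$\left( ii\right) $ and $\rho _{p}$ is what forces the precise form of $\beta \left( \lambda \right) $.
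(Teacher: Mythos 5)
Your proposal is correct and follows essentially the same route as the paper: invoke Lemma \ref{g4} for the $t_{\lambda ,\beta }^{+}$ part and the existence of $t_{\lambda ,\beta }^{\pm }$, then bound $J_{\lambda ,\beta }$ at $t_{\lambda ,\beta }^{-}$ by $\alpha _{\beta }^{\infty }+\frac{\lambda }{4}\left( \frac{2}{4-p}\right) ^{4/\left( p-2\right) }\overline{S}^{-2}S_{12/5}^{-4}\left\Vert w_{\beta }\right\Vert _{H^{1}}^{4}$ via Lemma \ref{L2-3}$\left( ii\right)$ and compare with the threshold defining $\overline{\mathbf{M}}_{\lambda ,\beta }^{\left( 1\right) }$, which is exactly how the definition of $\beta \left( \lambda \right) $ is calibrated. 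Your explicit verification of the norm constraint and the case split $\lambda \lessgtr \rho _{p}$ merely fill in details the paper leaves implicit (the norm condition also follows automatically from $P^{-}\in \mathbf{M}_{\lambda ,\beta }^{-}$ together with the energy bound and the decomposition $\mathbf{M}_{\lambda ,\beta }[D]=\mathbf{M}_{\lambda ,\beta }^{(1)}[D]\cup \mathbf{M}_{\lambda ,\beta }^{(2)}[D]$).
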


\begin{proof}
By Lemma \ref{g4}, for $\lambda >0$ and $\beta >\beta \left( \lambda \right)
,$ we have%
\begin{equation*}
\left( t_{\lambda ,\beta }^{+}\sqrt{s_{\beta }}w_{\beta },t_{\lambda ,\beta
}^{+}\sqrt{1-s_{\beta }}w_{\beta }\right) \in \overline{\mathbf{M}}_{\lambda
,\beta }^{\left( 2\right) }\cap H_{r}.
\end{equation*}%
Next, we show that for $\lambda >0$ and $\beta >\beta \left( \lambda \right)
,$
\begin{equation*}
\left( t_{\lambda ,\beta }^{-}\sqrt{s_{\beta }}w_{\beta },t_{\lambda ,\beta
}^{-}\sqrt{1-s_{\beta }}w_{\beta }\right) \in \overline{\mathbf{M}}_{\lambda
,\beta }^{\left( 1\right) }\cap H_{r}.
\end{equation*}%
It follows from Lemma \ref{2-1} and (\ref{4-1}) that%
\begin{eqnarray*}
&&J_{\lambda ,\beta }\left( t_{\lambda ,\beta }^{-}\sqrt{s_{\beta }}w_{\beta
},t_{\lambda ,\beta }^{-}\sqrt{1-s_{\beta }}w_{\beta }\right)  \\
&=&\frac{\left( t_{\lambda ,\beta }^{-}\right) ^{2}}{2}\left\Vert w_{\beta
}\right\Vert _{H^{1}}^{2}+\frac{\lambda \left( t_{\lambda ,\beta
}^{-}\right) ^{4}}{4}\int_{\mathbb{R}^{3}}\phi _{w_{\beta }}w_{\beta }^{2}dx-%
\frac{\left( t_{\lambda ,\beta }^{-}\right) ^{p}}{p}\int_{\mathbb{R}%
^{3}}g_{\beta }\left( s_{\beta }\right) w_{\beta }^{p}dx \\
&<&\alpha _{\beta }^{\infty }+\frac{\lambda }{4}\left( \frac{2}{4-p}\right)
^{\frac{4}{p-2}}\overline{S}^{-2}S_{12/5}^{-4}\left\Vert w_{\beta
}\right\Vert _{H^{1}}^{4} \\
&=&\frac{p-2}{p}\left( \frac{S_{p}^{p}}{1+\beta }\right) ^{2/\left(
p-2\right) }+\frac{\lambda }{\overline{S}^{2}S_{12/5}^{4}}\left( \frac{2}{4-p%
}\right) ^{\frac{4}{p-2}}\left( \frac{S_{p}^{p}}{1+\beta }\right) ^{4/\left(
p-2\right) } \\
&<&\frac{\left( p-2\right) ^{2}\overline{S}^{2}S_{12/5}^{4}}{4p(4-p)k\left(
\lambda \right) },
\end{eqnarray*}%
which implies that $\left( t_{\lambda ,\beta }^{-}\sqrt{s_{\beta }}w_{\beta
},t_{\lambda ,\beta }^{-}\sqrt{1-s_{\beta }}w_{\beta }\right) \in \overline{%
\mathbf{M}}_{\lambda ,\beta }^{\left( 1\right) }\cap H_{r}.$ This completes
the proof.
\end{proof}

Define
\begin{eqnarray*}
\alpha _{\lambda ,\beta }^{-} &:&=\inf_{\left( u,v\right) \in \overline{%
\mathbf{M}}_{\lambda ,\beta }^{\left( 1\right) }}J_{\lambda ,\beta }\left(
u,v\right) \text{ for }2<p<4, \\
\alpha _{\lambda ,\beta }^{+} &:&=\inf_{\left( u,v\right) \in \overline{%
\mathbf{M}}_{\lambda ,\beta }^{\left( 2\right) }}J_{\lambda ,\beta }\left(
u,v\right) \text{ for }2<p<4
\end{eqnarray*}%
and%
\begin{equation*}
\alpha _{\lambda ,\beta }^{+,r}:=\inf_{\left( u,v\right) \in \overline{%
\mathbf{M}}_{\lambda ,\beta }^{(2)}\cap H_{r}}J_{\lambda ,\beta }\left(
u,v\right) \text{ for }2<p<3.
\end{equation*}%
Clearly, $\alpha _{\lambda ,\beta }^{-}=\inf_{u\in \overline{\mathbf{M}}%
_{\lambda ,\beta }^{-}}J_{\lambda ,\beta }\left( u,v\right) ,$ $\alpha
_{\lambda ,\beta }^{+}=\inf_{\left( u,v\right) \in \overline{\mathbf{M}}%
_{\lambda ,\beta }^{+}}J_{\lambda ,\beta }\left( u,v\right) $ and $\alpha
_{\lambda ,\beta }^{+,r}=\inf_{\left( u,v\right) \in \overline{\mathbf{M}}%
_{\lambda ,\beta }^{+}\cap H_{r}}J_{\lambda ,\beta }\left( u,v\right) .$ It
follows from Lemmas \ref{m5}, \ref{g5} and \ref{g6} that
\begin{equation*}
\frac{p-2}{4p}C_{\beta }^{-1/\left( p-2\right) }<\alpha _{\lambda ,\beta
}^{-}<\frac{\left( p-2\right) ^{2}\overline{S}^{2}S_{12/5}^{4}}{%
4p(4-p)k\left( \lambda \right) }\text{ for }2<p<4,  \label{25}
\end{equation*}%
and%
\begin{equation}
-\infty <\alpha _{\lambda ,\beta }^{+,r}<0\text{ for }2<p<3.  \label{26}
\end{equation}%
Furthermore, we have the following results.

\begin{theorem}
\label{t4}Let $2<p<4$ and $\lambda >0.$ Then for each $\beta >\beta
_{0}\left( \lambda \right) ,$ we have%
\begin{equation*}
\alpha _{\lambda ,\beta }^{+}=\inf_{\left( u,v\right) \in \mathbf{M}%
_{\lambda ,\beta }^{+}}J_{\lambda ,\beta }\left( u,v\right) =-\infty .
\label{6-1}
\end{equation*}
\end{theorem}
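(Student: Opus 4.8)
The plan is to exhibit, for every fixed $\beta>\beta_0(\lambda)$, a family of functions lying in $\mathbf{M}_{\lambda,\beta}^{+}$ along which $J_{\lambda,\beta}$ tends to $-\infty$. The natural candidates come from Lemma \ref{g4}: the pair $\bigl(t_{\lambda,\beta}^{+}\sqrt{s_\beta}\,w_\beta,\,t_{\lambda,\beta}^{+}\sqrt{1-s_\beta}\,w_\beta\bigr)$ sits in $\mathbf{M}_{\lambda,\beta}^{+}\cap H_r$ and already has negative energy. The idea is to replace $w_\beta$ by a scaled/translated family $w_\beta^R$ (for instance $w_\beta(\cdot/R)$, or $w_\beta$ dilated so as to drive down the $H^1$-norm relative to the $L^p$-norm) and then, for each member of the family, re-run the fibering analysis of Lemma \ref{g4} to produce the associated $t^{+}$-point on $\mathbf{M}_{\lambda,\beta}^{+}$. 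Since $\beta>\beta_0(\lambda)$ is precisely the threshold that guarantees $\inf_{t>0}\eta(t)<-\lambda\int\phi_{w}w^2$, and this strict inequality is stable under small perturbations of the profile, the same construction goes through for the whole family once it stays in a controlled region.

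Concretely, first I would fix the profile $z=z_R$ (a suitable one-parameter family in $H^1(\mathbb{R}^3)\setminus\{0\}$) and set $(u_R,v_R)=\bigl(\sqrt{s_\beta}\,z_R,\sqrt{1-s_\beta}\,z_R\bigr)$, so that $\|(u_R,v_R)\|_H^2=\|z_R\|_{H^1}^2$, $\int\phi_{u_R,v_R}(u_R^2+v_R^2)\,dx=\int\phi_{z_R}z_R^2\,dx$, and $\int F_\beta(u_R,v_R)\,dx=g_\beta(s_\beta)\int|z_R|^p\,dx$, exactly as in the computation preceding Lemma \ref{g4}. Then I would introduce $\eta_R(t)=t^{-2}\|z_R\|_{H^1}^2-t^{p-4}g_\beta(s_\beta)\int|z_R|^p\,dx$ and observe, as there, that $t(u_R,v_R)\in\mathbf{M}_{\lambda,\beta}$ iff $\eta_R(t)=-\lambda\int\phi_{z_R}z_R^2\,dx$, with $\eta_R$ strictly decreasing on $\bigl(0,(\tfrac{2}{4-p})^{1/(p-2)}\bigr)$ and strictly increasing afterwards. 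The key quantitative step is to check that the minimum value $\eta_R\bigl((\tfrac{2}{4-p})^{1/(p-2)}\bigr)=-\tfrac{p-2}{2}\bigl(\tfrac{4-p}{2}\bigr)^{(4-p)/(p-2)}\|z_R\|_{H^1}^2$ stays strictly below $-\lambda\int\phi_{z_R}z_R^2\,dx\ge -\lambda\overline{S}^{-2}S_{12/5}^{-4}\|z_R\|_{H^1}^4$; by Lemma \ref{L2-3}(ii) this reduces to the scale-free inequality $\tfrac{p-2}{2}\bigl(\tfrac{4-p}{2}\bigr)^{(4-p)/(p-2)}>\lambda\overline{S}^{-2}S_{12/5}^{-4}\|z_R\|_{H^1}^2$, which holds once $\|z_R\|_{H^1}^2$ is not too large — so I would normalize the family to keep $\|z_R\|_{H^1}$ bounded (e.g. of fixed size), guaranteeing a $t_R^{+}$ with $t_R^{+}>(\tfrac{2}{4-p})^{1/(p-2)}$ and $\bigl(t_R^{+}\sqrt{s_\beta}z_R,t_R^{+}\sqrt{1-s_\beta}z_R\bigr)\in\mathbf{M}_{\lambda,\beta}^{+}$, exactly mirroring Lemma \ref{g4}.

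Having placed the family on $\mathbf{M}_{\lambda,\beta}^{+}$, the final step is to evaluate the energy. Using $J_{\lambda,\beta}(t z_R\text{-pair})=\tfrac{p-2}{2p}t^2\|z_R\|_{H^1}^2-\tfrac{\lambda(4-p)}{4p}t^4\int\phi_{z_R}z_R^2\,dx$ (the $\mathbf{M}_{\lambda,\beta}^{+}$ representation (\ref{2-6-1})), at $t=t_R^{+}$ this is dominated by $-\tfrac{\lambda(4-p)}{4p}(t_R^{+})^4\int\phi_{z_R}z_R^2\,dx+\tfrac{p-2}{2p}(t_R^{+})^2\|z_R\|_{H^1}^2$; I expect the Coulomb term to blow up faster than the others along the chosen family (for instance along dilations $z_R=w_\beta(\cdot/R)$, where $\|z_R\|_{H^1}^2$, $\int\phi_{z_R}z_R^2$ and $\int|z_R|^p$ scale with different powers of $R$, one can arrange $t_R^{+}\to\infty$ while the quartic Coulomb contribution overwhelms the quadratic one), forcing $J_{\lambda,\beta}\to-\infty$. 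Then $\alpha_{\lambda,\beta}^{+}=\inf_{\mathbf{M}_{\lambda,\beta}^{+}}J_{\lambda,\beta}=-\infty$, and since $\alpha_{\lambda,\beta}^{+}$ on the sub-level piece $\overline{\mathbf{M}}_{\lambda,\beta}^{(2)}$ was shown finite and negative in (\ref{26}), this also makes transparent why the constraint must be localized to $\overline{\mathbf{M}}_{\lambda,\beta}^{(2)}$ rather than all of $\mathbf{M}_{\lambda,\beta}^{+}$. The main obstacle is the bookkeeping of scalings: one has to choose the test family so that it simultaneously (a) keeps $\|z_R\|_{H^1}$ small enough that the $\beta_0(\lambda)$-type inequality still yields a genuine $t_R^{+}$-branch, and (b) drives the quartic Coulomb term to dominate, and verifying these two requirements are compatible is the only delicate point; everything else is a verbatim repetition of the fibering-map computations already carried out for Lemma \ref{g4}.
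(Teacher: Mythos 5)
Your overall framework is right (fibering maps, produce a family on $\mathbf{M}_{\lambda ,\beta }^{+}$ with energy diverging to $-\infty $), but the mechanism you propose for the divergence is the wrong one, and the ``only delicate point'' you defer -- the compatibility of your requirements (a) and (b) -- is in fact where the argument breaks. The existence of the $t_{R}^{+}$-branch on the ray through $z_{R}$ requires
\begin{equation*}
\lambda \int_{\mathbb{R}^{3}}\phi _{z_{R}}z_{R}^{2}\,dx\;<\;-\inf_{t>0}\eta
_{R}(t)\;=\;\frac{p-2}{4-p}\,t_{\ast }^{-2}\left\Vert z_{R}\right\Vert
_{H^{1}}^{2},\qquad t_{\ast }^{p-2}=\frac{2\left\Vert z_{R}\right\Vert
_{H^{1}}^{2}}{(4-p)g_{\beta }(s_{\beta })\int |z_{R}|^{p}dx},
\end{equation*}
i.e.\ the Coulomb term must be \emph{small} relative to the local quantities. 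For dilations $z_{R}=c_{R}w_{\beta }(\cdot /R)$ one has $\int \phi _{z_{R}}z_{R}^{2}\sim c_{R}^{4}R^{5}$, $\left\Vert z_{R}\right\Vert _{H^{1}}^{2}\sim c_{R}^{2}R^{3}$, $\int |z_{R}|^{p}\sim c_{R}^{p}R^{3}$, and a short computation shows the displayed inequality fails for large $R$ whether or not you normalize $\left\Vert z_{R}\right\Vert _{H^{1}}$: un-normalized, the left side grows like $R^{5}$ against $R^{3}$; normalized, it decays like $R^{-1}$ against $R^{-3}$. So the ray through $z_{R}$ misses $\mathbf{M}_{\lambda ,\beta }$ altogether and there is no $t_{R}^{+}$ to evaluate. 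More structurally, on $\mathbf{M}_{\lambda ,\beta }$ one has $J_{\lambda ,\beta }=\frac{p-2}{2p}\Vert (u,v)\Vert _{H}^{2}-\frac{\lambda (4-p)}{4p}\int \phi _{u,v}(u^{2}+v^{2})dx$ together with Lemma \ref{L2-3}(ii), so ``making the quartic Coulomb term dominate'' is exactly what the constraint $\beta >\beta _{0}(\lambda )$ is designed to prevent on the admissible rays; the negativity of the single-profile energy comes from the $L^{p}$-coupling term (large $\beta $), not from the Coulomb term.

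The paper's proof runs in the opposite direction: it makes the Coulomb interaction \emph{negligible} rather than dominant. One truncates $w_{\beta }$ to a compactly supported $u_{R_{0}}$ (preserving the strict inequalities of Lemma \ref{g4}), and then sums $N$ translates $u_{R_{0},N}^{(i)}$ with mutual distances of order $N^{3}$, so the supports are disjoint. Then $\Vert w_{R_{0},N}\Vert _{H^{1}}^{2}$ and $\int |w_{R_{0},N}|^{p}$ are exactly $N$ times the single-bump quantities, the cross Coulomb interactions are $O(N^{2}/N^{3})\rightarrow 0$, and hence $\eta _{N}(t)=N\eta _{R_{0}}(t)$: the fibering picture is $N$-independent and yields $t_{\lambda ,N}^{(2)}\in \mathbf{M}_{\lambda ,\beta }^{+}$ with
\begin{equation*}
J_{\lambda ,\beta }\bigl(t_{\lambda ,N}^{(2)}\sqrt{s_{\beta }}w_{R_{0},N},\,t_{\lambda ,N}^{(2)}\sqrt{1-s_{\beta }}w_{R_{0},N}\bigr)\leq N\,J_{\lambda
,\beta }\bigl(t_{\lambda ,\beta }^{+}\sqrt{s_{\beta }}u_{R_{0}},\,t_{\lambda
,\beta }^{+}\sqrt{1-s_{\beta }}u_{R_{0}}\bigr)+C_{0}\rightarrow -\infty ,
\end{equation*}
since the single-bump energy at $t_{\lambda ,\beta }^{+}$ is strictly negative by Lemma \ref{g4}. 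If you want to repair your proposal, replace the dilation family by this multi-bump family; the rest of your fibering analysis then goes through essentially verbatim.
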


\begin{proof}
Since $w_{\beta }$ is the unique positive radial solution of Eq. $\left(
E_{\beta }^{\infty }\right) $ with $w_{\beta }\left( 0\right) =\max_{x\in
\mathbb{R}^{3}}w_{0}\left( x\right) ,$ we have
\begin{equation}
\frac{\lambda }{4}\overline{S}^{-2}S_{12/5}^{-4}\left\Vert w_{\beta
}\right\Vert _{H^{1}}^{2}<\frac{p-2}{2p}\left( \frac{4-p}{p}\right) ^{\left(
4-p\right) /\left( p-2\right) }  \label{6-3}
\end{equation}%
and
\begin{equation}
\left\Vert w_{\beta }\right\Vert _{H^{1}}^{2}=\int_{\mathbb{R}^{3}}g_{\beta
}\left( s_{\beta }\right) \left\vert w_{\beta }\right\vert ^{p}dx=\left(
\frac{S_{p}^{p}}{g_{\beta }\left( s_{\beta }\right) }\right) ^{2/\left(
p-2\right) }.  \label{6-4}
\end{equation}%
Then by Lemma \ref{g4}, there exists a positive constant $t_{\lambda ,\beta
}^{+}$ satisfying
\begin{equation*}
1<\left( \frac{2}{4-p}\right) ^{\frac{1}{p-2}}<t_{\lambda ,\beta }^{+}
\end{equation*}%
such that
\begin{equation*}
J_{\lambda ,\beta }\left( t_{\lambda ,\beta }^{+}\sqrt{s_{\beta }}w_{\beta
},t_{\lambda ,\beta }^{+}\sqrt{1-s_{\beta }}w_{\beta }\right) =\inf_{t\geq
0}J_{\lambda ,\beta }\left( t\sqrt{s_{\beta }}w_{\beta },t\sqrt{1-s_{\beta }}%
w_{\beta }\right) <0.
\end{equation*}%
For $R>1,$ we define a function $\psi _{R}\in C^{1}(\mathbb{R}^{3},\left[ 0,1%
\right] )$ as
\begin{equation*}
\psi _{R}\left( x\right) =\left\{
\begin{array}{ll}
1 & \left\vert x\right\vert <\frac{R}{2}, \\
0 & \left\vert x\right\vert >R,%
\end{array}%
\right.
\end{equation*}%
and $\left\vert \nabla \psi _{R}\right\vert \leq 1$ in $\mathbb{R}^{3}.$ Let
$u_{R}\left( x\right) =w_{\beta }\left( x\right) \psi _{R}(x).$ Then there
hold
\begin{equation}
\int_{\mathbb{R}^{3}}\left\vert u_{R}\right\vert ^{p}dx\rightarrow \int_{%
\mathbb{R}^{3}}\left\vert w_{\beta }\right\vert ^{p}dx\text{ and }\left\Vert
u_{R}\right\Vert _{H^{1}}\rightarrow \left\Vert w_{\beta }\right\Vert
_{H^{1}}\text{ as }R\rightarrow \infty ,  \label{6-5}
\end{equation}%
and%
\begin{equation*}
\int_{\mathbb{R}^{3}}\phi _{u_{R}}u_{R}^{2}dx=\int_{\mathbb{R}^{3}}\int_{%
\mathbb{R}^{3}}\frac{u_{R}^{2}\left( x\right) u_{R}^{2}\left( y\right) }{%
4\pi \left\vert x-y\right\vert }dxdy\rightarrow \int_{\mathbb{R}^{3}}\phi
_{w_{\beta }}w_{\beta }^{2}dx\text{ as }R\rightarrow \infty .
\end{equation*}%
Since $J_{\lambda ,\beta }\in C^{1}(H,\mathbb{R)}$, by $\left( \ref{6-3}%
\right) $--$\left( \ref{6-5}\right) $ there exists $R_{0}>0$ such that%
\begin{equation}
\frac{\lambda }{4}\overline{S}^{-2}S_{12/5}^{-4}\left\Vert
u_{R_{0}}\right\Vert _{H^{1}}^{2}<\frac{p-2}{2p}\left( \frac{4-p}{p}\right)
^{\left( 4-p\right) /\left( p-2\right) }\left( \frac{\int_{\mathbb{R}%
^{3}}g_{\beta }\left( s_{\beta }\right) \left\vert u_{R_{0}}\right\vert
^{p}dx}{\left\Vert u_{R_{0}}\right\Vert _{H^{1}}^{2}}\right) ^{2/\left(
p-2\right) }  \label{6-7}
\end{equation}%
and%
\begin{equation}
J_{\lambda ,\beta }\left( t_{\lambda ,\beta }^{+}\sqrt{s_{\beta }}%
u_{R_{0}},t_{\lambda ,\beta }^{+}\sqrt{1-s_{\beta }}u_{R_{0}}\right) <0.
\label{6-9}
\end{equation}%
Let%
\begin{equation*}
u_{R_{0},N}^{\left( i\right) }\left( x\right) =w_{\beta }\left(
x+iN^{3}e\right) \psi _{R_{0}}\left( x+iN^{3}e\right)
\end{equation*}%
for $e\in \mathbb{S}^{2}$ and $i=1,2,\ldots ,N$, where $N^{3}>2R_{0}.$ Then
we deduce that
\begin{equation*}
\left\Vert u_{R_{0},N}^{\left( i\right) }\right\Vert _{H^{1}}^{2}=\left\Vert
u_{R_{0}}\right\Vert _{H^{1}}^{2},\text{ }\int_{\mathbb{R}^{3}}\left\vert
u_{R_{0},N}^{\left( i\right) }\right\vert ^{p}dx=\int_{\mathbb{R}%
^{3}}\left\vert u_{R_{0}}\right\vert ^{p}dx
\end{equation*}%
and
\begin{eqnarray*}
\int_{\mathbb{R}^{3}}\phi _{u_{R_{0},N}^{\left( i\right) }}\left[
u_{R_{0},N}^{\left( i\right) }\right] ^{2}dx &=&\int_{\mathbb{R}^{3}}\int_{%
\mathbb{R}^{3}}\frac{\left[ u_{R_{0},N}^{\left( i\right) }\right] ^{2}\left(
x\right) \left[ u_{R_{0},N}^{\left( j\right) }\right] ^{2}\left( y\right) }{%
4\pi \left\vert x-y\right\vert }dxdy \\
&=&\int_{\mathbb{R}^{3}}\int_{\mathbb{R}^{3}}\frac{u_{R_{0}}^{2}\left(
x\right) u_{R_{0}}^{2}\left( y\right) }{4\pi \left\vert x-y\right\vert }dxdy.
\end{eqnarray*}%
for all $N.$ Moreover, by $\left( \ref{6-7}\right) $ and $\left( \ref{6-9}%
\right) ,$ there exists $N_{0}>0$ with $N_{0}^{3}>2R_{0}$ such that for
every $N\geq N_{0},$
\begin{equation*}
\frac{\lambda }{4}\overline{S}^{-2}S_{12/5}^{-4}\left\Vert
u_{R_{0},N}^{\left( i\right) }\right\Vert _{H^{1}}^{2}<\frac{p-2}{2p}\left(
\frac{4-p}{p}\right) ^{\left( 4-p\right) /\left( p-2\right) }\left( \frac{%
\int_{\mathbb{R}^{3}}g_{\beta }\left( s_{\beta }\right) \left\vert
u_{R_{0},N}^{\left( i\right) }\right\vert ^{p}dx}{\left\Vert
u_{R_{0},N}^{\left( i\right) }\right\Vert _{H^{1}}^{2}}\right) ^{2/\left(
p-2\right) }
\end{equation*}%
and
\begin{eqnarray*}
\inf_{t\geq 0}J_{\lambda ,\beta }\left( t\sqrt{s_{\beta }}%
u_{R_{0},N}^{\left( i\right) },t\sqrt{1-s_{\beta }}u_{R_{0},N}^{\left(
i\right) }\right) &\leq &J_{\lambda ,\beta }\left( t_{\lambda ,\beta }^{+}%
\sqrt{s_{\beta }}u_{R_{0},N}^{\left( i\right) },t_{\lambda ,\beta }^{+}\sqrt{%
1-s_{\beta }}u_{R_{0},N}^{\left( i\right) }\right) \\
&=&J_{\lambda ,\beta }\left( t_{\lambda ,\beta }^{+}\sqrt{s_{\beta }}%
u_{R_{0}},t_{\lambda ,\beta }^{+}\sqrt{1-s_{\beta }}u_{R_{0}}\right) \\
&<&0,
\end{eqnarray*}%
for all $e\in \mathbb{S}^{2}$ and $i=1,2,\ldots ,N.$ Let
\begin{equation*}
w_{R_{0},N}\left( x\right) =\sum_{i=1}^{N}u_{R_{0},N}^{\left( i\right) }.
\end{equation*}%
Observe that $w_{R_{0},N}$ is a sum of translation of $u_{R_{0}}.$ When $%
N^{3}\geq N_{0}^{3}>2R_{0}$, the summands have disjoint support. In such a
case we have%
\begin{equation}
\left\Vert w_{R_{0},N}\right\Vert _{H^{1}}^{2}=N\Vert u_{R_{0}}\Vert
_{H^{1}}^{2},  \label{14}
\end{equation}%
\begin{equation}
\int_{\mathbb{R}^{3}}\left\vert w_{R_{0},N}\right\vert
^{p}dx=\sum_{i=1}^{N}\int_{\mathbb{R}^{3}}\left\vert u_{R_{0},N}^{\left(
i\right) }\right\vert ^{p}dx=N\int_{\mathbb{R}^{3}}\left\vert
u_{R_{0}}\right\vert ^{p}dx,  \label{15}
\end{equation}%
and%
\begin{eqnarray}
&&\int_{\mathbb{R}^{3}}\phi _{\sqrt{s_{\beta }}w_{R_{0},N},\sqrt{1-s_{\beta }%
}w_{R_{0},N}}\left( \left( \sqrt{s_{\beta }}w_{R_{0},N}\right) ^{2}+\left(
\sqrt{1-s_{\beta }}w_{R_{0},N}\right) ^{2}\right) dx  \notag \\
&=&\int_{\mathbb{R}^{3}}\int_{\mathbb{R}^{3}}\frac{w_{R_{0},N}^{2}\left(
x\right) w_{R_{0},N}^{2}\left( y\right) }{4\pi \left\vert x-y\right\vert }%
dxdy  \notag \\
&=&\sum_{i=1}^{N}\int_{\mathbb{R}^{3}}\int_{\mathbb{R}^{3}}\frac{\left[
u_{R_{0},N}^{\left( i\right) }\right] ^{2}\left( x\right) \left[
u_{R_{0},N}^{\left( i\right) }\right] ^{2}\left( y\right) }{4\pi \left\vert
x-y\right\vert }dxdy  \notag \\
&&+\sum_{i\neq j}^{N}\int_{\mathbb{R}^{3}}\int_{\mathbb{R}^{3}}\frac{\left[
u_{R_{0},N}^{\left( i\right) }\right] ^{2}\left( x\right) \left[
u_{R_{0},N}^{\left( j\right) }\right] ^{2}\left( y\right) }{4\pi \left\vert
x-y\right\vert }dxdy.  \label{16}
\end{eqnarray}%
A straightforward calculation shows that
\begin{equation*}
\sum_{i\neq j}^{N}\int_{\mathbb{R}^{3}}\int_{\mathbb{R}^{3}}\frac{\left[
u_{R_{0},N}^{\left( i\right) }\right] ^{2}\left( x\right) \left[
u_{R_{0},N}^{\left( j\right) }\right] ^{2}\left( y\right) }{4\pi \left\vert
x-y\right\vert }dxdy\leq \frac{(N^{2}-N)}{N^{3}-2R_{0}}\left( \int_{\mathbb{R%
}^{3}}w_{\beta }^{2}\left( x\right) dx\right) ^{2},
\end{equation*}%
which implies that%
\begin{equation}
\sum_{i\neq j}^{N}\int_{\mathbb{R}^{3}}\int_{\mathbb{R}^{3}}\frac{\left[
u_{R_{0},N}^{\left( i\right) }\right] ^{2}\left( x\right) \left[
u_{R_{0},N}^{\left( j\right) }\right] ^{2}\left( y\right) }{4\pi \left\vert
x-y\right\vert }dxdy\rightarrow 0\text{ as }N\rightarrow \infty .  \label{17}
\end{equation}%
Next, we define
\begin{equation*}
\eta _{N}\left( t\right) =t^{-2}\left\Vert \left( \sqrt{s_{\beta }}%
w_{R_{0},N},\sqrt{1-s_{\beta }}w_{R_{0},N}\right) \right\Vert
_{H}^{2}-t^{p-4}\int_{\mathbb{R}^{3}}F_{\beta }\left( \sqrt{s_{\beta }}%
w_{R_{0},N},\sqrt{1-s_{\beta }}w_{R_{0},N}\right) dx
\end{equation*}%
and%
\begin{equation*}
\eta _{R_{0}}(t)=t^{-2}\left\Vert u_{R_{0}}\right\Vert
_{H^{1}}^{2}-t^{p-4}\int_{\mathbb{R}^{3}}g_{\beta }\left( s_{\beta }\right)
\left\vert u_{R_{0}}\right\vert ^{p}dx
\end{equation*}%
for $t>0.$ Then by $\left( \ref{14}\right) $ and $\left( \ref{15}\right) $,
we get
\begin{eqnarray}
\eta _{N}\left( t\right) &=&t^{-2}\left\Vert w_{R_{0},N}\right\Vert
_{H}^{2}-t^{p-4}\int_{\mathbb{R}^{3}}g_{\beta }\left( s_{\beta }\right)
\left\vert w_{R_{0},N}\right\vert ^{p}dx  \notag \\
&=&t^{-2}N\left\Vert u_{R_{0}}\right\Vert _{H^{1}}^{2}-t^{p-4}N\int_{\mathbb{%
R}^{3}}g_{\beta }\left( s_{\beta }\right) \left\vert u_{R_{0}}\right\vert
^{p}dx  \notag \\
&=&N\eta _{R_{0}}(t)\text{ for all }t>0.  \label{6-8}
\end{eqnarray}%
So one can see that $\left( t\sqrt{s_{\beta }}w_{R_{0},N},t\sqrt{1-s_{\beta }%
}w_{R_{0},N}\right) \in \mathbf{M}_{\lambda ,\beta }$ if and only if
\begin{equation*}
\eta _{N}\left( t\right) =-\lambda \int_{\mathbb{R}^{3}}\phi _{\sqrt{%
s_{\beta }}w_{R_{0},N},\sqrt{1-s_{\beta }}w_{R_{0},N}}\left( \left( \sqrt{%
s_{\beta }}w_{R_{0},N}\right) ^{2}+\left( \sqrt{1-s_{\beta }}%
w_{R_{0},N}\right) ^{2}\right) dx.
\end{equation*}%
We observe that
\begin{equation*}
\eta _{R_{0}}\left( T_{\beta }\left( u_{R_{0}}\right) \right) =0,\
\lim_{t\rightarrow 0^{+}}\eta _{R_{0}}(t)=\infty \text{ and }%
\lim_{t\rightarrow \infty }\eta _{R_{0}}(t)=0,
\end{equation*}%
where%
\begin{equation*}
T_{\beta }\left( u_{R_{0}}\right) :=\left( \frac{\left\Vert
u_{R_{0}}\right\Vert _{H^{1}}^{2}}{\int_{\mathbb{R}^{3}}g_{\beta }\left(
s_{\beta }\right) \left\vert u_{R_{0}}\right\vert ^{p}dx}\right) ^{1/\left(
p-2\right) }.
\end{equation*}%
Moreover, the first derivative of $\eta _{R_{0}}(t)$ is the following%
\begin{equation*}
\eta _{R_{0}}^{\prime }\left( t\right) =-2t^{-3}\left\Vert
u_{R_{0}}\right\Vert _{H^{1}}^{2}+\left( 4-p\right) t^{p-5}\int_{\mathbb{R}%
^{3}}g_{\beta }\left( s_{\beta }\right) \left\vert u_{R_{0}}\right\vert
^{p}dx.
\end{equation*}%
Then we obtain that $\eta _{R_{0}}$ is decreasing on $0<t<\left( \frac{%
2\left\Vert u_{R_{0}}\right\Vert _{H^{1}}^{2}}{\left( 4-p\right) \int_{%
\mathbb{R}^{3}}g_{\beta }\left( s_{\beta }\right) \left\vert
u_{R_{0}}\right\vert ^{p}dx}\right) ^{1/\left( p-2\right) }$ and is
increasing on $t>\left( \frac{2\left\Vert u_{R_{0}}\right\Vert _{H^{1}}^{2}}{%
\left( 4-p\right) \int_{\mathbb{R}^{3}}g_{\beta }\left( s_{\beta }\right)
\left\vert u_{R_{0}}\right\vert ^{p}dx}\right) ^{1/\left( p-2\right) }.$
Moreover, by $\left( \ref{6-7}\right) $ one has
\begin{eqnarray}
\inf_{t>0}\eta _{R_{0}}\left( t\right) &=&\eta _{R_{0}}\left( \left( \frac{%
2\left\Vert u_{R_{0}}\right\Vert _{H^{1}}^{2}}{\left( 4-p\right) \int_{%
\mathbb{R}^{3}}g_{\beta }\left( s_{\beta }\right) \left\vert
u_{R_{0}}\right\vert ^{p}dx}\right) ^{1/\left( p-2\right) }\right)  \notag \\
&=&-\frac{2\left( p-2\right) }{4-p}\left( \frac{\left( 4-p\right) \int_{%
\mathbb{R}^{3}}g_{\beta }\left( s_{\beta }\right) \left\vert
u_{R_{0}}\right\vert ^{p}dx}{2\left\Vert u_{R_{0}}\right\Vert _{H^{1}}^{2}}%
\right) ^{2/\left( p-2\right) }\left\Vert u_{R_{0}}\right\Vert _{H^{1}}^{2}
\notag \\
&<&-\lambda \overline{S}^{-2}S_{12/5}^{-4}\left\Vert u_{R_{0}}\right\Vert
_{H^{1}}^{4}  \notag \\
&=&-\lambda \int_{\mathbb{R}^{3}}\phi _{u_{R_{0}}}u_{R_{0}}^{2}dx.
\label{6-10}
\end{eqnarray}%
Then it follows from $\left( \ref{6-8}\right) $ and $\left( \ref{6-10}%
\right) $ that
\begin{eqnarray*}
\inf_{t>0}\eta _{N}\left( t\right) &\leq &\eta _{N}\left( \left( \frac{%
2\left\Vert u_{R_{0}}\right\Vert _{H^{1}}^{2}}{\left( 4-p\right) \int_{%
\mathbb{R}^{3}}g_{\beta }\left( s_{\beta }\right) \left\vert
u_{R_{0}}\right\vert ^{p}dx}\right) ^{1/\left( p-2\right) }\right) \\
&=&N\eta _{R_{0}}\left( \left( \frac{2\left\Vert u_{R_{0}}\right\Vert
_{H^{1}}^{2}}{\left( 4-p\right) \int_{\mathbb{R}^{3}}g_{\beta }\left(
s_{\beta }\right) \left\vert u_{R_{0}}\right\vert ^{p}dx}\right) ^{1/\left(
p-2\right) }\right) \\
&<&-\lambda N\int_{\mathbb{R}^{3}}\phi _{u_{R_{0}}}u_{R_{0}}^{2}dx \\
&=&-\lambda N\int_{\mathbb{R}^{3}}\int_{\mathbb{R}^{3}}\frac{%
u_{R_{0}}^{2}\left( x\right) u_{R_{0}}^{2}\left( y\right) }{4\pi \left\vert
x-y\right\vert }dxdy,
\end{eqnarray*}%
and together with $\left( \ref{17}\right) ,$ we further have
\begin{eqnarray*}
&&\inf_{t>0}\eta _{N}\left( t\right) \\
&<&-\lambda N\int_{\mathbb{R}^{3}}\int_{\mathbb{R}^{3}}\frac{%
u_{R_{0}}^{2}\left( x\right) u_{R_{0}}^{2}\left( y\right) }{4\pi \left\vert
x-y\right\vert }dxdy-\lambda \sum_{i\neq j}^{N}\int_{\mathbb{R}^{3}}\int_{%
\mathbb{R}^{3}}\frac{u_{R_{0},N}^{\left( i\right) }\left( x\right)
u_{R_{0},N}^{\left( j\right) }\left( y\right) }{4\pi \left\vert
x-y\right\vert }dxdy \\
&=&-\lambda \int_{\mathbb{R}^{3}}\phi _{\sqrt{s_{\beta }}w_{R_{0},N},\sqrt{%
1-s_{\beta }}w_{R_{0},N}}\left( \left( \sqrt{s_{\beta }}w_{R_{0},N}\right)
^{2}+\left( \sqrt{1-s_{\beta }}w_{R_{0},N}\right) ^{2}\right) dx
\end{eqnarray*}%
for $N$ sufficiently large. Thus, for $N$ sufficiently large, there exist
two positive constants $t_{\lambda ,N}^{\left( 1\right) }$ and $t_{\lambda
,N}^{\left( 2\right) }$ satisfying
\begin{equation*}
1<t_{\lambda ,N}^{\left( 1\right) }<\left( \frac{2\left\Vert
u_{R_{0}}\right\Vert _{H^{1}}^{2}}{\left( 4-p\right) \int_{\mathbb{R}%
^{3}}g_{\beta }\left( s_{\beta }\right) \left\vert u_{R_{0}}\right\vert
^{p}dx}\right) ^{1/\left( p-2\right) }<t_{\lambda ,N}^{\left( 2\right) }
\end{equation*}%
such that
\begin{equation*}
\eta _{N}\left( t_{\lambda ,N}^{\left( i\right) }\right) +\lambda \int_{%
\mathbb{R}^{3}}\phi _{\sqrt{s_{\beta }}w_{R_{0},N},\sqrt{1-s_{\beta }}%
w_{R_{0},N}}\left( \left( \sqrt{s_{\beta }}w_{R_{0},N}\right) ^{2}+\left(
\sqrt{1-s_{\beta }}w_{R_{0},N}\right) ^{2}\right) dx=0
\end{equation*}%
for $i=1,2$. That is, $\left( t_{\lambda ,N}^{\left( i\right) }\sqrt{%
s_{\beta }}w_{R_{0},N},t_{\lambda ,N}^{\left( i\right) }\sqrt{1-s_{\beta }}%
w_{R_{0},N}\right) \in \mathbf{M}_{\lambda ,\beta }$ for $i=1,2.$ A direct
calculation on the second order derivatives gives
\begin{eqnarray*}
h_{\lambda ,\left( t_{\lambda ,N}^{\left( 1\right) }\sqrt{s_{\beta }}%
w_{R,N},t_{\lambda ,N}^{\left( 1\right) }\sqrt{1-s_{\beta }}w_{R,N}\right)
}^{\prime \prime }\left( 1\right) &=&-2\left\Vert t_{\lambda ,N}^{\left(
1\right) }w_{R,N}\right\Vert _{H^{1}}^{2}+\left( 4-p\right) \int_{\mathbb{R}%
^{3}}g_{\beta }\left( s_{\beta }\right) \left\vert t_{\lambda ,N}^{\left(
1\right) }w_{R,N}\right\vert ^{p}dx \\
&=&\left( t_{\lambda ,N}^{\left( 1\right) }\right) ^{5}\eta _{N}^{\prime
}\left( t_{\lambda ,N}^{\left( 1\right) }\right) <0
\end{eqnarray*}%
and
\begin{eqnarray*}
h_{\lambda ,\left( t_{\lambda ,N}^{\left( 2\right) }\sqrt{s_{\beta }}%
w_{R,N},t_{\lambda ,N}^{\left( 2\right) }\sqrt{1-s_{\beta }}w_{R,N}\right)
}^{\prime \prime }\left( 1\right) &=&-2\left\Vert t_{\lambda ,N}^{\left(
2\right) }w_{R,N}\right\Vert _{H^{1}}^{2}+\left( 4-p\right) \int_{\mathbb{R}%
^{3}}g_{\beta }\left( s_{\beta }\right) \left\vert t_{\lambda ,N}^{\left(
2\right) }w_{R,N}\right\vert ^{p}dx \\
&=&\left( t_{\lambda ,N}^{\left( 2\right) }\right) ^{5}\eta _{N}^{\prime
}\left( t_{\lambda ,N}^{\left( 2\right) }\right) >0.
\end{eqnarray*}%
These enable us to conclude that
\begin{equation*}
\left( t_{\lambda ,N}^{\left( 1\right) }\sqrt{s_{\beta }}w_{R,N},t_{\lambda
,N}^{\left( 1\right) }\sqrt{1-s_{\beta }}w_{R,N}\right) \in \mathbf{M}%
_{\lambda ,\beta }^{-}
\end{equation*}%
and%
\begin{equation*}
\left( t_{\lambda ,N}^{\left( 2\right) }\sqrt{s_{\beta }}w_{R,N},t_{\lambda
,N}^{\left( 2\right) }\sqrt{1-s_{\beta }}w_{R,N}\right) \in \mathbf{M}%
_{\lambda ,\beta }^{+}.
\end{equation*}%
Moreover, it follows from $\left( \ref{14}\right) -\left( \ref{17}\right) $
that%
\begin{eqnarray*}
J_{\lambda ,\beta }\left( t_{\lambda ,N}^{\left( 2\right) }\sqrt{s_{\beta }}%
w_{R,N},t_{\lambda ,N}^{\left( 2\right) }\sqrt{1-s_{\beta }}w_{R,N}\right)
&=&\inf_{t>0}J_{\lambda ,\beta }\left( t\sqrt{s_{\beta }}w_{R,N},t\sqrt{%
1-s_{\beta }}w_{R,N}\right) \\
&\leq &J_{\lambda ,\beta }\left( t_{\lambda ,\beta }^{+}\sqrt{s_{\beta }}%
w_{R,N},t_{\lambda ,\beta }^{+}\sqrt{1-s_{\beta }}w_{R,N}\right) \\
&\leq &NJ_{\lambda ,\beta }\left( t_{\lambda ,\beta }^{+}\sqrt{s_{\beta }}%
u_{R_{0}},t_{\lambda ,\beta }^{+}\sqrt{1-s_{\beta }}u_{R_{0}}\right) +C_{0}\
\text{for some }C_{0}>0
\end{eqnarray*}%
and
\begin{equation*}
J_{\lambda ,\beta }\left( t_{\lambda ,N}^{\left( 2\right) }\sqrt{s_{\beta }}%
w_{R,N},t_{\lambda ,N}^{\left( 2\right) }\sqrt{1-s_{\beta }}w_{R,N}\right)
\rightarrow -\infty \text{ as }N\rightarrow \infty ,
\end{equation*}%
which implies that $\alpha _{\lambda ,\beta }^{+}=\inf_{\left( u,v\right)
\in \mathbf{M}_{\lambda ,\beta }^{+}}J_{\lambda ,\beta }\left( u,v\right)
=-\infty .$ This completes the proof.
\end{proof}

\begin{theorem}
\label{T4-2}Let $2<p<3$ and $\lambda >0.$ Then for each $\beta >\beta
_{0}\left( \lambda \right) ,$ System $(E_{\lambda ,\beta })$ has a vectorial
positive radial solution $\left( u_{\lambda ,\beta }^{\left( 2\right)
},v_{\lambda ,\beta }^{\left( 2\right) }\right) \in \overline{\mathbf{M}}%
_{\lambda ,\beta }^{\left( 2\right) }\cap H_{r}$ with $J_{\lambda ,\beta
}\left( u_{\lambda ,\beta }^{\left( 2\right) },v_{\lambda ,\beta }^{\left(
2\right) }\right) =\alpha _{\lambda ,\beta }^{+,r}.$
\end{theorem}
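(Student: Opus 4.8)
The plan is to realize $\alpha_{\lambda,\beta}^{+,r}$ as a minimum and show the minimizer is an interior point of $\overline{\mathbf{M}}_{\lambda,\beta}^{(2)}\cap H_r$, hence a critical point of $J_{\lambda,\beta}$ in $H$ by Lemma \ref{g7} together with the Palais criticality principle \cite{P}. First I would record that, for $\beta>\beta_0(\lambda)$, Lemma \ref{g4} furnishes $\bigl(t_{\lambda,\beta}^{+}\sqrt{s_\beta}w_\beta,t_{\lambda,\beta}^{+}\sqrt{1-s_\beta}w_\beta\bigr)\in\overline{\mathbf{M}}_{\lambda,\beta}^{(2)}\cap H_r$ with negative energy, so by Lemma \ref{m5} one has $-\infty<\alpha_{\lambda,\beta}^{+,r}<0$. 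Take a minimizing sequence $\{(u_n,v_n)\}\subset\overline{\mathbf{M}}_{\lambda,\beta}^{(2)}\cap H_r$; replacing it by $\{(|u_n|,|v_n|)\}$ (which stays in the set, since $\|\cdot\|_H$, $\int\phi_{\cdot,\cdot}(\cdot)$, $\int F_\beta(\cdot)$ and $J_{\lambda,\beta}$ depend only on $|u_n|,|v_n|$) we may assume $u_n,v_n\geq0$. By Lemma \ref{m5} the sequence is bounded in $H_r$, so up to a subsequence $(u_n,v_n)\rightharpoonup(u_0,v_0)$ in $H_r$, and by the compact Strauss embedding $H^1_{rad}(\mathbb{R}^3)\hookrightarrow L^q(\mathbb{R}^3)$ for $2<q<6$, also $(u_n,v_n)\to(u_0,v_0)$ strongly in $L^p\times L^p$ and in $L^{12/5}\times L^{12/5}$. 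Hence $\int F_\beta(u_n,v_n)\,dx\to\int F_\beta(u_0,v_0)\,dx=:c$ and, by Hardy--Littlewood--Sobolev applied to the densities $u_n^2+v_n^2\to u_0^2+v_0^2$ in $L^{6/5}$, $\int\phi_{u_n,v_n}(u_n^2+v_n^2)\,dx\to\int\phi_{u_0,v_0}(u_0^2+v_0^2)\,dx=:b$. Setting $a:=\|(u_0,v_0)\|_H^2$ and using the Nehari identity, $\|(u_n,v_n)\|_H^2\to c-\lambda b$; since $J_{\lambda,\beta}(u_n,v_n)\to\alpha_{\lambda,\beta}^{+,r}<0$, if $(u_0,v_0)=(0,0)$ then $a,b,c$ all tend to $0$ and $J_{\lambda,\beta}(u_n,v_n)\to0$, a contradiction, so $(u_0,v_0)\neq(0,0)$ and $a,b,c>0$.

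The crux is to improve weak convergence to strong convergence, i.e.\ to rule out $a<c-\lambda b$ (weak lower semicontinuity already gives $a\le c-\lambda b$). Suppose $a<c-\lambda b$. Then the fibering map $h_{\lambda,(u_0,v_0)}(t)=\frac{t^2}{2}a+\frac{\lambda t^4}{4}b-\frac{t^p}{p}c$ satisfies $h_{\lambda,(u_0,v_0)}'(1)=a+\lambda b-c<0$, while $h_{\lambda,(u_0,v_0)}'(t)/t=a+\lambda bt^2-ct^{p-2}$ is positive near $t=0$, tends to $+\infty$ as $t\to\infty$ (here $b>0$ and $p-2<2$), and is unimodal; hence $h_{\lambda,(u_0,v_0)}$ has exactly two critical points $\underline{t}<1<\overline{t}$, a local maximum at $\underline{t}$ and a local minimum at $\overline{t}$, with $h_{\lambda,(u_0,v_0)}$ decreasing on $(\underline t,\overline t)\ni1$ and $\bigl(\overline{t}u_0,\overline{t}v_0\bigr)\in\mathbf{M}_{\lambda,\beta}^{+}\cap H_r$. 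Therefore
\[
J_{\lambda,\beta}(\overline{t}u_0,\overline{t}v_0)=h_{\lambda,(u_0,v_0)}(\overline{t})\le h_{\lambda,(u_0,v_0)}(1)=\tfrac{a}{2}+\tfrac{\lambda b}{4}-\tfrac{c}{p}<\tfrac{c-\lambda b}{2}+\tfrac{\lambda b}{4}-\tfrac{c}{p}=\lim_{n\to\infty}J_{\lambda,\beta}(u_n,v_n)=\alpha_{\lambda,\beta}^{+,r}.
\]
Since $\alpha_{\lambda,\beta}^{+,r}<0$, the point $(\overline{t}u_0,\overline{t}v_0)$ has negative energy, hence lies in $\mathbf{M}_{\lambda,\beta}^{(1)}\cup\mathbf{M}_{\lambda,\beta}^{(2)}$ but not in $\mathbf{M}_{\lambda,\beta}^{(1)}\subset\mathbf{M}_{\lambda,\beta}^{-}$ (on which $J_{\lambda,\beta}>0$ by Lemma \ref{g5}); so $(\overline{t}u_0,\overline{t}v_0)\in\overline{\mathbf{M}}_{\lambda,\beta}^{(2)}\cap H_r$ with energy strictly below $\alpha_{\lambda,\beta}^{+,r}$, contradicting the definition of the infimum. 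Thus $a=c-\lambda b$, $\|(u_n,v_n)\|_H\to\|(u_0,v_0)\|_H$, and $(u_n,v_n)\to(u_0,v_0)$ strongly in $H_r$.

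Strong convergence gives $(u_0,v_0)\in\mathbf{M}_{\lambda,\beta}$ with $J_{\lambda,\beta}(u_0,v_0)=\alpha_{\lambda,\beta}^{+,r}$; as in the previous paragraph the negativity of the energy forces $(u_0,v_0)\in\mathbf{M}_{\lambda,\beta}^{(2)}$, and since $J_{\lambda,\beta}(u_0,v_0)=\alpha_{\lambda,\beta}^{+,r}$ lies strictly below the threshold defining $\overline{\mathbf{M}}_{\lambda,\beta}^{(2)}$, the point $(u_0,v_0)$ is an interior minimizer of $J_{\lambda,\beta}$ on $\mathbf{M}_{\lambda,\beta}^{(2)}\cap H_r$ (equivalently, a local minimizer of $J_{\lambda,\beta}$ on $\mathbf{M}_{\lambda,\beta}\cap H_r$ not lying in $\mathbf{M}_{\lambda,\beta}^{0}$). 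By the Palais criticality principle \cite{P}, together with Lemma \ref{g2} and Lemma \ref{g7}, it is a critical point of $J_{\lambda,\beta}$ in $H$, hence a nonnegative radial solution of System $(E_{\lambda,\beta})$. To see it is vectorial, suppose for contradiction that, say, $v_0\equiv0$; then $u_0\not\equiv0$ and Lemma \ref{L2-2} gives $s_0\in(0,1)$ with $J_{\lambda,\beta}\bigl(\sqrt{s_0}u_0,\sqrt{1-s_0}u_0\bigr)<J_{\lambda,\beta}(u_0,0)=\alpha_{\lambda,\beta}^{+,r}<0$; the fiber of the point $\bigl(\sqrt{s_0}u_0,\sqrt{1-s_0}u_0\bigr)$ then has a strictly negative value at $t=1$, so (arguing exactly as in the fibering dichotomy above) it has a ``far'' local minimum at some $\overline{t}>1$ with $\bigl(\overline{t}\sqrt{s_0}u_0,\overline{t}\sqrt{1-s_0}u_0\bigr)\in\overline{\mathbf{M}}_{\lambda,\beta}^{(2)}\cap H_r$ and energy $\le J_{\lambda,\beta}\bigl(\sqrt{s_0}u_0,\sqrt{1-s_0}u_0\bigr)<\alpha_{\lambda,\beta}^{+,r}$, again contradicting the infimum; hence $u_0\not\equiv0$ and $v_0\not\equiv0$, and the strong maximum principle applied to each equation upgrades $u_0,v_0\geq0$ to $u_0,v_0>0$. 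The main obstacle is the strong-convergence step: the failure of compactness of $H^1(\mathbb{R}^3)\hookrightarrow L^p(\mathbb{R}^3)$ is handled by restricting to $H_r$, but one must still exclude loss of energy in the limit, and this is exactly where the double-well structure of the fibering map — available because $p-2<2$ makes the quartic Coulomb term dominate at infinity — is used to push any hypothetical ``defect'' back into $\overline{\mathbf{M}}_{\lambda,\beta}^{(2)}$ with lower energy.
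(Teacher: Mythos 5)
Your proof is correct, but it follows a genuinely different route from the paper's. The paper does not minimize over the constraint set at all: since $2<p<3$, Lemma \ref{m5} makes $J_{\lambda ,\beta }$ coercive and bounded below on all of $H_{r}$, so the authors minimize globally over $H_{r}$, produce an almost-critical minimizing sequence via Ekeland's variational principle together with the Palais criticality principle, obtain strong convergence by quoting Ruiz's argument, and only afterwards observe (via Lemma \ref{g5}, because the limiting energy is negative) that the global minimizer lies in $\overline{\mathbf{M}}_{\lambda ,\beta }^{(2)}\cap H_{r}$, whence $\alpha _{\lambda ,\beta }=\alpha _{\lambda ,\beta }^{+,r}$. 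You instead minimize directly over $\overline{\mathbf{M}}_{\lambda ,\beta }^{(2)}\cap H_{r}$, restore compactness by hand through the two-critical-point structure of the fibering map (projecting a hypothetical energy-deficient weak limit back onto $\overline{\mathbf{M}}_{\lambda ,\beta }^{(2)}$ at strictly lower energy), and then obtain criticality through the natural-constraint mechanism of Lemmas \ref{g2} and \ref{g7} combined with Palais, rather than from an Ekeland sequence. Both arguments are sound; the paper's is shorter because the global coercivity on $H_{r}$ and the imported Ruiz compactness do the heavy lifting and it additionally identifies $\alpha _{\lambda ,\beta }^{+,r}$ with the global infimum $\inf_{H_{r}}J_{\lambda ,\beta }$, while yours is more self-contained on the constrained problem and would survive in situations where $J_{\lambda ,\beta }$ fails to be bounded below on all of $H_{r}$. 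Two harmless imprecisions: in the vectoriality step the ``far'' local minimum $\overline{t}$ of the fiber through $\left( \sqrt{s_{0}}u_{0},\sqrt{1-s_{0}}u_{0}\right) $ need not satisfy $\overline{t}>1$ (only $h(1)<0$ is known there, not $h^{\prime }(1)<0$), but the inequality $h(\overline{t})\leq h(1)$ that you actually use holds in either case; and Lemma \ref{g2} is stated in $H$, so its radial version plus Palais is what you are really invoking, which is fine.
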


\begin{proof}
It follows from Lemma \ref{m5} and $\left( \ref{26}\right) $ that $%
J_{\lambda ,\beta }$ is coercive and bounded below on $H_{r}$ and
\begin{equation*}
-\infty <\alpha _{\lambda ,\beta }:=\inf_{(u,v)\in H_{r}}J_{\lambda ,\beta
}(u,v)<0.
\end{equation*}%
Then by the Ekeland variational principle \cite{E} and Palais criticality
principle \cite{P}, there exists a sequence $\{\left( u_{n},v_{n}\right)
\}\subset H_{r}$ such that
\begin{equation*}
J_{\lambda ,\beta }(u_{n},v_{n})=\alpha _{\lambda ,\beta }+o(1)\text{ and }%
J_{\lambda ,\beta }^{\prime }(u_{n},v_{n})=o(1)\text{ in }H^{-1}.
\end{equation*}%
Again, adopting the argument used in \cite[Theorem 4.3]{R1}, there exist a
subsequence $\{\left( u_{n},v_{n}\right) \}\subset H_{r}$ and $\left(
u_{\lambda ,\beta }^{\left( 2\right) },v_{\lambda ,\beta }^{\left( 2\right)
}\right) \in \mathbf{M}_{\lambda ,\beta }\cap H_{r}$ such that $\left(
u_{n},v_{n}\right) \rightarrow \left( u_{\lambda ,\beta }^{\left( 2\right)
},v_{\lambda ,\beta }^{\left( 2\right) }\right) $ strongly in $H_{r}$ and $%
\left( u_{\lambda ,\beta }^{\left( 2\right) },v_{\lambda ,\beta }^{\left(
2\right) }\right) $ is a solution of System $(E_{\lambda ,\beta })$
satisfying%
\begin{equation*}
J_{\lambda ,\beta }\left( u_{\lambda ,\beta }^{\left( 2\right) },v_{\lambda
,\beta }^{\left( 2\right) }\right) =\alpha _{\lambda ,\beta }<0.
\end{equation*}%
Moreover, by Lemma \ref{g5} it follows that $\left( u_{\lambda ,\beta
}^{\left( 2\right) },v_{\lambda ,\beta }^{\left( 2\right) }\right) \in
\overline{\mathbf{M}}_{\lambda ,\beta }^{\left( 2\right) }\cap H_{r}$ and
further%
\begin{equation*}
\alpha _{\lambda ,\beta }^{+,r}\leq J_{\lambda ,\beta }\left( u_{\lambda
,\beta }^{\left( 2\right) },v_{\lambda ,\beta }^{\left( 2\right) }\right)
=\alpha _{\lambda ,\beta }\leq \alpha _{\lambda ,\beta }^{+,r},
\end{equation*}%
which implies that%
\begin{equation*}
J_{\lambda ,\beta }\left( u_{\lambda ,\beta }^{\left( 2\right) },v_{\lambda
,\beta }^{\left( 2\right) }\right) =\alpha _{\lambda ,\beta }=\alpha
_{\lambda ,\beta }^{+,r},
\end{equation*}%
then so is $\left( \left\vert u_{\lambda ,\beta }^{\left( 2\right)
}\right\vert ,\left\vert v_{\lambda ,\beta }^{\left( 2\right) }\right\vert
\right) .$ According to Lemma \ref{g7}, we may assume that $\left(
u_{\lambda ,\beta }^{\left( 2\right) },v_{\lambda ,\beta }^{\left( 2\right)
}\right) $ is a nonnegative nontrivial critical point of $J_{\lambda ,\beta
} $. Furthermore, since $\alpha _{\lambda ,\beta }<0,$ it follows from
Theorem \ref{t5} that $u_{\lambda ,\beta }^{\left( 2\right) }\neq 0$ and $%
v_{\lambda ,\beta }^{\left( 2\right) }\neq 0.$ This completes the proof.
\end{proof}

\begin{theorem}
\label{T4-1}Let $2<p<4$ and $\lambda >0.$ Then for each $\beta >\beta \left(
\lambda \right) ,$ System $(E_{\lambda ,\beta })$ has a vectorial positive
solution $\left( u_{\lambda ,\beta }^{\left( 1\right) },v_{\lambda ,\beta
}^{\left( 1\right) }\right) \in \overline{\mathbf{M}}_{\lambda ,\beta
}^{\left( 1\right) }$ with $J_{\lambda ,\beta }\left( u_{\lambda ,\beta
}^{\left( 1\right) },v_{\lambda ,\beta }^{\left( 1\right) }\right) =\alpha
_{\lambda ,\beta }^{-}.$
\end{theorem}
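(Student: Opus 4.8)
The plan is to realize $\left(u_{\lambda,\beta}^{(1)},v_{\lambda,\beta}^{(1)}\right)$ as a minimizer of $J_{\lambda,\beta}$ over the set $\overline{\mathbf{M}}_{\lambda,\beta}^{(1)}$; write $\mathcal{C}(u,v):=\int_{\mathbb{R}^{3}}\phi_{u,v}(u^{2}+v^{2})\,dx$ for brevity. By Lemma \ref{g6}, $\overline{\mathbf{M}}_{\lambda,\beta}^{(1)}\neq\emptyset$; by Lemmas \ref{g5} and \ref{g7}, $J_{\lambda,\beta}$ is coercive on $\mathbf{M}_{\lambda,\beta}^{-}\supset\overline{\mathbf{M}}_{\lambda,\beta}^{(1)}$ and bounded below there by $\tfrac{p-2}{4p}C_{\beta}^{-1/(p-2)}>0$; and, by Lemma \ref{g6} together with (\ref{4-2}),
\begin{equation*}
0<\tfrac{p-2}{4p}C_{\beta}^{-1/(p-2)}\leq\alpha_{\lambda,\beta}^{-}<\tfrac{(p-2)^{2}\overline{S}^{2}S_{12/5}^{4}}{4p(4-p)k(\lambda)}\leq\tfrac{p-2}{2p}S_{p}^{2p/(p-2)}.
\end{equation*}
Since $\overline{\mathbf{M}}_{\lambda,\beta}^{(1)}$ is a relatively open subset of the $C^{1}$-manifold $\mathbf{M}_{\lambda,\beta}^{-}$ that misses $\mathbf{M}_{\lambda,\beta}^{0}$ (Lemma \ref{g7}), a minimizer there is automatically a critical point of $J_{\lambda,\beta}$ in $H$ (Lemmas \ref{g2}, \ref{g7}). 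So two things remain: to show $\alpha_{\lambda,\beta}^{-}$ is attained, and to check the minimizer is vectorial and positive.

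First I would produce a Palais--Smale sequence. Applying the Ekeland variational principle to $J_{\lambda,\beta}$ on $\mathbf{M}_{\lambda,\beta}^{-}$ near the level $\alpha_{\lambda,\beta}^{-}$ (which, by the strict inequality above, forces $\Vert(u,v)\Vert_{H}^{2}<\tfrac{4p}{p-2}J_{\lambda,\beta}(u,v)<\tfrac{(p-2)\overline{S}^{2}S_{12/5}^{4}}{\lambda(4-p)}$ with a uniform gap, so the relevant sublevel set lies inside $\overline{\mathbf{M}}_{\lambda,\beta}^{(1)}$ away from its boundary) and the Palais criticality principle yields $\{(u_{n},v_{n})\}\subset\overline{\mathbf{M}}_{\lambda,\beta}^{(1)}$ with $J_{\lambda,\beta}(u_{n},v_{n})\to\alpha_{\lambda,\beta}^{-}$ and constrained gradient $\to0$; by (\ref{2-6-1}) and Lemma \ref{L2-3}(ii) the norm bound keeps $h_{\lambda,(u_{n},v_{n})}^{\prime\prime}(1)$ uniformly negative, and since $\Vert(u_{n},v_{n})\Vert_{H}\geq C_{\beta}^{-1/(p-2)}$ the Lagrange multiplier tends to $0$, so $\{(u_{n},v_{n})\}$ is a genuine Palais--Smale sequence for $J_{\lambda,\beta}$ in $H^{-1}$ at level $\alpha_{\lambda,\beta}^{-}$. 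Coercivity makes it bounded, so up to a subsequence $(u_{n},v_{n})\rightharpoonup(u_{0},v_{0})$ in $H$ with $J_{\lambda,\beta}^{\prime}(u_{0},v_{0})=0$.

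The core step is to recover strong convergence in the non-radial space $H$. Vanishing is impossible since, on $\mathbf{M}_{\lambda,\beta}$, $\int_{\mathbb{R}^{3}}F_{\beta}(u_{n},v_{n})\,dx\geq\Vert(u_{n},v_{n})\Vert_{H}^{2}\geq C_{\beta}^{-2/(p-2)}>0$, so by Lions' lemma the $L^{2}$-mass does not uniformly vanish; by the translation invariance of $J_{\lambda,\beta}$, $\mathbf{M}_{\lambda,\beta}$ and $\overline{\mathbf{M}}_{\lambda,\beta}^{(1)}$ we may translate and assume $(u_{0},v_{0})\neq(0,0)$. With $w_{n}:=(u_{n}-u_{0},v_{n}-v_{0})\rightharpoonup(0,0)$, the Brezis--Lieb lemma for $\Vert\cdot\Vert_{H}^{2}$ and $\int F_{\beta}$ and its nonlocal analogue for $\mathcal{C}$, together with $\langle J_{\lambda,\beta}^{\prime}(u_{n},v_{n}),(u_{n},v_{n})\rangle=o(1)$ and $\langle J_{\lambda,\beta}^{\prime}(u_{0},v_{0}),(u_{0},v_{0})\rangle=0$, give that the tail is asymptotically on the Nehari manifold, $\Vert w_{n}\Vert_{H}^{2}+\lambda\mathcal{C}(w_{n})-\int_{\mathbb{R}^{3}}F_{\beta}(w_{n})\,dx=o(1)$, whence
\begin{equation*}
J_{\lambda,\beta}(u_{n},v_{n})-J_{\lambda,\beta}(u_{0},v_{0})=\tfrac{p-2}{2p}\Vert w_{n}\Vert_{H}^{2}-\tfrac{\lambda(4-p)}{4p}\mathcal{C}(w_{n})+o(1).
\end{equation*}
Since $\limsup_{n}\Vert w_{n}\Vert_{H}^{2}\leq\limsup_{n}\Vert(u_{n},v_{n})\Vert_{H}^{2}-\Vert(u_{0},v_{0})\Vert_{H}^{2}<\tfrac{(p-2)\overline{S}^{2}S_{12/5}^{4}}{\lambda(4-p)}$, Lemma \ref{L2-3}(ii) forces $\lambda\mathcal{C}(w_{n})\leq\lambda\overline{S}^{-2}S_{12/5}^{-4}\Vert w_{n}\Vert_{H}^{4}<\tfrac{p-2}{4-p}\Vert w_{n}\Vert_{H}^{2}$ for $n$ large, so $J_{\lambda,\beta}(u_{n},v_{n})-J_{\lambda,\beta}(u_{0},v_{0})\geq\tfrac{p-2}{4p}\Vert w_{n}\Vert_{H}^{2}+o(1)$, i.e.\ $\alpha_{\lambda,\beta}^{-}\geq J_{\lambda,\beta}(u_{0},v_{0})+\tfrac{p-2}{4p}\limsup_{n}\Vert w_{n}\Vert_{H}^{2}$. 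On the other hand $(u_{0},v_{0})\in\mathbf{M}_{\lambda,\beta}$ is a nontrivial critical point with $J_{\lambda,\beta}(u_{0},v_{0})\leq\alpha_{\lambda,\beta}^{-}<\tfrac{(p-2)^{2}\overline{S}^{2}S_{12/5}^{4}}{4p(4-p)k(\lambda)}$, and $\Vert(u_{0},v_{0})\Vert_{H}^{2}\leq\liminf_{n}\Vert(u_{n},v_{n})\Vert_{H}^{2}\leq\tfrac{(p-2)\overline{S}^{2}S_{12/5}^{4}}{\lambda(4-p)}$; invoking the splitting of $\mathbf{M}_{\lambda,\beta}$ into $\mathbf{M}_{\lambda,\beta}^{(1)}$ and $\mathbf{M}_{\lambda,\beta}^{(2)}$ established above (whose $\mathbf{M}^{(2)}$-alternative is ruled out by this norm bound) places $(u_{0},v_{0})\in\overline{\mathbf{M}}_{\lambda,\beta}^{(1)}$, so $J_{\lambda,\beta}(u_{0},v_{0})\geq\alpha_{\lambda,\beta}^{-}$. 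The two inequalities then give $\limsup_{n}\Vert w_{n}\Vert_{H}^{2}=0$, hence $(u_{n},v_{n})\to(u_{0},v_{0})$ strongly in $H$, $J_{\lambda,\beta}(u_{0},v_{0})=\alpha_{\lambda,\beta}^{-}$, and $(u_{0},v_{0})\in\overline{\mathbf{M}}_{\lambda,\beta}^{(1)}$ is a critical point of $J_{\lambda,\beta}$ in $H$.

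Finally, $(|u_{0}|,|v_{0}|)$ has the same $\Vert\cdot\Vert_{H}$, $\int F_{\beta}$ and $\mathcal{C}$, so it is again a minimizer on $\overline{\mathbf{M}}_{\lambda,\beta}^{(1)}$, hence a nonnegative solution of $(E_{\lambda,\beta})$; since $\alpha_{\lambda,\beta}^{-}<\tfrac{p-2}{2p}S_{p}^{2p/(p-2)}$, Lemma \ref{l5} excludes $u_{0}\equiv0$ or $v_{0}\equiv0$, and the strong maximum principle applied to each equation gives $u_{0}>0$, $v_{0}>0$; taking $\left(u_{\lambda,\beta}^{(1)},v_{\lambda,\beta}^{(1)}\right):=(u_{0},v_{0})$ finishes the proof, with $J_{\lambda,\beta}\left(u_{\lambda,\beta}^{(1)},v_{\lambda,\beta}^{(1)}\right)=\alpha_{\lambda,\beta}^{-}>0$. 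The hard part will be the compactness step: it goes through only because membership in $\overline{\mathbf{M}}_{\lambda,\beta}^{(1)}$ simultaneously caps the energy strictly below the single-equation ground-state level $\tfrac{p-2}{2p}S_{p}^{2p/(p-2)}$ and bounds the $H$-norm below $\big(\tfrac{(p-2)\overline{S}^{2}S_{12/5}^{4}}{\lambda(4-p)}\big)^{1/2}$, and the norm bound is precisely what makes the Coulomb energy of an escaping tail strictly smaller than $\tfrac{p-2}{4-p}$ times its squared $H$-norm — the slack needed to absorb the tail once the weak limit is identified as an admissible competitor.
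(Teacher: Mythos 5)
Your proposal is correct and follows essentially the same route as the paper's proof: Ekeland to produce a Palais--Smale sequence, translation to rule out vanishing, a Brezis--Lieb splitting in which the norm bound coming from membership in $\overline{\mathbf{M}}_{\lambda ,\beta }^{\left( 1\right) }$ controls the Coulomb energy of the escaping tail, and Lemma \ref{l5} to exclude semitrivial limits. The only difference is cosmetic: where the paper rescales the tail onto the Nehari manifold and derives the contradiction $\alpha _{\lambda ,\beta }^{-}\geq \tfrac{3}{2}\alpha _{\lambda ,\beta }^{-}$, you estimate $J_{\lambda ,\beta }\left( w_{n},z_{n}\right) \geq \tfrac{p-2}{4p}\left\Vert \left( w_{n},z_{n}\right) \right\Vert _{H}^{2}+o(1)$ directly and compare with the weak limit as an admissible competitor in $\overline{\mathbf{M}}_{\lambda ,\beta }^{\left( 1\right) }$.
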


\begin{proof}
By Lemmas \ref{g5}--\ref{g7} and the Ekeland variational principle, there
exists a minimizing sequence $\left\{ \left( u_{n},v_{n}\right) \right\}
\subset \overline{\mathbf{M}}_{\lambda ,\beta }^{\left( 1\right) }$ such
that
\begin{equation*}
J_{\lambda ,\beta }\left( u_{n},v_{n}\right) =\alpha _{\lambda ,\beta
}^{-}+o\left( 1\right) \text{ and }J_{\lambda ,\beta }^{\prime }\left(
u_{n},v_{n}\right) =o\left( 1\right) \text{ in }H^{-1}.  \label{18-0}
\end{equation*}%
Since $\left\{ \left( u_{n},v_{n}\right) \right\} $ is bounded, there exists
a convergent subsequence of $\left\{ \left( u_{n},v_{n}\right) \right\} $
(denoted as $\left\{ \left( u_{n},v_{n}\right) \right\} $ for notation
convenience) such that as $n\rightarrow \infty $,%
\begin{equation*}
\begin{array}{l}
\left( u_{n},v_{n}\right) \rightharpoonup \left( u_{0},v_{0}\right) \text{
weakly in }H, \\
\left( u_{n},v_{n}\right) \rightarrow \left( u_{0},v_{0}\right) \text{
strongly in }L_{loc}^{p}\left( \mathbb{R}^{3}\right) \times
L_{loc}^{p}\left( \mathbb{R}^{3}\right) , \\
\left( u_{n},v_{n}\right) \rightarrow \left( u_{0},v_{0}\right) \text{ a.e.
in }\mathbb{R}^{3}.%
\end{array}%
\end{equation*}

Now we claim that there exist a subsequence $\left\{ \left(
u_{n},v_{n}\right) \right\} _{n=1}^{\infty }$ and a sequence $%
\{x_{n}\}_{n=1}^{\infty }\subset \mathbb{R}^{3}$ such that
\begin{equation}
\int_{B^{N}\left( x_{n},R\right) }\left\vert \left( u_{n},v_{n}\right)
\right\vert ^{2}dx\geq d_{0}>0\text{ for all }n\in \mathbb{N},  \label{12-5}
\end{equation}%
where $d_{0}$ and $R$ are positive constants, independent of $n.$ Suppose on
the contrary. Then for all $R>0$, there holds%
\begin{equation*}
\sup_{x\in \mathbb{R}^{N}}\int_{B^{N}\left( x_{n},R\right) }\left\vert
\left( u_{n},v_{n}\right) \right\vert ^{2}dx\rightarrow 0\text{ as }%
n\rightarrow \infty .
\end{equation*}%
Applying the argument of \cite[Lemma I.1]{Li1} (see also \cite{Wi}) gives
\begin{equation*}
\int_{\mathbb{R}^{N}}(\left\vert u_{n}\right\vert ^{r}+\left\vert
v_{n}\right\vert ^{r})dx\rightarrow 0\text{ as }n\rightarrow \infty ,
\end{equation*}%
for all $2<r<2^{\ast }.$ Then we have $\int_{\mathbb{R}^{N}}F_{\beta }\left(
u_{n},v_{n}\right) dx\rightarrow 0$ and $\int_{\mathbb{R}^{3}}\phi
_{u_{n},v_{n}}\left( u_{n}^{2}+v_{n}^{2}\right) dx\rightarrow 0$ as $%
n\rightarrow \infty ,$ which implies that%
\begin{eqnarray*}
\alpha _{\lambda ,\beta }^{-}+o\left( 1\right) &=&J_{\lambda ,\beta }\left(
u_{\lambda ,\beta }^{\left( 1\right) },v_{\lambda ,\beta }^{\left( 1\right)
}\right) \\
&=&-\frac{1}{4}\int_{\mathbb{R}^{3}}\phi _{u_{n},v_{n}}\left(
u_{n}^{2}+v_{n}^{2}\right) dx+\frac{p-2}{2p}\int_{\mathbb{R}^{N}}F_{\beta
}\left( u_{n},v_{n}\right) dx \\
&=&o\left( 1\right) ,
\end{eqnarray*}%
which contradicts with $\alpha _{\lambda ,\beta }^{-}>0.$ So, $\left( \ref%
{12-5}\right) $ is claimed. Let $\left( \overline{u}_{n}\left( x\right) ,%
\overline{v}_{n}\left( x\right) \right) =\left( u_{n}\left( x-x_{n}\right)
,v_{n}\left( x-x_{n}\right) \right) .$ Clearly, $\left\{ \left( \overline{u}%
_{n},\overline{v}_{n}\right) \right\} \subset \overline{\mathbf{M}}_{\lambda
,\beta }^{\left( 1\right) }$ such that
\begin{equation}
J_{\lambda ,\beta }\left( \overline{u}_{n},\overline{v}_{n}\right) =\alpha
_{\lambda ,\beta }^{-}+o\left( 1\right) \text{ and }J_{\lambda ,\beta
}^{\prime }\left( \overline{u}_{n},\overline{v}_{n}\right) =o\left( 1\right)
\text{ in }H^{-1}.  \label{12-7}
\end{equation}%
Since $\left\{ \left( \overline{u}_{n},\overline{v}_{n}\right) \right\} $
also is bounded, there exist a convergent subsequence of $\left\{ \left(
\overline{u}_{n},\overline{v}_{n}\right) \right\} $ and $\left( u_{\lambda
,\beta }^{\left( 1\right) },v_{\lambda ,\beta }^{\left( 1\right) }\right)
\in H$ such that as $n\rightarrow \infty $,
\begin{equation}
\begin{array}{l}
\left( \overline{u}_{n},\overline{v}_{n}\right) \rightharpoonup \left(
u_{\lambda ,\beta }^{\left( 1\right) },v_{\lambda ,\beta }^{\left( 1\right)
}\right) \text{ weakly in }H, \\
\left( \overline{u}_{n},\overline{v}_{n}\right) \rightarrow \left(
u_{\lambda ,\beta }^{\left( 1\right) },v_{\lambda ,\beta }^{\left( 1\right)
}\right) \text{ strongly in }L_{loc}^{p}\left( \mathbb{R}^{3}\right) \times
L_{loc}^{p}\left( \mathbb{R}^{3}\right) , \\
\left( \overline{u}_{n},\overline{v}_{n}\right) \rightarrow \left(
u_{\lambda ,\beta }^{\left( 1\right) },v_{\lambda ,\beta }^{\left( 1\right)
}\right) \text{ a.e. in }\mathbb{R}^{3}.%
\end{array}
\label{15-1}
\end{equation}%
Moreover, by $\left( \ref{12-5}\right) $ and $(\ref{12-7})-(\ref{15-1})$, we
have%
\begin{equation*}
\int_{B^{N}\left( R\right) }\left\vert \left( u_{\lambda ,\beta }^{\left(
1\right) },v_{\lambda ,\beta }^{\left( 1\right) }\right) \right\vert
^{2}dx\geq d_{0}>0\text{ and }\left( u_{\lambda ,\beta }^{\left( 1\right)
},v_{\lambda ,\beta }^{\left( 1\right) }\right) \in \mathbf{M}_{\lambda
,\beta }.
\end{equation*}

Next, we show that%
\begin{equation*}
\left( \overline{u}_{n},\overline{v}_{n}\right) \rightarrow \left(
u_{\lambda ,\beta }^{\left( 1\right) },v_{\lambda ,\beta }^{\left( 1\right)
}\right) \text{ in }H.
\end{equation*}%
To this end, we suppose the contrary. Then it has
\begin{equation}
\left\Vert \left( u_{\lambda ,\beta }^{\left( 1\right) },v_{\lambda ,\beta
}^{\left( 1\right) }\right) \right\Vert _{H}^{2}<\liminf_{n\rightarrow
\infty }\left\Vert \left( \overline{u}_{n},\overline{v}_{n}\right)
\right\Vert _{H}^{2},  \label{15-4}
\end{equation}%
which implies that $\left\Vert \left( u_{\lambda ,\beta }^{\left( 1\right)
},v_{\lambda ,\beta }^{\left( 1\right) }\right) \right\Vert _{H}<\left(
\frac{\left( p-2\right) \overline{S}^{2}S_{12/5}^{4}}{\lambda (4-p)}\right)
^{1/2},$ since $\left\{ \left( \overline{u}_{n},\overline{v}_{n}\right)
\right\} \subset \overline{\mathbf{M}}_{\lambda ,\beta }^{\left( 1\right) }.$
From $\left( \ref{2-6-1}\right) ,$ the Sobolev inequality and Lemma \ref%
{L2-3}, it follows that
\begin{eqnarray*}
h_{\lambda ,\left( u_{\lambda ,\beta }^{\left( 1\right) },v_{\lambda ,\beta
}^{\left( 1\right) }\right) }^{\prime \prime }\left( 1\right) &=&-\left(
p-2\right) \left\Vert \left( u_{\lambda ,\beta }^{\left( 1\right)
},v_{\lambda ,\beta }^{\left( 1\right) }\right) \right\Vert _{H}^{2}+\lambda
\left( 4-p\right) \int_{\mathbb{R}^{3}}\phi _{u_{\lambda ,\beta }^{\left(
1\right) },v_{\lambda ,\beta }^{\left( 1\right) }}\left( \left[ u_{\lambda
,\beta }^{\left( 1\right) }\right] ^{2}+\left[ v_{\lambda ,\beta }^{\left(
1\right) }\right] ^{2}\right) dx \\
&\leq &\left\Vert \left( u_{\lambda ,\beta }^{\left( 1\right) },v_{\lambda
,\beta }^{\left( 1\right) }\right) \right\Vert _{H}^{2}\left[ \frac{\lambda
(4-p)}{\overline{S}^{2}S_{12/5}^{4}}\left\Vert \left( u_{\lambda ,\beta
}^{\left( 1\right) },v_{\lambda ,\beta }^{\left( 1\right) }\right)
\right\Vert _{H}^{2}-\left( p-2\right) \right] \\
&<&\left\Vert \left( u_{\lambda ,\beta }^{\left( 1\right) },v_{\lambda
,\beta }^{\left( 1\right) }\right) \right\Vert _{H}^{2}\left( \frac{\lambda
(4-p)}{\overline{S}^{2}S_{12/5}^{4}}\frac{\left( p-2\right) \overline{S}%
^{2}S_{12/5}^{4}}{\lambda (4-p)}-\left( p-2\right) \right) \\
&=&0.
\end{eqnarray*}%
This indicate that%
\begin{equation}
\left( u_{\lambda ,\beta }^{\left( 1\right) },v_{\lambda ,\beta }^{\left(
1\right) }\right) \in \mathbf{M}_{\lambda ,\beta }^{-}\text{ and }J_{\lambda
,\beta }\left( u_{\lambda ,\beta }^{\left( 1\right) },v_{\lambda ,\beta
}^{\left( 1\right) }\right) \geq \alpha _{\lambda ,\beta }^{-}.  \label{15-3}
\end{equation}%
Let $\left( w_{n},z_{n}\right) =\left( \overline{u}_{n}-u_{\lambda ,\beta
}^{\left( 1\right) },\overline{v}_{n}-v_{\lambda ,\beta }^{\left( 1\right)
}\right) .$ Then by $\left( \ref{15-1}\right) $ and $\left( \ref{15-4}%
\right) ,$ there exists $c_{0}>0$ such that%
\begin{equation*}
c_{0}\leq \left\Vert \left( w_{n},z_{n}\right) \right\Vert
_{H}^{2}=\left\Vert \left( \overline{u}_{n},\overline{v}_{n}\right)
\right\Vert _{H}^{2}-\left\Vert \left( u_{\lambda ,\beta }^{\left( 1\right)
},v_{\lambda ,\beta }^{\left( 1\right) }\right) \right\Vert _{H}^{2}+o(1),
\end{equation*}%
which implies that%
\begin{equation}
\left\Vert \left( w_{n},z_{n}\right) \right\Vert _{H}^{2}<\left( \frac{%
\left( p-2\right) \overline{S}^{2}S_{12/5}^{4}}{\lambda (4-p)}\right) ^{1/2}%
\text{ for }n\text{ sufficiently large.}  \label{15-5}
\end{equation}%
On the other hand, it follows from the Brezis-Lieb Lemma \cite{BLi} that%
\begin{equation*}
\int_{\mathbb{R}^{N}}F_{\beta }\left( \overline{u}_{n},\overline{v}%
_{n}\right) dx=\int_{\mathbb{R}^{N}}F_{\beta }\left( w_{n},z_{n}\right)
dx+\int_{\mathbb{R}^{N}}F_{\beta }\left( u_{\lambda ,\beta }^{\left(
1\right) },v_{\lambda ,\beta }^{\left( 1\right) }\right) dx+o(1)
\end{equation*}%
and%
\begin{equation*}
\int_{\mathbb{R}^{3}}\phi _{\overline{u}_{n},\overline{v}_{n}}\left(
\overline{u}_{n}^{2}+\overline{v}_{n}^{2}\right) dx=\int_{\mathbb{R}%
^{3}}\phi _{w_{n},z_{n}}\left( w_{n}^{2}+z_{n}^{2}\right) dx+\int_{\mathbb{R}%
^{3}}\phi _{u_{\lambda ,\beta }^{\left( 1\right) },v_{\lambda ,\beta
}^{\left( 1\right) }}\left( \left[ u_{\lambda ,\beta }^{\left( 1\right) }%
\right] ^{2}+\left[ v_{\lambda ,\beta }^{\left( 1\right) }\right]
^{2}\right) dx+o(1),
\end{equation*}%
which implies that%
\begin{equation}
\left\Vert \left( w_{n},z_{n}\right) \right\Vert _{H}^{2}+\int_{\mathbb{R}%
^{3}}\phi _{w_{n},z_{n}}\left( w_{n}^{2}+z_{n}^{2}\right) dx-\int_{\mathbb{R}%
^{N}}F_{\beta }\left( w_{n},z_{n}\right) dx=o\left( 1\right)  \label{15-6}
\end{equation}%
and%
\begin{equation}
J_{\lambda ,\beta }\left( \overline{u}_{n},\overline{v}_{n}\right)
=J_{\lambda ,\beta }\left( w_{n},z_{n}\right) +J_{\lambda ,\beta }\left(
u_{\lambda ,\beta }^{\left( 1\right) },v_{\lambda ,\beta }^{\left( 1\right)
}\right) +o(1).  \label{15-7}
\end{equation}%
Moreover, by $\left( \ref{15-5}\right) $ and $\left( \ref{15-6}\right) ,$
there exists $s_{n}=1+o\left( 1\right) $ such that%
\begin{equation*}
\left\Vert \left( s_{n}w_{n},s_{n}z_{n}\right) \right\Vert _{H}^{2}+\int_{%
\mathbb{R}^{3}}\phi _{s_{n}w_{n},s_{n}z_{n}}\left(
s_{n}^{2}w_{n}^{2}+s_{n}^{2}z_{n}^{2}\right) dx-\int_{\mathbb{R}%
^{N}}F_{\beta }\left( s_{n}w_{n},s_{n}z_{n}\right) dx=0
\end{equation*}%
and%
\begin{equation*}
\left\Vert \left( s_{n}w_{n},s_{n}z_{n}\right) \right\Vert _{H}^{2}<\left(
\frac{\left( p-2\right) \overline{S}^{2}S_{12/5}^{4}}{\lambda (4-p)}\right)
^{1/2}\text{ for }n\text{ sufficiently large.}
\end{equation*}%
Thus, we have%
\begin{equation*}
h_{\lambda ,\left( s_{n}w_{n},s_{n}z_{n}\right) }^{\prime \prime }\left(
1\right) =-\left( p-2\right) \left\Vert \left( s_{n}w_{n},s_{n}z_{n}\right)
\right\Vert _{H}^{2}+\lambda \left( 4-p\right) \int_{\mathbb{R}^{3}}\phi
_{s_{n}w_{n},s_{n}z_{n}}\left( s_{n}^{2}w_{n}^{2}+s_{n}^{2}z_{n}^{2}\right)
dx<0,
\end{equation*}%
which implies that%
\begin{equation}
J_{\lambda ,\beta }\left( s_{n}w_{n},s_{n}z_{n}\right) \geq \frac{1}{2}%
\alpha _{\lambda ,\beta }^{-}\text{ for }n\text{ sufficiently large.}
\label{15-2}
\end{equation}%
Hence, by (\ref{15-3}), (\ref{15-7}) and (\ref{15-2}) one has%
\begin{equation*}
\alpha _{\lambda ,\beta }^{-}+o\left( 1\right) =J_{\lambda ,\beta }\left(
\overline{u}_{n},\overline{v}_{n}\right) \geq \frac{3}{2}\alpha _{\lambda
,\beta }^{-}\text{ for }n\text{ sufficiently large.}
\end{equation*}%
This is a contradiction. Therefore, we conclude that $\left( \overline{u}%
_{n},\overline{v}_{n}\right) \rightarrow \left( u_{\lambda ,\beta }^{\left(
1\right) },v_{\lambda ,\beta }^{\left( 1\right) }\right) $ strongly in $H$
and $J_{\lambda ,\beta }\left( u_{\lambda ,\beta }^{\left( 1\right)
},v_{\lambda ,\beta }^{\left( 1\right) }\right) =\alpha _{\lambda ,\beta
}^{-},$ then so is $\left( \left\vert u_{\lambda ,\beta }^{\left( 1\right)
}\right\vert ,\left\vert v_{\lambda ,\beta }^{\left( 1\right) }\right\vert
\right) .$ According to Lemma \ref{g7}, we may assume that $\left(
u_{\lambda ,\beta }^{\left( 1\right) },v_{\lambda ,\beta }^{\left( 1\right)
}\right) $ is a nonnegative nontrivial critical point of $J_{\lambda ,\beta
} $. Moreover, since $\alpha _{\lambda ,\beta }^{-}\leq \frac{p-2}{2p}%
S_{p}^{2p/\left( p-2\right) }$ by (\ref{4-2}), it follows from Lemma \ref{l5}
that $u_{\lambda ,\beta }^{\left( 1\right) }\neq 0$ and $u_{\lambda ,\beta
}^{\left( 1\right) }\neq 0.$ The proof is complete.
\end{proof}

\textbf{We are now ready to prove Theorem \ref{t2}: }The proof directly
follows from Theorems \ref{T4-2} and \ref{T4-1}.

\section{Proofs of Theorems \protect\ref{t3} and \textbf{\protect\ref{t3-2}}}

Define
\begin{equation*}
\mathbb{A}_{\lambda ,\beta }:=\left\{ \left( u,v\right) \in H\setminus
\left\{ \left( 0,0\right) \right\} :\left( u,v\right) \text{ is a solution
of System }(E_{\lambda ,\beta })\text{ with }J_{\lambda ,\beta }\left(
u,v\right) <\frac{p-2}{2p}S_{p}^{2p/\left( p-2\right) }\right\} .
\end{equation*}%
Clearly, $\mathbb{A}_{\lambda ,\beta }\subset \mathbf{M}_{\lambda ,\beta }%
\left[ \frac{p-2}{2p}S_{p}^{2p/\left( p-2\right) }\right] .$ Furthermore, we
have the following result.

\begin{proposition}
\label{t6}Let $3\leq p<4$. Then for every $0<\lambda <\lambda _{0}$ and $%
\beta >0,$ we have $\mathbb{A}_{\lambda ,\beta }\subset \mathbf{M}_{\lambda
,\beta }^{-},$ where%
\begin{equation*}
\lambda _{0}:=\frac{6p\sqrt{3p}\left( p-2\right) \pi }{8\sqrt[3]{2}\left(
4-p\right) \left( 6-p\right) ^{3/2}S_{p}^{2p/\left( p-2\right) }}.
\end{equation*}
\end{proposition}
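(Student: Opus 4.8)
The plan is to take an arbitrary solution $(u,v)\in\mathbb{A}_{\lambda,\beta}$, so that $(u,v)\in\mathbf{M}_{\lambda,\beta}$ with $J_{\lambda,\beta}(u,v)<\frac{p-2}{2p}S_p^{2p/(p-2)}$, and to show $h_{\lambda,(u,v)}''(1)<0$. Since $(u,v)\in\mathbf{M}_{\lambda,\beta}$, the identity $(\ref{2-6-1})$ gives
\begin{equation*}
h_{\lambda,(u,v)}''(1)=-(p-2)\left\Vert(u,v)\right\Vert_H^2+\lambda(4-p)\int_{\mathbb{R}^3}\phi_{u,v}(u^2+v^2)\,dx,
\end{equation*}
so it suffices to produce an upper bound on $\int_{\mathbb{R}^3}\phi_{u,v}(u^2+v^2)\,dx$ in terms of $\left\Vert(u,v)\right\Vert_H^2$ that is strictly smaller than $\frac{p-2}{\lambda(4-p)}\left\Vert(u,v)\right\Vert_H^2$. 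The naive bound from Lemma \ref{L2-3}(ii) is $\overline S^{-2}S_{12/5}^{-4}\left\Vert(u,v)\right\Vert_H^4$, which only works when $\left\Vert(u,v)\right\Vert_H$ is small; here the low-energy condition must be converted into exactly such a smallness statement. First I would use the Pohozaev-type / Nehari information available for solutions: since $(u,v)$ solves $(E_{\lambda,\beta})$, I expect a Pohozaev identity to hold, and combining it with the Nehari identity will express both $\left\Vert(u,v)\right\Vert_H^2$ and $\int\phi_{u,v}(u^2+v^2)\,dx$ through $\int_{\mathbb{R}^3}F_\beta(u,v)\,dx$ and the kinetic term; the constant $\sqrt[3]{2}$, $\pi$, $(6-p)^{3/2}$ appearing in $\lambda_0$ strongly suggests the Pohozaev identity (the factor $4\pi$ in $\phi$, the scaling exponents in $\mathbb{R}^3$) is the mechanism.

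Concretely, the steps I would carry out are: (1) write down the Pohozaev identity for System $(E_{\lambda,\beta})$ — because of the Coulomb term this reads, schematically, $\frac12\left\Vert\nabla(u,v)\right\Vert_2^2+\frac32\left\Vert(u,v)\right\Vert_{L^2}^2+\frac{5\lambda}{4}\int\phi_{u,v}(u^2+v^2)\,dx=\frac3p\int F_\beta(u,v)\,dx$; (2) combine it with the Nehari identity $\left\Vert(u,v)\right\Vert_H^2+\lambda\int\phi_{u,v}(u^2+v^2)\,dx=\int F_\beta(u,v)\,dx$ to eliminate $\int F_\beta(u,v)\,dx$, obtaining a linear relation among $\left\Vert\nabla(u,v)\right\Vert_2^2$, $\left\Vert(u,v)\right\Vert_{L^2}^2$ and $\int\phi_{u,v}(u^2+v^2)\,dx$; (3) use this relation together with $J_{\lambda,\beta}(u,v)=\frac{p-2}{2p}\left\Vert(u,v)\right\Vert_H^2-\frac{\lambda(4-p)}{4p}\int\phi_{u,v}(u^2+v^2)\,dx<\frac{p-2}{2p}S_p^{2p/(p-2)}$ (formula $(\ref{2-6-1})$ again) to bound $\left\Vert(u,v)\right\Vert_H$ from above by an explicit quantity; (4) feed that bound, via Lemma \ref{L2-3}(ii), back into the displayed expression for $h_{\lambda,(u,v)}''(1)$ and check that the resulting inequality is strict precisely when $\lambda<\lambda_0$. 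The Sobolev constant $S_p$ enters because $\int F_\beta(u,v)\,dx\ge\left\Vert(u,v)\right\Vert_H^2$ on $\mathbf{M}_{\lambda,\beta}$ forces $\left\Vert(u,v)\right\Vert_H\ge C_\beta^{-1/(p-2)}$ from below (inequality $(\ref{2-2})$), but more importantly one needs the Gagliardo–Nirenberg/Sobolev estimate relating $\int\phi_{u,v}(u^2+v^2)\,dx$ to the $L^2$ and gradient norms through $S_{12/5}$ and $\overline S$ as in Lemma \ref{L2-3}(ii).

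The main obstacle I anticipate is getting the constant in step (3)–(4) sharp enough that the threshold comes out as exactly $\lambda_0=\frac{6p\sqrt{3p}(p-2)\pi}{8\sqrt[3]{2}(4-p)(6-p)^{3/2}S_p^{2p/(p-2)}}$ rather than some weaker bound. This requires (i) the correct coefficients in the Pohozaev identity — in particular handling the Coulomb energy's scaling, which contributes with weight $5/4$ (since $\phi_{u,v}$ scales like a nonlocal term of homogeneity $5$ under $u_t(x)=u(x/t)$ in $\mathbb{R}^3$) — and (ii) using the optimal form of the inequality $\int\phi_{u,v}(u^2+v^2)\,dx\le\overline S^{-2}S_{12/5}^{-4}\left\Vert(u,v)\right\Vert_H^4$ together with a careful interpolation that isolates the $L^2$-mass (this is where the $(6-p)^{3/2}$ and $\pi$ — likely $\overline S$ expressed through $\pi$ in dimension three — should materialize). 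I would organize the algebra so that the condition $h''<0$ reduces to a single polynomial inequality in $\left\Vert(u,v)\right\Vert_H^2$ whose discriminant condition is exactly $\lambda<\lambda_0$; if the constants do not match on the first attempt, the likely fix is to track whether Lemma \ref{L2-3}(ii) should be replaced in this argument by a split estimate using only $\left\Vert u\right\Vert_{L^{12/5}}$ and $\left\Vert\nabla u\right\Vert_{L^2}$ (and similarly for $v$), which is the genuinely sharp inequality behind the Coulomb term.
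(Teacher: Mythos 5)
Your overall skeleton (Nehari identity plus the Pohozaev identity with coefficients $\tfrac12,\tfrac32,\tfrac{5\lambda}{4},\tfrac{3}{p}$, combined with the energy bound $J_{\lambda,\beta}(u,v)<\tfrac{p-2}{2p}S_p^{2p/(p-2)}$ and an estimate on the Coulomb term) matches the paper's, but the concrete plan in your steps (3)--(4) has a genuine gap. On $\mathbf{M}_{\lambda,\beta}$ one has $J_{\lambda,\beta}(u,v)=\tfrac{p-2}{2p}\Vert(u,v)\Vert_H^2-\tfrac{\lambda(4-p)}{4p}\int\phi_{u,v}(u^2+v^2)\,dx$, so the low-energy condition gives a \emph{lower} bound on $\Vert(u,v)\Vert_H$, not an upper one; indeed the whole point of the split $\mathbf{M}_{\lambda,\beta}^{(1)}\cup\mathbf{M}_{\lambda,\beta}^{(2)}$ earlier in the paper is that the low-energy part of the Nehari manifold contains elements of arbitrarily large norm. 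Adding the Pohozaev identity only controls the $L^2$-mass: the paper's computation gives $\int(u^2+v^2)\,dx\le\frac{6-p}{p-2}\,\theta$ with $\theta=J_{\lambda,\beta}(u,v)$, but leaves the gradient part $z_1=\int(|\nabla u|^2+|\nabla v|^2)\,dx$ unbounded (in the paper's parametrization $z_1=3\theta+t(p-2)$ with $t>0$ a free parameter; for $p=3$ the positivity constraints impose no upper bound on $t$ at all). Consequently you cannot ``bound $\Vert(u,v)\Vert_H$ from above by an explicit quantity'' as claimed, and feeding such a bound into Lemma \ref{L2-3}(ii), which is quartic in the full $H$-norm, cannot close the argument — certainly not at $p=3$, which the proposition includes.

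The paper's actual mechanism, which you only gesture at in your final hedge, is to replace Lemma \ref{L2-3}(ii) by the Hardy--Littlewood--Sobolev inequality followed by the interpolation $\Vert w\Vert_{6/5}^2\le\Vert w\Vert_1^{3/2}\Vert w\Vert_3^{1/2}$ with $w=u^2+v^2$ and the explicit Sobolev constant for $D^{1,2}(\mathbb{R}^3)\hookrightarrow L^6(\mathbb{R}^3)$; this yields $z_3\le\frac{16\sqrt[3]{2}}{3\sqrt{3}\,\pi}\bigl(\frac{(6-p)\theta}{p-2}\bigr)^{3/2}z_1^{1/2}$, i.e.\ a bound that uses the Pohozaev-controlled $L^2$-mass cubed and is only of order $z_1^{1/2}$ in the (uncontrolled) gradient. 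Substituting the one-parameter solution of the linear system formed by the Nehari, Pohozaev and energy identities turns this into a quadratic inequality in the free parameter $t$, whose resulting upper bound on $t$ is exactly what makes $h''_{\lambda,(u,v)}(1)=-2p\theta+t(p-2)(4-p)$ negative for $\lambda<\lambda_0$; this is where $\pi$, $\sqrt[3]{2}$, $(6-p)^{3/2}$ and $\sqrt{3p}$ come from. Your suggested fallback (an estimate through $\Vert u\Vert_{L^{12/5}}$ and $\Vert\nabla u\Vert_{L^2}$) is still not the right one, since $L^{12/5}$ is not controlled by the Pohozaev identity either; the essential point is that only the $L^2$-mass is controlled by $\theta$, and the Coulomb term must be estimated sublinearly in the gradient.
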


\begin{proof}
Let $\left( u_{0},v_{0}\right) \in \mathbb{A}_{\lambda ,\beta }.$ Then there
holds%
\begin{equation}
\left\Vert \left( u_{0},v_{0}\right) \right\Vert _{H}^{2}+\lambda \int_{%
\mathbb{R}^{3}}\phi _{u_{0},v_{0}}\left( u_{0}^{2}+v_{0}^{2}\right) dx-\int_{%
\mathbb{R}^{3}}F_{\beta }\left( u_{0},v_{0}\right) dx=0.  \label{2-6}
\end{equation}%
Following the argument of \cite[Lemma 3.1]{DM1}, we have the following
Pohozaev type identity%
\begin{equation}
\frac{1}{2}\int_{\mathbb{R}^{3}}(\left\vert \nabla u_{0}\right\vert
^{2}+\left\vert \nabla v_{0}\right\vert ^{2})dx+\frac{3}{2}\int_{\mathbb{R}%
^{3}}(u_{0}^{2}+v_{0}^{2})dx+\frac{5\lambda }{4}\int_{\mathbb{R}^{3}}\phi
_{u_{0},v_{0}}\left( u_{0}^{2}+v_{0}^{2}\right) dx=\frac{3}{p}\int_{\mathbb{R%
}^{3}}F_{\beta }\left( u_{0},v_{0}\right) dx.  \label{2-7}
\end{equation}%
Set%
\begin{equation}
\theta :=J_{\lambda ,\beta }\left( u_{0},v_{0}\right) =\frac{1}{2}\left\Vert
\left( u_{0},v_{0}\right) \right\Vert _{H}^{2}+\frac{\lambda }{4}\int_{%
\mathbb{R}^{3}}\phi _{u_{0},v_{0}}\left( u_{0}^{2}+v_{0}^{2}\right) dx-\frac{%
1}{p}\int_{\mathbb{R}^{3}}F_{\beta }\left( u_{0},v_{0}\right) dx.
\label{2-13}
\end{equation}%
Then it follows from $\left( \ref{2-6}\right) $--$\left( \ref{2-13}\right) $
that%
\begin{eqnarray}
\theta &=&\frac{p-2}{6-p}\int_{\mathbb{R}^{3}}(u_{0}^{2}+v_{0}^{2})dx+\frac{%
\lambda (p-3)}{6-p}\int_{\mathbb{R}^{3}}\phi _{u_{0},v_{0}}\left(
u_{0}^{2}+v_{0}^{2}\right) dx  \notag \\
&\geq &\frac{p-2}{6-p}\int_{\mathbb{R}^{3}}(u_{0}^{2}+v_{0}^{2})dx>0\text{
for }3\leq p<4.  \label{2-9}
\end{eqnarray}%
Moreover, by the Hardy-Littlewood-Sobolev and Gagliardo-Nirenberg
inequalities and $\left( \ref{2-9}\right) $, we have%
\begin{eqnarray}
\int_{\mathbb{R}^{3}}\phi _{u_{0},v_{0}}\left( u_{0}^{2}+v_{0}^{2}\right) dx
&\leq &\frac{8\sqrt[3]{2}}{3\sqrt[3]{\pi }}\left( \int_{\mathbb{R}%
^{3}}\left( u_{0}^{2}+v_{0}^{2}\right) ^{6/5}dx\right) ^{5/3}  \notag \\
&\leq &\frac{8\sqrt[3]{2}}{3\sqrt[3]{\pi }}\left( \int_{\mathbb{R}%
^{3}}\left( u_{0}^{2}+v_{0}^{2}\right) dx\right) ^{3/2}\left( \int_{\mathbb{R%
}^{3}}\left( u_{0}^{2}+v_{0}^{2}\right) ^{3}dx\right) ^{1/6}  \notag \\
&\leq &\frac{8\sqrt[3]{4}}{3\sqrt[3]{\pi }}\left( \frac{\theta (6-p)}{p-2}%
\right) ^{3/2}\left( \int_{\mathbb{R}^{3}}(u_{0}^{6}+v_{0}^{6})dx\right)
^{1/6}  \notag \\
&\leq &\frac{8\sqrt[3]{4}S}{3\sqrt[3]{\pi }}\left( \frac{\theta (6-p)}{p-2}%
\right) ^{3/2}\left[ \left( \int_{\mathbb{R}^{3}}|\nabla u_{0}|^{2}dx\right)
^{3}+\left( \int_{\mathbb{R}^{3}}|\nabla v_{0}|^{2}dx\right) ^{3}\right]
^{1/6}  \notag \\
&\leq &\frac{2^{11/3}}{3\sqrt[3]{\pi }}\frac{1}{\sqrt{3}}\frac{\sqrt[3]{4}}{%
\pi ^{\frac{2}{3}}}\left( \frac{\theta (6-p)}{p-2}\right) ^{3/2}\left( \int_{%
\mathbb{R}^{3}}(|\nabla u_{0}|^{2}+|\nabla v_{0}|^{2})dx\right) ^{1/2}
\notag \\
&=&\frac{16\sqrt[3]{2}}{3\sqrt{3}\pi }\left( \frac{\theta (6-p)}{p-2}\right)
^{3/2}\left( \int_{\mathbb{R}^{3}}(|\nabla u_{0}|^{2}+|\nabla
v_{0}|^{2})dx\right) ^{1/2}.  \label{2-8}
\end{eqnarray}%
We now define%
\begin{equation*}
\begin{array}{ll}
z_{1}=\int_{\mathbb{R}^{3}}(|\nabla u_{0}|^{2}+|\nabla v_{0}|^{2})dx, &
z_{2}=\int_{\mathbb{R}^{3}}\left( u_{0}^{2}+v_{0}^{2}\right) dx, \\
z_{3}=\int_{\mathbb{R}^{3}}\phi _{u_{0},v_{0}}\left(
u_{0}^{2}+v_{0}^{2}\right) dx, & z_{4}=\int_{\mathbb{R}^{3}}F_{\beta }\left(
u_{0},v_{0}\right) dx.%
\end{array}%
\end{equation*}%
Then from $\left( \ref{2-6}\right) -\left( \ref{2-13}\right) $ it follows
that%
\begin{equation}
\left\{
\begin{array}{l}
\frac{1}{2}z_{1}+\frac{1}{2}z_{2}+\frac{\lambda }{4}z_{3}-\frac{1}{p}%
z_{4}=\theta , \\
z_{1}+z_{2}+\lambda z_{3}-z_{4}=0, \\
\frac{1}{2}z_{1}+\frac{3}{2}z_{2}+\frac{5\lambda }{4}z_{3}-\frac{3}{p}%
z_{4}=0, \\
z_{i}>0\text{ for }i=1,2,3,4.%
\end{array}%
\right.  \label{2-10}
\end{equation}%
Moreover, by $\left( \ref{2-8}\right) $ and System $\left( \ref{2-10}\right)
$, we have%
\begin{equation*}
\theta =\frac{p-2}{6-p}z_{2}+\frac{\lambda \left( p-3\right) }{6-p}z_{3}\geq
\frac{p-2}{6-p}z_{2}>0
\end{equation*}%
and
\begin{equation}
z_{3}^{2}\leq \left( \frac{16\sqrt[3]{2}}{3\sqrt{3}\pi }\right) ^{2}\left(
\frac{6-p}{p-2}\theta \right) ^{3}z_{1}.  \label{2-12}
\end{equation}%
Next, we show that there exists a constant%
\begin{equation*}
\lambda _{0}:=\frac{6p\sqrt{3p}\left( p-2\right) \pi }{8\sqrt[3]{2}\left(
4-p\right) \left( 6-p\right) ^{3/2}S_{p}^{2p/\left( p-2\right) }}>0
\end{equation*}%
such that%
\begin{equation}
-\left( p-2\right) \left( z_{1}+z_{2}\right) +\lambda \left( 4-p\right)
z_{3}<0\text{ for all }\lambda \in \left( 0,\lambda _{0}\right) .
\label{2-19}
\end{equation}%
Since the general solution of System $\left( \ref{2-10}\right) $ is
\begin{equation}
\left[
\begin{array}{c}
z_{1} \\
z_{2} \\
z_{3} \\
z_{4}%
\end{array}%
\right] =\frac{\theta }{p-2}\left[
\begin{array}{c}
3(p-2) \\
6-p \\
0 \\
2p%
\end{array}%
\right] +t\left[
\begin{array}{c}
p-2 \\
-2(p-3) \\
\frac{2}{\lambda }(p-2) \\
p%
\end{array}%
\right] ,  \label{2-3}
\end{equation}%
where $s,t,w\in \mathbb{R}.$ From $\left( \ref{2-3}\right) $, we know that $%
z_{i}>0$ ($i=1,2,3,4)$ provided that the parameter $t$ satisfies
\begin{equation}
2(p-3)t<\frac{6-p}{p-2}\theta \text{ with }t>0.  \label{2-11}
\end{equation}%
Substituting $\left( \ref{2-3}\right) $ into $\left( \ref{2-12}\right) $, we
have%
\begin{eqnarray}
&&\left( \frac{2t(p-2)}{\lambda }\right) ^{2}-t\left( 4-p\right) \left(
\frac{16\sqrt[3]{2}}{3\sqrt{3}\pi }\right) ^{2}\left( \frac{\theta (6-p)}{p-2%
}\right) ^{3}  \notag \\
&\leq &\left( \frac{16\sqrt[3]{2}}{3\sqrt{3}\pi }\right) ^{2}\left( \frac{%
\theta (6-p)}{p-2}\right) ^{3}\left[ 3\theta +2\left( p-3\right) t\right] .
\label{2-14}
\end{eqnarray}%
Using the fact that $t>0$, it follows from (\ref{2-11}) and (\ref{2-14}) that%
\begin{eqnarray*}
&&\left[ \frac{4t^{2}(p-2)^{2}}{\lambda ^{2}}-t\theta ^{3}\left( 4-p\right)
\left( \frac{16\sqrt[3]{2}}{3\sqrt{3}\pi }\right) ^{2}\left( \frac{6-p}{p-2}%
\right) ^{3}\right] \\
&<&\theta ^{4}\left( \frac{16\sqrt[3]{2}}{3\sqrt{3}\pi }\right) ^{2}\left(
\frac{6-p}{p-2}\right) ^{3}\left( 3+\frac{6-p}{p-2}\right)
\end{eqnarray*}%
or%
\begin{equation*}
\frac{4t^{2}(p-2)^{2}}{\lambda ^{2}}-At\theta ^{3}\left( 4-p\right) -\frac{%
2pA\theta ^{4}}{p-2}<0,
\end{equation*}%
where $A:=\left( \frac{16\sqrt[3]{2}}{3\sqrt{3}\pi }\right) ^{2}\left( \frac{%
6-p}{p-2}\right) ^{3}$. This implies that the parameter $t$ satisfies
\begin{equation}
0<t<\frac{\lambda ^{2}\left( A\left( 4-p\right) \theta ^{3}+\sqrt{%
A^{2}\left( 4-p\right) ^{2}\theta ^{6}+\frac{32p(p-2)A\theta ^{4}}{\lambda
^{2}}}\right) }{8(p-2)^{2}}.  \label{2-15}
\end{equation}%
Using $\left( \ref{2-3}\right) $ again, we have
\begin{equation}
-\left( p-2\right) \left( z_{1}+z_{2}\right) +\lambda \left( 4-p\right)
z_{3}=-2p\theta +t(p-2)(4-p).  \label{2-18}
\end{equation}%
Then, it follows from (\ref{2-15}) and (\ref{2-18}) that%
\begin{eqnarray}
&&\frac{-\left( p-2\right) \left( z_{1}+z_{2}\right) +\lambda \left(
4-p\right) z_{3}}{\theta }  \notag \\
&\leq &-2p+(p-2)(4-p)\frac{\lambda ^{2}\left( A\left( 4-p\right) \theta ^{3}+%
\sqrt{A^{2}\left( 4-p\right) ^{2}\theta ^{6}+\frac{32p(p-2)A\theta ^{4}}{%
\lambda ^{2}}}\right) }{8(p-2)^{2}\theta }  \notag \\
&=&-2p+\frac{\lambda ^{2}(4-p)\left( A\left( 4-p\right) \theta ^{2}+\sqrt{%
A^{2}\left( 4-p\right) ^{2}\theta ^{4}+\frac{32p(p-2)A\theta ^{2}}{\lambda
^{2}}}\right) }{8\left( p-2\right) }.  \label{2-16}
\end{eqnarray}%
In addition, a direct calculation shows that%
\begin{equation}
A\left( 4-p\right) \lambda ^{2}\theta ^{2}+\lambda ^{2}\sqrt{A^{2}\left(
4-p\right) ^{2}\theta ^{4}+\frac{32p(p-2)A\theta ^{2}}{\lambda ^{2}}}<\frac{%
16p\left( p-2\right) }{4-p}  \label{2-20}
\end{equation}%
for all $0<\theta <\frac{p-2}{2p}S_{p}^{2p/\left( p-2\right) }$ and $%
0<\lambda <\frac{4p}{\left( 4-p\right) \left( p-2\right) S_{p}^{2p/\left(
p-2\right) }}\left( \frac{p\left( p-2\right) }{A}\right) ^{1/2}.$ Hence, it
follows from $\left( \ref{2-16}\right) $ and $\left( \ref{2-20}\right) $
that for each $\lambda \in (0,\lambda _{0})$,
\begin{equation*}
-\left( p-2\right) \left( z_{1}+z_{2}\right) +\lambda \left( 4-p\right)
z_{3}<0,
\end{equation*}%
where $\lambda _{0}$ is as in (\ref{2-20}). Namely, (\ref{2-19}) is proved.
This shows that%
\begin{equation*}
h_{\lambda ,\left( u_{0},v_{0}\right) }^{\prime \prime }\left( 1\right)
=-\left( p-2\right) \left\Vert \left( u_{0},v_{0}\right) \right\Vert
_{H}^{2}+\lambda \left( 4-p\right) \int_{\mathbb{R}^{3}}\phi
_{u_{0},v_{0}}\left( u_{0}^{2}+v_{0}^{2}\right) dx<0,
\end{equation*}%
leading to $\left( u_{0},v_{0}\right) \in \mathbf{M}_{\lambda ,\beta }^{-}.$
Therefore, we have $\mathbb{A}_{\lambda ,\beta }\subset \mathbf{M}_{\lambda
,\beta }^{-}.$ This completes the proof.
\end{proof}

\textbf{We are now ready to prove Theorem \ref{t3}:} By Theorem \ref{T4-1},
System $(E_{\lambda ,\beta })$ has a vectorial solution $\left( u_{\lambda
,\beta }^{\left( 1\right) },v_{\lambda ,\beta }^{\left( 1\right) }\right)
\in \overline{\mathbf{M}}_{\lambda ,\beta }^{\left( 1\right) },$ which
satisfies%
\begin{equation*}
J_{\lambda ,\beta }\left( u_{\lambda ,\beta }^{\left( 1\right) },v_{\lambda
,\beta }^{\left( 1\right) }\right) =\alpha _{\lambda ,\beta }^{-}<\frac{%
\left( p-2\right) ^{2}\overline{S}^{2}S_{12/5}^{4}}{4p(4-p)k\left( \lambda
\right) }
\end{equation*}%
and%
\begin{equation*}
J_{\lambda ,\beta }\left( u_{\lambda ,\beta }^{\left( 1\right) },v_{\lambda
,\beta }^{\left( 1\right) }\right) =\alpha _{\lambda ,\beta }^{-}=\inf_{u\in
\mathbf{M}_{\lambda ,\beta }^{-}}J_{\lambda ,\beta }(u,v).
\end{equation*}%
Since $\frac{\left( p-2\right) ^{2}\overline{S}^{2}S_{12/5}^{4}}{%
4p(4-p)k\left( \lambda \right) }\leq \frac{p-2}{2p}S_{p}^{2p/\left(
p-2\right) }$, it follows from Proposition \ref{t6} that
\begin{equation*}
J_{\lambda ,\beta }\left( u_{\lambda ,\beta }^{\left( 1\right) },v_{\lambda
,\beta }^{\left( 1\right) }\right) =\alpha _{\lambda ,\beta }^{-}=\inf_{u\in
\mathbb{A}_{\lambda ,\beta }}J_{\lambda ,\beta }(u,v),
\end{equation*}%
which implies that $\left( u_{\lambda ,\beta }^{\left( 1\right) },v_{\lambda
,\beta }^{\left( 1\right) }\right) $ is a vectorial ground state solution of
System $(E_{\lambda ,\beta }).$ This completes the proof.

\begin{proposition}
\label{g3}Let $\frac{1+\sqrt{73}}{3}\leq p<6,\lambda >0$ and $\beta >0.$ Let
$\left( u_{0},v_{0}\right) $ be a nontrivial solution of System $(E_{\lambda
,\beta }).$ Then $\left( u_{0},v_{0}\right) \in \mathbf{M}_{\lambda ,\beta
}^{-}.$
\end{proposition}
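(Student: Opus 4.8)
The plan is to establish $\left( u_{0},v_{0}\right) \in \mathbf{M}_{\lambda ,\beta }^{-}$ by showing that $h_{\lambda ,\left( u_{0},v_{0}\right) }^{\prime \prime }\left( 1\right) <0.$ Since $\left( u_{0},v_{0}\right) $ is a nontrivial solution of System $(E_{\lambda ,\beta }),$ it lies on the Nehari manifold $\mathbf{M}_{\lambda ,\beta }$ and, by elliptic regularity, satisfies the Pohozaev type identity $\left( \ref{2-7}\right) $ used in the proof of Proposition \ref{t6}. First I would reuse the bookkeeping of that proof: setting $z_{1},z_{2},z_{3},z_{4}$ as there and $\theta =J_{\lambda ,\beta }\left( u_{0},v_{0}\right) ,$ the tuple $\left( z_{1},z_{2},z_{3},z_{4}\right) $ solves the linear system $\left( \ref{2-10}\right) $ with all $z_{i}>0,$ whose general solution is $\left( \ref{2-3}\right) $ for some parameter $t\in \mathbb{R}.$ By $\left( \ref{2-6-1}\right) $ and $\left( \ref{2-18}\right) $ one then has $h_{\lambda ,\left( u_{0},v_{0}\right) }^{\prime \prime }\left( 1\right) =-\left( p-2\right) \left( z_{1}+z_{2}\right) +\lambda \left( 4-p\right) z_{3}=-2p\theta +t\left( p-2\right) \left( 4-p\right) ,$ so the whole matter reduces to proving $t\left( p-2\right) \left( 4-p\right) <2p\theta .$

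The next step is to read off sign and size information on $\theta $ and $t$ from the positivity constraints $z_{i}>0.$ Using $\left( \ref{2-3}\right) $ one recovers $\theta =\frac{p-2}{6-p}z_{2}+\frac{\lambda \left( p-3\right) }{6-p}z_{3}$ as in $\left( \ref{2-9}\right) ,$ and since $\frac{1+\sqrt{73}}{3}>3$ and $p<6$ every coefficient is positive, so $\theta \geq \frac{p-2}{6-p}z_{2}>0.$ From the third coordinate $z_{3}=\frac{2t\left( p-2\right) }{\lambda }$ and $z_{3}>0$ one gets $t>0,$ and from the second coordinate $z_{2}=\frac{\theta \left( 6-p\right) }{p-2}-2t\left( p-3\right) >0$ together with $p>3$ one gets the decisive bound $0<t<\frac{\theta \left( 6-p\right) }{2\left( p-2\right) \left( p-3\right) }.$

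Finally I would conclude by distinguishing two cases. If $4\leq p<6,$ then $4-p\leq 0,$ hence $t\left( p-2\right) \left( 4-p\right) \leq 0<2p\theta $ and $h_{\lambda ,\left( u_{0},v_{0}\right) }^{\prime \prime }\left( 1\right) <0$ trivially. If $\frac{1+\sqrt{73}}{3}\leq p<4,$ then $4-p>0,$ and inserting the bound on $t$ yields $t\left( p-2\right) \left( 4-p\right) <\frac{\theta \left( 6-p\right) \left( 4-p\right) }{2\left( p-3\right) };$ it then suffices to verify $\left( 6-p\right) \left( 4-p\right) \leq 4p\left( p-3\right) ,$ equivalently $3p^{2}-2p-24\geq 0,$ whose positive root is precisely $\frac{1+\sqrt{73}}{3}.$ In either case $h_{\lambda ,\left( u_{0},v_{0}\right) }^{\prime \prime }\left( 1\right) <0,$ so $\left( u_{0},v_{0}\right) \in \mathbf{M}_{\lambda ,\beta }^{-}.$

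I expect the only genuinely nonroutine point — and the step to get right — to be the choice of which positivity constraint bounds the free parameter $t$: using $z_{2}>0$ (the positivity of the mass term $\int_{\mathbb{R}^{3}}(u_{0}^{2}+v_{0}^{2})\,dx$) rather than a Sobolev bound on $z_{3}$ as in Proposition \ref{t6} is exactly what makes the estimate independent of $\lambda $ and isolates the threshold $p\geq \frac{1+\sqrt{73}}{3}.$ Everything else is elementary linear algebra together with the quadratic inequality $3p^{2}-2p-24\geq 0.$
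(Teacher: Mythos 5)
Your proof is correct. It rests on exactly the same two ingredients as the paper's argument --- the Nehari identity $\langle J_{\lambda,\beta}'(u_0,v_0),(u_0,v_0)\rangle=0$ and the Pohozaev identity $(\ref{2-7})$ --- and it reduces to the same quadratic threshold $3p^{2}-2p-24\geq 0$, whose positive root is $\frac{1+\sqrt{73}}{3}$. The only difference is bookkeeping: the paper combines the two identities directly to solve for $\int_{\mathbb{R}^{3}}(|\nabla u_{0}|^{2}+|\nabla v_{0}|^{2})dx$ in terms of $z_{2}$ and $z_{3}$ and then substitutes into $(\ref{2-6-1})$, obtaining
\begin{equation*}
h_{\lambda ,\left( u_{0},v_{0}\right) }^{\prime \prime }\left( 1\right)
=-\frac{2p(p-2)}{6-p}z_{2}-\frac{\lambda (3p^{2}-2p-24)}{2\left( 6-p\right) }z_{3}<0
\end{equation*}
in one stroke, whereas you route through the parametrized general solution $(\ref{2-3})$ of the linear system $(\ref{2-10})$ and bound the free parameter $t$ via the positivity constraint $z_{2}>0$. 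The two computations are algebraically equivalent; your version has the minor expository advantage of making explicit that it is the constraint $z_{2}>0$ (rather than any Sobolev estimate on $z_{3}$, as in Proposition \ref{t6}) that yields a $\lambda$-independent conclusion, while the paper's version avoids the case split at $p=4$ and the detour through the general solution. All the individual steps you use --- $\theta>0$ from $(\ref{2-9})$, $t>0$ from $z_{3}>0$, the identity $(\ref{2-18})$, and the strictness of the final inequality at $p=\frac{1+\sqrt{73}}{3}$ coming from the strict bound on $t$ --- check out.
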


\begin{proof}
Since $\left( u_{0},v_{0}\right) $ is a nontrivial solution of System $%
(E_{\lambda ,\beta })$, we have%
\begin{equation}
\left\Vert \left( u_{0},v_{0}\right) \right\Vert _{H}^{2}+\lambda \int_{%
\mathbb{R}^{3}}\phi _{u_{0},v_{0}}\left( u_{0}^{2}+v_{0}^{2}\right) dx-\int_{%
\mathbb{R}^{3}}F_{\beta }\left( u_{0},v_{0}\right) dx=0  \label{6-11}
\end{equation}%
and%
\begin{eqnarray}
\frac{1}{2}\int_{\mathbb{R}^{3}}(\left\vert \nabla u_{0}\right\vert
^{2}+\left\vert \nabla v_{0}\right\vert ^{2})dx+\frac{3}{2}\int_{\mathbb{R}%
^{3}}(u_{0}^{2}+v_{0}^{2})dx+\frac{5\lambda }{4}\int_{\mathbb{R}^{3}}\phi
_{u_{0},v_{0}}\left( u_{0}^{2}+v_{0}^{2}\right) dx =\frac{3}{p}\int_{\mathbb{%
R}^{3}}F_{\beta }\left( u_{0},v_{0}\right) dx.  \label{6-12}
\end{eqnarray}%
Combining $(\ref{6-11})$ with $(\ref{6-12})$, one has
\begin{equation*}
\int_{\mathbb{R}^{3}}(\left\vert \nabla u_{0}\right\vert ^{2}+\left\vert
\nabla v_{0}\right\vert ^{2})dx=\frac{3(p-2)}{6-p}\int_{\mathbb{R}%
^{3}}(u_{0}^{2}+v_{0}^{2})dx+\frac{\lambda (5p-12)}{2\left( 6-p\right) }%
\int_{\mathbb{R}^{3}}\phi _{u_{0},v_{0}}\left( u_{0}^{2}+v_{0}^{2}\right) dx.
\end{equation*}%
Using this, together with $\left( \ref{2-6-1}\right) ,$ gives
\begin{eqnarray*}
h_{\lambda ,\left( u_{0},v_{0}\right) }^{\prime \prime }\left( 1\right)
&=&-\left( p-2\right) \left\Vert \left( u_{0},v_{0}\right) \right\Vert
_{H}^{2}+\lambda \left( 4-p\right) \int_{\mathbb{R}^{3}}\phi
_{u_{0},v_{0}}\left( u_{0}^{2}+v_{0}^{2}\right) dx \\
&=&-\frac{2p(p-2)}{6-p}\int_{\mathbb{R}^{3}}(u_{0}^{2}+v_{0}^{2})dx-\frac{%
\lambda (3p^{2}-2p-24)}{2\left( 6-p\right) }\int_{\mathbb{R}^{3}}\phi
_{u_{0},v_{0}}\left( u_{0}^{2}+v_{0}^{2}\right) dx \\
&<&0,
\end{eqnarray*}%
where we have also used the fact of $3p^{2}-2p-24\geq 0$ if $\frac{1+\sqrt{73%
}}{3}\leq p<6.$ Therefore, there holds $\left( u_{0},v_{0}\right) \in
\mathbf{M}_{\lambda ,\beta }^{-}.$ This completes the proof.
\end{proof}

\textbf{We are now ready to prove Theorem \ref{t3-2}:} For $\lambda >0$ and $%
\beta >\beta \left( \lambda \right) .$ By Theorem \ref{T4-1}, System $%
(E_{\lambda ,\beta })$ has a vectorial solution $\left( u_{\lambda ,\beta
}^{\left( 1\right) },v_{\lambda ,\beta }^{\left( 1\right) }\right) \in
\overline{\mathbf{M}}_{\lambda ,\beta }^{\left( 1\right) }$ satisfying%
\begin{equation*}
J_{\lambda ,\beta }\left( u_{\lambda ,\beta }^{\left( 1\right) },v_{\lambda
,\beta }^{\left( 1\right) }\right) =\alpha _{\lambda ,\beta }^{-}=\inf_{u\in
\mathbf{M}_{\lambda ,\beta }^{-}}J_{\lambda ,\beta }\left( u,v\right) ,
\end{equation*}%
and according to Proposition \ref{g3}, we conclude that $\left( u_{\lambda
,\beta }^{\left( 1\right) },v_{\lambda ,\beta }^{\left( 1\right) }\right) $
is a vectorial ground state solution of System $(E_{\lambda ,\beta }).$ This
completes the proof.

\section{Appendix}

\begin{theorem}
\label{AT-1}Let $2<p<3$ and $\beta \geq 0$. Then the following statements
are true.\newline
$\left( i\right) $ $0<\Lambda \left( \beta \right) <\infty ;$\newline
$\left( ii\right) $ $\Lambda \left( \beta \right) $ is achieved, i.e. there
exists $\left( u_{0},v_{0}\right) \in H_{r}\setminus \left\{ \left(
0,0\right) \right\} $ such that
\begin{equation*}
\Lambda \left( \beta \right) =\frac{\frac{1}{p}\int_{\mathbb{R}^{3}}F_{\beta
}\left( u_{0},v_{0}\right) dx-\frac{1}{2}\left\Vert \left(
u_{0},v_{0}\right) \right\Vert _{H}^{2}}{\int_{\mathbb{R}^{3}}\phi
_{u_{0},v_{0}}\left( u_{0}^{2}+v_{0}^{2}\right) dx}>0.
\end{equation*}
\end{theorem}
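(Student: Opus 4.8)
The plan is to treat (i) and (ii) together, exploiting the fact that along each ray $t\mapsto(tu,tv)$ the (non‑homogeneous) quotient defining $\Lambda(\beta)$ can be maximized explicitly, so that the true obstruction is controlled by the interpolation available for $2<p<3$, the Lions‑type inequality $(\ref{2-0})$, and the Strauss compactness of $H_r$. For positivity, fix $\varphi\in H^1_{rad}(\mathbb{R}^3)\setminus\{0\}$ and test with $(u,v)=(t\varphi,0)$: the denominator is $t^4\int_{\mathbb{R}^3}\phi_{\varphi}\varphi^2\,dx>0$ while the numerator equals $\tfrac{t^p}{p}\|\varphi\|_{L^p}^p-\tfrac{t^2}{2}\|\varphi\|_{H^1}^2$, which is positive as soon as $t^{p-2}>\tfrac{p}{2}\|\varphi\|_{H^1}^2\|\varphi\|_{L^p}^{-p}$ because $p>2$. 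Hence $\Lambda(\beta)>0$.

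\textbf{Finiteness.} From $2|u|^{p/2}|v|^{p/2}\le |u|^p+|v|^p$ one has $\int_{\mathbb{R}^3}F_\beta(u,v)\,dx\le(1+\beta)\bigl(\|u\|_{L^p}^p+\|v\|_{L^p}^p\bigr)$. Since $2<p<3$, the interpolation inequality $\|u\|_{L^p}^p\le\|u\|_{L^2}^{6-2p}\|u\|_{L^3}^{3p-6}$ holds, and since $(\ref{2-0})$ is valid for every $\lambda>0$, minimizing its right‑hand side over $\lambda$ yields $\|u\|_{L^3}^3\le\|\nabla u\|_{L^2}\bigl(\int_{\mathbb{R}^3}\phi_{u,v}(u^2+v^2)\,dx\bigr)^{1/2}$, and similarly for $v$. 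Combining these gives, with $X:=\|(u,v)\|_H$ and $\Phi:=\int_{\mathbb{R}^3}\phi_{u,v}(u^2+v^2)\,dx$, the bound $\int_{\mathbb{R}^3}F_\beta(u,v)\,dx\le 2(1+\beta)\,X^{4-p}\Phi^{(p-2)/2}$. Therefore the numerator of the quotient is at most $q(X):=\tfrac{2(1+\beta)}{p}X^{4-p}\Phi^{(p-2)/2}-\tfrac12X^2$, and because $4-p<2$ the function $X\mapsto q(X)$ attains its maximum at a finite $X_*$ with $q(X_*)=C(\beta,p)\,\Phi$; dividing by $\Phi$ shows $\Lambda(\beta)\le C(\beta,p)<\infty$.

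\textbf{Attainment.} Take a maximizing sequence $\{(u_n,v_n)\}\subset H_r\setminus\{(0,0)\}$; after scaling each term we may assume $\Phi_n:=\int_{\mathbb{R}^3}\phi_{u_n,v_n}(u_n^2+v_n^2)\,dx=1$, so that the numerators converge to $\Lambda(\beta)>0$ and in particular are $\ge\tfrac12\Lambda(\beta)$ for large $n$. Feeding this into the estimate $\tfrac12\Lambda(\beta)\le q(X_n)\le\tfrac{2(1+\beta)}{p}X_n^{4-p}$ (and the companion bound $\tfrac12X_n^2\le\tfrac{2(1+\beta)}{p}X_n^{4-p}$) forces $0<c\le X_n\le C<\infty$, so $\{(u_n,v_n)\}$ is bounded in $H_r$. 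By the compact Strauss embedding $H^1_{rad}(\mathbb{R}^3)\hookrightarrow L^q(\mathbb{R}^3)$ for $2<q<6$, along a subsequence $(u_n,v_n)\rightharpoonup(u_0,v_0)$ in $H_r$ and $(u_n,v_n)\to(u_0,v_0)$ strongly in $L^p\times L^p$ and in $L^{12/5}\times L^{12/5}$; hence $\int_{\mathbb{R}^3}F_\beta(u_n,v_n)\,dx\to\int_{\mathbb{R}^3}F_\beta(u_0,v_0)\,dx$ and, by the Hardy–Littlewood–Sobolev inequality applied to $u_n^2+v_n^2\to u_0^2+v_0^2$ in $L^{6/5}$, $\Phi_n\to\int_{\mathbb{R}^3}\phi_{u_0,v_0}(u_0^2+v_0^2)\,dx=1$, so $(u_0,v_0)\neq(0,0)$. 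Finally, weak lower semicontinuity of $\|\cdot\|_H$ gives $\tfrac1p\int_{\mathbb{R}^3}F_\beta(u_0,v_0)\,dx-\tfrac12\|(u_0,v_0)\|_H^2\ge\limsup_n\bigl(\tfrac1p\int_{\mathbb{R}^3}F_\beta(u_n,v_n)\,dx-\tfrac12\|(u_n,v_n)\|_H^2\bigr)=\Lambda(\beta)$, so the quotient at $(u_0,v_0)$ is $\ge\Lambda(\beta)$; together with the definition of the supremum it equals $\Lambda(\beta)$, which is thus achieved.

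\textbf{Main obstacle.} The delicate step is the finiteness estimate: a plain Sobolev bound $\int F_\beta\le C\|(u,v)\|_H^p$ is useless, and one must simultaneously use the restriction $2<p<3$ (which makes $|u|^p$ interpolable between $L^2$ and $L^3$ and makes $4-p<2$, so that the $X$‑maximization is finite) together with the Coulomb structure via the $\lambda$‑optimized form of $(\ref{2-0})$. Once this two‑sided control $0<c\le X_n\le C$ on a normalized maximizing sequence is in hand, the remainder — boundedness, passage to the weak limit, nontriviality of the limit (guaranteed precisely by $\Phi_n\equiv1$ and the continuity of $\Phi$ under strong $L^{12/5}$ convergence) — is routine thanks to radial compactness.
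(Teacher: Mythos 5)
Your positivity argument and your finiteness argument are both sound; the latter in fact takes a genuinely different route from the paper (interpolation of $L^p$ between $L^2$ and $L^3$ plus the $\lambda$-optimized form of $(\ref{2-0})$, followed by a one-variable maximization in $X$), whereas the paper uses the pointwise Young inequality $\frac{1+\beta}{p}|w|^p\leq\frac12 w^2+C_{p,\beta}|w|^3$ and obtains the explicit bound $\Lambda(\beta)\leq C_{p,\beta}^2$. Either way works, and your version is arguably tidier.

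The attainment step, however, has a genuine gap at the very first move: you cannot assume, ``after scaling each term,'' that a maximizing sequence satisfies $\Phi_n=1$. The quotient defining $\Lambda(\beta)$ is not homogeneous -- under $(u,v)\mapsto(tu,tv)$ the numerator scales like a combination of $t^p$ and $t^2$ while the denominator scales like $t^4$ -- so rescaling a maximizing sequence destroys the maximizing property, and $\sup_{\Phi=1}Q$ need not equal $\sup Q$. Everything downstream (the two-sided bound $0<c\leq X_n\leq C$, hence boundedness, hence the compactness argument and the nontriviality of the limit via $\Phi_n\equiv 1$) rests on this normalization. Without it, your own inequalities only yield $X_n\leq C\Phi_n^{1/2}$ and $\Phi_n\leq CX_n^2$, which is circular and gives no upper bound on $X_n$. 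This is exactly the point where the paper has to work hardest: it proves boundedness of the (unnormalized) maximizing sequence by contradiction, introducing the auxiliary functional $\widetilde J$ with the function $f_\beta(s)=\frac14 s^2+\frac{\sqrt{C_1}}{2}s^3-\frac{1+\beta}{p}s^p$, analyzing the superlevel sets $D_n^{(i)}$ where $u_n,v_n$ fall in the interval on which $f_\beta<0$, and combining the Strauss radial decay inequality with a lower bound on the Coulomb energy of those sets to force $|D_n^{(1)}|+|D_n^{(2)}|$ to stay bounded, a contradiction. Some argument of this strength (or another genuine mechanism ruling out $\|(u_n,v_n)\|_H\to\infty$) is required; once boundedness is in hand, your weak-lower-semicontinuity conclusion is fine and is even slightly cleaner than the paper's contradiction argument for strong convergence.
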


\begin{proof}
$\left( i\right) $ Since $2<p<3,$ by Fatou's lemma, for $\left( u,v\right)
\in H_{r}\setminus \left\{ \left( 0,0\right) \right\} $ with $\int_{\mathbb{R%
}^{3}}F_{\beta }\left( u,v\right) dx>0,$ we have%
\begin{equation*}
\lim_{t\rightarrow \infty }\frac{1}{t^{p}}\left[ \frac{1}{2}\left\Vert
\left( tu,tv\right) \right\Vert _{H}^{2}-\frac{1}{p}\int_{\mathbb{R}%
^{3}}F_{\beta }\left( tu,tv\right) dx\right] =-\frac{1}{p}\int_{\mathbb{R}%
^{3}}F_{\beta }\left( u,v\right) dx<0,
\end{equation*}%
which implies that there exists $\left( e_{1},e_{2}\right) \in H_{r}$ such
that
\begin{equation*}
\frac{1}{2}\left\Vert \left( e_{1},e_{2}\right) \right\Vert _{H}^{2}-\frac{1%
}{p}\int_{\mathbb{R}^{3}}F_{\beta }\left( e_{1},e_{2}\right) dx<0.
\end{equation*}%
Then, for each $\left( u,v\right) \in H_{r}\setminus \left\{ \left(
0,0\right) \right\} $ with $\frac{1}{2}\left\Vert \left( u,v\right)
\right\Vert _{H}^{2}-\frac{1}{p}\int_{\mathbb{R}^{3}}F_{\beta }\left(
u,v\right) dx<0,$ there exists $c_{0}>0$ such that%
\begin{equation*}
\frac{1}{2}\left\Vert \left( u,v\right) \right\Vert _{H}^{2}+\frac{c_{0}}{4}%
\int_{\mathbb{R}^{3}}\phi _{u,v}\left( u^{2}+v^{2}\right) dx-\frac{1}{p}%
\int_{\mathbb{R}^{3}}F_{\beta }\left( u,v\right) dx<0
\end{equation*}%
or%
\begin{equation*}
\frac{c_{0}}{4}<\frac{\frac{1}{p}\int_{\mathbb{R}^{3}}F_{\beta }\left(
u,v\right) dx-\frac{1}{2}\left\Vert \left( u,v\right) \right\Vert _{H}^{2}}{%
\int_{\mathbb{R}^{3}}\phi _{u,v}\left( u^{2}+v^{2}\right) dx}.
\end{equation*}%
This indicates that there exists $\hat{c}_{0}>0$ such that $\Lambda \left(
\beta \right) \geq \hat{c}_{0}>0.$

Next, we show that $0<\Lambda \left( \beta \right) <\infty .$ By Young's
inequality, we have
\begin{equation}
\frac{1+\beta }{p}\left\vert w\right\vert ^{p}\leq \frac{1}{2}%
w^{2}+C_{p,\beta }\left\vert w\right\vert ^{3},  \label{A-1}
\end{equation}%
where
\begin{equation*}
C_{p,\beta }=\left( p-2\right) \left[ 2\left( 3-p\right) \right] ^{\frac{3-p%
}{p-2}}\left( \frac{1+\beta }{p}\right) ^{\frac{1}{p-2}}>0.
\end{equation*}%
Moreover, similar to (\ref{2-0}) and (\ref{2-00}), we have
\begin{equation}
C_{p,\beta }\int_{\mathbb{R}^{3}}(\left\vert u\right\vert
^{3}+v^{2}\left\vert u\right\vert )dx\leq \frac{1}{2}\int_{\mathbb{R}%
^{3}}\left\vert \nabla u\right\vert ^{2}dx+\frac{C_{p,\beta }^{2}}{2}\int_{%
\mathbb{R}^{3}}\phi _{u,v}\left( u^{2}+v^{2}\right) dx  \label{A-2}
\end{equation}%
and%
\begin{equation}
C_{p,\beta }\int_{\mathbb{R}^{3}}(u^{2}\left\vert v\right\vert +\left\vert
v\right\vert ^{3})dx\leq \frac{1}{2}\int_{\mathbb{R}^{3}}\left\vert \nabla
v\right\vert ^{2}dx+\frac{C_{p,\beta }^{2}}{2}\int_{\mathbb{R}^{3}}\phi
_{u,v}\left( u^{2}+v^{2}\right) dx  \label{A-3}
\end{equation}%
for all $\left( u,v\right) \in H_{r}\setminus \left\{ \left( 0,0\right)
\right\} .$ Then it follows from (\ref{A-1})--(\ref{A-3}) that%
\begin{eqnarray*}
&&\frac{\frac{1}{p}\int_{\mathbb{R}^{3}}F_{\beta }\left( u,v\right) dx-\frac{%
1}{2}\left\Vert \left( u,v\right) \right\Vert _{H}^{2}}{\int_{\mathbb{R}%
^{3}}\phi _{u,v}(u^{2}+v^{2})dx} \\
&\leq &2C_{p,\beta }^{2}\times \frac{\frac{1+\beta }{p}\int_{\mathbb{R}%
^{3}}(\left\vert u\right\vert ^{p}+\left\vert v\right\vert ^{p})dx-\frac{1}{2%
}\left\Vert \left( u,v\right) \right\Vert _{H}^{2}}{2C_{p,\beta }\int_{%
\mathbb{R}^{3}}(\left\vert u\right\vert ^{3}+\left\vert v\right\vert
^{3})dx+2C_{p,\beta }\int_{\mathbb{R}^{3}}(u^{2}\left\vert v\right\vert
+v^{2}\left\vert u\right\vert )dx-\int_{\mathbb{R}^{3}}(\left\vert \nabla
u\right\vert ^{2}+\left\vert \nabla v\right\vert ^{2})dx} \\
&\leq &2C_{p,\beta }^{2}\times \frac{C_{p,\beta }\int_{\mathbb{R}%
^{3}}(\left\vert u\right\vert ^{3}+\left\vert v\right\vert ^{3})dx-\frac{1}{2%
}\int_{\mathbb{R}^{3}}(\left\vert \nabla u\right\vert ^{2}+\left\vert \nabla
v\right\vert ^{2})dx}{2C_{p,\beta }\int_{\mathbb{R}^{3}}(\left\vert
u\right\vert ^{3}+\left\vert v\right\vert ^{3})dx-\int_{\mathbb{R}%
^{3}}(\left\vert \nabla u\right\vert ^{2}+\left\vert \nabla v\right\vert
^{2})dx} \\
&=&C_{p,\beta }^{2},
\end{eqnarray*}%
which shows that%
\begin{equation*}
0<\Lambda \left( \beta \right) :=\sup_{\left( u,v\right) \in H_{r}\setminus
\left\{ \left( 0,0\right) \right\} }\frac{\frac{1}{p}\int_{\mathbb{R}%
^{3}}F_{\beta }\left( u,v\right) dx-\frac{1}{2}\left\Vert (u,v)\right\Vert
_{H}^{2}}{\int_{\mathbb{R}^{3}}\phi _{u,v}(u^{2}+v^{2})dx}\leq C_{p,\beta
}^{2}.
\end{equation*}%
$\left( ii\right) $ Let $\left\{ \left( u_{n},v_{n}\right) \right\} \subset
H_{r}\setminus \{\left( 0,0\right) \}$ be a maximum sequence of $(\ref{1-7}).
$ First of all, we claim that $\{\left( u_{n},v_{n}\right) \}$ is bounded in
$H_{r}$. Suppose on the contrary. Then $\left\Vert (u_{n},v_{n})\right\Vert
_{H}\rightarrow \infty $ as $n\rightarrow \infty $. Since $0<\Lambda \left(
\beta \right) <\infty $ and%
\begin{equation*}
\frac{\frac{1}{p}\int_{\mathbb{R}^{3}}F_{\beta }\left( u_{n},v_{n}\right) dx-%
\frac{1}{2}\left\Vert \left( u_{n},v_{n}\right) \right\Vert _{H}^{2}}{\int_{%
\mathbb{R}^{3}}\phi _{u_{n},v_{n}}\left( u_{n}^{2}+v_{n}^{2}\right) dx}%
=\Lambda \left( \beta \right) +o\left( 1\right) ,
\end{equation*}%
there exists $C_{1}>0$ such that%
\begin{equation}
\widetilde{J}\left( u_{n},v_{n}\right) :=\frac{1}{2}\left\Vert \left(
u_{n},v_{n}\right) \right\Vert _{H}^{2}+C_{1}\int_{\mathbb{R}^{3}}\phi
_{u_{n},v_{n}}\left( u_{n}^{2}+v_{n}^{2}\right) dx-\frac{1}{p}\int_{\mathbb{R%
}^{3}}F_{\beta }\left( u_{n},v_{n}\right) dx\leq 0  \label{A-4}
\end{equation}%
for $n$ sufficiently large. Similar to (\ref{2-0}) and (\ref{2-00}), we have%
\begin{equation}
\frac{\sqrt{C_{1}}}{2}\int_{\mathbb{R}^{3}}(\left\vert u\right\vert
^{3}+v^{2}\left\vert u\right\vert )dx\leq \frac{1}{4}\int_{\mathbb{R}%
^{3}}\left\vert \nabla u\right\vert ^{2}dx+\frac{C_{1}}{4}\int_{\mathbb{R}%
^{3}}\phi _{u,v}\left( u^{2}+v^{2}\right) dx  \label{A-5}
\end{equation}%
and%
\begin{equation}
\frac{\sqrt{C_{1}}}{2}\int_{\mathbb{R}^{3}}(u^{2}\left\vert v\right\vert
+\left\vert v\right\vert ^{3})dx\leq \frac{1}{4}\int_{\mathbb{R}%
^{3}}\left\vert \nabla v\right\vert ^{2}dx+\frac{C_{1}}{4}\int_{\mathbb{R}%
^{3}}\phi _{u,v}\left( u^{2}+v^{2}\right) dx  \label{A-6}
\end{equation}%
for all $\left( u,v\right) \in H_{r}.$ Then it follows from (\ref{A-4})--(%
\ref{A-6}) that%
\begin{equation*}
\widetilde{J}\left( u_{n},v_{n}\right) \geq \frac{1}{4}\left\Vert \left(
u_{n},v_{n}\right) \right\Vert _{H}^{2}+\frac{C_{1}}{2}\int_{\mathbb{R}%
^{3}}\phi _{u_{n},v_{n}}\left( u_{n}^{2}+v_{n}^{2}\right) dx+\int_{\mathbb{R}%
^{3}}(f_{\beta }\left( u_{n}\right) +f_{\beta }\left( v_{n}\right) )dx,
\end{equation*}%
where $f_{\beta }\left( s\right) :=\frac{1}{4}s^{2}+\frac{\sqrt{C_{1}}}{2}%
s^{3}-\frac{1+\beta }{p}s^{p}$ for $s>0.$ It is clear that $f_{\beta }$ is
positive for $s\rightarrow 0^{+}$ or $s\rightarrow \infty ,$ since $2<p<3$
and $\beta \geq 0.$ Define%
\begin{equation*}
m_{\beta }:=\inf_{s>0}f_{\beta }(s).
\end{equation*}%
If $m_{\beta }\geq 0,$ then by (\ref{A-4}) we have
\begin{equation*}
0\geq \widetilde{J}\left( u_{n},v_{n}\right) \geq \frac{1}{4}\left\Vert
\left( u_{n},v_{n}\right) \right\Vert _{H}^{2}+\frac{C_{1}}{2}\int_{\mathbb{R%
}^{3}}\phi _{u_{n},v_{n}}(u_{n}^{2}+v_{n}^{2})dx>0,
\end{equation*}%
which is a contradiction. We now assume that $m_{\beta }<0.$ Then the set $%
\left\{ s>0:f_{\beta }\left( s\right) <0\right\} $ is an open interval $%
\left( s_{1},s_{2}\right) $ with $s_{1}>0.$ Note that constants $%
s_{1},s_{2},m_{\beta }$ depend on $p,\beta $ and $C_{1}$. Thus, there holds
\begin{eqnarray}
\widetilde{J}\left( u_{n},v_{n}\right)  &\geq &\frac{1}{4}\left\Vert \left(
u_{n},v_{n}\right) \right\Vert _{H}^{2}+\frac{C_{1}}{2}\int_{\mathbb{R}%
^{3}}\phi _{u_{n},v_{n}}(u_{n}^{2}+v_{n}^{2})dx+\int_{\mathbb{R}%
^{3}}(f_{\beta }\left( u_{n}\right) +f_{\beta }\left( v_{n}\right) )dx
\notag \\
&\geq &\frac{1}{4}\left\Vert \left( u_{n},v_{n}\right) \right\Vert _{H}^{2}+%
\frac{C_{1}}{2}\int_{\mathbb{R}^{3}}\phi
_{u_{n},v_{n}}(u_{n}^{2}+v_{n}^{2})dx+\int_{D_{n}^{\left( 1\right)
}}f_{\beta }\left( u_{n}\right) dx+\int_{D_{n}^{\left( 2\right) }}f_{\beta
}\left( v_{n}\right) dx  \notag \\
&\geq &\frac{1}{4}\left\Vert \left( u_{n},v_{n}\right) \right\Vert _{H}^{2}+%
\frac{C_{1}}{2}\int_{\mathbb{R}^{3}}\phi
_{u_{n},v_{n}}(u_{n}^{2}+v_{n}^{2})dx-\left\vert m_{\beta }\right\vert
\left( \left\vert D_{n}^{\left( 1\right) }\right\vert +\left\vert
D_{n}^{\left( 2\right) }\right\vert \right) ,  \label{A-7}
\end{eqnarray}%
where the sets $D_{n}^{\left( 1\right) }:=\left\{ x\in \mathbb{R}%
^{3}:u_{n}\left( x\right) \in \left( s_{1},s_{2}\right) \right\} $ and $%
D_{n}^{\left( 2\right) }:=\left\{ x\in \mathbb{R}^{3}:v_{n}\left( x\right)
\in \left( s_{1},s_{2}\right) \right\} .$ It follows from (\ref{A-4}) and (%
\ref{A-7}) that
\begin{equation}
\left\vert m_{\beta }\right\vert \left( \left\vert D_{n}^{\left( 1\right)
}\right\vert +\left\vert D_{n}^{\left( 2\right) }\right\vert \right) >\frac{1%
}{4}\left\Vert \left( u_{n},v_{n}\right) \right\Vert _{H}^{2},  \label{A-12}
\end{equation}%
which implies that $\left\vert D_{n}^{\left( 1\right) }\right\vert
+\left\vert D_{n}^{\left( 2\right) }\right\vert \rightarrow \infty $ as $%
n\rightarrow \infty ,$ since $\left\Vert (u_{n},v_{n})\right\Vert
_{H}\rightarrow \infty $ as $n\rightarrow \infty .$ Moreover, since $%
D_{n}^{\left( 1\right) }$ and $D_{n}^{\left( 2\right) }$ are spherically
symmetric, we define $\rho _{n}^{\left( i\right) }:=\sup \left\{ \left\vert
x\right\vert :x\in D_{n}^{\left( i\right) }\right\} $ for $i=1,2.$ Then we
can take $x^{\left( 1\right) },x^{\left( 2\right) }\in \mathbb{R}^{3}$ such
that $\left\vert x^{\left( i\right) }\right\vert =\rho _{n}^{\left( i\right)
}.$ Clearly, $u_{n}\left( x^{\left( 1\right) }\right) =v_{n}\left( x^{\left(
2\right) }\right) =s_{1}>0.$ Recall the following Strauss's inequality by
Strauss \cite{S}%
\begin{equation}
\left\vert z\left( x\right) \right\vert \leq c_{0}\left\vert x\right\vert
^{-1}\left\Vert z\right\Vert _{H^{1}}\text{ for all }z\in H_{r}^{1}(\mathbb{R%
}^{3})  \label{A-13}
\end{equation}%
for some $c_{0}>0.$ Thus, by $\left( \ref{A-12}\right) $ and $\left( \ref%
{A-13}\right) ,$ we have%
\begin{equation*}
0<s_{1}=u_{n}\left( x^{\left( 1\right) }\right) <c_{0}\left( \rho
_{n}^{\left( 1\right) }\right) ^{-1}\left\Vert u_{n}\right\Vert _{H^{1}}\leq
2c_{0}\left\vert m_{\beta }\right\vert ^{1/2}\left( \rho _{n}^{\left(
1\right) }\right) ^{-1}\left( \left\vert D_{n}^{\left( 1\right) }\right\vert
+\left\vert D_{n}^{\left( 2\right) }\right\vert \right) ^{1/2}
\end{equation*}%
and%
\begin{equation*}
0<s_{1}=v_{n}\left( x^{\left( 2\right) }\right) <c_{0}\left( \rho
_{n}^{\left( 2\right) }\right) ^{-1}\left\Vert v_{n}\right\Vert _{H^{1}}\leq
2c_{0}\left\vert m_{\beta }\right\vert ^{1/2}\left( \rho _{n}^{\left(
2\right) }\right) ^{-1}\left( \left\vert D_{n}^{\left( 1\right) }\right\vert
+\left\vert D_{n}^{\left( 2\right) }\right\vert \right) ^{1/2}.
\end{equation*}%
These imply that%
\begin{equation}
c_{i}\rho _{n}^{\left( i\right) }\leq \left( \left\vert D_{n}^{\left(
1\right) }\right\vert +\left\vert D_{n}^{\left( 2\right) }\right\vert
\right) ^{1/2}\text{ for some }c_{i}>0\text{ and }i=1,2.  \label{A-14}
\end{equation}%
On the other hand, since $\widetilde{J}\left( u_{n},v_{n}\right) \leq 0,$ we
have
\begin{eqnarray*}
&&\frac{2}{C_{1}}\left\vert m_{\beta }\right\vert \left( \left\vert
D_{n}^{\left( 1\right) }\right\vert +\left\vert D_{n}^{\left( 2\right)
}\right\vert \right)  \\
&\geq &\int_{\mathbb{R}^{3}}\phi _{u_{n},v_{n}}\left(
u_{n}^{2}+v_{n}^{2}\right) dx \\
&=&\int_{\mathbb{R}^{3}}\int_{\mathbb{R}^{3}}\frac{u_{n}^{2}(x)u_{n}^{2}(y)}{%
|x-y|}dxdy+\int_{\mathbb{R}^{3}}\int_{\mathbb{R}^{3}}\frac{%
v_{n}^{2}(x)v_{n}^{2}(y)}{|x-y|}dxdy+2\int_{\mathbb{R}^{3}}\int_{\mathbb{R}%
^{3}}\frac{u_{n}^{2}(x)v_{n}^{2}(y)}{|x-y|}dxdy \\
&\geq &\int_{D_{n}^{\left( 1\right) }}\int_{D_{n}^{\left( 1\right) }}\frac{%
u_{n}^{2}(x)u_{n}^{2}(y)}{|x-y|}dxdy+\int_{D_{n}^{\left( 2\right)
}}\int_{D_{n}^{\left( 2\right) }}\frac{v_{n}^{2}(x)v_{n}^{2}(y)}{|x-y|}%
dxdy+2\int_{D_{n}^{\left( 2\right) }}v_{n}^{2}(y)\left( \int_{D_{n}^{\left(
1\right) }}\frac{u_{n}^{2}(x)}{|x-y|}dx\right) dy \\
&\geq &s_{1}^{4}\left( \frac{\left\vert D_{n}^{\left( 1\right) }\right\vert
^{2}}{2\rho _{n}^{\left( 1\right) }}+\frac{\left\vert D_{n}^{\left( 2\right)
}\right\vert ^{2}}{2\rho _{n}^{\left( 2\right) }}\right)
+2\int_{D_{n}^{\left( 2\right) }}v_{n}^{2}(y)\left( \int_{D_{n}^{\left(
1\right) }}\frac{u_{n}^{2}(x)}{|x|+\left\vert y\right\vert }dx\right) dy \\
&\geq &s_{1}^{4}\left( \frac{\left\vert D_{n}^{\left( 1\right) }\right\vert
^{2}}{2\rho _{n}^{\left( 1\right) }}+\frac{\left\vert D_{n}^{\left( 2\right)
}\right\vert ^{2}}{2\rho _{n}^{\left( 2\right) }}\right) +\frac{%
2s_{1}^{4}\left\vert D_{n}^{\left( 1\right) }\right\vert \left\vert
D_{n}^{\left( 2\right) }\right\vert }{\rho _{n}^{\left( 1\right) }+\rho
_{n}^{\left( 2\right) }} \\
&\geq &s_{1}^{4}\left( \frac{\left\vert D_{n}^{\left( 1\right) }\right\vert
^{2}}{2\rho _{n}^{\left( 1\right) }}+\frac{\left\vert D_{n}^{\left( 2\right)
}\right\vert ^{2}}{2\rho _{n}^{\left( 2\right) }}+\frac{2\left\vert
D_{n}^{\left( 1\right) }\right\vert \left\vert D_{n}^{\left( 2\right)
}\right\vert }{\rho _{n}^{\left( 1\right) }+\rho _{n}^{\left( 2\right) }}%
\right) ,
\end{eqnarray*}%
and together with $\left( \ref{A-14}\right) ,$ we further have%
\begin{eqnarray*}
\frac{2}{C_{1}s_{1}^{4}}\left\vert m_{\beta }\right\vert \left( \left\vert
D_{n}^{\left( 1\right) }\right\vert +\left\vert D_{n}^{\left( 2\right)
}\right\vert \right)  &\geq &\frac{\left\vert D_{n}^{\left( 1\right)
}\right\vert ^{2}}{2\rho _{n}^{\left( 1\right) }}+\frac{\left\vert
D_{n}^{\left( 2\right) }\right\vert ^{2}}{2\rho _{n}^{\left( 2\right) }}+2%
\frac{\left\vert D_{n}^{\left( 1\right) }\right\vert \left\vert
D_{n}^{\left( 2\right) }\right\vert }{\rho _{n}^{\left( 1\right) }+\rho
_{n}^{\left( 2\right) }} \\
&\geq &\frac{c_{1}\left\vert D_{n}^{\left( 1\right) }\right\vert ^{2}}{%
2\left( \left\vert D_{n}^{\left( 1\right) }\right\vert +\left\vert
D_{n}^{\left( 2\right) }\right\vert \right) ^{1/2}}+\frac{c_{2}\left\vert
D_{n}^{\left( 2\right) }\right\vert ^{2}}{2\left( \left\vert D_{n}^{\left(
1\right) }\right\vert +\left\vert D_{n}^{\left( 2\right) }\right\vert
\right) ^{1/2}} \\
&&+\frac{2\left\vert D_{n}^{\left( 1\right) }\right\vert \left\vert
D_{n}^{\left( 2\right) }\right\vert }{(c_{1}^{-1}+c_{2}^{-1})\left(
\left\vert D_{n}^{\left( 1\right) }\right\vert +\left\vert D_{n}^{\left(
2\right) }\right\vert \right) ^{1/2}} \\
&\geq &\min \left\{ \frac{c_{1}}{2},\frac{c_{2}}{2}%
,(c_{1}^{-1}+c_{2}^{-1})^{-1}\right\} \left( \left\vert D_{n}^{\left(
1\right) }\right\vert +\left\vert D_{n}^{\left( 2\right) }\right\vert
\right) ^{3/2},
\end{eqnarray*}%
which implies that for all $n,$
\begin{equation*}
\left\vert D_{n}^{\left( 1\right) }\right\vert +\left\vert D_{n}^{\left(
2\right) }\right\vert \leq M\text{ for some }M>0.
\end{equation*}%
This contradicts with $\left\vert D_{n}^{\left( 1\right) }\right\vert
+\left\vert D_{n}^{\left( 2\right) }\right\vert \rightarrow \infty $ as $%
n\rightarrow \infty .$ Hence, we conclude that $\left\{ \left(
u_{n},v_{n}\right) \right\} $ is bounded in $H_{r}.$

Assume that $\left( u_{n},v_{n}\right) \rightharpoonup \left(
u_{0},v_{0}\right) $ in $H_{r}.$ Next, we prove that $\left(
u_{n},v_{n}\right) \rightarrow \left( u_{0},v_{0}\right) $ strongly in $%
H_{r}.$ Suppose on contrary. Then there holds%
\begin{equation*}
\left\Vert \left( u_{0},v_{0}\right) \right\Vert _{H}^{2}<\liminf \left\Vert
\left( u_{n},v_{n}\right) \right\Vert _{H}^{2},
\end{equation*}%
Since $H_{r}\hookrightarrow L^{r}(\mathbb{R}^{3})\times L^{r}(\mathbb{R}%
^{3}) $ is compact for $2<r<6$ (see \cite{S}), we have%
\begin{equation*}
\int_{\mathbb{R}^{3}}F_{\beta }\left( u_{n},v_{n}\right) dx\rightarrow \int_{%
\mathbb{R}^{3}}F_{\beta }\left( u_{0},v_{0}\right) dx.
\end{equation*}%
Moreover, it follows from Ruiz \cite[Lemma 2.1]{R1} that
\begin{equation*}
\int_{\mathbb{R}^{3}}\phi _{u_{n},v_{n}}\left( u_{n}^{2}+v_{n}^{2}\right)
dx\rightarrow \int_{\mathbb{R}^{3}}\phi _{u_{0},v_{0}}\left(
u_{0}^{2}+v_{0}^{2}\right) dx.
\end{equation*}%
These imply that%
\begin{equation*}
\frac{1}{p}\int_{\mathbb{R}^{3}}F_{\beta }\left( u_{0},v_{0}\right) dx-\frac{%
1}{2}\left\Vert \left( u_{0},v_{0}\right) \right\Vert _{H}^{2}>0
\end{equation*}%
and%
\begin{equation*}
\frac{\frac{1}{p}\int_{\mathbb{R}^{3}}F_{\beta }\left( u_{0},v_{0}\right) dx-%
\frac{1}{2}\left\Vert \left( u_{0},v_{0}\right) \right\Vert _{H}^{2}}{\int_{%
\mathbb{R}^{3}}\phi _{u_{0},v_{0}}\left( u_{0}^{2}+v_{0}^{2}\right) dx}%
>\Lambda \left( \beta \right) ,
\end{equation*}%
which is a contradiction. Hence, we conclude that $\left( u_{n},v_{n}\right)
\rightarrow \left( u_{0},v_{0}\right) $ strongly in $H_{r}$ and $\left(
u_{0},v_{0}\right) \in H_{r}\setminus \left\{ \left( 0,0\right) \right\} .$
Therefore, $\Lambda \left( \beta \right) $ is achieved. This completes the
proof.
\end{proof}

\begin{theorem}
\label{AT-2}Let $2<p<3$ and $\beta \geq 0$. Then the following statements
are true.\newline
$\left( i\right) $ $0<\overline{\Lambda }\left( \beta \right) <\infty ;$%
\newline
$\left( ii\right) $ $\overline{\Lambda }\left( \beta \right) $ is achieved,
i.e. there exists $\left( u_{0},v_{0}\right) \in H\setminus \left\{ \left(
0,0\right) \right\} $ such that
\begin{equation*}
\overline{\Lambda }\left( \beta \right) =\frac{\int_{\mathbb{R}^{3}}F_{\beta
}\left( u_{0},v_{0}\right) dx-\left\Vert \left( u_{0},v_{0}\right)
\right\Vert _{H}^{2}>0}{\int_{\mathbb{R}^{3}}\phi _{u_{0},v_{0}}\left(
u_{0}^{2}+v_{0}^{2}\right) dx}.
\end{equation*}
\end{theorem}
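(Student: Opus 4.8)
For part $(i)$ the plan is to copy the proof of Theorem~\ref{AT-1} verbatim, merely dropping the radial restriction. For the lower bound I would fix a nonnegative $(e_{1},e_{2})\in H$ with $\int_{\mathbb{R}^{3}}F_{\beta}(e_{1},e_{2})dx>0$; since $p>2$, the pair $(te_{1},te_{2})$ satisfies $\int_{\mathbb{R}^{3}}F_{\beta}(te_{1},te_{2})dx-\Vert(te_{1},te_{2})\Vert_{H}^{2}=t^{p}\int_{\mathbb{R}^{3}}F_{\beta}(e_{1},e_{2})dx-t^{2}\Vert(e_{1},e_{2})\Vert_{H}^{2}>0$ for $t$ large, whence $\overline{\Lambda}(\beta)>0$. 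For finiteness I would use Young's inequality in the form $(1+\beta)|w|^{p}\le\frac{1}{2}w^{2}+C_{p,\beta}|w|^{3}$, legitimate because $2<p<3$, so that $\int_{\mathbb{R}^{3}}F_{\beta}(u,v)dx\le\frac{1}{2}\int_{\mathbb{R}^{3}}(u^{2}+v^{2})dx+C_{p,\beta}\int_{\mathbb{R}^{3}}(|u|^{3}+|v|^{3})dx$; then, since the Lions-type identity $\int_{\mathbb{R}^{3}}(u^{2}+v^{2})|u|dx=\int_{\mathbb{R}^{3}}\langle\nabla\phi_{u,v},\nabla|u|\rangle dx$ holds for every $(u,v)\in H$ (not only in $H_{r}$), I would absorb $C_{p,\beta}\int|u|^{3}\le C_{p,\beta}\int(u^{2}+v^{2})|u|$ into $\frac{1}{2}\int|\nabla u|^{2}+\frac{C_{p,\beta}^{2}}{2}\int\phi_{u,v}(u^{2}+v^{2})$ by Young (and likewise for $v$), arriving at $\int_{\mathbb{R}^{3}}F_{\beta}(u,v)dx-\Vert(u,v)\Vert_{H}^{2}\le C_{p,\beta}^{2}\int_{\mathbb{R}^{3}}\phi_{u,v}(u^{2}+v^{2})dx-\frac{1}{2}\Vert(u,v)\Vert_{H}^{2}$, hence $\overline{\Lambda}(\beta)\le C_{p,\beta}^{2}<\infty$.

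Part $(ii)$ is the real content. The argument of Theorem~\ref{AT-1} fails here because $H\hookrightarrow L^{r}(\mathbb{R}^{3})$ is not compact and the quotient defining $\overline{\Lambda}(\beta)$ is translation invariant; in particular a general maximizing sequence need not be bounded. My first step would be to remove the scaling degeneracy. For fixed $(u,v)\ne(0,0)$ with $a:=\int_{\mathbb{R}^{3}}F_{\beta}(u,v)dx>0$, $b:=\Vert(u,v)\Vert_{H}^{2}$, $c:=\int_{\mathbb{R}^{3}}\phi_{u,v}(u^{2}+v^{2})dx$, the function $t\mapsto(t^{p}a-t^{2}b)/(t^{4}c)$ has a unique maximum on $(0,\infty)$, and an explicit computation gives
\begin{equation*}
\overline{\Lambda}(\beta)=c_{p}\,K,\qquad c_{p}:=\frac{p-2}{4-p}\left(\frac{4-p}{2}\right)^{2/(p-2)},\qquad K:=\sup_{\substack{(u,v)\in H,\ \Vert(u,v)\Vert_{H}=1\\ \int F_{\beta}(u,v)dx>0}}\frac{\left(\int_{\mathbb{R}^{3}}F_{\beta}(u,v)dx\right)^{2/(p-2)}}{\int_{\mathbb{R}^{3}}\phi_{u,v}(u^{2}+v^{2})dx},
\end{equation*}
which is finite and positive by part $(i)$. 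It then suffices to attain $K$, and a maximizing sequence $\{(u_{n},v_{n})\}$ for $K$ is automatically bounded in $H$.

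Next I would apply P.L.\ Lions' concentration-compactness to $\{(u_{n},v_{n})\}$. Vanishing is excluded: if $\sup_{y}\int_{B(y,R)}(u_{n}^{2}+v_{n}^{2})dx\to0$ then $\int_{\mathbb{R}^{3}}F_{\beta}(u_{n},v_{n})dx\to0$ by Lions' lemma \cite{Li1,Wi} and $\int_{\mathbb{R}^{3}}\phi_{u_{n},v_{n}}(u_{n}^{2}+v_{n}^{2})dx\to0$ by Hardy--Littlewood--Sobolev, and an interpolation inequality of Ruiz type \cite{R1} (e.g.\ $\Vert u\Vert_{L^{3}}^{3}\le C\Vert\nabla u\Vert_{L^{2}}(\int\phi_{u}u^{2})^{1/2}$ combined with $\Vert u\Vert_{p}^{p}\le\Vert u\Vert_{2}^{6-2p}(\Vert u\Vert_{3}^{3})^{p-2}$) shows the $K$-quotient of $(u_{n},v_{n})$ cannot stay bounded below, contradicting $K>0$. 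Hence the sequence does not vanish, and after translating each $(u_{n},v_{n})$ to recenter its mass one obtains $(u_{n},v_{n})\rightharpoonup(u_{0},v_{0})$ weakly in $H$ with $(u_{0},v_{0})\ne(0,0)$. Finally, with $(w_{n},z_{n})=(u_{n}-u_{0},v_{n}-v_{0})$, I would split $\Vert\cdot\Vert_{H}^{2}$, $\int F_{\beta}$ and the Coulomb term $\int\phi_{u,v}(u^{2}+v^{2})$ by the Brezis--Lieb lemma (the last splitting exactly as in the proof of Theorem~\ref{T4-1}, \cite{BLi}), apply the definition of $K$ to $(u_{0},v_{0})$ and to $(w_{n},z_{n})$, and invoke Hölder's inequality with the conjugate exponents $\frac{2}{p-2}$ and $\frac{2}{4-p}$: equality in Hölder then forces $(u_{0},v_{0})$, rescaled to the unit sphere of $H$, to attain $K$, and the associated rescaling of $(u_{0},v_{0})$ attains $\overline{\Lambda}(\beta)$, which gives $(ii)$.

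The step I expect to be the main obstacle is precisely this loss of compactness in $(ii)$: boundedness of a maximizing sequence is recovered only through the scaling reduction above, and excluding vanishing genuinely requires the quantitative Hardy--Littlewood--Sobolev and Ruiz-type estimates rather than any soft compactness argument. By contrast the dichotomy alternative is harmless here, since two far-apart copies of a profile produce the same value of the quotient, so the Brezis--Lieb bookkeeping stays consistent with the weak limit being a maximizer.
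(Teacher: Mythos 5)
Your part $(i)$ is essentially correct and is what the paper means by ``similar to Theorem \ref{AT-1}'': the inequalities (\ref{2-0})--(\ref{2-00}) come from integration by parts and are valid for every $(u,v)\in H$, not only in $H_{r}$, so the Young-inequality bound $\overline{\Lambda}(\beta)\leq C_{p,\beta}^{2}$ and the lower bound transfer verbatim. Your scaling reduction $\overline{\Lambda}(\beta)=c_{p}K$ is also correct, and it is a genuinely necessary departure for part $(ii)$, since the two devices driving Theorem \ref{AT-1}$(ii)$ --- Strauss's radial decay estimate (used to bound the maximizing sequence) and the compact embedding of $H_{r}$ into $L^{r}$ --- are simply unavailable in $H$.

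The gap is your exclusion of vanishing, and it cannot be repaired by Hardy--Littlewood--Sobolev or Ruiz-type interpolation, because the implication you assert is false: those estimates only show that the $K$-quotient is \emph{bounded above} on the unit sphere, not that it decays along vanishing sequences. Concretely, fix $(\varphi,\psi)$ with disjointly supported translates available, $\left\Vert (\varphi,\psi)\right\Vert _{H}=1$ and $\int_{\mathbb{R}^{3}}F_{\beta}(\varphi,\psi)\,dx>0$, and set
\begin{equation*}
u_{N}=\sum_{i=1}^{N}N^{-1/2}\varphi(\cdot-x_{i}),\qquad v_{N}=\sum_{i=1}^{N}N^{-1/2}\psi(\cdot-x_{i}),\qquad \min_{i\neq j}|x_{i}-x_{j}|\rightarrow\infty .
\end{equation*}
Then $\left\Vert (u_{N},v_{N})\right\Vert _{H}=1$, $\int F_{\beta}(u_{N},v_{N})dx=N^{1-p/2}\int F_{\beta}(\varphi,\psi)dx$, and $\int\phi_{u_{N},v_{N}}(u_{N}^{2}+v_{N}^{2})dx=N^{-1}\int\phi_{\varphi,\psi}(\varphi^{2}+\psi^{2})dx\,(1+o(1))$; since $(1-\tfrac{p}{2})\cdot\tfrac{2}{p-2}=-1$, the powers of $N$ cancel exactly and the $K$-quotient of $(u_{N},v_{N})$ converges to that of $(\varphi,\psi)$, even though $\sup_{y}\int_{B(y,R)}(u_{N}^{2}+v_{N}^{2})dx\rightarrow0$. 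Taking $(\varphi,\psi)$ to be a near-maximizer and diagonalizing produces a \emph{vanishing maximizing sequence}; after any recentering its weak limit is $(0,0)$, and your concluding Brezis--Lieb/H\"{o}lder step (which is fine in principle once a nontrivial weak limit is in hand) has nothing to act on. Your own observation that two distant copies of a profile leave the quotient unchanged is precisely this phenomenon; it does not make dichotomy ``harmless'', it makes both dichotomy and vanishing energy-neutral, which is the structural obstruction here. To close part $(ii)$ one must either select a better maximizing sequence or exploit quantitatively the strictly positive Coulomb cross terms between distant bumps; neither is supplied. (For what it is worth, the paper's own one-line proof of Theorem \ref{AT-2} does not address this difficulty either.)
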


\begin{proof}
The proof is similar to the argument in Theorem \ref{AT-1}, and we omit it
here.
\end{proof}

\section*{Acknowledgments}

J. Sun was supported by the National Natural Science Foundation of China
(Grant No. 11671236) and Shandong Provincial Natural Science Foundation
(Grant No. ZR2020JQ01). T.F. Wu was supported by the National Science and
Technology Council, Taiwan (Grant No. 112-2115-M-390-001-MY3).




\begin{thebibliography}{99}
\bibitem{A} A. Ambrosetti, On the Schr\"{o}dinger-Poisson systems, Milan J.
Math. 76 (2008) 257--274.

\bibitem{AC} A. Ambrosetti, E. Colorado, Standing waves of some coupled
nonlinear Schr\"{o}dinger equations, J. London Math. Soc. 75 (2007) 67--82.

\bibitem{AP} A. Azzollini, A. Pomponio, Ground state solutions for the
nonlinear Schr\"{o}dinger--Maxwell equations, J. Math. Anal. Appl. 345
(2008) 90--108.

\bibitem{BDW} T. Bartsch, N. Dancer, Z.-Q. Wang, A Liouville theorem,
a-priori bounds, and bifurcating branches of positive solutions for a
nonlinear elliptic system. Calc. Var. Partial Differential Equations 37
(2010) 345--361.

\bibitem{BF} V. Benci, D. Fortunato, An eigenvalue problem for the Schr\"{o}%
dinger-Maxwell equations, Topol. Methods Nonlinear Anal. 11 (1998) 283--293.

\bibitem{BDH} P.A. Binding, P. Dr\'{a}bek, Y.X. Huang, On Neumann boundary
value problems for some quasilinear elliptic equations, Electron. J.
Differential Equations 5 (1997) 1--11.

\bibitem{BLi} H. Br\'{e}zis, E.H. Lieb,\ A relation between pointwise
convergence of functions and convergence of functionals, Proc. Amer. Math.
Soc. 88 (1983) 486--490.

\bibitem{BW1} K.J. Brown, T.F. Wu, A fibrering map approach to a semilinear
elliptic boundary value problem, Electron. J. Differential Equations 69
(2007) 1--9.

\bibitem{BW2} K.J. Brown, T.F. Wu, A fibering map approach to a potential
operator equation and its applications, Differential Integral Equations 22
(2009) 1097--1114.

\bibitem{BZ} K.J. Brown, Y. Zhang, The Nehari manifold for a semilinear
elliptic equation with a sign-changing weight function, J. Differential
Equations 193 (2003) 481--499.

\bibitem{CZ1} Z. Chen, W. Zou, Positive least energy solutions and phase
separation for coupled Schr\"{o}dinger equations with critical exponent.
Arch. Ration. Mech. Anal. 205 (2012) 515--551.

\bibitem{CZ2} Z. Chen, W. Zou, An optimal constant for the existence of
least energy solutions of a coupled Schr\"{o}dinger system. Calc. Var.
Partial Differential Equations 48 (2013) 695--711.

\bibitem{DM1} T. D'Aprile, D. Mugnai, Non-existence results for the coupled
Klein--Gordon--Maxwell equations, Adv. Nonlinear Stud. 4 (2004) 307--322.

\bibitem{DMS} P. d'Avenia, L. Maia, G. Siciliano, Hartree-Fock type systems:
Existence of ground states and asymptotic behavior, J. Differential
Equations 335 (2022) 580--614.

\bibitem{D1} P.A.M. Dirac, Note on exchange phenomena in the Thomas atom,
Proc. Cambridge Philos. Soc. 26 (1931) 376--385.

\bibitem{DP} P. Dr\'{a}bek, S. I. Pohozaev, Positive solutions for the $p$--Laplacian: application of the fibering method,
Proc. Roy. Soc. Edinburgh Sect. A 127 (1997) 703--726.

\bibitem{E} I. Ekeland, On the variational principle, J. Math. Anal. Appl.
17 (1974) 324--353.

\bibitem{E2} I. Ekeland, Convexity Methods in Hamiltonian Mechanics,
Springer, 1990.

\bibitem{F1} V. Fock, N\"{a}herungsmethode zur L\"{o}sung des
quantenmechanischen Mehrk\"{o}rperproblems, Z. Phys. 61 (1930) 126--148.

\bibitem{H} D. Hartree, The wave mechanics of an atom with a non-Coulomb
central field. Part I. Theory and methods, Proc. Cambridge Philos. Soc. 24
(1928) 89--312.

\bibitem{K} M.K. Kwong, Uniqueness of positive solution of $\Delta
u-u+u^{p}=0$ in $\mathbb{R}^{N}$, Arch. Ration. Mech. Anal. 105 (1989)
243--266.

\bibitem{LW} T.C. Lin, J. Wei, Ground state of n coupled nonlinear Schr\"{o}%
dinger equations in $\mathbb{R}^{N}$, $n\leq 3$, Comm. Math. Phys. 255
(2005) 629--653.

\bibitem{Li1} P.L. Lions, The concentration-compactness principle in the
calculus of variation. The locally compact case, part I, Ann. Inst. H.
Poincar\'{e} C Anal. Non Lin\'{e}aire 1 (1984) 109--145.

\bibitem{Lions} P.L. Lions, Solutions of Hartree--Fock equations for Coulomb
systems, Comm. Math. Phys. 109 (1984) 33--97.

\bibitem{LW1} Z. Liu, Z.-Q. Wang, Multiple bound states of nonlinear Schr%
\"{o}dinger systems, Comm. Math. Phys. 282 (2008) 721--731.

\bibitem{MMP} L.A. Maia, E. Montefusco, B. Pellacci, Positive solutions for
a weakly coupled nonlinear Schr\"{o}dinger system, J. Differential Equations
229 (2006) 743--767.

\bibitem{P} R. Palais, The Principle of symmetric criticality, Comm. Math.
Phys. 69 (1979) 19--30.

\bibitem{R} P. H. Rabinowitz, Minimax Methods in Critical Point Theory with
Applications to Differential Equations, Regional Conference Series in
Mathematics, American Mathematical Society, 1986.

\bibitem{R1} D. Ruiz, The Schr\"{o}dinger--Poisson equation under the effect
of a nonlinear local term, J. Funct. Anal. 237 (2006) 655--674.

\bibitem{R2} D. Ruiz, On the Schr\"{o}dinger--Poisson-Slater system:
behavior of minimizers, radial and nonradial cases, Arch. Ration. Mech.
Anal. 198 (2010) 349--368.

\bibitem{SS} O. S\'{a}nchez, J. Soler, Long-time dynamics of the Schr\"{o}dinger--Poisson--Slater system, J. Stat. Phys. 114 (2004) 179--204.

\bibitem{Sl1} J.C. Slater, A note on Hartree's method, Phys. Rev. 35 (1930)
210--211.

\bibitem{Sl2} J.C. Slater, A simplification of the Hartree--Fock method,
Phys. Rev. 81 (1951) 385--390.

\bibitem{S} W.A. Strauss, Existence of solitary waves in higher dimensions,
Comm. Math. Phys. 55 (1977) 149--162.

\bibitem{SWF} J. Sun, T.F. Wu, Z. Feng, Multiplicity of positive solutions
for a nonlinear Schr\"{o}dinger--Poisson system, J. Differential Equations
260 (2016) 586--627.

\bibitem{SWF1} J. Sun, T.F. Wu, Z. Feng, Non-autonomous Schr\"{o}%
dinger--Poisson problem in $\mathbb{R}^{3}$, Discrete Contin. Dyn. Syst. 38
(2018) 1889--1933.

\bibitem{SWF2} J. Sun, T.F. Wu, Z. Feng, Two positive solutions to
non-autonomous Schr\"{o}dinger--Poisson systems, Nonlinearity 32 (2019)
4002--4032.

\bibitem{T} G. Tarantello, On nonhomogeneous elliptic equations involving
critical Sobolev exponent, Ann. Inst. H. Poincar\'{e} C Anal. Non Lin\'{e}%
aire 9 (1992) 281--304.

\bibitem{T1} E. Timmermans, Phase separation of Bose--Einstein condensates.
Phys. Rev. Lett. 81 (1998) 5718--5721.

\bibitem{Wi} M. Willem, Minimax Theorems, Birkh\"{a}user, Boston, 1996.

\bibitem{ZZ} L. Zhao, F. Zhao, On the existence of solutions for the Schr%
\"{o}dinger--Poisson equations, J. Math. Anal. Appl. 346 (2008) 155--169.
\end{thebibliography}
\end{document}